\numberwithin{equation}{section}
\newtheorem{theorem}{Theorem}[section]
\newtheorem{proposition}[theorem]{Proposition}
\newtheorem{corollary}[theorem]{Corollary}
\newtheorem{definition}[theorem]{Definition}
\newtheorem{lemma}[theorem]{Lemma}
\newtheorem{fact}[theorem]{Fact}
\newtheorem{example}[theorem]{Example}
\newtheorem{problem}[theorem]{Problem}
\newtheorem{observation}[theorem]{Observation}
\newcommand{\sims}{\stackrel{\ref{thm-strong}}{\sim}}
\newcommand{\simref}[1]{\stackrel{#1}{\sim}}
\newcommand{\simcomp}{\stackrel{d}{\sim}} 
\newcommand{\nceq}{\stackrel{nc}\sim}
\newcommand{\cceq}{\stackrel{cc}\sim}
\newcommand{\pil}{\pi_{\text{L}}} 
\newcommand{\pim}{\pi_\text{M}}
\newcommand{\pir}{\pi_{\text{R}}}
\newcommand{\pilp}{\pi'_{\text{L}}} 
\newcommand{\pimp}{\pi'_\text{M}}
\newcommand{\pirp}{\pi'_{\text{R}}}
\newcommand{\ttau}[2]{\tau_{#1}(#2)} 
\begin{document}

\pagenumbering{arabic}
\pagestyle{headings}
\def\sof{\hfill\rule{2mm}{2mm}}
\def\llim{\lim_{n\rightarrow\infty}}

\title{On multiple pattern avoiding set partitions}
\maketitle

\begin{center}
V\'\i t Jel\'\i nek\\
Computer Science Institute, Charles University in Prague,\\
Malostransk\'e n\'am\v est\'i 25, 118 00, Prague 1, Czechia

{\tt jelinek@iuuk.mff.cuni.cz}
\end{center}

\begin{center}
Toufik Mansour\\
Department of Mathematics, University of Haifa, 31905 Haifa, Israel

{\tt tmansour@univ.haifa.ac.il}
\end{center}

\begin{center}Mark Shattuck\\
Department of Mathematics, University of Haifa, 31905 Haifa, Israel

{\tt maarkons@excite.com}
\end{center}

\section*{Abstract}

We study classes of set partitions determined by the avoidance of multiple
patterns, applying a natural notion of partition containment that has been
introduced by Sagan. We say that two sets $S$ and $T$ of patterns are
equivalent if for each $n$ the number of partitions of size $n$ avoiding all the members of $S$
is the same as the number of those that avoid all the members of~$T$.

Our goal is to classify the equivalence classes among two-element pattern sets
of several general types. First, we focus on pairs of patterns
$\{\sigma,\tau\}$, where $\sigma$ is a pattern of size three with at least two
distinct symbols and $\tau$ is an arbitrary pattern of size~$k$ that
avoids~$\sigma$. We show that pattern-pairs of this type determine a small
number of equivalence classes; in particular, the classes have on average
exponential size in~$k$. We provide a (sub-exponential) upper bound for the
number of equivalence classes, and provide an explicit formula for the
generating function of all such avoidance classes, showing that in all cases
this generating function is rational.

Next, we study partitions avoiding a pair of patterns of the form $(1212,\tau)$,
where $\tau$ is an arbitrary pattern. Note that partitions avoiding $1212$
are exactly the non-crossing partitions. We provide several general equivalence
criteria for pattern pairs of this type, and show that these criteria account
for all the equivalences observed when $\tau$ has size at most six.

In the last part of the paper, we perform a full classification of the
equivalence classes of all the pairs $\{\sigma,\tau\}$, where $\sigma$ and
$\tau$ have size four.

\noindent{Keywords}: set partition, pattern avoidance, Wilf-equivalence class

\noindent{2010 Mathematics Subject Classification}: 05A18, 05A15, 05A19

\section{Introduction}

If $n \geq 1$, then a \emph{partition} of $[n]=\{1,2,\ldots,n\}$ is any
collection of nonempty, pairwise disjoint subsets, called \emph{blocks}, whose
union is $[n]$. (If $n=0$, then there is a single empty partition of
$[0]=\varnothing$ which has no blocks.) A partition $\Pi$ having exactly $k$
blocks is called a $k$-\emph{partition}. We will denote the set of all
$k$-partitions of $[n]$ by $P_{n,k}$ and the set of all partitions of $[n]$ by
$P_n$.  The number $n$ is referred to as \emph{the size} of a partition. A
partition $\Pi$ is said to be in \emph{standard form} if it is written
as $\Pi=B_1/B_2/\cdots$, where $\min(B_1)<\min(B_2)<\cdots$.  One may also
represent the partition $\Pi=B_1/B_2/\cdots/B_k \in P_{n,k}$, equivalently, by
the \emph{canonical sequential form} $\pi=\pi_1\pi_2\cdots\pi_n$, wherein $j\in
B_{\pi_j},\, 1\le j\le n$ (see, e.g., \cite{SW}). Throughout this paper, we will
represent set partitions by their canonical forms and consider the problem of
avoidance on these words.

For instance, the partition $\Pi=1,3,8/2,5/4,7/6 \in P_{8,4}$ has
the canonical sequential form $\pi=12132431$.  Note that
$\pi=\pi_1\pi_2\cdots\pi_n \in P_{n,k}$ is a {\em restricted growth function}
from $[n]$ to $[k]$ (see, e.g., \cite{Mi} for details), meaning that it
satisfies the following three properties: (i) $\pi_1=1$, (ii) $\pi$ is onto
$[k]$, and (iii) $\pi_{i+1}\leq\text{max}\{\pi_1,\pi_2,\ldots,\pi_i\}+1$ for all
$i$, $1\leq i \leq n-1$.  We remark that restricted growth functions are often
encountered in the study of set partitions \cite{SA,WW} as well as
other related topics, such as Davenport-Schinzel sequences \cite{DS,MS}.

 Let $\sigma=\sigma_1\sigma_2\cdots \sigma_n$ and
$\tau=\tau_1\tau_2\cdots\tau_m$ be two partitions, represented by their
canonical sequences.   We say that $\sigma$ \emph{contains} $\tau$ if $\sigma$
contains a subsequence that is order-isomorphic to $\tau$; that is, $\sigma$
has a subsequence $\sigma_{f(1)},\sigma_{f(2)},\ldots,\sigma_{f(m)}$, where
$1\leq f(1)<f(2)<\cdots<f(m)\leq n$, such that for each $i,j \in [m]$,  we have
$\sigma_{f(i)}<\sigma_{f(j)}$ if and only if $\tau_i<\tau_j$ and
$\sigma_{f(i)}>\sigma_{f(j)}$ if and only if $\tau_i>\tau_j$.  Otherwise, we say
that $\sigma$ \emph{avoids} $\tau$.  In this context, $\tau$ is usually called a
\emph{pattern}.  For example, the partition $\sigma$ avoids the pattern $1212$
if there exist no indices $i<j<k<\ell$ with
$\sigma_i=\sigma_k<\sigma_j=\sigma_{\ell}$ and avoids $1232$ if there exist no
such indices with $\sigma_i<\sigma_j=\sigma_\ell<\sigma_k$.

The concept of pattern-avoidance described above was introduced by Sagan
\cite{SA}, who considered, among other topics, the enumeration of partitions
avoiding patterns of size three. Several other notions of pattern-avoidance of
set partitions have been studied, see, e.g., the works of Klazar~\cite{K}, Chen
et al.~\cite{Ch}, or Goyt~\cite{G}.

We will use the following notation.  If $\{\tau_1,\tau_2,\ldots\}$ is a set of
patterns, then let $P_n(\tau_1,\tau_2,\ldots)$ and
$P_{n,k}(\tau_1,\tau_2,\ldots)$ denote the subsets of $P_n$ and $P_{n,k}$,
respectively, which avoid all of the patterns.  We will denote the cardinalities
of $P_n(\tau_1,\tau_2,\ldots)$ and $P_{n,k}(\tau_1,\tau_2,\ldots)$ by
$p_n(\tau_1,\tau_2,\ldots)$ and $p_{n,k}(\tau_1,\tau_2,\ldots)$, respectively.
From the definitions, note that $p_n(\tau_1,\tau_2,
\ldots)=\sum_{k\geq0}p_{n,k}(\tau_1,\tau_2,\ldots)$. In accordance with the
terminology first used for permutations (see, e.g., \cite{Ma}), we will say that
two sets of partition patterns $T=\{\tau_1,\tau_2,\ldots,\}$ and
$R=\{\rho_1,\rho_2,\ldots\}$ are (Wilf) \emph{equivalent}, denoted by $T\sim
R$, if $p_n(\tau_1,\tau_2,\ldots)=p_n(\rho_1,\rho_2,\ldots)$ for all $n \geq 0$.

The pattern avoidance question is a rather broad one in enumerative
combinatorics and has been the topic of much research, starting with Knuth
\cite{Kn} and Simion and Schmidt \cite{SI} on permutations.  See also, for
example, \cite{NZ,RWZ,MV}.  More recently, the problem has been considered on
further structures such as $k$-ary words and compositions.

Jel\'\i nek and Mansour ~\cite{parts} have determined all the equivalences
among singleton sets of patterns of size at most seven. In this paper, we focus
on classes of partitions determined by two forbidden patterns. We address three
main problems.  First, in Section~\ref{sec-3k} we
consider set partitions
avoiding a pair of patterns $\{\sigma,\tau\}$, where $\sigma$ is a pattern of
size three and $\tau$ is an arbitrary pattern not
containing~$\sigma$. The situation when $\sigma=111$ corresponds to
single-pattern
avoidance in partial matchings, which has been previously addressed~\cite{pp}.
We therefore restrict our attention to the cases when $\sigma\neq 111$.

We derive general criteria for Wilf-equivalence between pairs of patterns
$\{\sigma,\tau\}$ of this form. In particular, we show that when $\tau$ has size
$k$, these pairs form at most $\xi_k+1$ equivalence classes, where $\xi_k$
is the number of integer partitions having no summand equal to $2$
\cite[sequence A027336]{oeis}. This implies that on average the equivalence
classes have exponential size. For small values of $k$ (up to $k=
20$), we are able to verify that the estimate $\xi_k+1$
is sharp and all the equivalence classes may be described explicitly.

We also derive explicit formulas for the generating functions
$\sum_{n\geq0}p_n(\sigma,\tau)x^n$, where $\sigma\neq111$ is of size three
and $\tau$ is any pattern not containing $\sigma$. In particular, we show that
all these generating functions are rational.

Next, in Section~\ref{sec-nc}, we study the equivalences among pairs of
patterns of the form $\{1212,\tau\}$, where $\tau$ is a pattern that avoids
$1212$. Note that the partitions avoiding $1212$ correspond to the classical
non-crossing partitions. We may therefore regard this section as the
study of single-pattern avoidance among non-crossing partitions. We derive
several general criteria for equivalences of pairs of patterns of this form. It
turns out that some of the equivalence classes have size that is exponential
in the size of~$\tau$. We verify, with the help of computer enumeration, that
our criteria are sufficient to fully describe the equivalences among the pairs
$\{1212,\tau\}$ for $\tau$ of size at most six.

Finally, in Section~\ref{sec-44}, we perform a systematic classification of the
equivalences among the pairs $\{\sigma,\tau\}$, where $\sigma$ and $\tau$ are
distinct patterns of size four.  Partial results in this direction have already
been provided by previous research~\cite{MS3,MS2,MS1}. We provide
several new results concerning the avoidance of two or more patterns, including
ones involving infinite families of patterns.  By combining these results with
some specific cases which are worked out, we are able to provide a complete
solution to the problem of identifying all of the equivalence classes
corresponding to two patterns of size four.

We shall employ the following notation: if $\tau=\tau_1,\tau_2,\dotsc,\tau_n$ is
a sequence of numbers, then $\tau+1$ refers to the sequence
$\tau_1+1,\tau_2+1,\dotsc,\tau_n+1$. Also, if $a$ is a symbol and $q\ge 0$ an
integer, then $a^q$ refers to the constant sequence $a,a,\dotsc,a$ of
length~$q$.

\section{Avoiding a pattern of size three and another pattern}\label{sec-3k}

Our first main goal is to study classes of partitions that avoid a pair of
patterns $(\sigma,\tau)$, where $\sigma$ is a pattern of size three.

Note first that a set partition avoids $111$ if and only if each of its blocks
has size at most two. Such a partition is known as a \emph{partial matching}.
Pattern avoidance in partial matchings has already been addressed in a previous
paper~\cite{pp}. We therefore focus on the remaining patterns of size three,
that is, we assume $\sigma\in\{112,121,122,123\}$. We may also assume that
$\tau$ does not contain $\sigma$, otherwise $P_n(\sigma,\tau)=P_n(\sigma)$.

Let us remark that Sagan~\cite{SA} has shown that for any pattern $\sigma$ from
the set $\{112,121,122,123\}$, we have $p_n(\sigma)=2^{n-1}$.

Let us say that a pair of patterns $(\sigma,\tau)$ is a \emph{$(3,k)$-pair}
if $\sigma\in\{112,121,122,123\}$ and $\tau$ is a pattern of size $k$ that
avoids~$\sigma$. Our first results deal with general criteria for equivalences
among $(3,k)$-pairs. These criteria will apply to $(3,k)$-pairs for any value
of~$k$. For values of $k$ up to $k=20$,
we have verified that our criteria account for all equivalences among
the $(3,k)$-pairs. We conjecture that this is the case for larger~$k$ as well.

We also give an explicit formula for the generating function of partitions
avoiding an arbitrary given $(3,k)$-pair.

\subsection{The patterns $112$ and $121$}

Let us first consider the $(3,k)$-pairs $(\sigma,\tau)$, where $\sigma$ is
equal to either $121$ or~$112$. We will show that the two avoidance classes
$P_n(121)$ and $P_n(112)$ are closely related. More precisely, these two
classes form isomorphic posets under the containment relation.

Notice that a partition $\tau$ avoids $121$ if and only if $\tau$ is a weakly
increasing pattern of the form $1^{a_1} 2^{a_2}\cdots m^{a_m}$ for some $m\ge1$
and some sequence $a=(a_1,\dotsc,a_m)$ of positive integers. In particular,
there is a bijection between $121$-avoiding partitions of size $n$ and sequences
of positive integers whose sum is $n$.

Similarly, a partition $\tau$ avoids $112$ if and only if $\tau$ has the form
$12\dotsb m m^{a_m-1} (m-1)^{a_{m-1}-1}\dotsb 1^{a_1-1}$, for some $m\ge1$
and some sequence $a=(a_1,\dotsc,a_m)$ of positive integers. We use the term
\emph{composition} to refer to any finite sequence of positive integers. The
\emph{size} of a composition is the sum of its components, and the \emph{length}
of a composition is the number of its components.

For a composition $a=(a_1,\dotsc,a_m)$, let $\ttau{121}{a}$ denote the
$121$-avoiding pattern $1^{a_1} 2^{a_2}\cdots m^{a_m}$ and let $\ttau{112}{a}$
denote the 112-avoiding pattern $12\dotsb m m^{a_m-1} (m-1)^{a_{m-1}-1}\dotsb
1^{a_1-1}$. Note that $\ttau{112}{a}$ is the unique $112$-avoiding partition
with $m$ blocks whose $i$-th block has size~$a_i$, and similarly for
$\ttau{121}{a}$.

Let $a=(a_1,\dotsc,a_m)$ and $b=(b_1,\dotsc,b_k)$ be two compositions.
We say that $b$ \emph{dominates} $a$, if there is an $m$-tuple of indices
$i(1),i(2),\dotsc,i(m)$ such that $1\le i(1)<i(2)<\dotsb<i(m)\le k$, and
$a_j\le b_{i(j)}$ for each $j\in[m]$. In other words, $b$ dominates $a$ if $b$
contains a subsequence of length $m$ whose every component is greater than or
equal to the corresponding component of~$a$.

We present the following simple fact without proof.
\begin{observation}\label{obs-comp}
 For any two compositions $a$ and $b$, the following are equivalent:
\begin{itemize}
 \item $b$ dominates $a$,
\item $\ttau{112}{b}$ contains $\ttau{112}{a}$,
\item $\ttau{121}{b}$ contains $\ttau{121}{a}$.
\end{itemize}
\end{observation}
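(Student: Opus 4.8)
The plan is to prove the three-way equivalence by first establishing the two "contains" statements are each equivalent to domination, which then makes them equivalent to each other. By symmetry of the argument it suffices to treat one of $\ttau{112}{\cdot}$ or $\ttau{121}{\cdot}$ carefully; I would do $\ttau{121}{\cdot}$ first since its structure $1^{a_1}2^{a_2}\cdots m^{a_m}$ is the cleanest, and then indicate the (nearly identical) modifications needed for $\ttau{112}{\cdot}$.

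First I would prove that if $b$ dominates $a$ then $\ttau{121}{b}$ contains $\ttau{121}{a}$. Given witnessing indices $i(1)<i(2)<\dotsb<i(m)$ with $a_j\le b_{i(j)}$, I would select from the $i(j)$-th block of $\ttau{121}{b}$ (which consists of $b_{i(j)}$ copies of the symbol $i(j)$) exactly $a_j$ of those copies. Concatenating these chosen positions in order yields a subsequence of $\ttau{121}{b}$ equal to $i(1)^{a_1}i(2)^{a_2}\cdots i(m)^{a_m}$; since $i(1)<i(2)<\dotsb<i(m)$, this is order-isomorphic to $1^{a_1}2^{a_2}\cdots m^{a_m}=\ttau{121}{a}$, so $\ttau{121}{b}$ contains $\ttau{121}{a}$.

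For the converse, suppose $\ttau{121}{b}$ contains $\ttau{121}{a}$, so there is an occurrence, i.e.\ a subsequence order-isomorphic to $1^{a_1}2^{a_2}\cdots m^{a_m}$. Since $\ttau{121}{b}=1^{b_1}2^{b_2}\cdots k^{b_k}$ is weakly increasing, any subsequence is weakly increasing, and its positions equal to a fixed value of the pattern must all lie within a single run, i.e.\ within one block of $\ttau{121}{b}$; moreover distinct pattern-values, being strictly ordered, must come from distinct blocks, and the block indices must be strictly increasing along the occurrence. Thus the occurrence determines indices $i(1)<i(2)<\dotsb<i(m)$ in $[k]$ such that at least $a_j$ positions of the occurrence lie in block $i(j)$, which forces $b_{i(j)}\ge a_j$. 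Hence $b$ dominates $a$.

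The case of $\ttau{112}{\cdot}$ is handled the same way: the pattern $\ttau{112}{b}=12\cdots k\, k^{b_k-1}(k-1)^{b_{k-1}-1}\cdots 1^{b_1-1}$ again has exactly $b_i$ positions carrying symbol $i$, now split as one occurrence in the initial increasing run $12\cdots k$ together with $b_i-1$ consecutive occurrences in the decreasing tail; selecting $a_j$ of the $b_{i(j)}$ positions of symbol $i(j)$ (and, for the converse, observing that the positions of a fixed pattern-value in an occurrence must carry a single symbol of $\ttau{112}{b}$ with distinct pattern-values going to strictly increasing symbols) yields the two implications exactly as before. I do not expect a genuine obstacle here; the only point requiring care is the bookkeeping in the converse direction — arguing that an occurrence cannot "split" a single pattern-value across two different blocks of the host, which follows from the weak monotonicity of each pattern and the fact that equal pattern-values must map to equal host-values while unequal ones map to unequal host-values.
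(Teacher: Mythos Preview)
Your argument is correct. The paper actually states this result as a ``simple fact without proof,'' so there is no proof in the paper to compare against; your write-up supplies exactly the routine verification the authors chose to omit. The only place to tighten is the forward direction for $\ttau{112}{\cdot}$: rather than ``selecting $a_j$ of the $b_{i(j)}$ positions of symbol $i(j)$'' arbitrarily, you should specify that you take the initial-run occurrence of each $i(j)$ together with the first $a_j-1$ tail occurrences, so that the chosen positions come in the order $i(1),i(2),\dotsc,i(m),i(m)^{a_m-1},\dotsc,i(1)^{a_1-1}$ required. Your converse arguments (equal pattern-values force equal host-values, distinct pattern-values force strictly increasing host-values, hence $b_{i(j)}\ge a_j$) are clean and need no change.
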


Observation~\ref{obs-comp} shows that the classes $P_n(112)$ and $P_n(121)$
ordered by containment and the set of all integer compositions ordered by
domination are three isomorphic posets, with size-preserving isomorphisms
identifying a composition $a$ with $\ttau{112}{a}$ and $\ttau{121}{a}$.

\begin{corollary}\label{cor-iso}
 For any integer composition $a$, the $(3,k)$-pairs $(112,\ttau{112}{a})$ and
$(121,\ttau{121}{a})$ are equivalent.
\end{corollary}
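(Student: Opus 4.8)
The plan is to exhibit a size-preserving bijection between $P_n(112,\ttau{112}{a})$ and $P_n(121,\ttau{121}{a})$, built from the poset isomorphism recorded after Observation~\ref{obs-comp}. Since Observation~\ref{obs-comp} already tells us that the map $b\mapsto(\ttau{112}{b},\ttau{121}{b})$, defined on the set of all integer compositions, induces isomorphisms between the containment posets on $P_n(112)$ and $P_n(121)$, the natural candidate bijection $\Phi\colon P_n(112)\to P_n(121)$ is $\ttau{112}{b}\mapsto\ttau{121}{b}$; every $112$-avoiding partition has the form $\ttau{112}{b}$ for a unique composition $b$, and likewise for $121$, so $\Phi$ is a well-defined size-preserving bijection.

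The key step is then to show that $\Phi$ restricts to a bijection between the two avoidance subclasses. Concretely, I would argue that for a composition $b$ of $n$,
\[
\ttau{112}{b}\text{ avoids }\ttau{112}{a}\quad\Longleftrightarrow\quad b\text{ does not dominate }a\quad\Longleftrightarrow\quad\ttau{121}{b}\text{ avoids }\ttau{121}{a},
\]
where the two equivalences are exactly the contrapositives of the equivalences in Observation~\ref{obs-comp}. Hence $\ttau{112}{b}\in P_n(112,\ttau{112}{a})$ iff $\ttau{121}{b}\in P_n(121,\ttau{121}{a})$, so $\Phi$ carries $P_n(112,\ttau{112}{a})$ bijectively onto $P_n(121,\ttau{121}{a})$. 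Taking cardinalities gives $p_n(112,\ttau{112}{a})=p_n(121,\ttau{121}{a})$ for every $n\ge0$, which is precisely the asserted equivalence of the $(3,k)$-pairs.

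There is essentially no obstacle here: the entire content has been pushed into Observation~\ref{obs-comp}, and the corollary is a formal consequence of the fact that a single map simultaneously realizes the isomorphism of both containment posets with the composition poset. The only point worth stating carefully is that avoidance of a pattern $\rho$ within a class $\mathcal{C}$ is determined purely by the containment order on $\mathcal{C}\cup\{\rho\}$, so that an order isomorphism between two such classes (fixing the distinguished pattern $\rho$, here $\ttau{112}{a}$ versus $\ttau{121}{a}$) automatically matches up the corresponding avoidance subclasses. I would phrase the proof in two or three sentences invoking Observation~\ref{obs-comp} directly.
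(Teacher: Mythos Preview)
Your proposal is correct and follows exactly the route the paper intends: the corollary is stated without separate proof and is meant to follow immediately from the poset isomorphism recorded after Observation~\ref{obs-comp}, via the size-preserving bijection $\ttau{112}{b}\mapsto\ttau{121}{b}$ and the equivalence of containment with domination. Your write-up just makes explicit the one-line deduction the paper leaves implicit.
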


For two compositions $a$ and $a'$, let us write $a\simcomp a'$ if for every $n$,
the number of compositions of size $n$ dominating $a$ is equal to the number of
compositions of size $n$ dominating $a'$. Observation~\ref{obs-comp} implies
that $a\simcomp a'$ if and only if $(112,\ttau{112}{a})\sim
(112,\ttau{112}{a'})$ which is if and only if $(121,\ttau{121}{a})\sim
(121,\ttau{121}{a'})$.

For a composition $a=(a_1,\dotsc,a_m)$, let $M(a)$ denote the multiset
$\{a_1,\dotsc,a_m\}$.

\begin{lemma}\label{lem-multi1}
 Let $a$ and $a'$ be two compositions such that $M(a)=M(a')$. Then
$a\simcomp a'$.
\end{lemma}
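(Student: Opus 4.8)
The plan is to show that if $M(a) = M(a')$, then for every $n$ the number of compositions of size $n$ dominating $a$ equals the number dominating $a'$. Since $M(a)=M(a')$ means $a'$ is obtained from $a$ by permuting its components, it suffices to prove that swapping two adjacent components of a composition does not change the number of size-$n$ compositions that dominate it; the general case then follows by composing transpositions. So I would reduce to the following claim: if $a = (a_1,\dotsc,a_m)$ and $a'' = (a_1,\dotsc,a_{j-1}, a_{j+1}, a_j, a_{j+2},\dotsc,a_m)$ differ only by swapping the entries in positions $j$ and $j+1$, then a composition $b$ of size $n$ dominates $a$ if and only if it dominates $a''$. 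This would actually give the stronger pointwise statement that the \emph{sets} of dominating compositions coincide, not merely their cardinalities.

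To prove the claim, suppose $b=(b_1,\dotsc,b_k)$ dominates $a$ via indices $i(1)<\dotsb<i(m)$ with $a_\ell \le b_{i(\ell)}$ for all $\ell$. I need to produce indices witnessing that $b$ dominates $a''$. The only positions of $a''$ that differ from $a$ are $j$ and $j+1$, which now hold $a_{j+1}$ and $a_j$ respectively. I claim the \emph{same} index tuple $i(1)<\dotsb<i(m)$ works after I check the two affected constraints: I need $a_{j+1} \le b_{i(j)}$ and $a_j \le b_{i(j+1)}$. This does not follow immediately from the original inequalities $a_j \le b_{i(j)}$ and $a_{j+1} \le b_{i(j+1)}$, so here is where the real argument goes: one considers the two values $b_{i(j)}$ and $b_{i(j+1)}$ and the two values $a_j, a_{j+1}$, and observes that among the four ways to match $\{a_j,a_{j+1}\}$ into $\{b_{i(j)}, b_{i(j+1)}\}$ respecting the index order, having $a_\ell \le b_{i(\ell)}$ for the original pairing lets us deduce a valid pairing for the swapped pair as well. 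Concretely, if additionally $a_{j+1}\le b_{i(j)}$ and $a_j \le b_{i(j+1)}$ we are done directly; otherwise one of these fails, say $a_{j+1} > b_{i(j)}$, and then since $a_{j+1}\le b_{i(j+1)}$ we may instead keep $i(j+1)$ assigned to the new position $j+1$ (which holds $a_j \le a_{j+1} \le b_{i(j+1)}$ — wait, we do not know $a_j\le a_{j+1}$). The clean way to handle this is to argue on the multiset level directly: a composition $b$ dominates $a$ if and only if $b$ has a subsequence $b_{i(1)} \le b_{i(2)} \le \dotsb$ — no; instead, the right reformulation is that $b$ dominates $a$ if and only if, writing the entries of $M(a)$ in some order, we can injectively and order-preservingly embed them below a subsequence of $b$, and by a standard greedy/exchange argument (Hall-type matching or the fact that a bipartite "dominance" matching between a sequence and a multiset depends only on the multiset) this condition depends on $a$ only through $M(a)$.

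So the cleanest route, which I would take in the write-up, is: prove a lemma that for a composition $b$, the predicate ``$b$ dominates $a$'' depends only on $M(a)$, by showing it is equivalent to a condition phrased purely in terms of the multiset $M(a)$ and $b$ — for instance, sort $M(a)$ in \emph{weakly decreasing} order as $c_1 \ge c_2 \ge \dotsb \ge c_m$, and show $b$ dominates $a$ iff $b$ dominates the composition $(c_1,\dotsc,c_m)$; this reduces the adjacent-swap claim to the case where one of the two swapped entries is $\ge$ the other, where the exchange argument above does go through without difficulty. I expect the main obstacle to be exactly this exchange step — verifying that an index witness for the sorted composition can be massaged into a witness for an arbitrary permutation of it — and getting the greedy matching argument stated so that it is manifestly correct rather than a case analysis that risks a gap.
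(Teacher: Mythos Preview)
Your approach has a genuine gap: the pointwise claim you aim for is simply false. The predicate ``$b$ dominates $a$'' does \emph{not} depend only on $M(a)$. For a concrete counterexample, take $a=(1,2)$ and $a'=(2,1)$, so $M(a)=M(a')$. The composition $b=(1,2)$ dominates $a$ (via the identity embedding) but does not dominate $a'$: to dominate $(2,1)$ you would need some $b_{i(1)}\ge 2$ with $i(1)<i(2)$, forcing $i(1)=2$, and then there is no room for $i(2)$. So the sets of compositions dominating $a$ and $a'$ are genuinely different; only their \emph{cardinalities} agree. Your greedy/exchange sketch cannot be repaired into a proof that the same $b$ works, because it does not.

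The paper's proof takes a different route that respects this obstruction: it also reduces to an adjacent transposition $a\leftrightarrow a'$, but then constructs a \emph{bijection} $b\mapsto b'$ between compositions dominating $a$ and compositions dominating $a'$, where $b'$ is in general different from $b$. Concretely, one locates the smallest index $i$ such that $(b_1,\dotsc,b_i)$ dominates the prefix $(a_1,\dotsc,a_{r-1})$ and the largest index $j$ such that $(b_{j+1},\dotsc,b_k)$ dominates the suffix $(a_{r+2},\dotsc,a_m)$, and then reverses the middle segment $b_{i+1},\dotsc,b_j$. In the counterexample above this sends $b=(1,2)$ to $b'=(2,1)$, which does dominate $(2,1)$. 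The key idea you are missing is that one must actually move mass around in $b$, not merely reassign indices.
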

\begin{proof}
It is enough to prove the lemma in the case when $a'$ is obtained from $a$ by
exchanging the order of two consecutive elements. Let $m$ be the length of $a$
and of~$a'$. Suppose $a$ equals $(a_1,\dotsc,a_m)$, and that $a'$ is obtained
from $a$ by exchanging the components $a_r$ and $a_{r+1}$ for some $r<m$, so
that we have
\begin{align*}
 a'=&(a_1,\dotsc,a_{r-1},a_{r+1},a_r,a_{r+2},\dotsc,a_m).
\end{align*}

We prove the lemma bijectively. Let $b=(b_1,\dotsc, b_k)$ be a
composition of size $n$ that dominates $a$. Let $i\in[k]$ be the smallest index
such that $(b_1,\dotsc,b_i)$ dominates $(a_1,\dotsc,a_{r-1})$. Let $j\in[k]$ be
the largest index such that $(b_{j+1},\dotsc,b_k)$ dominates
$(a_{r+2},\dotsc,a_m)$. Since $b$ dominates $a$, we know that $i+2\le j$ and
that $(b_{i+1},b_{i+2},\dotsc,b_j)$ dominates $(a_r,a_{r+1})$. Consider now a
composition $b'$ obtained from $b$ by reversing the order of the elements
$b_{i+1},b_{i+2},\dotsc,b_j$, that is,
\[
 b'=(b_1,\dotsc,b_i,b_j,b_{j-1},\dotsc,b_{i+1},b_{j+1},\dotsc,b_k).
\]
Clearly, $b'$ dominates $a'$, and the mapping $b\mapsto b'$ is a
size-preserving bijection between compositions that dominate $a$ and
those that dominate~$a'$.
\end{proof}

\begin{lemma}\label{lem-multi2}
Let $a=(a_1,\dotsc,a_m)$ be a composition, with $a_m=2$. Define another
composition $a'=(a_1,\dotsc,a_{m-1},1,1)$. Then $a\simcomp a'$.
\end{lemma}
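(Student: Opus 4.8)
The plan is to establish a size-preserving bijection between compositions of size $n$ that dominate $a=(a_1,\dotsc,a_{m-1},2)$ and compositions of size $n$ that dominate $a'=(a_1,\dotsc,a_{m-1},1,1)$, in the spirit of the proof of Lemma~\ref{lem-multi1}. The key observation is that the two conditions ``$b$ dominates $(\dotsc,2)$'' and ``$b$ dominates $(\dotsc,1,1)$'' differ only in the ``tail'' behaviour: in the first case we need, after matching the prefix $(a_1,\dotsc,a_{m-1})$ up to some position, a single later component of $b$ that is $\ge 2$; in the second case we need, after the same prefix, two later components of $b$ that are each $\ge 1$ — and since every component of a composition is a positive integer, two later components \emph{always} exist as long as there are at least two components of $b$ strictly after the matched prefix.

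First I would make the matching of the prefix canonical: given a composition $b=(b_1,\dotsc,b_k)$ that dominates $a$, let $i=i(b)\in[k]$ be the smallest index such that $(b_1,\dotsc,b_i)$ dominates $(a_1,\dotsc,a_{m-1})$. (This $i$ is well-defined for any $b$ dominating $a$, and also for any $b$ dominating $a'$, since both impose the same prefix requirement.) Now the point is:
\begin{itemize}
\item $b$ dominates $a$ iff $(b_1,\dotsc,b_i)$ dominates $(a_1,\dotsc,a_{m-1})$ (with $i$ minimal) and some $b_\ell$ with $\ell>i$ satisfies $b_\ell\ge 2$;
\item $b$ dominates $a'$ iff $(b_1,\dotsc,b_i)$ dominates $(a_1,\dotsc,a_{m-1})$ (with $i$ minimal) and there are at least two indices $\ell$ with $i<\ell\le k$, i.e. $k\ge i+2$ (here we use that every $b_\ell\ge 1$ automatically).
\end{itemize}
So the set of $b$'s dominating $a$ but not $a'$ consists of those where $k=i+1$ and $b_{i+1}\ge 2$, while the set of $b$'s dominating $a'$ but not $a$ consists of those where $k\ge i+2$ but every $b_\ell$ with $\ell>i$ equals $1$. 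I would define the bijection to be the identity on all $b$ that dominate both $a$ and $a'$, and on the ``exceptional'' sets map $b=(b_1,\dotsc,b_i,b_{i+1})$ with $b_{i+1}\ge 2$ to $b'=(b_1,\dotsc,b_i,b_{i+1}-1,1)$, which has the same size, has $i(b')=i$ (since we have not touched the prefix, and it is still minimal), has at least two components after position $i$, and has all of them equal to $1$ precisely when $b_{i+1}-1=1$, i.e. $b_{i+1}=2$ — wait, that breaks surjectivity onto the full exceptional target set.

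I would instead use the cleaner correspondence: map $b=(b_1,\dotsc,b_i,b_{i+1})$ with $b_{i+1}=q\ge 2$ to the composition $b'=(b_1,\dotsc,b_i,\underbrace{1,1,\dotsc,1}_{q})$ obtained by replacing the last component $q$ by $q$ copies of $1$. This is clearly size-preserving; it sends an exceptional-for-$a$ composition to one with $k'=i+q\ge i+2$ whose components after position $i$ are all $1$, i.e. an exceptional-for-$a'$ composition; the minimality of the prefix index is preserved because the prefix $(b_1,\dotsc,b_i)$ is untouched; and it is invertible, since from $b'$ we recover $q$ as the number of trailing $1$'s after position $i(b')$ and reconstruct the single component $q$. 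Checking that this map, extended by the identity on the common part, is a well-defined bijection between $\{b:\ b\text{ dominates }a\}$ and $\{b:\ b\text{ dominates }a'\}$ is then routine. The only real point requiring care — and the step I expect to be the main obstacle — is verifying that the prefix index $i(b)$ is genuinely unchanged and that the three-way partition of each domination class (``common'', ``exceptional for $a$ only'', ``exceptional for $a'$ only'') is exactly as described; once that bookkeeping is pinned down, the conclusion $a\simcomp a'$ is immediate.
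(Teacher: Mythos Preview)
Your argument is correct. The characterization of the two exceptional sets is exactly right (and the verification that the minimal prefix index $i$ is unchanged by the tail operation is the only point that needs any care, as you note). The map ``replace the single tail component $q\ge 2$ by $q$ copies of $1$'' is a size-preserving bijection between them, and together with the identity on the common part this gives the desired bijection between dominators of $a$ and dominators of~$a'$.

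The paper's proof uses the same underlying idea but phrased on the complements: it builds a bijection between compositions that do \emph{not} dominate $a$ and those that do \emph{not} dominate~$a'$, by collapsing all the trailing $1$'s after position $i$ into a single component. Your ``expand $q$ into $q$ ones'' is precisely the inverse of that collapsing operation, applied to the complementary exceptional set. So the two proofs are essentially mirror images of one another; neither is more general or more elementary, and the bookkeeping burden is the same in both.
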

\begin{proof}
Fix a size $n$. We provide a bijection $f$ between compositions of size $n$
that do not dominate $a$ and compositions of size $n$ that do not dominate
$a'$. Suppose that $b=(b_1,\dotsc,b_k)$ is a composition of size $n$ that does
not dominate $a$. If $b$ does not even dominate $(a_1,\dotsc,a_{m-1})$, then $b$
does not dominate $a'$, and we put $f(b)=b$.

Suppose now that $b$ dominates $(a_1,\dotsc,a_{m-1})$. Let $i\in[k]$ be the
smallest index such that $(b_1,\dotsc,b_i)$ dominates $(a_1,\dotsc,a_{m-1})$.
Since $b$ does not dominate $a$, we know that all of the components
$(b_{i+1},b_{i+2},\dotsc,b_k)$ must be equal to $1$. Define a new composition
$b'=f(b)$ obtained from $b$ by replacing all the components
$(b_{i+1},b_{i+2},\dotsc,b_k)$ with a single component equal to
$b_{i+1}+b_{i+2}+\dotsb+b_k$ (if $i=k$, then we put $b'=b$). Clearly, the new
composition $b'$ has size $n$ and does not dominate $a'$, and the mapping $f$
is the required bijection.
\end{proof}

From Lemmas~\ref{lem-multi1} and~\ref{lem-multi2}, we see that every
composition $a$ is $\simcomp$-equivalent to a composition $a'$ that has the
property that its components are weakly decreasing and none of them are equal
to~2. Let us call such a composition $a'$ a \emph{2-free integer partition}.
Let $\xi_k$ be the number of 2-free integer partitions of size $n$. Note that
the sequence $(\xi_k)_{k\ge 0}$ is listed as A027336 in the OEIS~\cite{oeis}.
Basic estimates on the number of integer partitions~(see, e.g., ~\cite{Andrews})
imply the bound $\xi_k=2^{O(\sqrt{k})}$.

Let $a=(a_1,a_2,\dotsc,a_\ell)$ be an integer composition, and let
$F_a(x,y)$ be the generating function for the number of partitions of
$[n]$ having exactly $k$ blocks and avoiding $\{112,\ttau{112}{a}\}$, i.e.,
$$F_a(x,y)=\sum_{n,k\geq0}p_{n,k}(112,\ttau{112}{a})x^ny^k.$$
We may give an explicit formula for $F_a(x,y)$ as follows.

\begin{theorem}\label{t1}
We have
\begin{equation}\label{e2}
F_a(x,y)=\sum_{j=0}^{\ell-1}\frac{x^{a_1+\cdots+a_j}y^j(1-x)}{\prod_{i=1}^{
j+1}\left(1-x(1+y)+x^{a_i}y\right)}.
\end{equation}
\end{theorem}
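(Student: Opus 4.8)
The plan is to reduce the statement to an enumeration of integer compositions and then settle it by a structural decomposition.

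\emph{Step 1: reduction to compositions.} As observed before Observation~\ref{obs-comp}, sending a composition $b=(b_1,\dots,b_k)$ of $n$ to $\ttau{112}{b}$ is a bijection onto the $112$-avoiding partitions of $[n]$ with $k$ blocks, the $i$-th block having size $b_i$; and by Observation~\ref{obs-comp} this bijection sends the compositions dominating $a$ precisely to the partitions containing $\ttau{112}{a}$. Hence $p_{n,k}(112,\ttau{112}{a})$ counts the compositions of $n$ with $k$ parts that do \emph{not} dominate $a$, and $F_a(x,y)=\sum_b x^{\|b\|}y^{\ell(b)}$, the sum being over all compositions $b$ not dominating $a$, where $\|b\|$ and $\ell(b)$ denote the size and the length of $b$.

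\emph{Step 2: greedy matching and a partition of the counted set.} For $b=(b_1,\dots,b_k)$ put $t_0=0$ and, for $j\ge1$, let $t_j$ be the least index with $t_{j-1}<t_j\le k$ and $b_{t_j}\ge a_j$; if no such index exists, leave $t_j$ and all $t_{j'}$ with $j'\ge j$ undefined. A short induction on $j$ shows that, for $s\le k$, the prefix $(b_1,\dots,b_s)$ dominates $(a_1,\dots,a_j)$ if and only if $t_j$ is defined and $t_j\le s$; the point is that matching the parts $a_i$ greedily from the left is optimal. Consequently $b$ fails to dominate $a$ exactly when the largest $j$ with $t_j$ defined lies in $\{0,1,\dots,\ell-1\}$, and this largest index partitions the set of compositions not dominating $a$ into $\ell$ disjoint classes $\mathcal C_0,\dots,\mathcal C_{\ell-1}$.

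\emph{Step 3: decomposing a class and summing.} Call a composition a \emph{minimal $(a_1,\dots,a_j)$-cover} if it dominates $(a_1,\dots,a_j)$ while none of its proper prefixes does. If $b\in\mathcal C_j$ and $p=t_j$, then $(b_1,\dots,b_p)$ is a minimal $(a_1,\dots,a_j)$-cover and, since $t_{j+1}$ is undefined, each remaining part $b_{p+1},\dots,b_k$ is at most $a_{j+1}-1$; conversely, concatenating any minimal $(a_1,\dots,a_j)$-cover with any composition all of whose parts are at most $a_{j+1}-1$ yields an element of $\mathcal C_j$, and the two operations are mutually inverse. Furthermore, a minimal $(a_1,\dots,a_j)$-cover is precisely a concatenation $w_1w_2\cdots w_j$ in which each block $w_i$, equal to $(b_{t_{i-1}+1},\dots,b_{t_i})$, is a nonempty composition whose last part is $\ge a_i$ and whose other parts are all $\le a_i-1$. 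The generating function of such a block $w_i$ is
\[
\frac{1}{1-y\sum_{s=1}^{a_i-1}x^s}\cdot\frac{yx^{a_i}}{1-x}=\frac{yx^{a_i}}{1-x(1+y)+x^{a_i}y},
\]
while the generating function of all compositions with every part $\le a_{j+1}-1$ equals $\frac{1-x}{1-x(1+y)+x^{a_{j+1}}y}$. Hence the contribution of $\mathcal C_j$ to $F_a(x,y)$ is
\[
\left(\prod_{i=1}^{j}\frac{yx^{a_i}}{1-x(1+y)+x^{a_i}y}\right)\cdot\frac{1-x}{1-x(1+y)+x^{a_{j+1}}y}=\frac{x^{a_1+\cdots+a_j}y^j(1-x)}{\prod_{i=1}^{j+1}\bigl(1-x(1+y)+x^{a_i}y\bigr)},
\]
and summing over $j=0,\dots,\ell-1$ gives~\eqref{e2}; the rationality of $F_a$ is immediate.

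\emph{Main obstacle.} The only steps needing care are the optimality of the greedy matching in Step 2 and the verification that the decomposition of Step 3 is a genuine bijection — in particular that glueing a minimal $(a_1,\dots,a_j)$-cover to a composition with all parts $\le a_{j+1}-1$ always lands back in $\mathcal C_j$ and not in some $\mathcal C_{j'}$ with $j'>j$. Both facts follow readily from the characterization of domination in terms of the indices $t_j$; the rest is routine manipulation of geometric series.
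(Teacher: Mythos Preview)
Your proof is correct. It differs from the paper's argument in a way worth noting. The paper works directly with $112$-avoiding partitions and decomposes by the size $r$ of the first block: if $r<a_1$ the remaining blocks must still avoid $\ttau{112}{a}$, while if $r\ge a_1$ they need only avoid $\ttau{112}{(a_2,\dots,a_\ell)}$. This yields the recurrence
\[
F_a(x,y)=\frac{1-x}{1-x(1+y)+x^{a_1}y}+\frac{x^{a_1}y}{1-x(1+y)+x^{a_1}y}\,F_{(a_2,\dots,a_\ell)}(x,y),
\]
and iterating it produces~\eqref{e2}. Your approach instead passes to compositions via Observation~\ref{obs-comp} and then unrolls this recursion in one stroke: the greedy indices $t_1,\dots,t_j$ mark exactly the positions where the paper's recursion would switch from the pattern $(a_i,a_{i+1},\dots)$ to $(a_{i+1},\dots)$, and your classes $\mathcal C_j$ correspond to the $j$-th term of the iterated recurrence. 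What you gain is a direct combinatorial interpretation of each summand in~\eqref{e2} as the generating function of a concrete set $\mathcal C_j$, at the cost of the small extra setup (the greedy lemma and the minimal-cover decomposition). The paper's route is shorter to write down but leaves the meaning of the individual summands implicit.
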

\begin{proof}
Let $b$ be the integer composition $(a_2,a_3,\dotsc,a_\ell)$.
Let $\pi$ be a nonempty partition from the set $P_{n,k}(112,\ttau{112}{a})$, and
let $r$ be the size of the first block of~$\pi$.
We consider the following two cases:
\begin{enumerate}
\item $1 \leq r \leq a_1-1$,
\item $r \geq a_1$.
\end{enumerate}
In the first case, $\pi$ must be of the form $1\pi'1^{r-1}$, where $\pi'$
is some partition on the letters $\{2,3,\ldots\}$ avoiding
$\{112,\ttau{112}{a}\}$, which
implies that the generating function counting these partitions is given by
$$xyF_a(x,y)+x^2yF_a(x,y)+\cdots+x^{a_1-1}yF_a(x,y)=\frac{x-x^{a_1}}{
1-x}yF_a(x,y).$$
In the second case, $\pi$ must be of the form $1\pi'1^{r-1}$, where $\pi'$
is now a partition on the letters $\{2,3,\ldots\}$ avoiding
$\{112,\ttau{112}{b}\}$ since
$r \geq a_1$.  Thus, the generating function counting the partitions in this
case is given by
$$x^{a_1}yF_b(x,y)+x^{a_1+1}yF_b(x,y)+\cdots=\frac{x^{a_1}y}{1-x}
F_b(x,y),$$
where we put $F_b(x,y)=0$ in the case $\ell=1$.  Adding the contributions from the two cases above gives
$$F_a(x,y)=1+\frac{x-x^{a_1}}{1-x}yF_a(x,y)+\frac{x^{a_1}y}{1-x}F_b(x,
y),$$
which may be rewritten as
\begin{equation}\label{e3}
F_a(x,y)=\frac{1-x}{1-x(1+y)+x^{a_1}y}+\frac{x^{a_1}y}{1-x(1+y)+x^{a_1}y}F_b(x,
y).
\end{equation}
Iterating recurrence \eqref{e3} yields \eqref{e2}, as desired.
\end{proof}

\subsection{The pattern $123$}

Consider now the $(3,k)$-pairs $(123,\tau)$, for a $123$-avoiding partition
$\tau$. Of course, a partition avoids $123$ if and only if it has at most two
blocks. We will distinguish two cases, depending on whether $\tau$ has a single
block or whether it has two blocks.

The first case is trivial:
\begin{observation}\label{obs-1231k}
A partition avoids the pair of patterns $(123,1^k)$ if and only it has at most
two blocks and each block has size at most $k-1$. In particular, the generating
function of the class $P_n(123,1^k)$ is given by the formula
\[
 \sum_{n\ge 0} p_n(123,1^k)x^n= 1+ \sum_{a=1}^{k-1} \sum_{b=0}^{k-1}
\binom{a+b-1}{b}x^{a+b}.
\]
\end{observation}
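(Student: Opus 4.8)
The plan is to characterize the members of $P_n(123,1^k)$ directly and then translate that characterization into a generating function.

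First I would recall that a partition avoids $123$ precisely when its canonical sequence uses at most two distinct symbols, i.e.\ it has at most one or two blocks. Combined with avoidance of $1^k$, which forbids any symbol from appearing $k$ times, we get that $P_n(123,1^k)$ consists exactly of the partitions with at most two blocks, each of size at most $k-1$. This is the content of the first sentence of the observation, and it follows immediately from the two pattern descriptions with no real work; I would state it in one line.

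Next I would enumerate such partitions by their canonical sequential form. The empty partition contributes the term $1$. A one-block partition of $[n]$ with $n\le k-1$ has canonical form $1^n$; the two-block partitions are more interesting. A partition with exactly two blocks has canonical form $1 w$, where $w$ is a word of length $n-1$ over $\{1,2\}$ in which the letter $2$ appears at least once (so that there really are two blocks) and the restricted-growth condition is automatically satisfied since the first $2$ may occur anywhere after the initial $1$. If block $1$ has size $a\ge 1$ and block $2$ has size $b\ge 1$, then the canonical word is determined by choosing which of the $b$ positions among the last $a+b-1$ letters (all letters after the forced initial $1$) carry the symbol $2$; the symbol $1$ occupies the remaining $a-1$ of those positions. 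Hence there are $\binom{a+b-1}{b}$ two-block partitions with block sizes $(a,b)$, and the size is $a+b$. Summing $x^{a+b}\binom{a+b-1}{b}$ over $1\le a\le k-1$ and $1\le b\le k-1$ accounts for all two-block partitions; allowing $b=0$ in the inner sum adds $\binom{a-1}{0}x^a=x^a$ for each $a$ from $1$ to $k-1$, which is exactly the generating function contribution of the one-block partitions $1^a$. Therefore
\[
\sum_{n\ge 0} p_n(123,1^k)x^n = 1 + \sum_{a=1}^{k-1}\sum_{b=0}^{k-1}\binom{a+b-1}{b}x^{a+b},
\]
as claimed.

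The only point that requires a moment's care is the counting of canonical forms of two-block partitions: one must check that every placement of the $2$'s yields a valid restricted growth function (it does, because the leading letter is $1$ and condition (iii) is never violated when only the symbols $1,2$ are used after an initial $1$) and that distinct placements give distinct partitions (clear, since the canonical form determines the partition). I do not anticipate any genuine obstacle here; the result is essentially a direct bijective count, and the slightly unusual form of the right-hand side — with $b$ ranging from $0$ rather than $1$ — is just the bookkeeping device that folds the one-block partitions into the double sum.
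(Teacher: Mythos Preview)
Your argument is correct and is exactly the natural justification for this observation; the paper itself presents the statement without proof, calling the case ``trivial,'' so there is no alternative proof to compare against. Your handling of the bookkeeping---absorbing the one-block case into the $b=0$ term of the double sum via $\binom{a-1}{0}=1$---is precisely the reason the formula is written with $b$ ranging from $0$.
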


To deal with the pairs $(123,\tau)$, where $\tau$ has two blocks, we first prove
a more general theorem.

\begin{theorem}\label{thm-123}
Let $m\ge 2$ be an integer. Let $\tau=\tau_1\tau_2\dotsb\tau_k$ be a partition
with exactly $m$ blocks and with the property that $\tau_i=i$ for each
$i\in[m-1]$. Then the generating function $\sum_{n\ge 0}
p_n(12\dotsb(m+1),\tau)x^n$ is equal to
\[
  \left(\sum_{n\ge 0}
p_n(12\dotsb(m+1))x^n\right)
-\left(\frac{x}{1-(m-1)x}\right)^{k-m}\frac{x}{1-mx}
\prod_{j=1}^{m-1}\frac{x}{1-jx}.
\]
In particular, the generating function depends on the size of $\tau$ but not
on $\tau$ itself.
\end{theorem}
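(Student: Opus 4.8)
The plan is to decompose a partition $\pi$ avoiding $12\dotsb(m+1)$ according to whether or not it contains the given pattern $\tau$, and to show that the partitions in $P_n(12\dotsb(m+1))$ that do contain $\tau$ are enumerated by the product appearing with a minus sign. Since a partition avoids $12\dotsb(m+1)$ exactly when it has at most $m$ blocks, the only way such a $\pi$ can contain $\tau$ (which has $m$ blocks) is for $\pi$ itself to have exactly $m$ blocks, and for the ``witnessing'' occurrence of $\tau$ to use one letter from each of the $m$ blocks; moreover, since $\tau$ starts with the prefix $12\dotsb(m-1)$, the occurrence must pick up the first appearances of the symbols $1,2,\dotsc,m-1$ in $\pi$ in their canonical order. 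The first task is therefore to give a clean combinatorial characterization: a partition $\pi\in P_{n,m}$ contains $\tau$ if and only if, after the position where symbol $m$ first appears, the remaining suffix of $\pi$ contains a subsequence order-isomorphic to the suffix $\tau_m\tau_{m+1}\dotsb\tau_k$ of $\tau$ drawn from symbols that respect the block structure already established.

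Next I would set up the generating function for the partitions in $P_{n,m}(12\dotsb(m+1))$ that contain $\tau$, building the canonical sequence left to right. Such a sequence splits into: an initial segment that realizes the prefix $12\dotsb m$ (contributing, once the ``new symbol'' positions are interleaved with repeats of previously seen symbols, the factor $\prod_{j=1}^{m-1}\frac{x}{1-jx}$ for the gaps among the first $m-1$ new letters, together with $\frac{x}{1-mx}$ for the block opened by symbol $m$); followed by a segment of length $k-m$ that must match $\tau_{m+1}\dotsb\tau_k$ — but because $\tau$ has only $m$ blocks, each of these $k-m$ matched letters is one of $1,\dotsc,m$, and between consecutive matched letters we may freely insert any letters from $\{1,\dotsc,m-1\}$ only (inserting an $m$ is harmless but already accounted for, and the key point is that exactly $m-1$ ``safe'' symbols are available between the forced steps), giving the factor $\left(\frac{x}{1-(m-1)x}\right)^{k-m}$. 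Assembling these factors yields precisely
\[
\left(\frac{x}{1-(m-1)x}\right)^{k-m}\frac{x}{1-mx}\prod_{j=1}^{m-1}\frac{x}{1-jx},
\]
and subtracting this from $\sum_{n\ge 0}p_n(12\dotsb(m+1))x^n$ gives the claimed formula. The final sentence of the theorem is then immediate, since the displayed product depends on $\tau$ only through $k=|\tau|$ and $m$, the number of its blocks, and not on the arrangement of the letters after the prefix.

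The step I expect to be the main obstacle is verifying the ``between consecutive forced letters we may insert exactly the $m-1$ symbols $1,\dotsc,m-1$ and nothing that creates a new occurrence or violates $12\dotsb(m+1)$-avoidance'' claim with full rigor: one must check that inserting such letters never produces a spurious longer increasing pattern (it cannot, since no $(m+1)$st symbol is ever introduced) and, conversely, that every $\tau$-containing, $12\dotsb(m+1)$-avoiding partition arises exactly once from this construction — in particular that the greedy choice of matching the earliest possible letters for the prefix $12\dotsb(m-1)$ makes the decomposition canonical and bijective. Once that combinatorial bijection is pinned down, the generating-function bookkeeping is routine geometric-series manipulation. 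I would also double-check the small case $k=m$ (where the product reduces to $\frac{x}{1-mx}\prod_{j=1}^{m-1}\frac{x}{1-jx}$, i.e. the empty-suffix case) and the base case $m=2$ against Observation~\ref{obs-1231k} as sanity checks.
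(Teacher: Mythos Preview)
Your overall strategy is exactly the paper's: count the partitions $\pi\in P_n(12\dotsb(m+1))$ that \emph{contain} $\tau$ by fixing the leftmost occurrence of $\tau$ (which, since both $\pi$ and $\tau$ have $m$ blocks, is an occurrence as a literal subsequence), decomposing $\pi$ into the $k$ forced letters $1,2,\dotsc,m{-}1,\tau_m,\dotsc,\tau_k$ with free words $w_1,\dotsc,w_k$ in between, and reading off the generating function. So the architecture is right and the answer you reach is correct.

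However, your description of the allowed alphabet in each gap is wrong, and your decomposition as stated is \emph{not} a bijection. You claim that between consecutive matched letters one may insert exactly the symbols $\{1,\dotsc,m-1\}$, and you attach the unrestricted $[m]$-gap (the factor $\frac{x}{1-mx}$) immediately after $\tau_m$. In fact, for $m-1\le j<k$ the word $w_j$ must lie over $[m]\setminus\{\tau_{j+1}\}$: the forbidden letter is whatever the \emph{next} matched symbol is, because inserting a copy of $\tau_{j+1}$ would produce an earlier completion of $\tau$, contradicting leftmost-ness. The only unrestricted gap is the final one, $w_k$, over all of $[m]$. Concretely, take $m=2$ and $\tau=1211$. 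Your scheme writes $\pi = 1\,(1^a)\,2\,(w)\,1\,(1^c)\,1\,(1^d)$ with $w\in\{1,2\}^*$; this cannot represent $\pi=12121\in Q_5$ at all, and it represents $\pi=12111$ three different ways. The paper's correct scheme is $\pi=1\,(1^a)\,2\,(2^b)\,1\,(2^c)\,1\,(\{1,2\}^d)$, which is visibly a bijection.

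The reason your final formula is nonetheless correct is that you got the \emph{sizes} of the gap-alphabets right (one gap of size $m$, and $k-m$ gaps of size $m-1$) even though you assigned them to the wrong positions; since the generating function only sees the multiset of alphabet sizes, the product comes out the same. To make the argument rigorous, replace ``insert letters from $\{1,\dotsc,m-1\}$'' by ``insert letters from $[m]\setminus\{\tau_{j+1}\}$'' for each intermediate gap, and move the $\frac{x}{1-mx}$ factor to the tail $w_k$; then the bijectivity check you flagged as the main obstacle becomes straightforward.
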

\begin{proof}
Let $Q_n$ be the set of partitions of size $n$ that avoid $12\dotsb(m+1)$
but contain $\tau$; in other words, $Q_n=P_n(12\dotsb(m+1))\setminus
P_n(12\dotsb(m+1),\tau)$. Define the generating function $G(x)=\sum_{n\ge 0}
|Q_n|x^n$. To prove the theorem, we need to prove the formula
\begin{equation}
G(x)=\left(\frac{x}{1-(m-1)x}\right)^{k-m}\frac{x}{1-mx}
\prod_{j=1}^{m-1}\frac{x}{1-jx}.\label{eq-123}
\end{equation}

Consider a partition $\pi\in Q_n$. Clearly $\pi$ must have exactly $m$ blocks.
Since $\pi$ contains $\tau$ as a pattern, and since $\tau$ has $m$ blocks as
well, we see that $\pi$ even contains $\tau$ as a subsequence. Recall that
$\tau_i=i$ for $i\in[m-1]$, that is, $\tau$ has the form $12\dotsb
(m-1)\tau_m\tau_{m+1}\dotsb\tau_k$. By fixing the leftmost occurrence of the
subsequence $\tau$ in $\pi$, we see that $\pi$ can be decomposed as
\[
 \pi = 1 w_1 2 w_2 3 w_3\dotsb (m-1) w_{m-1} \tau_m w_m
\tau_{m+1} w_{m+1} \dotsb w_{k-1} \tau_k w_k,
\]
where the $w_j$'s are determined as follows:
\begin{itemize}
 \item for $1\le j < m-1$, $w_j$ is an arbitrary word over the alphabet $[j]$,
\item for $m-1\le j < k$, $w_j$ is an arbitrary word over
$[m]\setminus\{\tau_{j+1}\}$, and
\item $w_k$ is an arbitrary word over $[m]$.
\end{itemize}
Conversely, any sequence with such a decomposition is an element of~$Q_n$. This
directly implies formula~\eqref{eq-123}.
\end{proof}

Applying Theorem~\ref{thm-123} to the case $m=2$, and noting that $\sum_{n\ge
0} p_n(123)x^n = (1-x)/(1-2x)$, we get the next result.

\begin{corollary}
For every $k$, the $(3,k)$-pairs of the form $(123,\tau)$ where $\tau$ has
two blocks are all equivalent, and the generating function of any such pair is
\[
\sum_{n\ge 0} p_n(123,\tau)x^n = \frac{1-x}{1-2x} -
\left(\frac{x}{1-x}\right)^{k-1}\frac{x}{1-2x}=\sum_{i=0}^{k-1}\left(\frac{x}{
1-x}\right)^i.
\]
\end{corollary}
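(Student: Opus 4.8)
The plan is to apply Theorem~\ref{thm-123} directly with $m=2$, so the corollary is essentially a bookkeeping exercise. First I would check that the hypothesis of that theorem is satisfied: for $m=2$ the condition ``$\tau_i=i$ for each $i\in[m-1]$'' reduces to ``$\tau_1=1$,'' which holds for \emph{every} pattern written in canonical sequential form. Hence Theorem~\ref{thm-123} applies to every $(3,k)$-pair $(123,\tau)$ with $\tau$ having exactly two blocks, and its final sentence --- that the generating function depends only on the size of $\tau$ and not on $\tau$ itself --- immediately yields that all such pairs are Wilf-equivalent.

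Next I would specialise the formula. Setting $m=2$ gives $12\dotsb(m+1)=123$, and Sagan's result $p_n(123)=2^{n-1}$ for $n\ge1$ together with $p_0(123)=1$ gives $\sum_{n\ge0}p_n(123)x^n=1+\frac{x}{1-2x}=\frac{1-x}{1-2x}$, as already noted before the corollary. The subtracted term in Theorem~\ref{thm-123} becomes $\left(\frac{x}{1-x}\right)^{k-2}\cdot\frac{x}{1-2x}\cdot\frac{x}{1-x}=\left(\frac{x}{1-x}\right)^{k-1}\frac{x}{1-2x}$, which produces the first expression in the claimed identity.

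Finally I would confirm the closed form $\sum_{i=0}^{k-1}\left(\frac{x}{1-x}\right)^i$ by a routine geometric-series computation: writing $r=\frac{x}{1-x}$, one has $1-r=\frac{1-2x}{1-x}$, so $\sum_{i=0}^{k-1}r^i=\frac{1-r^k}{1-r}=\frac{1-x}{1-2x}\bigl(1-r^k\bigr)=\frac{1-x}{1-2x}-\frac{x^k}{(1-x)^{k-1}(1-2x)}$, and this matches the previous expression since $r^{k-1}\cdot\frac{x}{1-2x}=\frac{x^k}{(1-x)^{k-1}(1-2x)}$. There is no genuine obstacle here; all the substance lies in Theorem~\ref{thm-123}, and the only points worth making explicit are that the two-block hypothesis is automatic and that the two stated forms of the generating function agree.
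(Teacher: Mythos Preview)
Your argument is correct and follows exactly the paper's own approach: the paper also derives this corollary by specialising Theorem~\ref{thm-123} to $m=2$ and using $\sum_{n\ge0}p_n(123)x^n=(1-x)/(1-2x)$. Your additional verification of the geometric-series identity is a nice extra detail that the paper leaves implicit.
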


Comparing the generating function of the previous corollary with the formula of
Theorem~\ref{t1}, we can say even more.

\begin{corollary}\label{cor-123-1}
For every $k$ and every partition $\tau\in P_k$ with two blocks, the
$(3,k)$-pair $(123,\tau)$ is equivalent to the $(3,k)$-pair $(112,12\cdots k)$.
\end{corollary}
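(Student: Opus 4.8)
The plan is to compare generating functions. By the preceding corollary, every $(3,k)$-pair $(123,\tau)$ with $\tau$ having two blocks satisfies
\[
\sum_{n\ge 0} p_n(123,\tau)x^n = \sum_{i=0}^{k-1}\left(\frac{x}{1-x}\right)^i,
\]
so it suffices to show that the pair $(112,12\cdots k)$ has this same generating function. First I would identify $12\cdots k$ as a $112$-avoiding pattern in the notation of the previous subsection: for the composition $a=(1,1,\dotsc,1)$ of length $k$, the pattern $\ttau{112}{a}=12\cdots m\, m^{a_m-1}(m-1)^{a_{m-1}-1}\cdots 1^{a_1-1}$ with all $a_i=1$ (hence $m=k$) is precisely $12\cdots k$.

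Next I would specialize the formula of Theorem~\ref{t1} to this composition. With $\ell=k$ and $a_i=1$ for every $i$, each denominator factor $1-x(1+y)+x^{a_i}y$ becomes $1-x(1+y)+xy=1-x$, and the exponent $a_1+\cdots+a_j$ equals $j$, so \eqref{e2} telescopes to
\[
F_a(x,y)=\sum_{j=0}^{k-1}\frac{x^j y^j(1-x)}{(1-x)^{j+1}}=\sum_{j=0}^{k-1}\left(\frac{xy}{1-x}\right)^j.
\]
Setting $y=1$ and recalling that $F_a(x,1)=\sum_{n\ge 0}p_n(112,\ttau{112}{a})x^n$, we obtain $\sum_{n\ge 0}p_n(112,12\cdots k)x^n=\sum_{j=0}^{k-1}(x/(1-x))^j$, which agrees with the generating function displayed above; hence $(123,\tau)\sim(112,12\cdots k)$ for every two-block $\tau\in P_k$.

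There is no genuine obstacle here: the argument is essentially a one-line substitution once the correct composition is identified, and the only step that needs a moment's attention is the collapse of the denominator, which rests on the identity $1-x(1+y)+xy=1-x$ that holds precisely because $a_i=1$. For completeness one could also bypass Theorem~\ref{t1} with a direct structural argument: a partition avoids $12\cdots k$ if and only if it has at most $k-1$ blocks, and a $112$-avoiding partition with exactly $i$ blocks is uniquely determined by the sequence of its $i$ block sizes, i.e.\ by a composition of length $i$, whose generating function is $(x/(1-x))^i$; summing over $0\le i\le k-1$ recovers the same series.
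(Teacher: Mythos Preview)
Your proof is correct and follows precisely the route the paper takes: the paper simply says to compare the generating function from the preceding corollary with the formula of Theorem~\ref{t1}, and your substitution of $a=(1,\dotsc,1)$ into \eqref{e2} is exactly that comparison carried out explicitly.
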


\subsection{The pattern $122$}

Note that a partition $\tau$ avoids $122$ if and only if each block of $\tau$
except possibly the first one has size one, or equivalently, any number greater
than 1 appears at most once in~$\tau$.

We will show that for every $k$, all
the $(3,k)$-pairs of the form $(122,\tau)$ are equivalent to $(112,1^k)$. To
this end, we first describe a bijection between $122$-avoiding and
$123$-avoiding partitions which, under suitable assumptions, preserves
containment. Let $\tau=\tau_1\tau_2\dotsb\tau_k$ be a $122$-avoiding partition.
Define a new partition $f(\tau)=\tau'_1\tau'_2\dotsb\tau'_k$ by putting
$\tau'_i=1$ if $\tau_i=1$ and $\tau'_i=2$ if $\tau_i>1$. For example, if
$\pi=1123145$, then $f(\pi)=1122122$. Note that the mapping
$f$ defined by these properties is a bijection from the set of $122$-avoiding
partitions to the set of $123$-avoiding partitions.

\begin{lemma}\label{lem-122}
 Let $\tau=\tau_1\dotsb\tau_k$ be a $122$-avoiding partition with at least two
blocks, and let $\pi=\pi_1\dotsb\pi_n$ be any $122$-avoiding partition. Then
$\pi$ contains $\tau$ if and only if $f(\pi)$ contains~$f(\tau)$.
\end{lemma}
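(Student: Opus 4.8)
The plan is to exploit the very explicit structure of $122$-avoiding partitions. Recall that a $122$-avoiding partition $\pi$ is exactly a word in which every symbol $>1$ occurs at most once; equivalently, $\pi$ is obtained from a word over $\{1,*\}$ (the $1$'s being the occurrences of the first block, the $*$'s being the ``large'' symbols) by reading the $*$'s left to right and replacing them with $2,3,4,\dots$ in that order. Thus the map $f$ just forgets the labels of the large symbols, collapsing them all to $2$, and is manifestly a bijection onto $123$-avoiding partitions (whose words are precisely the words over $\{1,2\}$ beginning with $1$, with the $2$'s being the second block). The key point is that, because the large symbols of a $122$-avoiding partition are pairwise distinct, \emph{the relative order of any two of them in the word already determines their order-isomorphism type}: if $i<j$ are positions of large symbols, then $\pi_i<\pi_j$ automatically (the earlier large symbol always receives the smaller label). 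So for $122$-avoiding words, being order-isomorphic to a given $122$-avoiding pattern is a purely ``positional'' condition about which positions carry $1$'s and which carry large symbols — and that positional data is exactly what $f$ preserves.

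First I would make this precise. Fix a subsequence $\sigma_{g(1)},\dots,\sigma_{g(k)}$ of $\pi$ occupying positions $g(1)<\dots<g(k)$. I claim this subsequence is order-isomorphic to $\tau$ if and only if for every index $t\in[k]$ the position $g(t)$ carries a $1$ exactly when $\tau_t=1$. One direction is immediate: if the subsequence is order-isomorphic to $\tau$, then since $\tau$ has at least two blocks, the symbol $1$ of $\tau$ is the unique minimum of its block-set and corresponds to the minimum value among the selected $\sigma_{g(t)}$'s, which in a $122$-avoiding partition are exactly the positions carrying a $1$. For the converse: suppose the $1$-positions among the $g(t)$ match the $1$-positions of $\tau$. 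Then within the selected positions, the ones carrying a $1$ all have the common value $1$, the ones carrying large symbols have pairwise distinct values that are all $>1$, and — crucially — their relative order along the word agrees with their numerical order (the leftmost large symbol is smallest, etc.). The same three facts hold verbatim for $\tau$ (leftmost block first, other blocks singletons with distinct labels in left-to-right order). Hence the two length-$k$ sequences have the same order-isomorphism type, i.e. the subsequence realizes $\tau$. Exactly the same characterization, applied to $f(\pi)$ and $f(\tau)$ in place of $\pi$ and $\tau$, says: a subsequence of $f(\pi)$ on positions $g(1)<\dots<g(k)$ is order-isomorphic to $f(\tau)$ iff its $1$-positions match the $1$-positions of $f(\tau)$.

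Now the lemma falls out by observing that $f$ changes neither which positions carry a $1$ nor which positions of $\tau$ carry a $1$: by definition $f(\pi)_i=1\iff\pi_i=1$ and $f(\tau)_t=1\iff\tau_t=1$. Therefore a set of positions $g(1)<\dots<g(k)$ in $\pi$ witnesses an occurrence of $\tau$ if and only if the \emph{same} set of positions in $f(\pi)$ witnesses an occurrence of $f(\tau)$, by the two characterizations above. Summing over all choices of positions, $\pi$ contains $\tau$ iff $f(\pi)$ contains $f(\tau)$, which is the claim.

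The one genuine subtlety — the step I expect to need the most care — is the hypothesis that $\tau$ has \emph{at least two blocks}. This is what guarantees that the symbol $1$ of $\tau$ is intrinsically the minimum (so matching $1$-positions is equivalent to matching order types); if $\tau$ were the all-$1$'s pattern $1^k$ this fails, since then $f(\tau)=1^k$ as well but a $122$-avoiding $\pi$ can contain $1^k$ in many ways not visible to $f$ (e.g. via an all-large subsequence). I would therefore state the block-count assumption explicitly where it is used, namely at the two spots where I claim ``the $1$-positions of the subsequence are exactly those carrying the value $1$.'' Everything else is bookkeeping about the normal form of $122$- and $123$-avoiding words, which I would summarize in a sentence rather than belabor.
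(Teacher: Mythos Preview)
Your central biconditional---that a subsequence of $\pi$ at positions $g(1)<\dots<g(k)$ is order-isomorphic to $\tau$ \emph{if and only if} the $1$-positions match---is false in the ``only if'' direction. Take $\pi=123$ (which is $122$-avoiding) and $\tau=12$; the positions $g=(2,3)$ give the subsequence $23$, which is order-isomorphic to $12$, yet $\pi_{g(1)}=2\neq 1$ while $\tau_1=1$. The flaw is in the sentence claiming that the minimum value among the selected $\pi_{g(t)}$'s ``in a $122$-avoiding partition are exactly the positions carrying a~$1$'': the minimum of a \emph{subsequence} need not be~$1$. Your ``if'' direction is fine, and your biconditional \emph{does} hold for $f(\pi)$ and $f(\tau)$ (since $f(\tau)$ uses both symbols $1$ and $2$ and $f(\pi)$ has no others), so the backward implication of the lemma goes through. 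But the forward implication---$\pi$ contains $\tau\Rightarrow f(\pi)$ contains $f(\tau)$---now has a gap, because you cannot conclude that an arbitrary occurrence of $\tau$ in $\pi$ has matching $1$-positions, hence you cannot transfer it to the same positions in $f(\pi)$.

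The fix is small and is exactly what the paper does: given any occurrence $g(1)<\dots<g(k)$ of $\tau$ in $\pi$, replace $g(1)$ by~$1$. Since $\tau_1=1$ and $\pi_1=1$, one checks (using that large symbols in a $122$-avoider are distinct) that this is still an occurrence; now the minimum of the selected values is~$1$, and your order-isomorphism argument forces the $1$-positions to match. From there your transfer to $f(\pi)$ works. (A side remark: your explanation of why $\tau=1^k$ breaks the lemma is backwards. An all-large subsequence of a $122$-avoider has pairwise distinct values and is never order-isomorphic to $1^k$; the actual failure is that $f(\pi)$ can contain $1^k$ via its repeated $2$'s even when $\pi$ does not, e.g.\ $\pi=1234$, $f(\pi)=1222$, $k=3$.)
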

\begin{proof}
Let us write $f(\pi)=\pi'_1\dotsb\pi'_n$ and $f(\tau)=\tau'_1\dotsb\tau'_k$.

Assume that $\pi$ contains $\tau$, and let $i(1)<i(2)<\dotsb<i(k)$ be
indices such that the sequence $\pi_{i(1)}\pi_{i(2)}\dotsb\pi_{i(k)}$ is order
isomorphic to $\tau$. We may assume without loss of generality that $i(1)=1$. It
then follows that $\pi'_{i(1)}\pi'_{i(2)}\dotsb\pi'_{i(k)}$ is order-isomorphic
to $f(\tau)$ and hence $f(\pi)$ contains $f(\tau)$.

Conversely, assume that $f(\pi)$ contains $f(\tau)$. Since $\tau$ has at least
two blocks, $f(\tau)$ has exactly two blocks, and therefore $f(\pi)$
contains $f(\tau)$ even as a subsequence, not just as a pattern. This implies
that $\pi$ contains~$\tau$.
\end{proof}

\begin{proposition}\label{pro-122}
 For any partition $\tau\in P_k(122)$ with at least two blocks, the
$(3,k)$-pair $(122,\tau)$ is equivalent to the $(3,k)$-pair $(123,f(\tau))$.
\end{proposition}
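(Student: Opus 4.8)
The plan is to show that the bijection $f$ (defined just before Lemma~\ref{lem-122}) restricts to a bijection between $P_n(122,\tau)$ and $P_n(123,f(\tau))$ for every~$n$; this at once yields $p_n(122,\tau)=p_n(123,f(\tau))$ for all $n\ge 0$, which is precisely the asserted equivalence $(122,\tau)\sim(123,f(\tau))$.

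First I would record the (routine) fact, already noted in the paragraph preceding Lemma~\ref{lem-122}, that $f$ is a size-preserving bijection from the set of $122$-avoiding partitions onto the set of $123$-avoiding partitions: $f$ does not change the length of a canonical sequence, it sends a $122$-avoiding restricted growth function to a restricted growth function using only the symbols $1$ and $2$ (the first letter stays $1$, and the first letter exceeding $1$ in $\tau$, which must be a $2$ by the growth condition, is the position where the symbol $2$ first appears in $f(\tau)$), and it is clearly invertible. In particular $f$ maps $P_n(122)$ bijectively onto $P_n(123)$ for each~$n$, and $f(\tau)\in P_k(123)$, so $(123,f(\tau))$ is indeed a $(3,k)$-pair.

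Next I would use Lemma~\ref{lem-122} to transfer avoidance of $\tau$ through $f$. If $\pi\in P_n(122,\tau)$, then $\pi$ avoids $122$, so $f(\pi)\in P_n(123)$; and since $\pi$ avoids $\tau$ while $\tau$ has at least two blocks, Lemma~\ref{lem-122} gives that $f(\pi)$ avoids $f(\tau)$, hence $f(\pi)\in P_n(123,f(\tau))$. Conversely, given $\rho\in P_n(123,f(\tau))$, let $\pi\in P_n(122)$ be its unique preimage under $f$; then $f(\pi)=\rho$ avoids $f(\tau)$, so by Lemma~\ref{lem-122} again $\pi$ avoids~$\tau$, whence $\pi\in P_n(122,\tau)$. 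Thus $f$ restricts to a bijection $P_n(122,\tau)\to P_n(123,f(\tau))$ for every $n$, and taking cardinalities finishes the proof.

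I do not expect any genuine obstacle: essentially all the content has already been isolated into Lemma~\ref{lem-122}, which does the work of translating containment of $\tau$ in a $122$-avoider into containment of $f(\tau)$ in the corresponding $123$-avoider, and the "at least two blocks" hypothesis on $\tau$ is exactly what licenses that lemma. The only point deserving a line of care is the well-definedness of $f$ as a bijection onto the $123$-avoiding partitions (that $f(\pi)$ really is a restricted growth function and that $f$ is onto), but this is immediate from the description of $f$ and of the $122$- and $123$-avoiding classes.
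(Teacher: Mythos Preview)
Your argument is correct and is exactly the paper's approach: the paper's proof is the single sentence that Lemma~\ref{lem-122} shows $f$ maps $P_n(122,\tau)$ bijectively to $P_n(123,f(\tau))$, and you have simply unpacked that sentence in detail.
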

\begin{proof}
 Lemma~\ref{lem-122} shows that $f$ maps $P_n(122,\tau)$
bijectively to $P_n(123,f(\tau))$.
\end{proof}

Combining Proposition~\ref{pro-122} with Corollary~\ref{cor-123-1}, we get the
following result.
\begin{corollary}\label{cor-122-A}
 For any $k$ and any partition $\tau\in P_k(122)$ with at least two blocks, the
$(3,k)$-pair $(122,\tau)$ is equivalent to the $(3,k)$-pair $(112,12\cdots k)$.
\end{corollary}

It remains to deal with $(3,k)$-pairs of the form $(122,1^k)$. It turns out
that these pairs are also equivalent to all the other $(3,k)$-pairs of the form
$(122,\tau)$.

\begin{proposition}\label{pro-122-1k}
The $(3,k)$-pairs $(122,1^k)$ and $(122,12\dotsb
k)$ are equivalent.
\end{proposition}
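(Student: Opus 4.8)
The plan is to prove the proposition by translating both avoidance classes into conditions on a simple combinatorial parameter and then exhibiting an explicit size-preserving bijection between them; along the way one recovers the common generating function $\sum_{i=0}^{k-1}\bigl(x/(1-x)\bigr)^i$ of the preceding corollary.

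First I would record the canonical form of a $122$-avoiding partition. Since avoiding $122$ means that no letter greater than $1$ is repeated, and since the canonical sequence of a partition is a restricted growth function, every $\pi\in P_n(122)$ is of the form $1^{c_0}\,2\,1^{c_1}\,3\,1^{c_2}\cdots m\,1^{c_{m-1}}$ for some $m\ge 1$, $c_0\ge 1$, and $c_1,\dots,c_{m-1}\ge 0$, where $m$ is the number of blocks. Equivalently, $\pi$ is completely determined by the set $S\subseteq\{2,3,\dots,n\}$ of positions carrying a letter greater than $1$: reading the elements of $S$ from left to right and labelling them $2,3,\dots$ reconstructs $\pi$, all other positions carrying the letter $1$. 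Under this correspondence $|S|=m-1$, and the letter $1$ occurs in $\pi$ exactly $n-|S|$ times; thus $\pi\mapsto S$ is a bijection $P_n(122)\leftrightarrow 2^{\{2,\dots,n\}}$ (and incidentally $p_{n,m}(122)=\binom{n-1}{m-1}$).

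Second, I would reinterpret the two extra forbidden patterns as constraints on $S$. A $122$-avoiding partition with $m$ blocks has longest increasing subsequence of length exactly $m$: the first letter $1$ followed by the unique occurrences of $2,3,\dots,m$ gives such a subsequence, and no increasing subsequence can be longer since its entries are pairwise distinct. Hence $\pi$ avoids $12\cdots k$ iff $m\le k-1$, i.e.\ iff $|S|\le k-2$. On the other hand, since only the letter $1$ can repeat in a $122$-avoider, $\pi$ contains $1^k$ iff the letter $1$ occurs at least $k$ times; so $\pi$ avoids $1^k$ iff $n-|S|\le k-1$, i.e.\ iff $|S|\ge n-k+1$.

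Finally I would conclude: complementation $S\mapsto\{2,\dots,n\}\setminus S$ is an involution on $2^{\{2,\dots,n\}}$ carrying the set $\{\,|S|\le k-2\,\}$ bijectively onto $\{\,|S|\ge (n-1)-(k-2)\,\}=\{\,|S|\ge n-k+1\,\}$. Composing it with the bijection of the first step yields a bijection from $P_n(122,12\cdots k)$ onto $P_n(122,1^k)$ that preserves $n$, and hence $p_n(122,1^k)=p_n(122,12\cdots k)$ for every $n\ge 0$ (the case $n=0$ being trivial), which is exactly the claimed equivalence. I expect no genuine obstacle in this argument; the only two points that need a careful line each are the description of the canonical form of a $122$-avoider (combining the restricted-growth conditions with "no repeated letter greater than $1$") and the identification of the longest increasing subsequence with the block count. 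Everything else, including the check that the resulting generating function agrees with the formula $\sum_{i=0}^{k-1}\bigl(x/(1-x)\bigr)^i$ of the previous corollary, is routine.
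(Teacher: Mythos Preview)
Your proof is correct and follows essentially the same approach as the paper's: both parametrize $122$-avoiding partitions of $[n]$ by subsets of $\{2,\dots,n\}$ (you use the positions carrying a letter greater than~$1$, the paper uses the complementary set of positions in the first block), translate the two avoidance conditions into the dual cardinality constraints $|S|\le k-2$ and $n-1-|S|\le k-2$, and conclude via complementation. The only cosmetic difference is that the paper simply counts both sides as $\sum_{i=0}^{k-2}\binom{n-1}{i}$, whereas you make the complementation bijection explicit.
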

\begin{proof}
Note that a $122$-avoiding partition of $[n]$ is uniquely determined by
specifying which of the elements of the set $\{2,3,\dotsc,n\}$ belong to the
first block.

Thus, a $122$-avoiding partition avoids $1^k$ if and only if
its first block has at most $k-2$ elements from $\{2,3,\dotsc,n\}$, and it
avoids $12\dotsb k$ if and only if the complement of the first block has at most
$k-2$ elements from $\{2,3,\dotsc,n\}$. Clearly, in both cases there are exactly
$\sum_{i=0}^{k-2} \binom{n-1}{i}$ possibilities to specify the first block, and
therefore the whole partition.
\end{proof}

Combining Corollary~\ref{cor-122-A} and Proposition~\ref{pro-122-1k}, we obtain
the main result of this subsection.

\begin{corollary}\label{cor-122}
For any $k$, the $(3,k)$-pairs of the form $(122,\tau)$ are all equivalent, and
they are equivalent to the pair $(112,12\cdots k)$.
\end{corollary}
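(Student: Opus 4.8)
The plan is to combine the two results just established, Corollary~\ref{cor-122-A} and Proposition~\ref{pro-122-1k}, together with the transitivity of the relation~$\sim$. The first step I would take is to observe that a $122$-avoiding partition of size $k$ is of exactly one of two types: either it consists of a single block, in which case it must equal $1^k$, or it has at least two blocks. This dichotomy splits the $(3,k)$-pairs of the form $(122,\tau)$ into two families, and it suffices to show that every pair in each family is equivalent to the single fixed pair $(112,12\cdots k)$.

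For a partition $\tau\in P_k(122)$ with at least two blocks, Corollary~\ref{cor-122-A} already gives $(122,\tau)\sim(112,12\cdots k)$, so there is nothing further to do in this case. In particular, applying this to $\tau=12\cdots k$, which has $k\ge 2$ singleton blocks and is readily seen to be $122$-avoiding, we obtain the particular equivalence $(122,12\cdots k)\sim(112,12\cdots k)$.

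It then remains only to treat the pair $(122,1^k)$. Here I would invoke Proposition~\ref{pro-122-1k}, which yields $(122,1^k)\sim(122,12\cdots k)$; chaining this with the equivalence $(122,12\cdots k)\sim(112,12\cdots k)$ noted in the previous paragraph gives $(122,1^k)\sim(112,12\cdots k)$ as well. The only value of $k$ not covered by this reasoning is $k=1$, where $1^k$ and $12\cdots k$ both reduce to the one-symbol pattern $1$; in that degenerate case $P_n(122,1)=P_n(112,1)$ consists of the empty partition alone for every $n$, so the asserted equivalence holds trivially.

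Putting the two cases together, every $(3,k)$-pair of the form $(122,\tau)$ is equivalent to the common pair $(112,12\cdots k)$, and hence, by transitivity of $\sim$, all such pairs are equivalent to one another. I do not anticipate any genuine obstacle: the substantive work has already been carried out in Corollary~\ref{cor-122-A} and Proposition~\ref{pro-122-1k}, and the only points requiring a moment's care are checking that $12\cdots k$ indeed has at least two blocks (so that Corollary~\ref{cor-122-A} applies to it) and separately disposing of the trivial case $k=1$.
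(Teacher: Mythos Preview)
Your proposal is correct and follows precisely the route indicated in the paper, which simply states that the corollary is obtained by combining Corollary~\ref{cor-122-A} and Proposition~\ref{pro-122-1k}; you have supplied the straightforward details of that combination, including the case split on whether $\tau$ has one block or more and the trivial verification for $k=1$.
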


\subsection{Summary of equivalences among $(3,k)$-pairs}
\begin{table}
$$\begin{array}{|lll||c|}\hline
&&&\mbox{Generating function }\\
&(\rho,\tau)&&\mbox{for the sequence }\{p_n(\rho,\tau)\}_{n\geq0}\\\hline\hline
&&&\\[-9pt]
&(112,1231)\sim(112,1232)\sim(112,1233)&&\\
&\sim(121,1122)\sim(121,1123)\sim(121,1223)&&\\
&\sim(121,1233)\sim(121,1234)\sim(122,1111)&&\\
&\sim(122,1112)\sim(122,1121)\sim(122,1123)&&\\
&\sim(122,1211)\sim(122,1213)\sim(122,1231)&&\sum_{i=0}^3\frac{x^i}{(1-x)^i}\\
&\sim(122,1234)\sim(112,1221)\sim(123,1112)&&\\
&\sim(123,1121)\sim(123,1122)\sim(123,1211)&&\\
&\sim(123,1212)\sim(123,1221)\sim(123,1222)&&\\
&\sim(112,1234)&&\\\hline
&&&\\[-9pt]
&(123,1111)&&\sum_{i=1}^3\sum_{j=0}^3 \binom{i+j-1}{j} x^{i+j}\\\hline
&&&\\[-9pt]
&(121,1222)\sim(121,1112)&&\\
&\sim(112,1211)\sim(112,1222)&&\frac{1-x+x^3}{(1-x)(1-x-x^2)}\\\hline
&&&\\[-9pt]
&(121,1111)\sim(112,1111)&&\frac{1}{1-x-x^2-x^3}\\\hline
\end{array}$$
\caption{The equivalence classes of $(3,4)$-pairs}\label{tab-34}
\end{table}

Let us summarize the equi\-valences among $(3,k)$-pairs that follow from the
results established so far (see Table~\ref{tab-34} for an example with $k=4$).
\begin{itemize}
 \item There is an equivalence class containing all the patterns $(112,
\ttau{112}{a})$ and $(121,\ttau{121}{a})$ for all compositions $a$ of size $k$
all of whose components are equal to 1 or 2 (Observation~\ref{obs-comp},
Lemma~\ref{lem-multi1}, and Lemma~\ref{lem-multi2}). That same class also
contains all the $(3,k)$-pairs of the form $(122,\tau)$
(Corollary~\ref{cor-122}). By Corollary~\ref{cor-123-1}, the same class
also contains all the pairs of the form $(123,\tau)$, where $\tau$ is different
from $1^k$.
 \item The pair $(123,1^k)$ is not equivalent to any other $(3,k)$-pair.
There are only finitely many partitions avoiding both $123$ and $1^k$, whereas
any other $(3,k)$-pair is avoided by infinitely many partitions.
 \item For every 2-free integer partition $a$ of size $k$, there is an
equivalence class containing all the pairs from the set
\[\{(121,\ttau{121}{a'}),a'\simcomp a\}\cup\{(112,\ttau{112}{a'}), a'\simcomp
a\}.\]
Note that if $a=(1,1,\dotsc,1)$, then this class corresponds to the equivalence
class mentioned in the first item of this list. By Lemmas~\ref{lem-multi1}
and~\ref{lem-multi2}, any composition is $\simcomp$-equivalent to a 2-free
integer partition, therefore, the classes mentioned so far contain all the
$(3,k)$-pairs.
\end{itemize}

\begin{corollary}
 For each $k$ and each $(3,k)$-pair $(\sigma,\tau)$, the generating function of
$(\sigma,\tau)$-avoiders is rational, and can be computed explicitly.
\end{corollary}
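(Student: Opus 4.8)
The plan is to collect the generating-function formulas already established for each type of $(3,k)$-pair and observe that the list of cases is exhaustive, so that rationality follows in every case. Recall that by the trichotomy $\sigma\in\{112,121,122,123\}$ (we have excluded $\sigma=111$), and that for each choice of $\sigma$ we have reduced the problem to a canonical representative. First I would recall that by Lemmas~\ref{lem-multi1} and~\ref{lem-multi2} together with Corollary~\ref{cor-iso}, every $(3,k)$-pair with $\sigma\in\{112,121\}$ is equivalent to one of the form $(112,\ttau{112}{a})$ with $a$ a $2$-free integer partition of $k$; its generating function is then $F_a(x,1)$, which by Theorem~\ref{t1} is a finite sum of rational functions, hence rational, and explicitly computable once $a$ is known.

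Next I would handle $\sigma=123$. By Observation~\ref{obs-1231k}, the pair $(123,1^k)$ has the stated polynomial generating function, which is trivially rational. For $(123,\tau)$ with $\tau$ having two blocks, Theorem~\ref{thm-123} (applied with $m=2$, using $\sum_{n\ge0}p_n(123)x^n=(1-x)/(1-2x)$) gives the rational closed form displayed in the corollary following that theorem. Finally, for $\sigma=122$: if $\tau$ has at least two blocks, Proposition~\ref{pro-122} reduces $(122,\tau)$ to $(123,f(\tau))$, which we have just dispatched; and if $\tau=1^k$, Proposition~\ref{pro-122-1k} reduces it to $(122,12\dotsb k)$, hence to the same rational function. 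Assembling these observations, every $(3,k)$-pair falls under one of the four computed cases, so its avoidance generating function is rational and can be written down explicitly.

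There is essentially no hard step here: the work is entirely bookkeeping, checking that the reductions in the earlier lemmas and propositions cover all $(\sigma,\tau)$ with $\sigma\in\{112,121,122,123\}$ and $\tau$ a $\sigma$-avoiding pattern of size $k$. The one point deserving a sentence of care is the claim that the generating function is not merely rational but \emph{explicitly computable}: for the $(112,\ttau{112}{a})$ family one must first run the $\simcomp$-reduction algorithm (repeated applications of Lemmas~\ref{lem-multi1} and~\ref{lem-multi2}) to normalize $a$ to a $2$-free partition, and then substitute into formula~\eqref{e2} with $y=1$; I would note that both steps are effective. Everything else is a direct citation of the formulas already derived in this section.
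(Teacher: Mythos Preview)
Your proposal is correct and follows essentially the same approach as the paper: the corollary is an immediate consequence of the case analysis summarized just before it, and you have assembled the relevant results (Theorem~\ref{t1}, Observation~\ref{obs-1231k}, Theorem~\ref{thm-123}, Proposition~\ref{pro-122}, Proposition~\ref{pro-122-1k}) in exactly the intended way. One small simplification: the $\simcomp$-reduction to a $2$-free partition is unnecessary for explicit computability, since formula~\eqref{e2} of Theorem~\ref{t1} already applies to an arbitrary composition~$a$.
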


\begin{corollary}
 For each $k\ge 3$, the $(3,k)$-pairs form at most $1+\xi_k$ equivalence
classes, where $\xi_k$ is the number of 2-free integer
partitions (A027336).
\end{corollary}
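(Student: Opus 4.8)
The plan is to exhibit, for every $(3,k)$-pair, a Wilf-equivalent pair drawn from a short explicit list, and then to count that list. The list I have in mind is: the single pair $(123,1^k)$, together with the pairs $(112,\ttau{112}{a})$ as $a$ ranges over the two-free integer partitions of size $k$. Since there are $\xi_k$ such partitions, this would give the bound $1+\xi_k$.

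I would organize the argument by the size-three pattern $\sigma\in\{112,121,122,123\}$. For $\sigma=123$: if the companion pattern $\tau$ (which has size $k$ and avoids $123$) has a single block then $\tau=1^k$ and we obtain exactly $(123,1^k)$; if $\tau$ has two blocks then Corollary~\ref{cor-123-1} gives $(123,\tau)\sim(112,12\cdots k)$. For $\sigma=122$: Corollary~\ref{cor-122} gives $(122,\tau)\sim(112,12\cdots k)$ for every admissible $\tau$. For $\sigma=121$: write $\tau=\ttau{121}{a}$ for the unique composition $a$ of size $k$ determined by $\tau$; Corollary~\ref{cor-iso} then gives $(121,\ttau{121}{a})\sim(112,\ttau{112}{a})$, which reduces this case to $\sigma=112$.

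It remains to treat $\sigma=112$, where $\tau=\ttau{112}{a}$ for a unique composition $a$ of size $k$. By Lemmas~\ref{lem-multi1} and~\ref{lem-multi2}, iterated as in the remark that follows them, $a$ is $\simcomp$-equivalent to some two-free integer partition $a'$ (necessarily of size $k$); and by the definition of $\simcomp$ together with Observation~\ref{obs-comp}, $a\simcomp a'$ implies $(112,\ttau{112}{a})\sim(112,\ttau{112}{a'})$. Hence every pair with $\sigma=112$ is equivalent to $(112,\ttau{112}{a'})$ for a two-free $a'$. Finally $12\cdots k=\ttau{112}{a}$ for the all-ones composition $a=(1,1,\dotsc,1)$, which is itself two-free, so the representatives produced in the $\sigma=121$, $\sigma=122$, and two-block $\sigma=123$ cases already lie on the list. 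Assembling the cases, every $(3,k)$-pair is equivalent either to $(123,1^k)$ or to one of the $\xi_k$ pairs $(112,\ttau{112}{a'})$, so there are at most $1+\xi_k$ classes.

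There is no genuine analytic difficulty here: all the substantive content — the containment-preserving bijections, the generating-function formulas, and above all the reduction of an arbitrary composition to a two-free integer partition — has been established in the preceding results, and this corollary merely collates them. The one point requiring care is to check that the case split over $\sigma$ is exhaustive and that each reduction chain terminates on the claimed list, in particular that the ``extra'' representatives coming from $\sigma\in\{121,122,123\}$ do not fall outside the family indexed by two-free partitions. That is the step I would treat most carefully, though it is routine bookkeeping rather than a real obstacle.
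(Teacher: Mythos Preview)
Your proposal is correct and mirrors the paper's own argument: the corollary is stated without a standalone proof, as it follows directly from the bulleted summary immediately preceding it, which organizes the $(3,k)$-pairs by $\sigma\in\{112,121,122,123\}$ and reduces each to either $(123,1^k)$ or to $(112,\ttau{112}{a'})$ for a $2$-free integer partition $a'$, exactly as you do. Your bookkeeping---in particular the observation that $12\cdots k=\ttau{112}{(1,\dotsc,1)}$ with $(1,\dotsc,1)$ already $2$-free---matches the paper's remark that the all-ones case coincides with the first bullet's equivalence class.
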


We do not know whether the bound of the previous corollary is tight or whether
there actually exist some more equivalences among the $(3,k)$-pairs. Note that
if such `hidden' equivalences exist, they must
involve $\tau$ of size at least 21, because for size 20 and less, we can check
(with the aid of a computer) that the
classes listed above are all non-equivalent. Also the additional equivalences
must involve $\sigma=112$ (or equivalently $\sigma=121$) because all the pairs
of the form $(122,\tau)$ are equivalent, and all the patterns of the form
$(123,\tau)$ are equivalent to them as well, except for $(123,1^k)$, which is
not equivalent to any other $(3,k)$-pair.

\begin{problem}
 Are there any more equivalences among the $(3,k)$-pairs of the form
$(112,\tau)$ other than those that we know about? Equivalently, are there any
two distinct 2-free integer partitions that are $\simcomp$-equivalent?
\end{problem}

\section{Pattern avoidance in non-crossing partitions}\label{sec-nc}

Our goal is to study partitions that avoid the pattern 1212 and another
pattern. Note that a partition avoids 1212 if and only if it is non-crossing.

We write $\sigma\nceq\tau$ if $(1212,\sigma)$ is equivalent to $(1212,\tau)$.
If $\sigma\nceq\tau$, we say that $\sigma$ and $\tau$ are \emph{nc-equivalent}
(`nc' stands for `non-crossing').

To simplify our notation, we will employ the following convention: whenever we
write $[\sigma]$ as a subsequence of a longer pattern $\pi$, we assume that
$[\sigma]$ refers to the sequence $\sigma+k$, where $k$ is the number of
distinct symbols of $\pi$ that appear before the first symbol of $\sigma$
in~$\pi$. Thus, for example, $1[112]1$ refers to the sequence $12231$, and
$\pi=11[121][112]$ should be understood as $\pi=11232445$.

Let us say that a set partition $\pi$ is \emph{connected}, if it cannot be
written as $\pi=\sigma[\tau]$ where $\sigma$ and $\tau$ are nonempty
partitions. Note that a non-crossing partition is connected if and only if its
last element belongs to the first block. For any set partition $\pi$, there is a
unique sequence of nonempty connected partitions $\sigma_1,\dotsc,\sigma_m$
such that $\pi=\sigma_1[\sigma_2][\sigma_3]\dotsb[\sigma_m]$. We call the
partitions $\sigma_i$ \emph{the components} of~$\pi$.

We say that two non-crossing partition patterns $\sigma$ and $\tau$ are
\emph{cc-equivalent}, denoted by $\sigma\cceq\tau$, if there is a bijection $f$
from the set of $(1212,\sigma)$-avoiding partitions to the set of
$(1212,\tau)$-avoiding partitions, such that for every non-crossing
$\sigma$-avoider $\pi$, the partition $f(\pi)$ has the same size and the same
number of components as~$\pi$. In particular, cc-equivalence is a refinement of
nc-equivalence.

 Suppose that
$\sigma=\sigma_1\dotsb\sigma_k$ and $\tau=\tau_1\dotsb\tau_n$ are two
partitions. We say that a sequence $I=(i(1),i(2),\dotsc,i(k))$ is an
\emph{occurrence} of $\sigma$ in~$\tau$, if $1\le i(1)<i(2)<\dotsb<i(k)\le n$
and $\tau_{i(1)}, \tau_{i(2)},\dotsc,\tau_{i(k)}$ is order-isomorphic
to~$\sigma$. We say that an occurrence $(i(1),\dotsc,i(k))$  of $\sigma$ in
$\tau$ is a \emph{leftmost occurrence} if
$i(k)$ has the smallest possible value among all occurrences of $\sigma$ in
$\tau$, or equivalently, $\sigma$ has no occurrence in
$\tau_1,\dotsc,\tau_{i(k)-1}$. We say that an occurrence $(i(1),\dotsc,i(k))$
of
$\sigma$ in $\tau$ is a \emph{topmost occurrence} if $\tau_{i(1)}$ has the
largest possible value among all the occurrences of $\sigma$ in $\tau$, or
equivalently, if the subsequence of $\tau$ formed by all the elements greater
than $\tau_{i(1)}$ is order-isomorphic to a $\sigma$-avoiding partition. If
$\sigma$ is the empty partition, we assume that the empty sequence is the unique
occurrence of $\sigma$ in $\tau$, and that this occurrence is both leftmost
and topmost.

For example, taking $\sigma=122$ and $\tau=11233245466233$, we see that $1,4,5$
and $2,4,5$ are the two leftmost occurrences of $\sigma$ in $\tau$, both
corresponding to a subsequence $133$ of $\tau$, while $8,10,11$ is the (in this
case unique) topmost occurrence of $\sigma$ in $\tau$, representing the
subsequence~$566$.

Let $I=(i(1),\dotsc,i(k))$ be a topmost occurrence of $\sigma$ in
$\tau$, with $b=\tau_{i(1)}$. Suppose that $i'$ is the smallest index such that
$\tau_{i'}=b$. Observe that replacing $i(1)$ with $i'$ yields another topmost
occurrence of $\sigma$ in~$\tau$.

\begin{lemma}\label{lem-left-top}
Let $\rho$, $\sigma$ and $\tau$ be non-crossing partitions of size
$k$, $\ell$ and $m$, respectively. Let $I=(i(1),\dotsc,i(k))$ be a leftmost
occurrence of $\rho$ in $\tau$, let $J=(j(1),\dotsc,j(\ell))$ be a topmost
occurrence of $\sigma$ in $\tau$, and let $H=(h(1),\dotsc, h(k+\ell))$ be any
occurrence of $\rho[\sigma]$ in~$\tau$. Then $(i(1),\dotsc,i(k),h(k+1),\dotsc,
h(k+\ell))$, as well as $(h(1),\dotsc,h(k),j(1),\dotsc,j(\ell))$ are both
occurrences of $\rho[\sigma]$ in~$\tau$. In particular, $\tau$ contains
$\rho[\sigma]$ if and only if $(i(1),\dotsc,i(k), j(1),\dotsc,j(\ell))$ is an
occurrence of $\rho[\sigma]$.
\end{lemma}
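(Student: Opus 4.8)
The plan is to exploit the defining properties of leftmost and topmost occurrences, together with the structure of the concatenation operation $\rho[\sigma]$, to show that one can always "slide" the $\rho$-part of any occurrence of $\rho[\sigma]$ to the left and the $\sigma$-part to the top without destroying the occurrence. Recall that in the notation $\rho[\sigma]$, the symbols of the $\sigma$-block all lie strictly above every symbol of $\rho$, and every position of the $\rho$-block lies strictly to the left of every position of the $\sigma$-block. So if $H=(h(1),\dotsc,h(k+\ell))$ is an occurrence of $\rho[\sigma]$ in $\tau$, then $h(1)<\dotsb<h(k)<h(k+1)<\dotsb<h(k+\ell)$, the subsequence $\tau_{h(1)}\dotsb\tau_{h(k)}$ is order-isomorphic to $\rho$, the subsequence $\tau_{h(k+1)}\dotsb\tau_{h(k+\ell)}$ is order-isomorphic to $\sigma$, and every entry among $\tau_{h(k+1)},\dotsc,\tau_{h(k+\ell)}$ exceeds every entry among $\tau_{h(1)},\dotsc,\tau_{h(k)}$.

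First I would prove that $(h(1),\dotsc,h(k),j(1),\dotsc,j(\ell))$ is an occurrence of $\rho[\sigma]$. By the remark preceding the lemma, we may assume the topmost occurrence $J$ of $\sigma$ has been chosen so that $j(1)$ is the \emph{first} index with $\tau_{j(1)}=\tau_{j(1)}$; more importantly, by the characterization of a topmost occurrence, the subsequence of $\tau$ consisting of all entries strictly greater than $\tau_{j(1)}$ is $\sigma$-avoiding. Now the entries $\tau_{h(k+1)},\dotsc,\tau_{h(k+\ell)}$ form a copy of $\sigma$ all lying above $\max\{\tau_{h(1)},\dotsc,\tau_{h(k)}\}$; since $\tau$ contains a copy of $\sigma$ above this value, and $J$ is topmost, we must have $\tau_{j(1)}\ge \tau_{h(k+1)}> \max\{\tau_{h(1)},\dotsc,\tau_{h(k)}\}$, and in fact all of $\tau_{j(1)},\dotsc,\tau_{j(\ell)}$ exceed $\max\{\tau_{h(1)},\dotsc,\tau_{h(k)}\}$. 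It remains to check the position constraint $h(k)<j(1)$: here I would use that $\tau$ is non-crossing. If $j(1)\le h(k)$, then since $\tau_{j(1)}$ is strictly larger than the $\rho$-entries, the value $\tau_{j(1)}$ would have to "return" somewhere (its block continues), and combined with the $\rho$-copy straddling it one produces a $1212$ pattern, contradicting that $\tau$ avoids $1212$. Hence $h(k)<j(1)$, and $(h(1),\dotsc,h(k),j(1),\dotsc,j(\ell))$ is the desired occurrence.

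Next, by the dual argument, I would show $(i(1),\dotsc,i(k),h(k+1),\dotsc,h(k+\ell))$ is an occurrence of $\rho[\sigma]$: since $I$ is a leftmost occurrence of $\rho$ in $\tau$, we have $i(k)\le h(k)<h(k+1)$, so the position constraint is automatic; and the value constraint (that the $\sigma$-entries $\tau_{h(k+1)},\dotsc,\tau_{h(k+\ell)}$ all exceed $\max\{\tau_{i(1)},\dotsc,\tau_{i(k)}\}$) needs a separate check, again using non-crossingness to rule out an interleaving of the leftmost $\rho$-copy with the low $\sigma$-entries. Finally, for the "in particular" clause: if $\tau$ contains $\rho[\sigma]$ at all, fix any occurrence $H$; applying the first part gives an occurrence of the form $(h(1),\dotsc,h(k),j(1),\dotsc,j(\ell))$, and then applying the second part to \emph{that} occurrence (whose first $k$ coordinates form a copy of $\rho$) replaces $h(1),\dotsc,h(k)$ by $i(1),\dotsc,i(k)$, yielding $(i(1),\dotsc,i(k),j(1),\dotsc,j(\ell))$; the converse direction is trivial. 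The main obstacle I anticipate is carrying out the two position-constraint arguments cleanly — making precise exactly which $1212$ pattern arises when a high $\sigma$-entry or a low $\sigma$-entry is wrongly positioned relative to the leftmost/topmost $\rho$-copy — since this is where the non-crossing hypothesis is genuinely used and where one must be careful about whether blocks "reopen."
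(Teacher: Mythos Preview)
Your overall strategy matches the paper's proof: for each of the two claimed occurrences, one of the two constraints (position or value) follows immediately from the definition of leftmost/topmost, and the other needs the non-crossing hypothesis. You have correctly identified which is which in each case, and your derivation of the ``in particular'' clause by composing the two replacements is exactly what the paper does.

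The one place where your sketch is genuinely incomplete---and where your stated intuition would lead you astray---is the explicit construction of the forbidden $1212$. Your suggestion that ``the value $\tau_{j(1)}$ would have to return somewhere (its block continues)'' is not the right mechanism, and indeed need not hold: the block of $\tau_{j(1)}$ might be a singleton. The paper instead exploits the restricted-growth property of the canonical sequence: if $x<y$ both occur in $\tau$, then the \emph{first} occurrence of $x$ precedes the first occurrence of $y$. For the position constraint $h(k)<j(1)$, one sets $x=\tau_{h(k)}$ and $y=\tau_{h(k+1)}$, so that $x<y\le\tau_{j(1)}$; letting $i',j'$ be the positions of the first occurrences of $x$ and $y$, one gets $i'<j'\le j(1)$, and if $j(1)\le h(k)$ then $(i',j',h(k),h(k+1))$ realises $1212$. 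The value constraint for the leftmost case is handled by the same first-occurrence trick, with $x=\tau_{h(k)}$ and $y=\tau_{h(k+b)}$ for the offending index $b$. So your plan is sound, but to complete it you should replace the ``block returning'' heuristic with this first-occurrence argument.
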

\begin{proof}
Let us assume that $\rho$, $\sigma$ and $\tau$ are nonempty, otherwise the
lemma is trivial.

Let us prove that $(i(1),\dotsc,i(k),h(k+1),\dotsc, h(k+\ell))$ is an occurrence
of $\rho[\sigma]$. Since we already know that $I$ is an
occurrence of $\rho$ and that $h(k+1),\dotsc,h(k+\ell)$ is an occurrence
of~$\sigma$, we only need to prove that $i(k)<h(k+1)$, and that every element
of $\tau_{i(1)},\tau_{i(2)}\dotsc,\tau_{i(k)}$ is smaller than any element of
$\tau_{h(k+1)},\tau_{h(k+2)},\dotsc,\tau_{h(k+\ell)}$.

Since $I$ is a leftmost occurrence of $\rho$, we know that
$i(k)\le h(k)$ and therefore $i(k)<h(k+1)$. We now show that
$\tau_{i(a)}<\tau_{h(k+b)}$ for every $a\in[k]$ and $b\in[\ell]$. Suppose that
we have $\tau_{i(a)}\ge \tau_{h(k+b)}$ for some $a\in[k]$ and $b\in[\ell]$. We
know that $\tau_{h(k)}<\tau_{h(k+b)}$. Let us write $x=\tau_{h(k)}$ and
$y=\tau_{h(k+b)}$. Let $i'$ and $j'$ be the indices of the first occurrences of
$x$ and $y$ in $\tau$, respectively. Since we know that $x<y\le\tau_{i(a)}$,
we know that $i'<j'\le i(a)<h(k)$. Thus, the four indices $i',j', h(k), h(k+b)$
are an occurrence of 1212 in $\tau$, contradicting the assumption that $\tau$
is non-crossing.

Let us now show that $h(1),\dotsc,h(k),j(1),\dotsc,j(\ell)$ is an occurrence of
$\rho[\sigma]$. Since $J$ is a topmost occurrence of
$\sigma$, we know that $\tau_{h(k+1)}\le \tau_{j(1)}$, and therefore
$\tau_{h(a)}<\tau_{j(b)}$ for any $a\in[k]$ and $b\in[\ell]$. To show that
$h(1),\dotsc,h(k),j(1),\dotsc,j(\ell)$ is an occurrence of
$\rho[\sigma]$, we thus only need to prove that $h(k)<j(1)$. Suppose that this
is not the case. Let us write $x=\tau_{h(k)}$ and $y=\tau_{h(k+1)}$, and let
$i'$ and $j'$ be the indices of first occurrences of $x$ and $y$ in $\tau$,
respectively. Since $x<y\le \tau_{j(1)}$, we know that $i'<j'\le j(1)$, showing
that $i',j', h(k), h(k+1)$ is an occurrence of 1212 in $\tau$, a contradiction.
\end{proof}

\begin{theorem}\label{thm-subst1}
 If $\sigma$ and $\tau$ are (possibly empty) non-crossing partitions, and if
$\rho$ and $\rho'$ are two cc-equivalent non-crossing partitions, then
$\sigma[\rho][\tau]\cceq\sigma[\rho'][\tau]$.
\end{theorem}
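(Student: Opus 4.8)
The plan is to build the required bijection from the cc-equivalence of $\rho$ and $\rho'$ by decomposing a $(1212,\sigma[\rho][\tau])$-avoiding partition along a canonical occurrence of $\sigma[\,\cdot\,][\tau]$ and swapping the "middle piece." Specifically, let $f$ be the given bijection witnessing $\rho\cceq\rho'$, so $f$ preserves size and number of components among non-crossing $\rho$-avoiders (respectively $\rho'$-avoiders). Given a non-crossing $\pi$ avoiding $\sigma[\rho][\tau]$, I would write $\pi=\pi_1\pi_2\cdots\pi_m$ in terms of its components and locate, using Lemma~\ref{lem-left-top} with $\rho$ there set to $\sigma$ and $\sigma$ there set to $[\rho][\tau]$ (and recursively again to split off $[\tau]$), a distinguished "window" of consecutive components on which $\pi$ restricts to a $\rho$-avoiding non-crossing partition $\mu$, in such a way that $\pi$ avoids $\sigma[\rho][\tau]$ if and only if $\mu$ avoids $\rho$, with the surrounding components fixed. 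The image $\bar\pi$ is obtained by replacing $\mu$ with $f(\mu)$ inside the same window; since $f$ preserves size and component count, $\bar\pi$ has the same size and number of components as $\pi$, and one checks $\bar\pi$ avoids $\sigma[\rho'][\tau]$. The inverse map uses $f^{-1}$ in the same window, so this is a genuine cc-equivalence.

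The first key step is to make the "window" canonical and well-defined. Here I would use the leftmost occurrence $I$ of $\sigma$ in $\pi$ and the topmost occurrence $J$ of $[\tau]$ (appropriately interpreted), as in Lemma~\ref{lem-left-top}, to carve $\pi$ into a prefix block of components carrying $\sigma$, a suffix block of components carrying $\tau$, and a middle block of components that must be entirely $\rho$-avoiding precisely when $\pi$ avoids $\sigma[\rho][\tau]$. Concretely: because $\pi$ is non-crossing and connected within each component, an occurrence of $\sigma[\rho][\tau]$ lives across a block of consecutive components, with $\sigma$'s symbols strictly below $\rho$'s strictly below $\tau$'s; Lemma~\ref{lem-left-top} lets me fix the $\sigma$-part to be the leftmost occurrence and the $\tau$-part to be the topmost one without loss of generality, so the only remaining freedom — and hence the only obstruction to avoidance — is whether the intermediate components contain $\rho$. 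I would spell out that the intermediate components, read together after renormalizing symbols, form a non-crossing $\rho$-avoider whenever $\pi$ avoids the big pattern.

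The second key step is verifying that replacing that intermediate non-crossing $\rho$-avoider $\mu$ by $f(\mu)$ does not create an occurrence of $\sigma[\rho'][\tau]$ elsewhere and does not destroy non-crossingness. Non-crossingness follows because we are substituting a non-crossing partition into a "slot" that is order-separated (all its symbols lie strictly between the symbols used before and after the slot), so no $1212$ can straddle the boundary; this is exactly the kind of order-separation guaranteed by the component structure and by Lemma~\ref{lem-left-top}. For the pattern side, any occurrence of $\sigma[\rho'][\tau]$ in $\bar\pi$ would, by the same leftmost/topmost normalization (Lemma~\ref{lem-left-top} again, now applied to $\bar\pi$), force a $\rho'$-occurrence inside the slot $f(\mu)$, contradicting that $f(\mu)$ avoids $\rho'$. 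The map $\mu\mapsto f(\mu)$ being a size- and component-preserving bijection then gives the size- and component-preserving bijection $\pi\mapsto\bar\pi$ on the full avoidance classes.

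The main obstacle I anticipate is bookkeeping the interaction between the fixed outer components and the substituted slot: one must ensure that the decomposition "$\pi = (\text{components before}) \;[\; \mu \;]\; (\text{components after})$" is canonical enough that the inverse, built from $f^{-1}$, recovers exactly the same outer components and the same window — i.e.\ that the window depends only on $\sigma,\rho,\tau$ and on the outer components, not on $\mu$ itself. This is where the leftmost/topmost choices in Lemma~\ref{lem-left-top} do the real work: the leftmost occurrence of $\sigma$ and the topmost occurrence of $[\tau]$ are determined by $\pi$ minus the slot, so they are stable under replacing $\mu$ by $f(\mu)$, which makes the window well-defined on both sides and forces $f^{-1}$ to undo $f$ in the matching window. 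Once that stability is pinned down, the rest is the routine verification sketched above, and the theorem follows; iterating it (or combining with Theorem~\ref{thm-subst1} applied to each component) will later yield substitution-closure statements for nc-equivalence as well.
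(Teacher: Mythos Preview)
Your proposal has a genuine gap: the ``window'' cannot in general be taken as a run of consecutive \emph{components} of~$\pi$. Consider any connected non-crossing $\pi$ (so $m=1$) that contains $\sigma[\tau]$ but avoids $\sigma[\rho][\tau]$; for instance, take $\sigma=\tau=1$ and $\pi=1\,2\,3\,2\,1$, which contains $12=\sigma[\tau]$ and avoids $123=\sigma[1][\tau]$ only if $\rho$ has size $\ge 2$, but the point is that the leftmost $\sigma$ and topmost $\tau$ both sit strictly inside the single component. There is no nontrivial set of ``middle components'' to hand to~$f$, yet the bijection must act nontrivially here. More generally, an occurrence of $\sigma[\rho][\tau]$ need not respect the component structure of~$\pi$ at all: the $\sigma$-part, the $\rho$-part, and the $\tau$-part can (and typically do) all live inside one connected piece, nested at different value-levels. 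So the claim ``an occurrence of $\sigma[\rho][\tau]$ lives across a block of consecutive components'' is false, and with it the order-separation you rely on for the substitution.

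What the paper actually does is finer. One fixes the leftmost occurrence $I$ of $\sigma$ and the topmost occurrence $J$ of $\tau$ (via Lemma~\ref{lem-left-top}), and lets $a=i(k)$, $b=j(1)$. This cuts the \emph{positions} of $\pi$ into a left part $\pi_1\cdots\pi_a$, a middle part $\pi_{a+1}\cdots\pi_{b-1}$, and a right part $\pi_b\cdots\pi_n$; blocks are then classified as left/middle/right by where their first element lies. The crucial object is $\pi_{\text M}$, the subpartition induced by middle-block elements lying in the middle part, and the key claim is that $\pi$ contains $\sigma[\rho][\tau]$ iff $\pi_{\text M}$ contains $\rho$. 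Elements that straddle parts (left-block elements in the middle part, middle-block elements in the right part) are \emph{outliers}; they sit in the gaps between the components $\kappa_1,\dotsc,\kappa_t$ of $\pi_{\text M}$, or attach to the first block of some $\kappa_i$ from the right. Applying the cc-bijection $g$ to $\pi_{\text M}$ (not to a chunk of components of $\pi$) and reinserting the outliers in the corresponding gaps---which is exactly where preserving the \emph{component count} of $\pi_{\text M}$ is used---gives the map. Your outline would become correct if you replaced ``consecutive components of $\pi$'' by this positional left/middle/right decomposition and added the outlier bookkeeping; without it, the construction is undefined on connected $\pi$.
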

Note that in the previous theorem, cc-equivalence cannot be replaced by
nc-equivalence. For example, $11$ and $12$ are nc-equivalent partitions, but
$1[11]=122$ and $1[12]=123$ are not nc-equivalent.

\begin{proof}[Proof of Theorem~\ref{thm-subst1}]
Let us write $\alpha=\sigma[\rho][\tau]$ and $\alpha'=\sigma[\rho'][\tau]$.
We will define a bijection $f$ that maps $\alpha$-avoiding
non-crossing partitions of size $n$ to $\alpha'$-avoiding
non-crossing partitions of the same size, while preserving the number of
connected components.

 Let $\pi=\pi_1\dotsb\pi_n$ be a non-crossing partition on $n$ vertices. If
$\pi$ avoids $\sigma[\tau]$, then we may define $f(\pi)=\pi$. Suppose that
$\pi$ contains $\sigma[\tau]$. Let $I=(i(1),\dotsc,i(k))$ be the leftmost
occurrence of $\sigma$ in $\pi$, and $J=(j(1),\dotsc,j(\ell))$ the top-most
occurrence of $\tau$. Furthermore, assume that $j(1)$ is chosen as small as
possible, that is, $\tau_{j(1)}$ is the first element of its $\pi$-block.
By Lemma~\ref{lem-left-top}, we know that
$i(1),\dotsc,i(k),j(1),\dotsc,j(\ell)$ is an occurrence of~$\sigma[\tau]$.

Let us define $a=i(k)$ and $b=j(1)$. We will refer to the elements
$\pi_1,\pi_2,\dotsc,\pi_a$ as the \emph{left part} of $\pi$, while
$\pi_{a+1},\pi_{a+2},\dotsc,\pi_{b-1}$ are the \emph{middle part}, and
$\pi_b,\dotsc,\pi_n$ are the \emph{right part}. A block of $\pi$ is a
\emph{left block} (or \emph{middle block} or \emph{right block}) if its first
element appears in the left part of $\pi$ (middle part, right part,
respectively). We say that an element $\pi_i$ is an \emph{outlier} if it belongs
to a different part of $\pi$ than the first element of its block. In other
words, an outlier is an element of a left block belonging to the middle part or
right part, or an element of a middle block belonging to the right part.

Let $\pim$ denote the partition obtained from $\pi$ by deleting all the left
blocks and right blocks, and then by deleting all the outliers from the middle
blocks. In other words, $\pim$ consists of the elements of $\pi$ belonging to
middle blocks and to the middle part of~$\pi$. We will abuse the terminology by
identifying an element of $\pim$ with the corresponding element of~$\pi$.

It is clear that if $\pim$
contains $\rho$, then $\pi$ contains $\alpha$. We claim that the converse is
true as well, i.e., if $\pi$ contains $\alpha$ then $\pim$ contains $\rho$. To
see this, fix an occurrence $H$ of $\alpha$ in $\pi$, and write $H$ as a
concatenation $I'KJ'$, where $I'$, $K$ and $J'$ are occurrences of $\sigma$,
$\rho$ and $\tau$, respectively. By Lemma~\ref{lem-left-top}, $IKJ$ is also an
occurrence of~$\alpha$, which shows that all the indices in $K$ refer to the
middle part of~$\pi$. To see that for each $k\in K$, $\pi_k$ belongs to a
middle block, notice that $\pi_k>\pi_a$, and if $\pi_k$ belonged to a left
block, then the block containing $\pi_k$ would cross the block
containing~$\pi_a$. Thus, $K$ induces an occurrence of $\rho$ in $\pim$.

Suppose now that $\pi$ is an $\alpha$-avoiding partition, and therefore $\pim$
is a $\rho$-avoiding partition. Let $\kappa_1,\dotsc,\kappa_t$ be the connected
components of~$\pim$. Note that any outlier appearing in the
middle part of $\pi$ must appear in the `gap' between two components $\kappa_i$
and $\kappa_{i+1}$, otherwise we would have a crossing between a left block and
a middle block. In particular, each $\kappa_i$ corresponds to a consecutive
sequence of elements of~$\pi$. Note also that if the right part of $\pi$
contains an element from a middle block, then this middle block must correspond
to the first block of one of the components~$\kappa_i$.

Suppose that the cc-equivalence of $\rho$ and $\rho'$ is witnessed by a
bijection~$g$. Assume that the $\rho'$-avoiding partition $g(\pim)$ has
components $\kappa'_1,\dotsc,\kappa'_t$. We now define an $\alpha'$-avoiding
partition $\pi'$ having the same size and the same number of components
as~$\pi$. The left part of $\pi'$ is identical to the left part of $\pi$. In
the middle part, we replace the elements corresponding to $\kappa_i$ with the
elements corresponding to $\kappa'_i$, for each $i=1,\dotsc,t$. The elements
belonging to left blocks and appearing in the gap between $\kappa_i$ and
$\kappa_{i+1}$ will remain in the same block and will appear in the gap between
$\kappa'_i$ and $\kappa'_{i+1}$. Since we do not assume that each $\kappa_i'$
has the same size as $\kappa_i$, it may happen that the position of the gap
changes. We also do not assume that $g(\pim)$ has the same number of blocks as
$\pim$, so the numbering of right blocks may change as well. Finally, if in
$\pi$ the first block of $\kappa_i$ contains some elements in the right part of
$\pi$, then in $\pi'$ these elements will be inserted into the first block
of~$\kappa_i'$. We now define $f(\pi)=\pi'$. It is clear that $f$ has the
required properties.
\end{proof}

\begin{theorem}\label{thm-cycle}
 If $\sigma$ and $\rho$ are non-crossing partitions and $\sigma$ is connected,
then $\sigma[\rho]\cceq\rho[\sigma]$.
\end{theorem}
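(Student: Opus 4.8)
The plan is to exhibit a size- and component-preserving bijection between $\sigma[\rho]$-avoiding non-crossing partitions and $\rho[\sigma]$-avoiding non-crossing partitions, exploiting the fact that $\sigma$ is connected. The starting point is the observation that if $\pi$ is non-crossing with connected components $\kappa_1,\dotsc,\kappa_t$, then $\pi$ contains $\sigma[\rho]$ if and only if there is an index $p$ such that $\kappa_1[\kappa_2]\dotsb[\kappa_p]$ contains $\sigma$ and $\kappa_{p+1}[\kappa_{p+2}]\dotsb[\kappa_t]$ contains $\rho$; here I want to use connectedness of $\sigma$ to argue that an occurrence of $\sigma$ cannot straddle the boundary between two components in a way that would not already give an occurrence entirely inside a prefix of the component sequence. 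More precisely, I would first prove a lemma: \emph{if $\sigma$ is connected, then a non-crossing partition $\pi=\kappa_1[\kappa_2]\dotsb[\kappa_t]$ contains $\sigma$ if and only if some single component $\kappa_i$ contains $\sigma$.} The point is that a connected non-crossing pattern has its last element in its first block, so any occurrence of $\sigma$ in $\pi$ whose first element lies in $\kappa_i$ must have all of its elements in $\kappa_i$ — otherwise the first block of the occurrence would contain an element of a later component, which, combined with the structure of $\kappa_i$ being ``closed off'' at its right end, forces a crossing.

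Granting that lemma, the containment of $\sigma[\rho]$ in $\pi=\kappa_1[\dotsb][\kappa_t]$ becomes a clean combinatorial condition on the sequence of components: $\pi$ contains $\sigma[\rho]$ iff there exist indices $i<j$ with $\kappa_i$ containing $\sigma$ and $\kappa_j$ containing $\rho$, or a single $\kappa_i$ containing $\sigma[\rho]$. (One must be a little careful, since $\sigma[\rho]$ itself need not be connected, so I would phrase it as: there is a splitting of the component sequence into a prefix and suffix, with the prefix containing $\sigma$ and the suffix containing $\rho$, using Lemma~\ref{lem-left-top} on the concatenation.) Since this condition is manifestly symmetric under reversing the roles of $\sigma$ and $\rho$ together with reversing the order of the components, the natural candidate bijection $f$ is: write $\pi=\kappa_1[\kappa_2]\dotsb[\kappa_t]$ and set $f(\pi)=\kappa_t[\kappa_{t-1}]\dotsb[\kappa_1]$, i.e., reverse the order of the connected components (keeping each component intact). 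This clearly preserves size and the number of components. The remaining task is to check that $\pi$ avoids $\sigma[\rho]$ iff $f(\pi)$ avoids $\rho[\sigma]$, which follows because $f(\pi)$ contains $\rho[\sigma]$ iff its component sequence $\kappa_t,\dotsc,\kappa_1$ splits into a prefix containing $\rho$ and a suffix containing $\sigma$ iff (reversing) $\kappa_1,\dotsc,\kappa_t$ splits into a prefix containing $\sigma$ and a suffix containing $\rho$ iff $\pi$ contains $\sigma[\rho]$.

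I expect the main obstacle to be the careful proof of the component lemma, and in particular handling the edge cases where $\sigma$ or $\rho$ is empty, or where the occurrence of $\sigma[\rho]$ crosses the boundary of a single component. The subtlety is that while $\sigma$ is assumed connected, $\rho$ is arbitrary, so an occurrence of $\sigma[\rho]$ may use several consecutive components for the $\rho$-part; I need Lemma~\ref{lem-left-top} (applied to the decomposition $\sigma[\rho]$, with leftmost occurrence of $\sigma$ and topmost occurrence of $\rho$) to show that the $\sigma$-part can always be pulled back entirely into the first component it touches and the $\rho$-part pushed into a suffix of the component list. A second, more routine point: I should double-check that the components of $f(\pi)$ are exactly $\kappa_t,\dotsc,\kappa_1$ — i.e., that reversing the component order does not accidentally merge or split components — but this is immediate since each $\kappa_i$ is connected and $f(\pi)=\kappa_t[\kappa_{t-1}]\dotsb[\kappa_1]$ is by construction the decomposition of $f(\pi)$ into connected partitions. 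With the lemma in hand, the verification that $f$ is the required bijection is then a short symmetric argument.
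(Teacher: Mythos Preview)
Your proposed bijection --- reversing the order of connected components --- cannot work, because it acts as the identity on every connected partition, yet a connected non-crossing partition can avoid $\sigma[\rho]$ while containing $\rho[\sigma]$. Take $\sigma=11$ (connected) and $\rho=1$, so that $\sigma[\rho]=112$ and $\rho[\sigma]=122$. The partition $\pi=1221$ is connected and non-crossing; it avoids $112$ (the only equal pairs are at positions $(1,4)$ and $(2,3)$, and neither is followed by a larger value), but it contains $122$ at positions $1,2,3$. Your map sends $\pi$ to itself, so it does not take $\sigma[\rho]$-avoiders to $\rho[\sigma]$-avoiders.

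The error is in the containment characterization. Your lemma that a connected $\sigma$ can only occur inside a single component of $\pi$ is correct, but it does \emph{not} follow that an occurrence of $\sigma[\rho]$ must place its $\sigma$-part and its $\rho$-part in different components. Inside one component, the values used by the $\sigma$-part need not exhaust the component's blocks, so there is room above them for the $\rho$-part --- exactly what happens in $1221$ with $\sigma=11$ occupying block~$1$ and $\rho=1$ occupying block~$2$. Thus ``prefix of the component list contains $\sigma$ and disjoint suffix contains $\rho$'' is strictly weaker than ``$\pi$ contains $\sigma[\rho]$'', and the symmetry you rely on collapses. Lemma~\ref{lem-left-top} lets you normalize an occurrence of $\sigma[\rho]$ so that the $\sigma$-part is leftmost and the $\rho$-part is topmost, but topmost is a condition on \emph{values}, not on component index, so it does not push the $\rho$-part into later components.

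The paper's proof is accordingly much finer than a component permutation. It locates the leftmost occurrence of $\sigma$ in $\pi$ (which, by connectedness of $\sigma$, spans a contiguous ``middle'' segment $\pi_{\mathrm M}$ whose endpoints lie in one block), and decomposes $\pi$ into a $\sigma$-avoiding left part $\pi_{\mathrm L}$, the middle $\pi_{\mathrm M}$, and a $\rho$-avoiding partition $\pi_{\mathrm R}$ formed by the right blocks, together with auxiliary integer sequences recording the positions and block-memberships of the ``outlier'' elements (elements of left or middle blocks that appear in the right part). A symmetric decomposition, using the \emph{topmost} occurrence of $\sigma$, is carried out for $\rho[\sigma]$-avoiders. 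The bijection then swaps the roles of $\pi_{\mathrm L}$ and $\pi_{\mathrm R}$ and of the two integer sequences; it genuinely rearranges the internal structure of connected partitions rather than leaving them fixed.
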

\begin{proof}
Let us define $\alpha=\sigma[\rho]$ and $\alpha'=\rho[\sigma]$. Of course, a
partition that avoids $\sigma$ must avoid both $\alpha$ and $\alpha'$. To
prove the theorem, we characterize the structure of a partition $\pi$
that contains $\sigma$ but not $\alpha$, as well as the structure of a partition
$\pi'$ containing $\sigma$ but not $\alpha'$. From the two characterizations, it
will be clear that the two classes are equinumerous and there is a bijection
between them preserving the number of components.

We will first describe the structure of an $\alpha$-avoiding non-crossing
partition~$\pi$ that contains $\sigma$. Let $I=(i(1),\dotsc,i(k))$ be a
leftmost occurrence of $\sigma$ in~$\pi$. Define $a=i(1)$ and $b=i(k)$. Note
that since $\sigma$ is connected, we know that $\pi_a$ and $\pi_b$ correspond to
the same block of~$\pi$. Choose $I$ in such a way that $a$ is the first element
of the block containing~$b$.

Define the left part of $\pi$ to be the elements strictly to the left of
$\pi_a$, the middle part to be the elements $\pi_a,\dotsc,\pi_b$, and the right
part to be the rest of~$\pi$. Define left blocks, middle blocks, right blocks
and outliers in the same way as in the previous proof. Note that there are no
outliers in the middle part of~$\pi$.

Let $\pil$ be the left part of $\pi$, let $\pim$ be the partition
order-isomorphic to the middle part of $\pi$, and let
$\pir$ be the partition formed by the elements in the right blocks of~$\pi$. By
the choice of $I$, we know that $\pil$ is $\sigma$-avoiding. It is not hard to
see that $\pi$ is $\sigma[\rho]$-avoiding if and only if $\pir$ is
$\rho$-avoiding. Let $\kappa_1,\dotsc,\kappa_t$ be the connected components of
$\pil$ ordered right-to-left, and let $\zeta_1,\dotsc,\zeta_u$ be the connected
components of~$\pir$ ordered left-to-right. Every outlier in $\pi$ is in the
right part of $\pi$ and its block is either the first block of $\pim$ or the
first block of one of the~$\kappa_i$. Each outlier must be placed between
the last vertex of $\zeta_i$ and the first vertex of $\zeta_{i+1}$ for some
$i\le u$, or between $\pi_b$ and the first vertex of~$\zeta_1$. Note that the
outliers form a weakly decreasing subsequence in~$\pi$.

Let $x_0$ be the number of outliers belonging to the first block of~$\pim$, and
for $i\in[t]$, let $x_i$ be the number of outliers from the first block of
$\kappa_i$. Thus, $\sum_{i=0}^t x_i$ is the number of all outliers in~$\pi$.
Let $y_0$ be the number of outliers appearing between the last element of
$\pim$ and the first element of $\zeta_1$, and for $i\in[u]$, let $y_i$ be the
number of outliers between the last element of $\zeta_i$ and the first element
of $\zeta_{i+1}$, with $y_u$ being the number of outliers to the right of
$\zeta_u$. The two sequences $x_0,\dotsc,x_t$ and $y_0,\dotsc,y_u$ determine
uniquely the position and value of the outliers in~$\pi$. Thus, $\pi$ is
uniquely determined by specifying $(\pil,\pim,\pir,
(x_i)_{i=0}^t,(y_i)_{i=0}^u)$. We may easily check that this gives a bijection
between $\sigma[\rho]$-avoiding partitions containing $\sigma$, and five-tuples
of the form $(\pil,\pim,\pir,(x_i)_{i=0}^t,(y_i)_{i=0}^u)$, where $\pil$ is a
$\sigma$-avoiding partition, $\pim$ is a connected partition that contains
$\sigma$ and every occurrence of $\sigma$ in $\pim$ intersects the last element
of $\pim$, $\pir$ is a $\rho$-avoiding partition, and $(x_i)_{i=0}^t$ and
$(y_i)_{i=0}^u$ are nonnegative integer sequences of the same sum, where $t$ is
the number of components in $\pil$ and $u$ is the number of components of~$\pir$.
Note that we rely on the fact that $\sigma$ is connected, which implies that if
$\pil$ avoids $\sigma$, then the leftmost occurrence of $\sigma$ in
$\pil[\pim]$ is contained in the component~$\pim$.

From the sequences $(x_i)_{i=0}^t$ and $(y_i)_{i=0}^u$, we may deduce
the number of components of~$\pi$ --- each component of $\pi$ is either equal to
$\kappa_i$ for some $i$, or equal to $\zeta_j$ for some $j$, or it contains
$\pim$. Moreover, $\kappa_i$ is a component of $\pi$ if and only if
$x_i=x_{i+1}=\dotsb =x_{t}=0$, and $\zeta_j$ is a component of $\pi$ if and only
if $y_j=y_{j+1}=\dotsb=y_u=0$.

Let us now provide an analogous analysis of the $\rho[\sigma]$-avoiding
partitions containing~$\sigma$. Let $\pi'$ be such a partition, and let
$I'=(i'(1),\dotsc,i'(k))$ be a topmost occurrence of $\sigma$ in $\pi'$, chosen
in such a way that $i'(1)$ is the first element of its block and $i'(k)$ is as
small as possible. Put $a=i'(1)$ and $b=i'(k)$, and define the left part, middle
part and right part of $\pi'$ in the same way as in the first part of the proof.
Let $\pilp$ be the left part of $\pi'$, let $\pimp$ be the middle part of
$\pi'$, and let $\pirp$ be the partition induced by the right blocks of~$\pi'$.
Then $\pilp$ is a $\rho$-avoiding partition and $\pirp$ is a $\sigma$-avoiding
partition. Suppose that $\kappa'_1,\dotsc,\kappa'_{t'}$ are the components of
$\pilp$ numbered right-to-left and $\zeta'_1,\dotsc,\zeta'_{u'}$ are the components
of~$\pirp$ numbered left-to-right. Let $x'_i$ be the number of outliers
belonging to the same $\pi$-block as the first vertex of $\kappa'_i$, with
$x'_0$ being the outliers belonging to the block of~$\pi_a$. Let $y'_i$ be the
number of outliers between $\zeta'_i$ and $\zeta'_{i+1}$, with $y'_0$ being the
number of outliers between $\pimp$ and~$\zeta_1$. Then $\pi'$ is uniquely
determined by $(\pilp,\pimp,\pirp,(x'_i)_{i=0}^{t'},(y'_i)_{i=0}^{u'})$, and the
$x'_i$s and $y'_i$s determine the number of components of~$\pi'$, in the same
way as in the case of~$\pi$. We see that by mapping $\pil$ to $\pirp$, $\pim$
to $\pimp$, $\pir$ to $\pilp$, $t$ to $u'$, $u$ to $t'$, $x_i$ to $y'_i$ and
$y_i$ to $x'_i$, we get the required bijection.
\end{proof}

\begin{theorem}\label{thm-permute}
 Let $\sigma_1,\dotsc,\sigma_k$ be a $k$-tuple of non-crossing partitions, and
let $p$ be a permutation of the set $\{1,2,\dotsc,k\}$. Then the partitions
$\sigma_1[\sigma_2]\dotsb[\sigma_k]$ and
$\sigma_{p(1)}[\sigma_{p(2)}]\dotsb[\sigma_{p(k)}]$ are cc-equivalent.
\end{theorem}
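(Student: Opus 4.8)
The plan is to derive Theorem~\ref{thm-permute} from the two special cases already established, namely Theorem~\ref{thm-subst1} (replacing one block by a cc-equivalent one inside a substitution context) and Theorem~\ref{thm-cycle} (cyclically moving a connected block past a partition). Since every permutation of $\{1,\dotsc,k\}$ is a product of adjacent transpositions, and cc-equivalence is transitive (the composition of two size-and-component-preserving bijections is again one), it suffices to prove that for any $i$ with $1\le i<k$, swapping $\sigma_i$ and $\sigma_{i+1}$ yields a cc-equivalent pattern. That is, writing $\mu=\sigma_1[\sigma_2]\dotsb[\sigma_{i-1}]$ and $\nu=\sigma_{i+2}[\sigma_{i+3}]\dotsb[\sigma_k]$ (either of which may be empty), I must show
\[
\mu[\sigma_i][\sigma_{i+1}][\nu]\;\cceq\;\mu[\sigma_{i+1}][\sigma_i][\nu].
\]

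The difficulty is that Theorem~\ref{thm-cycle} only lets me interchange a \emph{connected} partition with an adjacent one, whereas $\sigma_i$ and $\sigma_{i+1}$ need not be connected. The plan is to factor each into its connected components and move them across one at a time. Write $\sigma_i=\rho_1[\rho_2]\dotsb[\rho_p]$ and $\sigma_{i+1}=\varrho_1[\varrho_2]\dotsb[\varrho_q]$ as decompositions into connected components (each $\rho_j$, $\varrho_j$ connected and nonempty). Then $\sigma_i[\sigma_{i+1}]=\rho_1[\rho_2]\dotsb[\rho_p][\varrho_1]\dotsb[\varrho_q]$, and likewise for the swapped version. To pass from the first arrangement to the second inside the ambient context $\mu[\,\cdot\,][\nu]$, I apply Theorem~\ref{thm-cycle} repeatedly: each $\varrho_j$ is connected, so I can move it leftward past $\rho_p$, then past $\rho_{p-1}$, and so on, until all of $\sigma_{i+1}$'s components sit before all of $\sigma_i$'s components. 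Concretely, one step is an instance of $\alpha[\beta][\gamma]\cceq\alpha[\gamma][\beta]$ with $\gamma=\varrho_j$ connected; but Theorem~\ref{thm-cycle} as stated handles only $\sigma[\rho]\cceq\rho[\sigma]$ with no surrounding context, so I first need to promote it to the contextual form. This is exactly where Theorem~\ref{thm-subst1} enters.

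The key observation is that Theorem~\ref{thm-subst1} has the form ``cc-equivalence is preserved under substitution into the middle slot of $\sigma[\,\cdot\,][\tau]$''. So from $\sigma[\rho]\cceq\rho[\sigma]$ (Theorem~\ref{thm-cycle}, valid whenever $\sigma$ is connected) I immediately get $\mu'[\sigma[\rho]][\nu']\cceq\mu'[\rho[\sigma]][\nu']$ for arbitrary non-crossing $\mu',\nu'$, i.e. $\mu'[\sigma][\rho][\nu']\cceq\mu'[\rho][\sigma][\nu']$ whenever $\sigma$ is connected. Applying this with $\sigma=\varrho_j$, $\rho=\rho_m$ for the appropriate $m$, and $\mu',\nu'$ absorbing everything else, moves one connected component of $\sigma_{i+1}$ one position to the left. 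Chaining $pq$ such moves (and invoking transitivity) transforms $\mu[\sigma_i][\sigma_{i+1}][\nu]$ into $\mu[\sigma_{i+1}][\sigma_i][\nu]$. The main obstacle to watch for is purely bookkeeping: making sure that at each intermediate step the pattern is still parsed via the $\sigma[\,\cdot\,][\tau]$ bracketing convention consistently (recall the convention that $[\sigma]$ means $\sigma$ shifted by the number of distinct symbols already used), so that the repeated applications of Theorems~\ref{thm-subst1} and~\ref{thm-cycle} are genuinely applied to the substructures I claim. Once the adjacent-transposition case is in hand, the general permutation follows since adjacent transpositions generate the symmetric group and $\cceq$ is an equivalence relation.
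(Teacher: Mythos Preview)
Your proposal is correct and follows essentially the same approach as the paper: reduce to adjacent transpositions, then use Theorem~\ref{thm-cycle} to swap two adjacent connected pieces and Theorem~\ref{thm-subst1} to place that swap inside the surrounding context. The only difference is organizational: the paper first observes that one may assume without loss of generality that \emph{all} the $\sigma_i$ are connected (by replacing the given $k$-tuple with the longer tuple of all connected components of all the $\sigma_i$), after which a single application of Theorem~\ref{thm-cycle} plus Theorem~\ref{thm-subst1} handles each adjacent transposition directly, rather than the $pq$ component-by-component moves you perform.
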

\begin{proof}
We may assume, without loss of generality, that all the
$\sigma_i$ are connected and that $p$ is a transposition of adjacent elements.
Suppose that for some $i<k$ we have $p(i)=i+1$, $p(i+1)=i$,
and $p(j)=j$ otherwise. By Theorem~\ref{thm-cycle}, we know that
$\sigma_i[\sigma_{i+1}]$ is cc-equivalent to~$\sigma_{i+1}[\sigma_i]$, and then
from Theorem~\ref{thm-subst1} we obtain the desired result, by putting
$\sigma=\sigma_1[\sigma_2]\dotsb[\sigma_{i-1}]$, $\rho=\sigma_i[\sigma_{i+1}]$,
$\rho'=\sigma_{i+1}[\sigma_i]$, and
$\tau=\sigma_{i+2}[\sigma_{i+3}]\dotsb[\sigma_k]$.
\end{proof}

\begin{theorem}\label{thm-conswap}
Let $\sigma_1,\dotsc,\sigma_k$ be a $k$-tuple of non-crossing partitions, let
$q<k$ be an index such that the partition $\sigma_q$ is empty, or connected,
or contains only singleton blocks. Then the partition
\[
\sigma=1[\sigma_1]1[\sigma_2]1\dotsb1[\sigma_{q-1}]1[\sigma_q]1[\sigma_{q+1}]
1[\sigma_{q+2}]1\dotsb1[\sigma_k]1
\]
is cc-equivalent to
\[
\sigma'=1[\sigma_1]1[\sigma_2]1\dotsb1[\sigma_{q-1}]1[\sigma_{q+1}]1[\sigma_q]
1[\sigma_{q+2}]1\dotsb1[\sigma_k]1.
\]
\end{theorem}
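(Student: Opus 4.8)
The plan is to adapt the structural, decomposition‑based method used to prove Theorem~\ref{thm-cycle}. Write $P=1[\sigma_1]1\dotsb1[\sigma_{q-1}]1$ for the truncated caterpillar; this $P$ is a common prefix of $\sigma$ and $\sigma'$, hence occurs in both, so every non‑crossing partition avoiding $P$ automatically avoids $\sigma$ and $\sigma'$ at once, and for these I set $f(\pi)=\pi$. It then suffices to produce a size‑ and component‑count‑preserving bijection between the non‑crossing partitions that avoid $\sigma$ but contain $P$ and those that avoid $\sigma'$ but contain $P$. As in Theorem~\ref{thm-cycle}, I would obtain this bijection by putting both families into an explicit normal form and matching the normal forms.

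The starting point is the elementary observation that a non‑crossing partition $\pi$ contains $1[\tau_1]1\dotsb1[\tau_t]1$ if and only if there is a block $B$ of $\pi$ with vertices $b_0<b_1<\dotsb<b_t$ such that, for each $i$, the non‑crossing partition $\pi^{(i)}$ formed by the blocks lying strictly between $b_{i-1}$ and $b_i$ contains $\tau_i$: the non‑crossing property forces every block other than $B$ that meets $(b_{i-1},b_i)$ to lie inside it, and it likewise forces the values of $\pi^{(i)}$ to sit below those of $\pi^{(i+1)}$, which is exactly the band pattern of the caterpillar. I would then take the greedy (leftmost) occurrence of the spine, slice $\pi$ at the spine vertices bounding the $q$‑th and $(q{+}1)$‑st gaps, and --- in the spirit of the five‑tuple decomposition in the proof of Theorem~\ref{thm-cycle} --- encode a non‑crossing $\sigma$‑avoider containing $P$ by: its left part (which contains $P$ and extends it in a controlled way into the $q$‑th gap), the contents of the $q$‑th and $(q{+}1)$‑st gaps, its right part (restricted by the avoidance of the later legs), and nonnegative integer sequences hung on the components of these pieces that record the outliers of $B$ and so determine the number of components of $\pi$. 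Carrying out the same encoding for $\sigma'$ gives the same data with the $q$‑th and $(q{+}1)$‑st gaps interchanged.

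The bijection must then transplant the ``$\sigma_q$‑piece'' past the ``$\sigma_{q+1}$‑piece'' while respecting size, component count, and the outlier sequences attached to the extreme components of the two gaps; this is where the hypothesis on $\sigma_q$ is essential, since in each admissible case the $\sigma_q$‑piece has a rigid normal form. If $\sigma_q$ is empty, the $q$‑th gap carries no constraint and the move is merely a relabelling of spine vertices. If $\sigma_q$ is connected, a minimal piece realising $\sigma_q$ in its gap is itself connected with its occurrence of $\sigma_q$ forced through its last vertex --- precisely the configuration carried across by the cycling bijection in the proof of Theorem~\ref{thm-cycle} --- so that bijection can be reused to shift it. If $\sigma_q$ consists of $r$ singleton blocks, containing $\sigma_q$ in a gap just means the gap has at least $r$ nested blocks, and a minimal such piece is $r$ nested blocks capped by one more, rigid enough to move wholesale. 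In all three cases the outlier sequences, hence the component count, are carried over unchanged, exactly as in Theorem~\ref{thm-cycle}.

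The main obstacle is making the decomposition genuinely canonical and checking that it is a bijection: one must verify that the greedy occurrence of the spine is well defined (the minimum value of the last spine vertex used, and then the block $B$ and the intermediate vertices, are all forced), that the outlier data depends only on the first vertices of the components of each gap and on the gaps between those components --- so that interchanging two \emph{interior} gaps does not disturb it --- and, most delicately, that each of the three normal forms survives transplantation past an \emph{arbitrary} $\sigma_{q+1}$‑piece. I expect the singleton case to demand the most care, because there the minimal piece is governed by a count of nested blocks rather than by a single leftmost occurrence, and one must make sure this count and the outliers attached to those blocks are transported faithfully; the empty case is essentially bookkeeping, and the connected case reduces cleanly to Theorem~\ref{thm-cycle}.
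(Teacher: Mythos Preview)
Your plan diverges from the paper's argument in a fundamental way, and the divergence is exactly where the gap lies. The paper does \emph{not} adapt the five-tuple decomposition of Theorem~\ref{thm-cycle}; instead it proceeds by induction on the size~$n$. For a connected $\pi=1[\pi_1]1\dotsb1[\pi_m]1$, it first applies the inductive bijection $g$ to every $\pi_i$, obtaining $\bar\pi=1[g(\pi_1)]1\dotsb1[g(\pi_m)]1$; this step alone kills every \emph{deep} occurrence of $\sigma'$ (an occurrence whose spine is not the first block). Only then does it perform a top-level rearrangement: it locates the minimal window $\bar\pi(a,c)$ carrying a top-level copy of $1[\sigma_q]1$ and reverses the order of $\bar\pi_a,\dotsc,\bar\pi_{c-1}$ inside the segment between indices $a$ and $b$. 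The hypothesis on $\sigma_q$ is used precisely to guarantee that this reversal preserves the ``just barely contains $1[\sigma_q]1$'' property, so that the construction is invertible.

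Your single-level decomposition based on the leftmost occurrence of the prefix $P$ cannot substitute for this induction. Because $\sigma$ is connected, an occurrence of $\sigma$ in $\pi$ may use \emph{any} block of $\pi$ as its spine, not just the block carrying the leftmost copy of~$P$. Concretely, take $\sigma=11211$ (so $q=2$, $P=11$) and $\pi=1\,2\,2\,1\,3\,3\,4\,3\,3\,1$: the leftmost $P$-spine is block~$2$, which has only two elements and hence no ``$q$-th gap'' at all, yet $\pi$ contains $\sigma$ with spine block~$3$. Thus the datum ``left part / $q$-th gap / $(q{+}1)$-st gap / right part'' attached to the leftmost $P$-spine neither detects nor governs $\sigma$-avoidance. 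In the other direction, even if $\pi$ is a genuine $\sigma$-avoider, swapping the two gaps at the leftmost-spine level does nothing to the interior partitions sitting inside those gaps, and any one of them may contain $\sigma'$ at a deeper level; your image need not avoid~$\sigma'$. The analogy with Theorem~\ref{thm-cycle} breaks down because there the pattern $\sigma[\rho]$ is disconnected, so Lemma~\ref{lem-left-top} lets a single leftmost/topmost occurrence control all occurrences; for a connected caterpillar no such reduction exists, and the recursion in the paper's proof is doing essential work that your outline omits.
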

\newcommand{\lle}{{\le}}
\newcommand{\gge}{{\ge}}
\newcommand{\bpi}{\overline\pi}
\newcommand{\bbpi}{\widehat\pi}
\begin{proof}
We proceed by induction.  Fix an integer $n$, and suppose that for every $n'<n$
and for every $p$, the number of $\sigma$-avoiding partitions of size $n'$ with
$p$ components is equal to the number of $\sigma'$-avoiding such partitions. Let
$g$ be a bijection between $\sigma$-avoiders and $\sigma'$-avoiders of size less
than $n$, preserving size and number of components. We may assume, without loss
of generality, that $g$ has the property that $g(\pi)=\pi$ for any partition
$\pi$ that avoids both $\sigma$ and~$\sigma'$. We will define a bijection $f$
mapping $\sigma$-avoiders of size $n$ to $\sigma'$-avoiders of the same size and
number of components.

Let $\pi$ be a $\sigma$-avoiding partition of size~$n$. If $\pi$ is
disconnected, it can be written as $\pi=\pi_1[\pi_2]\dotsb[\pi_m]$ for $m>1$ and
$\pi_i$ connected. We then define $f(\pi)=g(\pi_1)[g(\pi_2)]\dotsb[g(\pi_m)]$.
This clearly satisfies all the claimed properties.

Assume now that $\pi$ is connected. Thus, $\pi$ can be uniquely
written as $\pi=1[\pi_1]1[\pi_2]1\dotsb1[\pi_m]1$ for some $\sigma$-avoiding
partitions~$\pi_i$.
We use the following terminology: for a partition $\rho$, an
occurrence $I=(i(1),\dotsc,i(\ell))$ of
$\rho$ in $\pi$ is a \emph{top-level occurrence} if it maps the elements of
the first block of $\rho$ to the elements of the first block of~$\pi$; in
other words, if $\pi_{i(1)}=1$. If $I$ is not a top-level occurrence, we say
that it is a \emph{deep occurrence}. Note that if $\rho$ is connected, then any
deep occurrence of $\rho$ in $\pi$ must correspond to an occurrence of $\rho$
in one of the partitions $\pi_1,\dotsc,\pi_m$.

For $i\le j\in[m+1]$, let $\pi(i,j)$ denote the partition
$1[\pi_i]1[\pi_{i+1}]1\dotsb1[\pi_{j-1}]1$, i.e., $\pi(i,j)$ is the
subpartition of $\pi$ between the $i$-th and $j$-th element of the first
block. For an integer $i\le m+1$, let $\pi(\lle i)$ denote the partition
$\pi(1,i)$ and $\pi(\gge i)$ be the partition $\pi(i,m+1)$. We apply analogous
notation for other connected partitions as well.

Let $\bpi_i$ denote the partition $g(\pi_i)$, and let $\bpi$ be the
partition $1[\bpi_1] 1[\bpi_2] 1\dotsb1[\bpi_m]1$. By induction, we know that
for any $i\in[m]$, $\bpi_i$ is $\sigma'$-avoiding and $\bpi_i$ contains $\sigma$
if and only if $\pi_i$ contains~$\sigma'$. Consequently, $\bpi$ has no deep
occurrence of $\sigma'$, and $\bpi$ has a deep occurrence of $\sigma$ if and
only if $\pi$ has a deep occurrence of~$\sigma'$. Using the fact that
$\bpi_i=\pi_i$ whenever
$\pi_i$ avoids both $\sigma$ and $\sigma'$, we also see that for any $j\in[k]$,
$\bpi_i$ contains $\sigma_j$ if and only if $\pi_i$ does, and more generally,
for any $h,i\in[m+1]$, $\pi(h,i)$ has a top-level occurrence of $1[\sigma_j]1$
if and only if $\bpi(h,i)$ does. Consequently, $\bpi$ has no top-level
occurrence of $\sigma$, and $\bpi$ has a top-level occurrence of $\sigma'$ if
and only if $\pi$ does.

Let $a\in[m+1]$ be the smallest index such that $\bpi(\lle a)$ has a top-level
occurrence of $\sigma(\lle q)$, and let $b\in[m+1]$ be the largest index such
that $\bpi(\gge b)$ has a top-level occurrence of $\sigma(\gge q+2)$. If such
$a$ or $b$ do not exist, or if $a+2> b$, then $\bpi$ has no top-level
occurrence of either $\sigma$ or $\sigma'$, and we define~$f(\pi)=\bpi$.

Suppose that $a+2\le b$, and let $c$ be the smallest integer from
$\{a+1,a+2,\dotsc,b\}$ such that
$\bpi(a,c)$ has a top-level occurrence
of~$1[\sigma_q]1$. If no such $c$ exists, we again put~$f(\pi)=\bpi$.
Otherwise, define a partition $\bbpi=1[\bbpi_1] 1 [\bbpi_2]1\dotsb1[\bbpi_m]1$, by
putting
$\bbpi(\lle a)=\bpi(\lle a)$, $\bbpi(\gge b)=\bpi(\gge b)$,
$\bbpi(a,a+b-c)=\bpi(c,b)$, and $\bbpi(a+b-c,b)$ being equal to
$1[\bpi_{c-1}]1[\bpi_{c-2}]1\dotsb1[\bpi_{a+1}]1[\bpi_a]1$. Notice that
$\bbpi(a+b-c,b)$ has a top-level occurrence of $1[\sigma_q]1$, while
$\bbpi(a+b-c+1,b)$ does not (here we use the assumption that $\sigma_q$ is
empty, or connected, or only contains singleton blocks). We also know that
$\bpi(c,b)$ has no top-level occurrence of $1[\sigma_{q+1}]1$, because $\bpi$ has
no top-level occurrence of~$\sigma$. This implies that $\bbpi$ has no top-level
occurrence of $\sigma'$, and therefore $\bbpi$ is a $\sigma'$-avoiding
partition. We then define $f(\pi)=\bbpi$. It is easy to check that $f$ is a
bijection between $\sigma$-avoiding and $\sigma'$-avoiding partitions of size
$n$ which preserves the number of components.
\end{proof}

\begin{theorem}\label{thm-subst2}
Let $\sigma_1,\dotsc,\sigma_k$ be a $k$-tuple of non-crossing partitions, let
$q<k$ be an index, and let $\sigma'_q$ be a partition cc-equivalent to
$\sigma_q$. Then the pattern
\[
\sigma=1[\sigma_1]1[\sigma_2]1\dotsb1[\sigma_{q-1}]1[\sigma_q]1[\sigma_{q+1}]
1\dotsb1[\sigma_k]1
\]
is cc-equivalent to
\[
\sigma'=1[\sigma_1]1[\sigma_2]1\dotsb1[\sigma_{q-1}]1[\sigma'_q]1[\sigma_{q+1}]
1\dotsb1[\sigma_k]1.
\]
\end{theorem}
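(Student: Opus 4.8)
The plan is to adapt the decomposition-and-bijection strategy of Theorem~\ref{thm-subst1} to the iterated-substitution context. I would fix a size $n$ and build a bijection $f$ from $\sigma$-avoiding non-crossing partitions to $\sigma'$-avoiding ones that preserves size and number of components. As before, if $\pi$ avoids the ``outer frame'' $\tau_0:=1[\sigma_1]1\dotsb1[\sigma_{q-1}]1[\,]1[\sigma_{q+1}]1\dotsb1[\sigma_k]1$ (the pattern $\sigma$ with $\sigma_q$ replaced by the empty partition), then $\pi$ avoids both $\sigma$ and $\sigma'$ automatically and we set $f(\pi)=\pi$. Otherwise we locate a canonical occurrence of $\tau_0$ in $\pi$: using Lemma~\ref{lem-left-top} repeatedly (and the fact that each occurrence of a connected piece $1[\sigma_i]1$ in a non-crossing partition meets a single block), one can pin down a ``slot'' for the $q$-th block of the frame, namely a contiguous middle part of $\pi$ bounded on the left by the chosen occurrence of $1[\sigma_1]1\dotsb1[\sigma_{q-1}]1$ and on the right by a top-most occurrence of $1[\sigma_{q+1}]1\dotsb1[\sigma_k]1$. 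The key point, exactly as in Theorem~\ref{thm-subst1}, is that $\pi$ contains $\sigma$ if and only if the sub-partition $\pi_{\text M}$ carved out of this middle slot (after discarding outliers and non-middle blocks) contains $\sigma_q$.

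The second step is to invoke the cc-equivalence of $\sigma_q$ and $\sigma'_q$: let $g$ be a witnessing bijection, so $g(\pi_{\text M})$ has the same size and the same number of components as $\pi_{\text M}$, and $g(\pi_{\text M})$ avoids $\sigma'_q$ iff $\pi_{\text M}$ avoids $\sigma_q$. I would then reassemble $\pi'=f(\pi)$ by keeping the left part, the right part, all the outliers and all the left/right blocks of $\pi$ exactly where they are, and splicing $g(\pi_{\text M})$ into the middle slot in place of $\pi_{\text M}$, distributing the middle-part outliers into the gaps between the (possibly re-sized, re-numbered) components of $g(\pi_{\text M})$ and threading the first blocks of those components through the right part exactly as was done at the end of the proof of Theorem~\ref{thm-subst1}. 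Because the frame $\tau_0$ is common to $\sigma$ and $\sigma'$ and only the content of the $q$-th slot changes, $\pi'$ avoids $\sigma'$ iff $\pi$ avoids $\sigma$; and the bookkeeping with outlier-gap sequences shows the number of components is preserved. Invertibility of $f$ follows because $g^{-1}$ reverses the middle-slot substitution and the frame occurrence in $\pi'$ is recovered by the same canonical (leftmost/top-most) rule.

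There is, however, a subtlety that makes this more delicate than a verbatim copy of Theorem~\ref{thm-subst1}: there the outer context $\sigma[\,\cdot\,][\tau]$ was an \emph{arbitrary} pair of non-crossing partitions, whereas here the frame has the specific shape $1[\sigma_1]1[\sigma_2]1\dotsb$ with the distinguished first block running through every slot. So the ``left part / middle part / right part'' split must be taken relative to a top-level occurrence of the frame — one that sends the first block of the frame into the first block of $\pi$ — and I expect the main obstacle to be verifying that this choice is canonical and that the middle slot is genuinely contiguous and self-contained, i.e. that no left block or right block of $\pi$ can sneak an element into the middle slot and no middle block can straddle the slot boundary. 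This is where non-crossingness is used crucially, in the same way as in the earlier proofs: an offending configuration would produce a $1212$ pattern. One clean way to organize this is to first reduce, via Theorem~\ref{thm-permute}, to the case where $\sigma_q$ is the \emph{last} slot, i.e. $q=k$ — wait, the hypothesis is $q<k$, so instead I would note that Theorem~\ref{thm-permute} already lets us permute the $\sigma_i$ freely up to cc-equivalence, so it suffices to treat one convenient position of the $q$-th slot; but even without that reduction, the argument above goes through, and the only real work is the non-crossing ``no-straddling'' lemma, which is essentially contained in the proof of Lemma~\ref{lem-left-top} and Theorem~\ref{thm-subst1}. Once that is in hand, the rest is the routine reassembly-and-inverse check, which I would not spell out in full detail.
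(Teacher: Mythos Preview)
Your approach has a genuine gap: it only controls \emph{top-level} occurrences of $\sigma'$ in the output $\pi'$, and says nothing about \emph{deep} occurrences. Concretely, suppose $\pi$ is connected, written as $1[\pi_1]1[\pi_2]1\dotsb1[\pi_m]1$, and $\pi$ avoids $\sigma$. Then each $\pi_i$ avoids $\sigma$ (since $\sigma$ is connected), but there is no reason whatsoever for $\pi_i$ to avoid $\sigma'$. If some $\pi_i$ outside your middle slot contains $\sigma'$, your construction leaves that $\pi_i$ untouched, and the resulting $\pi'$ contains $\sigma'$ as a deep occurrence. So the map you describe does not land in the set of $\sigma'$-avoiders. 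Your claimed equivalence ``$\pi'$ avoids $\sigma'$ iff $\pi$ avoids $\sigma$, because only the $q$-th slot changes'' is simply false at the deep level: the $q$-th slot is special only for top-level occurrences.

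The paper fixes exactly this by working \emph{inductively} on the size $n$, as in the proof of Theorem~\ref{thm-conswap}. One assumes a bijection $g$ between $\sigma$-avoiders and $\sigma'$-avoiders of size $<n$ (with $g$ the identity on partitions avoiding both), and for connected $\pi$ of size $n$ one first replaces the middle block $\rho=\pi_a[\pi_{a+1}]\dotsb[\pi_{b-1}]$ by $h(\rho)$ via the cc-bijection $h$ for $\sigma_q\cceq\sigma'_q$ (your step), and \emph{then} applies $g$ to every outer piece $\pi_i$ with $i<a$ or $i\ge b$. This second pass is what kills deep occurrences of $\sigma'$; the identity-on-common-avoiders property of $g$ is what guarantees that $a$ and $b$ are recoverable from the output, so that the map is invertible. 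Your locating of the slot via leftmost/topmost rules is also shakier than you suggest (Lemma~\ref{lem-left-top} is about patterns of the form $\rho[\sigma]$, not about a connected frame whose first block threads through every slot), and the paper sidesteps this entirely by working with the canonical decomposition $1[\pi_1]1\dotsb1[\pi_m]1$ of a connected $\pi$ and defining $a,b$ directly in terms of top-level occurrences of $\sigma(\le q)$ and $\sigma(\ge q+1)$.
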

\begin{proof}
As in the proof of Theorem~\ref{thm-conswap}, we proceed by induction. Suppose
again that $n$ is given, and that there is a bijection $g$ mapping the
$\sigma$-avoiders of size less than $n$ to $\sigma'$-avoiders of the same size
and same number of components. Suppose also that $g(\pi)=\pi$ for any partition
that avoids both $\sigma$ and~$\sigma'$. We define a bijection $f$ mapping
$\sigma$-avoiders of size $n$ to $\sigma'$-avoiders of the same size and number
of components. Let $h$ be a mapping from $\sigma_q$-avoiding partitions to
$\sigma'_q$-avoiding partitions which witnesses the cc-equivalence of
$\sigma_q$ and $\sigma'_q$.

Let $\pi$ be a $\sigma$-avoiding partition of size~$n$. If $\pi$ is disconnected
with components $\pi_1,\dotsc,\pi_m$, we define $f(\pi)$ to be the partition
with components $g(\pi_1),\dotsc, g(\pi_m)$. Suppose now that $\pi$ is
connected, and has the form $1[\pi_1]1\dotsb1[\pi_m]1$.

We will define a new partition $\bpi$ that has no top-level occurrence of
$\sigma'$. Let $a$ be the smallest integer such that $\pi(\lle a)$ has a
top-level occurrence of $\sigma(\lle q)$, and let $b$ be the largest integer
such that $\pi(\gge b)$ has a top-level occurrence of $\sigma(\gge q+1)$. If
such $a$ or $b$ does not exist, or if $a\ge b$, we define~$\bpi=\pi$. Otherwise,
let $\rho$ be the partition $\pi_{a}[\pi_{a+1}]\dotsb[\pi_{b-1}]$, and let $p_i$
be the number of connected components of $\pi_i$, so that $\rho$ has
$p_a+p_{a+1}+\dotsb+p_{b-1}$ components. Note that $\rho$ avoids
$\sigma_q$. Define $\rho'=h(\rho)$, and write $\rho'$ as
$\rho'=\bpi_a[\bpi_{a+1}]\dotsb[\bpi_{b-1}]$, where each $\bpi_i$ is chosen so that it has exactly $p_i$ components. We now define the partition
$\bpi=1[\bpi_1]1\dotsb1[\bpi_m]1$ by putting $\bpi(\lle a)=\pi(\lle a)$,
$\bpi(\gge b)=\pi(\gge b)$, and $\bpi(a,b)$ is determined by the partitions
$\bpi_i$ obtained from~$\rho'$.

Note that $\bpi$ has no top-level occurrence of $\sigma'$. Also, $\bpi(a,b)$
has no deep occurrence of $\sigma'$, because it does not even have a deep
occurrence of~$\sigma'_q$. Define now a partition $\bbpi=1[\bbpi_1]
1\dotsb1[\bbpi_m]1$ by putting $\bbpi_i=\bpi_i$ for each $a\le i<b$, and
$\bbpi_i=g(\bpi_i)$ for each $i<a$ and $i\ge b$. Then $\bbpi$ has no deep
occurrence of $\sigma'$. Using the fact that $g(\bpi_i)=\bpi_i$ whenever
$\bpi_i$ avoids both $\sigma$ and $\sigma'$, we can also see that $a$ is the
smallest index such that $\bbpi(\lle a)$ has a top-level occurrence of
$\sigma'(\lle q)$, and $b$ is the largest index such that $\bbpi(\gge b)$ has a
top-level occurrence of $\sigma'(\gge q+1)$. We put $f(\pi)=\bbpi$, and easily
see that $f$ is the required bijection.
\end{proof}

In the rest of this section, we will often employ generating functions as tools
in our proofs. Let us therefore fix the following notation. For a partition
$\pi$, we let $NC(x;\pi)$ denote the generating function of the set of
non-crossing $\pi$-avoiding partitions, and we let $C(x;\pi)$ denote the
generating function of the set of nonempty connected non-crossing
$\pi$-avoiding partitions.

\begin{theorem}\label{thm-1t1s}
Let $\sigma$ and $\tau$ be two possibly empty connected partitions. Then
$1[\sigma]1[\tau]$ and $1[\tau]1[\sigma]$ are nc-equivalent.
\end{theorem}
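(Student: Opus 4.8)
The plan is to prove the statement by exhibiting, for every fixed size $n$, a bijection between non-crossing partitions of size $n$ avoiding $\alpha:=1[\sigma]1[\tau]$ and those avoiding $\alpha':=1[\tau]1[\sigma]$. Since the statement claims only nc-equivalence (not cc-equivalence), we have more room than in Theorems~\ref{thm-subst1}--\ref{thm-subst2}: in particular, we are free to not track the number of components and to recurse freely. The natural strategy is a generating-function argument combined with the structural decomposition already developed in the proof of Theorem~\ref{thm-cycle}. I would first dispose of the trivial observation that any partition avoiding $1\sigma\cdots$ i.e. avoiding $1[\sigma]$ (or $1[\tau]$, after appropriate interpretation) automatically avoids both $\alpha$ and $\alpha'$, so the difference between the two classes lies entirely among partitions that contain $1[\sigma]$ but not $\alpha$, versus those containing $1[\tau]$ but not $\alpha'$. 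Since $1[\sigma]$ with $\sigma$ connected is itself connected, and likewise $1[\tau]$, Theorem~\ref{thm-cycle} is almost what we want — it gives $1[\sigma][\tau]\cceq [\tau]1[\sigma]$ — but the patterns here have the extra "$1$" wedged between $[\sigma]$ and $[\tau]$, so Theorem~\ref{thm-cycle} does not apply directly.

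The key step will be a careful structural decomposition of an $\alpha$-avoiding non-crossing partition $\pi$ along its first block. Write the first block of $\pi$ as giving a factorization $\pi=1[\pi_1]1[\pi_2]1\dotsb1[\pi_m]1$ when $\pi$ is connected (and handle disconnected $\pi$ componentwise). A top-level occurrence of $\alpha=1[\sigma]1[\tau]$ in $\pi$ uses the first element of the first block of $\pi$, together with a top-level occurrence of $1[\sigma]1$ in some prefix $\pi(\lle a)$ and a top-level occurrence of $1[\tau]1$ in the complementary suffix $\pi(\gge a)$. I would define, as in Theorem~\ref{thm-conswap}, the smallest index $a$ such that $\pi(\lle a)$ contains a top-level occurrence of $1[\sigma]1$ and the largest index $b$ such that $\pi(\gge b)$ contains a top-level occurrence of $1[\tau]1$; avoidance of $\alpha$ forces $a\ge b$ (equivalently the two "witnessed" regions overlap or abut). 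Meanwhile the individual factors $\pi_i$ must themselves avoid $\sigma$ or avoid $\tau$ or avoid both, depending on their position relative to $a$ and $b$, and deep occurrences of $\alpha$ are ruled out precisely because $\alpha$ is connected so a deep occurrence would be an occurrence of $\alpha$ in some $\pi_i$ — which one then recurses on. Assembling this yields a generating-function identity expressing $NC(x;\alpha)$ (or better, $C(x;\alpha)$) in terms of $C(x;1[\sigma]1)$, $C(x;1[\tau]1)$, $C(x;1[\sigma]1[\tau]1)$ etc.; the point is that $1[\sigma]1$ and $1[\tau]1$ enter the formula symmetrically enough that swapping $\sigma$ and $\tau$ leaves it invariant. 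I would in fact aim to show $C(x;\alpha)=C(x;\alpha')$ directly from such a symmetric formula, from which $NC(x;\alpha)=NC(x;\alpha')$ follows by the usual relation $NC(x;\pi)=1/(1-C(x;\pi))$.

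The main obstacle, as in the proof of Theorem~\ref{thm-conswap}, will be bookkeeping the "outliers" — elements belonging to an earlier block that physically sit in a later region — and the interaction between where the first block of each component $\pi_i$ lies and which patterns $\pi(\lle a),\pi(a,b),\pi(\gge b)$ are allowed to witness. Getting a clean, manifestly $\sigma\leftrightarrow\tau$-symmetric decomposition is where the real work is; a wrong choice of $a$ and $b$ (e.g. not taking them extremal, or not requiring the occurrence of $1[\sigma]1$ to use the first element of its block) leads to overcounting. I expect the cleanest route is to mirror the proof of Theorem~\ref{thm-cycle} almost verbatim: characterize $\alpha$-avoiders containing $1[\sigma]1$ via a tuple $(\pi_{\mathrm L},\pi_{\mathrm M},\pi_{\mathrm R},(x_i),(y_i))$ where $\pi_{\mathrm L}$ avoids $1[\sigma]1$, $\pi_{\mathrm M}$ is a connected "core" every top-level occurrence of $1[\sigma]1$ in which meets the last element, $\pi_{\mathrm R}$ avoids $\tau$, and the sequences $(x_i),(y_i)$ place the outliers; then the analogous characterization of $\alpha'$-avoiders containing $1[\tau]1$ is obtained by the relabeling $\sigma\leftrightarrow\tau$, $\pi_{\mathrm L}\leftrightarrow\pi_{\mathrm R}$, giving the bijection. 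One subtlety to flag: unlike in Theorem~\ref{thm-cycle}, the "core" $\pi_{\mathrm M}$ must itself avoid $\tau$ below top level (since a deep $\tau$ inside $\pi_{\mathrm M}$ together with the top-level $1[\sigma]1$ would create $\alpha$), so the core's generating function is $C(x;1[\sigma]1[\tau]1)$-flavored rather than just "connected containing $1[\sigma]1$"; this term is also symmetric under $\sigma\leftrightarrow\tau$ by Theorem~\ref{thm-cycle} applied one level down, which is exactly what makes the induction close.
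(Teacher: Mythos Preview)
Your proposal has a genuine gap, and it stems from a structural fact you assert incorrectly: the pattern $\alpha=1[\sigma]1[\tau]$ is \emph{not} connected when $\tau$ is nonempty. Its last element lies in the $[\tau]$ part, hence not in the first block. So you cannot ``handle disconnected $\pi$ componentwise'' (a disconnected $\pi$ can contain $\alpha$ without any single component containing $\alpha$), and your claim that ``deep occurrences of $\alpha$ are ruled out precisely because $\alpha$ is connected so a deep occurrence would be an occurrence of $\alpha$ in some $\pi_i$'' is false for the same reason. A deep occurrence of $1[\sigma]1$ can sit inside one $\pi_j$ while the accompanying $[\tau]$ spills over into later $\pi_{j'}$'s.

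Your fallback route, mirroring Theorem~\ref{thm-cycle} with $1[\sigma]1$ playing the role of the connected pattern, has a different problem: the resulting tuple has $\pi_{\mathrm L}$ avoiding $1[\sigma]1$ while $\pi_{\mathrm R}$ avoids $\tau$. After the swap $\sigma\leftrightarrow\tau$, $\pi_{\mathrm L}\leftrightarrow\pi_{\mathrm R}$, you would be matching the class of $1[\sigma]1$-avoiders against the class of $\sigma$-avoiders, and these are not the same. The decomposition is not symmetric in the way you need. (Separately, the partitions \emph{not} containing $1[\sigma]1$ on one side must be matched with those not containing $1[\tau]1$ on the other, and there is no reason these are equinumerous.)

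The paper's proof avoids both issues with two ideas you are missing. First, it subtracts off $NC(x;\sigma[\tau])$, which is already known to equal $NC(x;\tau[\sigma])$ by Theorem~\ref{thm-cycle}, and reduces to showing that $G(x;\sigma,\tau):=NC(x;1[\sigma]1[\tau])-NC(x;\sigma[\tau])$ is symmetric in $\sigma,\tau$. Second, it decomposes a partition in this difference class using the leftmost occurrence of $\sigma$ (not of $1[\sigma]1$) together with the \emph{topmost} occurrence of $\tau$; this pairing of ``leftmost'' with ``topmost'' is what makes Lemma~\ref{lem-left-top} available and produces a decomposition involving $\sigma$-minimal and $\tau$-minimal pieces, $\sigma$-avoiding and $1[\sigma]1$-avoiding components, and ``zones'' built from $\tau$-avoiders. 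The resulting formula for $G(x;\sigma,\tau)$ is not manifestly symmetric, and the symmetry only emerges after an algebraic simplification using the identity $C(x;1[\rho]1)=x/(1-xNC(x;\rho))$. The case where one of $\sigma,\tau$ is empty is then handled separately by a short direct computation.
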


In the previous theorem, the assumption that $\sigma$ and $\tau$ are connected
is necessary, as shown, e.g., by the two patterns $1[1]1[12]=12134$ and
$1[12]1[1]=12314$, which are not nc-equivalent. Also, nc-equivalence in the
conclusion cannot in general be replaced with cc-equivalence. For example,
taking $\sigma$ empty and $\tau=1$, we see that $1[\sigma]1[\tau]=112$, while
$1[\tau]1[\sigma]=121$. Since $112$ and $121$ do not have the same number of
components, it is easy to see that they cannot be cc-equivalent.

\begin{proof}[Proof of Theorem~\ref{thm-1t1s}]
Let us first deal with the situation where both $\sigma$ and $\tau$ are
nonempty. Let $G(x; \sigma,\tau)$ denote the generating function of non-crossing
partitions that avoid $1[\sigma]1[\tau]$ but contain $\sigma[\tau]$, in other
words,
\[
G(x;\sigma,\tau)=NC(x;1[\sigma]1[\tau])-NC(x;\sigma[\tau]).
\]

We know from Theorem~\ref{thm-cycle} that 
$NC(x,\sigma[\tau])=NC(x,\tau[\sigma])$. Therefore, to show
that $1[\sigma]1[\tau]$ is nc-equivalent to $1[\tau]1[\sigma]$, it is enough to
prove that $G(x;\sigma,\tau)=G(x;\tau,\sigma)$. We will derive a formula for
$G(x;\sigma,\tau)$ from which the previous identity will easily follow.

\newcommand{\cC}{\mathcal{C}}
\newcommand{\cM}{\mathcal{M}}
Note that if $\rho$ is a connected partition, a non-crossing partition $\pi$
avoids $\rho$ if and only if each component of $\pi$ avoids~$\rho$. In
particular, we have the identity
$NC(x;\rho)=1/(1-C(x;\rho))$. We say that a partition $\pi=\pi_1,\dotsc,\pi_n$
is \emph{$\rho$-minimal} if it is connected, non-crossing, contains $\rho$, but
avoids $1[\rho]1$. Let $M(x;\rho)$ be the generating function of the set of
$\rho$-minimal partitions.

Suppose that $\pi=\pi_1,\dotsc,\pi_n$ is a non-crossing partition that avoids
$1[\sigma]1[\tau]$ and contains~$\sigma[\tau]$. Let $I=(i(1),\dotsc,i(k))$ be a
leftmost occurrence of $\sigma$ in $\pi$, chosen in such a way that $i(1)$ is
as small as possible. This implies that $\pi_{i(1)}$ is the leftmost element of
its $\pi$-block. Let us write $a=i(1)$.

Let $J=(j(1),...,j(\ell))$ be a topmost occurrence of $\tau$. Choose $J$ in
such a way that $j(1)$ is as small as possible, and write
$b=j(1)$. Then $\pi_b$ is the leftmost element of its
block. Let $\pi_c$ be the rightmost element of the block containing $\pi_b$.
Then $\pi_b,\pi_{b+1},\dotsc,\pi_c$ is order-isomorphic to
a $\tau$-minimal partition, because if it contained a copy of $1[\tau]1$, it
would contradict the topmost choice of~$J$.

Let $\pil$ denote the partition $\pi_1,\pi_2,\dotsc,\pi_{b-1}$. Note that
$\pil$ avoids $1[\sigma]1$. Let $\pil^1,\pil^2,\dotsc,\pil^m$ be the connected
components of~$\pil$. Let $\pil^q$ be the component of $\pil$ containing the
vertex $\pi_a$. Note that $\pi_a$ must be the leftmost vertex of $\pil^q$,
otherwise $\pil^q$ would contain $1[\sigma]1$. We see that $\pil^q$ is a
$\sigma$-minimal partition. Note also that all the components preceding $\pil^q$
must avoid $\sigma$, since $\pil^q$ contains the leftmost occurrence
of~$\sigma$.

We say that an element $\pi_i$ of $\pi$ is an \emph{outlier}, if $i>c$ and
$\pi_i<\pi_c$. In other words, an outlier is an element that does not belong to
$\pil$, but belongs to a $\pi$-block whose leftmost element belongs to~$\pil$.
Note that if $\pi_i$ is an outlier, then the $\pi$-block containing $\pi_i$
intersects a unique component $\pil^j$ of $\pil$, and it is the first block of
of~$\pil^j$; we then say that $\pi_i$ is an \emph{outlier from~$\pil^j$}.

For a component $\pil^j$ of $\pil$, define \emph{the zone} of $\pil^j$, denoted
by $Z_j$, inductively as follows. If $\pil^j$ has no outlier, then $Z_j$ is
empty, otherwise $Z_j$ is the sequence $\pi_g,\pi_{g+1},\dotsc,\pi_h$, where
$\pi_g$ is the leftmost outlier of $\pil^j$ and $\pi_h$ is the rightmost
vertex not belonging to $Z_1\cup Z_2\cup \dotsb\cup Z_{j-1}$. Let
$\pi_d$ be the rightmost vertex of $\pi$ not belonging to any zone. The
zones $Z_1,\dotsc,Z_m$ form a disjoint collection of subsequences whose union
is~$\pi_{d+1},\dotsc,\pi_n$. Each zone $Z_j$ is order-isomorphic to a partition
of the form $1[\rho_1] 1[\rho_2] 1\dotsb 1[\rho_r]$ in which each occurrence of
1 corresponds to an outlier from $\pil^j$, and each $\rho_i$ is a
$\tau$-avoiding partition formed by non-outliers. The generating function of
such partitions is
\[
 Z(x)= \frac{1}{1-xNC(x;\tau)}= \frac{1}{1-\frac{x}{1-C(x;\tau)}}.
\]
Note also that the elements
$\pi_{c+1},\pi_{c+2},\dotsc,\pi_d$ (which do not belong to any zone and do not
contain any outliers) are order-isomorphic to a $\tau$-avoiding partition.

We claim that the generating function of all the non-crossing partitions $\pi$
avoiding $1[\sigma]1[\tau]$, containing $\sigma[\tau]$, having $m$ components in
$\pil$, and with the component $\pil^q$ containing the leftmost occurrence of
$\sigma$ is equal to
\[
 \left(C(x;\sigma) Z(x)\right)^{q-1}
M(x;\sigma)Z(x)\left(C(x;1[\sigma]1)Z(x)\right)^{m-q}M(x;\tau)\frac{1}{
1-C(x;\tau)}.
\]
To see this, note first that each factor $C(x;\sigma)Z(x)$ corresponds to one
of the first $q-1$ components of $\pil$, together with its zone. Next, the
factor $M(x;\sigma)Z(x)$ corresponds to the possible choices for the component
$\pil^q$ and its zone. The factor $C(x;1[\sigma]1)Z(x)$ corresponds to a
component $\pil^i$ for $i>q$, together with its zone. The factor $M(x;\tau)$
corresponds to the elements from $\pi_b$ to $\pi_c$, and the next factor
$(1-C(x;\tau))^{-1}$ corresponds to the elements $\pi_{c+1},\dotsc,\pi_d$.

Summing the above expression for all possible $m\ge 1$ and $q\in[m]$, we obtain
\begin{equation}
 G(x;\sigma,\tau)=\frac{1}{1-C(x;\sigma)Z(x)}M(x;\sigma)Z(x)\frac{1}{1-C(x;1[
\sigma]1)Z(x)}M(x;\tau)\frac{1}{1-C(x;\tau)}.\label{eq-G}
\end{equation}
Using the identity
\[
C(x;1[\rho]1)=x+\frac{x^2}{1-x-C(x;\rho)}=\frac{x}{1-\frac{x}{1-C(x;\rho)}},
\]
which is valid for any connected non-crossing partition~$\rho$, we define two
auxiliary expressions, both of which are symmetric in $\sigma$ and $\tau$:
\begin{align*}
 F_1(x;\sigma,\tau)&=\frac{1}{1-C(x;1[\sigma]1)Z(x)}
=\frac{1}{1-\frac{
x(1-C(x;\sigma))(1-C(x;\tau))} { (1-x-C(x;\sigma))(1-x-C(x;\tau))}}\\
\intertext{and}
F_2(x;\sigma,\tau)&=\frac{Z(x)}{(1-C(x;\sigma)Z(x))(1-C(x;\tau))}\\
&=\frac{1}{1-x-C(x;\sigma)-C(x;\tau)+C(x;\sigma)C(x;\tau)}.
\end{align*}

With this notation, \eqref{eq-G} simplifies into
\[
 G(x;\sigma,\tau)=M(x;\sigma)M(x;\tau)F_1(x;\sigma,\tau)F_2(x;\sigma,\tau),
\]
This makes it clear that $G(x;\sigma,\tau)=G(x;\tau,\sigma)$, completing the
proof for the case when both $\sigma$ and $\tau$ are nonempty.

It remains to deal with the case when $\sigma$ or $\tau$ is empty, i.e., to
show that $1[\tau]1\nceq 11[\tau]$ for any connected~$\tau$. The generating
function of $1[\tau]1$-avoiding non-crossing partitions is equal to
\[
\frac{1}{1-C(x;1[\tau]1)}=\frac{1-x-C(x;\tau)}{1-2x-C(x;\tau)+xC(x;\tau)
}.
\]
Let us now sketch the argument for the pattern $11[\tau]$. Partitions avoiding
$11$ have generating function $1/(1-x)$. Let $\pi$ be a partition containing
$11$ and avoiding $11[\tau]$. Let $I=(a,b)$ be the leftmost occurrence of $11$
in~$\pi$. All the elements $\pi_1,\pi_2,\dotsc,\pi_{b-1}$ belong to distinct
blocks of~$\pi$. An element $\pi_i$ is an \emph{outlier} if $i>b$ and $\pi_i\le
\pi_a$. The elements $\pi_{b+1},\dotsc,\pi_n$ that are not outliers form a
$\tau$-avoiding partition. We may define zones $Z_1,\dotsc,Z_a$ in analogy to
the previous case. All elements of $\pi$ to the right of the leftmost
outlier (inclusive) belong to a unique zone. This yields a generating function
\[
 \frac{1}{1-xZ(x)}\frac{x^2 Z(x)}{1-x}\frac{1}{1-C(x;\tau)},
\]
where the factor $1/(1-x)$ counts the elements between $\pi_a$ and $\pi_b$,
while the factor $(1-C(x;\tau))^{-1}$ counts subpartitions formed by the
elements to the right of $\pi_b$ but to the left of the leftmost zone.

Adding $1/(1-x)$ to the above expression and simplifying shows that
$11[\tau]$-avoiding partitions have the same generating function as
$1[\tau]1$-avoiding partitions.
\end{proof}

\begin{theorem}\label{thm-add1}
 Let $\sigma$ be a connected partition, and let $\tau$ be a partition of the
form $1[\rho]$ for some partition $\rho$. If $\sigma$ and $\tau$ are
nc-equivalent, then $1[\sigma]$ and $\tau1=1[\rho]1$ are nc-equivalent as well.
\end{theorem}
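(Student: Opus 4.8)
The plan is to give both avoidance classes the same (rational) generating function. Keeping the notation $NC(x;\cdot)$ and $C(x;\cdot)$ fixed just before Theorem~\ref{thm-1t1s}, the hypothesis $\sigma\nceq\tau$ is precisely the statement $NC(x;\sigma)=NC(x;\tau)=NC(x;1[\rho])$, so it suffices to prove the two ``duplication'' identities
\begin{align*}
NC(x;1[\sigma])&=\frac{1}{1-x\,NC(x;\sigma)}\qquad\text{for every connected }\sigma,\\
NC(x;1[\rho]1)&=\frac{1}{1-x\,NC(x;1[\rho])}\qquad\text{for every pattern }\rho,
\end{align*}
and then to substitute $NC(x;\sigma)=NC(x;1[\rho])$ into the first, using $\tau1=1[\rho]1$, to obtain $NC(x;1[\sigma])=NC(x;\tau1)$. (One may assume $\sigma,\rho\neq\varnothing$; when $\rho=\varnothing$ the hypothesis forces $\sigma=1$ and the assertion is the immediate $12\nceq11$.)

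For the first identity, take a non-crossing partition with component decomposition $\pi=\pi^{(1)}[\pi^{(2)}]\dotsb[\pi^{(s)}]$. Since $\sigma$ is connected, every occurrence of $\sigma$ lies inside one component; and since $\sigma_1=1$ is the least symbol of $\sigma$, in any occurrence of $1[\sigma]$ the leading symbol lies strictly below the copy of $\sigma$, which is impossible if that copy is anchored at the first block of its component (that block carries the symbol $1$). This yields, first, that $\pi$ avoids $1[\sigma]$ iff $\pi^{(1)}$ avoids $1[\sigma]$ and every later component avoids $\sigma$, and, second, that a connected $\pi=1[\pi_1]1[\pi_2]1\dotsb1[\pi_r]1$ avoids $1[\sigma]$ iff each gap $\pi_i$ avoids $\sigma$. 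From the second, $C(x;1[\sigma])=\sum_{r\ge0}x^{r+1}NC(x;\sigma)^r=x/(1-x\,NC(x;\sigma))$; from the first, together with $NC(x;\sigma)=1/(1-C(x;\sigma))$ (valid for connected $\sigma$), $NC(x;1[\sigma])=1+C(x;1[\sigma])\,NC(x;\sigma)$. These combine to the claimed formula, which in the special case of connected $\rho$ recovers the identity $C(x;1[\rho]1)=x/(1-x\,NC(x;\rho))$ already used in the proof of Theorem~\ref{thm-1t1s}.

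For the second identity, use the first-block decomposition: every connected non-crossing partition is uniquely $\pi=1[\pi_1]1[\pi_2]1\dotsb1[\pi_r]1$, and every non-crossing partition is uniquely $\nu=1[\nu_1]1[\nu_2]1\dotsb1[\nu_r]$, where the $r$ ones form the first block, the $\nu_i$ are arbitrary non-crossing partitions, and $\nu_r$ is the part of $\nu$ following the last first-block vertex. I would prove: $\pi$ avoids $1[\rho]1$ iff $\pi_1[\pi_2]\dotsb[\pi_r]$ avoids $\rho$, and $\nu$ avoids $1[\rho]$ iff $\nu_1[\nu_2]\dotsb[\nu_r]$ avoids $\rho$. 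For $\nu$: the copy of $\rho$ in any occurrence of $1[\rho]$ sits strictly above the leading symbol, hence above the symbol $1$, hence uses only non-first-block vertices, i.e.\ lies in $\nu_1[\nu_2]\dotsb[\nu_r]$ — and conversely any occurrence of $\rho$ there can be anchored at the first vertex of $\nu$. For $\pi$: the two equal outer symbols of an occurrence of $1[\rho]1$ either lie in the first block, in which case (as for $\nu$) the copy of $\rho$ uses only non-first-block vertices and so lies in $\pi_1[\pi_2]\dotsb[\pi_r]$ (conversely, anchor the outer symbols at the first and last first-block vertices), or lie in a non-first block, which by non-crossingness occupies a single gap $\pi_i$, forcing $\pi_i\supseteq\rho$. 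Granting this, the connected $1[\rho]1$-avoiders and all the $1[\rho]$-avoiders are parametrised by the same set of tuples $(\pi_1,\dotsc,\pi_r)$ with $\pi_1[\pi_2]\dotsb[\pi_r]$ avoiding $\rho$, of sizes $(r+1)+\sum_i|\pi_i|$ and $r+\sum_i|\pi_i|$ respectively; hence $C(x;1[\rho]1)=x\,NC(x;1[\rho])$, and since $1[\rho]1$ is connected, $NC(x;1[\rho]1)=\bigl(1-C(x;1[\rho]1)\bigr)^{-1}$, which is the second identity.

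The generating-function computations are short once the two structural characterisations are in hand, so the real content is those characterisations. The key — and mildly delicate — point throughout is the status of the globally smallest symbol $1$: it is exactly because the leading symbol of $1[\sigma]$, $1[\rho]$, or $1[\rho]1$ is minimal that an occurrence can always be re-anchored onto, or forced away from, the first block, which is what makes the component and first-block recursions close. After that, everything is bookkeeping of the same flavour as in the proof of Theorem~\ref{thm-1t1s}.
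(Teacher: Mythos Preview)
Your proof is correct and follows the same strategy as the paper: derive the two generating-function identities $NC(x;1[\sigma])=\bigl(1-x\,NC(x;\sigma)\bigr)^{-1}$ (for connected $\sigma$) and $NC(x;\tau1)=\bigl(1-x\,NC(x;\tau)\bigr)^{-1}$ (for $\tau=1[\rho]$), then substitute the hypothesis $NC(x;\sigma)=NC(x;\tau)$. The paper obtains these identities a bit more directly---for the first it uses the first-block decomposition of an arbitrary non-crossing partition in one step (rather than passing through the component decomposition), and for the second it simply observes that deleting the last symbol of a connected $\pi$ sets up a bijection with all $\tau$-avoiders---but your route via $C(x;1[\rho]1)=x\,NC(x;1[\rho])$ is equivalent. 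One cosmetic slip: these generating functions are in general algebraic, not rational, so drop the parenthetical ``(rational)'' in your opening sentence.
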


\begin{proof}
Let $NC(x;\pi)$ denote the generating function of the set of $\pi$-avoiding
non-crossing partitions. By assumption, we have $NC(x;\sigma)=NC(x;\tau)$.

Note that a partition $\pi$ avoids $1[\sigma]$ if and only if it can be
written as 
\[\pi=1[\pi^1]1[\pi^2]1\dotsb1[\pi^k]\]
for some $k$, where each $\pi^i$
is a $\sigma$-avoiding partition (here we use the fact that $\sigma$ is
connected). Therefore, we have the identity
\[
 NC(x;1[\sigma])=\frac{1}{1-xNC(x;\sigma)}.
\]

Consider now the partition $\tau1=1[\rho]1$. Since this partition is connected,
we see that $\pi$ avoids $\tau1$ if and only if each component of $\pi$ avoids
$\tau1$. Moreover, a connected partition $\pi=\pi_1,\dotsc,\pi_n$ avoids
$\tau1$ if and only if $\pi_1,\dotsc,\pi_{n-1}$ avoids $\tau$ (here we use the
fact that $\tau$ has only one occurrence of the symbol $1$, and that $\pi_n=1$
because $\pi$ is connected). This implies the identity
\[
 NC(x;\tau1)=\frac{1}{1-xNC(x;\tau)}.
\]

Since $NC(x;\sigma)=NC(x;\tau)$, we get that $NC(x;1[\sigma])=NC(x;\tau1)$.
\end{proof}

As an example of an application of Theorem~\ref{thm-add1}, consider the
partitions $\sigma=11$ and $\tau=12$. Since these two partitions are
nc-equivalent, the theorem implies
that $1[\sigma]=122$ and $\tau1=121$ are nc-equivalent as well. We may in fact
apply the theorem again to this new pair of patterns, and obtain that
$1221\nceq 1232$, and a third application reveals that $12332\nceq12321$.
Generalizing this example into a straightforward induction argument, we get the
next corollary.

\begin{corollary}\label{cor-add1}
For any $k$, the pattern $12\dotsb(k-1) k k (k-1) \dotsb 3 2$ is nc-equivalent
to the pattern $1 2 \dotsb (k-1) k (k-1) \dotsb 2 1$, and the pattern $1 2
\dotsb (k-1) k k (k-1)\dotsb  2 1$ is nc-equivalent to $ 1 2 \dotsb k (k+1) k
\dotsb 3 2$.
\end{corollary}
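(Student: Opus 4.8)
The plan is to derive Corollary~\ref{cor-add1} from Theorem~\ref{thm-add1} by a straightforward induction, exactly mirroring the worked example in the paragraph preceding the corollary. The base case for the first family is the nc-equivalence $11\nceq 12$, which holds because $p_n(1212,11)=p_n(1212,12)$; indeed a non-crossing partition avoids $11$ iff every block is a singleton and avoids $12$ iff it is a single block, and in both cases there is exactly one such partition of each size, so the generating functions agree (alternatively this is recorded in the small-case tables). Both $11=1[\rho]$ with $\rho$ empty and $12$ is connected, so Theorem~\ref{thm-add1} applies with $\sigma=11$, $\tau=12$, yielding $1[11]=122\nceq 121=121=12\,1$.

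Next I would set up the induction. Claim: for every $k\ge 2$, the pattern $\alpha_k=12\dotsb(k-1)kk(k-1)\dotsb 2$ is nc-equivalent to $\beta_k=12\dotsb(k-1)k(k-1)\dotsb 1$. The example already gives $k=2$ ($122\nceq 121$) and $k=3$ ($1221\nceq 1232$ — here one checks directly that $1221=1[122]$... more precisely that in the notation of the paper $1[\alpha_{k-1}]$ and $\beta_{k-1}1$ coincide with $\alpha_k$ and $\beta_k$ up to relabelling). For the inductive step, suppose $\alpha_{k-1}\nceq\beta_{k-1}$. I would check that $\alpha_{k-1}$ is connected (its last symbol equals its first block, namely the symbol $1$ does not reappear — wait, $\alpha_{k-1}$ starts $12\dots$; its last element is $2$, which does not lie in the first block, so in fact $\alpha_{k-1}$ is \emph{not} connected). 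The cleaner route: observe that $\beta_{k-1}=12\dotsb(k-1)(k-2)\dotsb 1$ \emph{is} connected (its last symbol is $1$), and that $\beta_{k-1}$ has the form $1[\rho_{k-1}]$ where $\rho_{k-1}=12\dotsb(k-2)(k-2)\dotsb 1$ read on the symbols $2,\dots$ — i.e. $\rho_{k-1}+1$ equals the middle of $\beta_{k-1}$. Then Theorem~\ref{thm-add1} applied with ``$\sigma$''$=\alpha_{k-1}$ ... no: I must apply it with the connected partition in the role of $\sigma$ and the $1[\rho]$ partition in the role of $\tau$. So I take $\sigma=\beta_{k-1}$ (connected), $\tau=\alpha_{k-1}$, and I need $\alpha_{k-1}$ to be expressible as $1[\rho]$; but $\alpha_{k-1}$ starts with $12$, not with $11$, so it is \emph{not} of the form $1[\rho]$.

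This is the point that needs care, and it will be the main obstacle: choosing the right pair at each inductive step so that Theorem~\ref{thm-add1}'s hypotheses (one partner connected, the other of the form $1[\rho]$) are met, and verifying that the conclusions $1[\sigma]$ and $1[\rho]1$ are literally the next patterns $\alpha_k,\beta_k$ (after the canonical relabelling implicit in the $[\cdot]$ notation). Working it out: $\alpha_k=12\dotsb k\,k(k-1)\dotsb 2$. Writing $\alpha_k=1[\gamma]$ forces the prefix to be $11$, which it is not; writing $\alpha_k=\gamma 1$ forces the suffix to be $1$, which it is not either — so $\alpha_k$ is genuinely in ``intermediate'' position. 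Instead, the right bookkeeping is: $\beta_{k}=1[\alpha_{k-1}']1$ and $\alpha_k=1[\beta_{k-1}']$ for suitable relabelled copies, OR one interleaves two sub-inductions (one producing the $\alpha$'s, one the $\beta$'s) as the example $11\to(122,121)\to(1221,1232)\to(12332,12321)$ suggests — note the example alternates between the ``both families'' and keeps feeding each output back in. So the clean statement to induct on is the \emph{pair}: for each $k$, $\alpha_k\nceq\beta_k$ AND $\alpha_k$ (resp. a relabelling) is connected while $\beta_k=1[\text{something}]$ or vice versa, tracking which of the two is in $1[\cdot]$-position and which is connected; Theorem~\ref{thm-add1} then swaps these roles at each step, which is exactly why the example needed three applications to cycle back to a recognizable form. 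Once this alternation is pinned down, each step is a one-line application of Theorem~\ref{thm-add1} plus a relabelling check, and the second sentence of the corollary ($12\dotsb(k-1)kk(k-1)\dotsb 1\nceq 12\dotsb k(k+1)k\dotsb 2$) falls out of the same chain (it is the $1[\sigma]$ conclusion at the step where $\sigma$ is the first pattern of the first family). I would present the induction on this paired invariant, do the base case $11\nceq 12$ explicitly, and leave the relabelling verifications as routine.
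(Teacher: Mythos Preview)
Your overall strategy --- iterate Theorem~\ref{thm-add1}, alternating between the two families so that at each step one partner is connected and the other has the form $1[\rho]$ --- is exactly the paper's approach. The confusion in the middle of your write-up stems from a single notational slip, and once that is fixed the argument becomes completely routine.

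The error is your claim that ``$\alpha_{k-1}$ starts with $12$, not with $11$, so it is \emph{not} of the form $1[\rho]$.'' Recall that in the paper's convention $[\rho]$ denotes $\rho$ with all symbols shifted up, so $1[\rho]=1(\rho+1)$ always starts with $12$ for nonempty~$\rho$; the condition ``$\tau$ is of the form $1[\rho]$'' simply means that the symbol $1$ occurs exactly once in~$\tau$. With this in mind, the pattern $\alpha_k=12\dotsb(k-1)kk(k-1)\dotsb 2$ has a unique occurrence of $1$ and is therefore $1[\rho]$ with $\rho=12\dotsb(k-1)(k-1)\dotsb 1$, while $\beta_k=12\dotsb k(k-1)\dotsb 1$ ends in~$1$ and is connected. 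So the very application you abandoned --- $\sigma=\beta_{k-1}$, $\tau=\alpha_{k-1}$ --- is legitimate, and it yields
\[
1[\beta_{k-1}]=12\dotsb k(k-1)\dotsb 2
\quad\text{and}\quad
\alpha_{k-1}\,1=12\dotsb(k-1)(k-1)\dotsb 21,
\]
which are precisely the two patterns of the second family. One more application (now with the roles swapped, since $12\dotsb(k-1)(k-1)\dotsb 21$ is connected and $12\dotsb k(k-1)\dotsb 2$ has a unique~$1$) gives $\alpha_k\nceq\beta_k$. This two-step cycle, seeded by $11\nceq12$, is the whole proof; no further ``relabelling verification'' or paired invariant is needed beyond reading off these identities.
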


Note that the partitions avoiding $1 2 \dotsb (k-1) k (k-1) \dotsb
2 1$ are precisely those that do not have a $k$-tuple of pairwise nested blocks.

\begin{theorem}\label{thm-3nest}
The partitions $12333$ and $12321$ are nc-equivalent. In other words, the
non-crossing partitions whose every block has size at most two except possibly
the first two blocks are equinumerous with the non-crossing partitions that
have no 3-nesting.
\end{theorem}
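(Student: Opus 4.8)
The statement asserts $12333\nceq 12321$, i.e. that the pair $(1212,12333)$ is equivalent to the pair $(1212,12321)$. The natural strategy is to bring the machinery of the preceding theorems to bear and reduce the claim to a generating-function identity, computing $NC(x;12333)$ and $NC(x;12321)$ separately and checking that they coincide. First I would observe that $12321 = 1[\rho]1$ with $\rho = 121 = 1[1]1$, so Theorem~\ref{thm-add1} (or rather its iterated form, Corollary~\ref{cor-add1}) governs $12321$: it is nc-equivalent to $1[\sigma]$ where $\sigma = 1221 = 1[12321\text{ shrunk}]$… more carefully, $12321\nceq 1[\sigma]$ whenever $\sigma\nceq 121$ and $\sigma$ is connected. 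So the target would follow if I can exhibit a connected $\sigma$ with $12333\nceq 1[\sigma]$ and $\sigma\nceq 121$. The pattern $12333$ is itself of the form $1[\pi]$ with $\pi = 1222$, and $1222 = 1[222]$; but $12333$ is not connected in the $\sigma[\tau]$ sense in a way that directly helps, so I expect the cleaner route is a direct enumeration.

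**Key steps.** The concrete plan is: (1) Derive a formula for $NC(x;12333)$. A non-crossing partition avoids $12333$ iff every block of size at least four is a "first" block in a suitable sense — more precisely, decompose along the first block: if $\pi$ is connected with first block $B$, then $\pi$ avoids $12333$ iff, writing $\pi$ as $1$'s interleaved with the blocks nested inside, the part sitting strictly to the right of (and below) the first occurrence of a second distinct symbol must avoid $1333$ = $1[333]$, i.e. must have all blocks of size at most three after the first, and so on recursively. I would set up a system of functional equations tracking the structure $\pi = 1[\pi^1]1[\pi^2]1\cdots$, using that each $\pi^i$ must avoid $333$ for $i\ge 2$ (three equal symbols strictly inside give $12333$ together with two copies of the outer $1$) while $\pi^1$ may contain anything $12333$-avoiding. (2) Derive a formula for $NC(x;12321)$. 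Here $12321 = 1[121]1$ is connected, so $NC(x;12321) = 1/(1-C(x;12321))$, and a connected $\pi$ avoids $12321$ iff deleting its last element leaves a partition avoiding $1232 = 1[121]$; iterate this and use that avoiding $1[121]$ means $\pi = 1[\pi^1]1[\pi^2]\cdots$ with each $\pi^i$ avoiding $121$, and a $121$-avoider is weakly increasing, i.e. of the form counted by compositions. This gives $C(x;12321)$ explicitly as a rational function, hence $NC(x;12321)$. (3) Compare the two rational functions and verify equality; equivalently, verify the two decompositions produce matching coefficient recurrences.

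**Main obstacle.** The delicate point is step (1): $12333$ is neither connected nor of the simple "append a $1$" shape, so none of Theorems~\ref{thm-subst1}--\ref{thm-add1} applies off the shelf, and the recursive structure of $12333$-avoiding non-crossing partitions is genuinely two-layered — one must carefully separate the behaviour of the outermost block (which imposes the constraint that at most one nested region may itself be "rich") from the inner regions, which are more restricted. Getting the functional equation exactly right, including the boundary cases (partitions with fewer than two distinct symbols, or whose first block has size $\le 2$), is where the real work lies; once both generating functions are in hand the comparison should be a short algebraic check. An alternative to step (1) that I would try in parallel is to find a connected partition $\sigma$ with $12333 \nceq 1[\sigma]$ via Theorem~\ref{thm-add1} — noting $12333$ is not of the form $1[\rho]1$, this does not apply directly either, so I would instead look for a bijective argument in the spirit of Theorem~\ref{thm-cycle}, decomposing a $12333$-avoider along its unique "heavy" region and reshuffling. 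But I expect the generating-function route to be the most reliable.
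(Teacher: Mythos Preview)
Your overall strategy --- compute $NC(x;12321)$ and $NC(x;12333)$ separately and check they agree --- is exactly what the paper does, and your plan for $12321$ via the iterated identity $C(x;1[\tau]1)=x/(1-xNC(x;\tau))$ is correct; it yields $NC(x;12321)=(1-3x+x^2)/((1-x)(1-3x))$.

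The gap is in your decomposition for $12333$. Your claim that in $\pi=1[\pi^1]1[\pi^2]1\cdots$ the piece $\pi^1$ may be ``anything $12333$-avoiding'' is false: take $\pi^1=1222$, which avoids $12333$, yet $1[\pi^1]1=123331$ contains $12333$. The correct constraint on the first nonempty $\pi^i$ is that it avoid $1222$, not $12333$. Your justification for the $i\ge 2$ constraint is also off: two copies of the outer $1$ together with three equal inner symbols give $11222$, not $12333$; the real reason later $\pi^i$'s must avoid $111$ is that one outer $1$, one element of an \emph{earlier} nonempty $\pi^j$, and the three equal symbols combine to $12333$. The paper circumvents this tangle by decomposing along blocks $1$ \emph{and} $2$ simultaneously: a non-crossing $12333$-avoider with at least two blocks is exactly one of the form
\[
1^a\,2[\rho^1]2[\rho^2]\cdots 2[\rho^k]\,1[\sigma^1]1[\sigma^2]\cdots 1[\sigma^\ell]
\]
with each $\rho^i$, $\sigma^j$ a non-crossing $111$-avoider (counted by Motzkin numbers), giving
\[
NC(x;12333)=\frac{1}{1-x}+\frac{x^2\,NC(x;111)}{(1-x)\bigl(1-xNC(x;111)\bigr)^2},
\]
from which equality with $NC(x;12321)$ is a short algebraic check.
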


\begin{proof}
 We again let $NC(x;\pi)$ denote the generating function of non-crossing
$\pi$-avoiding partitions, and let $C(x;\pi)$ be the generating function of
nonempty connected non-crossing $\pi$-avoiding partitions. As we have already pointed
out before, for a connected partition $\pi$, we have the identity
\[
NC(x;\pi)=\frac{1}{1-C(x;\pi)},
\]
and for arbitrary $\tau$, we have the identity
\[
 C(x;1[\tau]1)=\frac{x}{1-xNC(x;\tau)}.
\]

Combining these two identities and simplifying, we deduce that
\[
 NC(x;12321)= \frac{1-3x+x^2}{(1-x)(1-3x)}=1+\sum_{n\ge 1} \frac{3^{n-1}+1}{2}
x^n.
\]

Let us now turn to the pattern $\tau=12333$. The generating function for the empty partition together with those that have a single block is of course~$1/(1-x)$. On the other hand,
a non-crossing partition with at least two blocks avoids $\tau$ if and only if
it has a decomposition of the form $11\dotsb
12[\rho^1]2[\rho^2]2\dotsb2[\rho^k]1[\sigma^1]
1[\sigma^2]1\dotsb1[\sigma^{\ell}]$ for some $k\ge 1$ and $\ell\ge 0$, where
the $\rho^i$ and $\sigma^j$ are $111$-avoiding non-crossing partitions. It is
known that non-crossing $111$-avoiding partitions are counted by the Motzkin
numbers (\cite[sequence A001006]{oeis}), and their generating function satisfies
the identity
\[
 NC(x;111)=1+xNC(x;111)+(xNC(x;111))^2.
\]
We deduce that
\[
 NC(x;12333)=\frac{1}{1-x}+ \frac{x^2 NC(x;111)}{(1-x)(1-xNC(x;111))^2},
\]
from which the result easily follows.
\end{proof}

We remark that the counting function of $12333$-avoiding non-crossing
partitions (and therefore also $12321$-avoiding non-crossing partitions) has
been encountered before in different contexts (see \cite[sequence
A124302]{oeis}).

We may again use Theorem~\ref{thm-add1} iteratively to extend the equivalence
$12333\nceq 12321$ to an infinite sequence of equivalences.

\begin{corollary}\label{cor-sequence2}
For every $k\ge 3$, the pattern $12\cdots k(k+1)k(k-1)\cdots2$ is
nc-equivalent to $12\cdots(k-1)kkk(k-2)\cdots1$, and the pattern
$12\cdots(k-1)k(k-1)\cdots1$ is nc-equivalent to
$12\cdots kkk(k-2)\cdots2$.
\end{corollary}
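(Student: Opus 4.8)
The plan is to obtain both sequences of nc-equivalences by iterating Theorem~\ref{thm-add1}, starting from Theorem~\ref{thm-3nest}, in exactly the manner illustrated before Corollary~\ref{cor-add1}. The two families of patterns in the statement are not treated separately; instead they interleave into a single chain of nc-equivalences in which every link is produced by one application of Theorem~\ref{thm-add1}.

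It is convenient to name the patterns. For $k\ge3$ put
\[
a_k=12\dotsb(k-1)\,k\,(k-1)\dotsb21,\qquad b_k=12\dotsb k\,(k+1)\,k\,(k-1)\dotsb2,
\]
\[
c_k=12\dotsb k\,k\,k\,(k-2)\dotsb2,\qquad d_k=12\dotsb(k-1)\,k\,k\,k\,(k-2)\dotsb1,
\]
so that the corollary asserts precisely $b_k\nceq d_k$ and $a_k\nceq c_k$ for all $k\ge3$. Inspecting these words, one checks the four identities
\[
1[a_k]=b_k,\qquad c_k\,1=d_k,\qquad 1[d_k]=c_{k+1},\qquad b_k\,1=a_{k+1},
\]
and also the structural facts that $a_k$ and $d_k$ are connected (their last symbol is $1$, hence lies in the first block, which for a non-crossing partition is exactly the definition of connectedness), whereas $b_k$ and $c_k$ contain the symbol $1$ only in their first position, so each is of the form $1[\rho]$; in particular $b_k=1[a_k]$. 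Note that it is precisely to keep $c_k$ and $b_k$ of this form that their descending tails stop at $2$ rather than at $1$, the extra trailing $1$ being reinstated by the operations $c_k\,1=d_k$ and $b_k\,1=a_{k+1}$.

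With this in hand the induction is immediate. The base case is $k=3$, where $a_3\nceq c_3$ reads $12321\nceq12333$, which is Theorem~\ref{thm-3nest}. For the inductive step, assume $a_k\nceq c_k$. Applying Theorem~\ref{thm-add1} with $\sigma=a_k$ (connected) and $\tau=c_k$ (of the form $1[\rho]$) gives $1[a_k]\nceq c_k\,1$, that is, $b_k\nceq d_k$ --- the first claim of the corollary for this $k$. Applying Theorem~\ref{thm-add1} a second time, now with $\sigma=d_k$ (connected) and $\tau=b_k=1[a_k]$, gives $1[d_k]\nceq b_k\,1$, that is, $c_{k+1}\nceq a_{k+1}$, which, nc-equivalence being symmetric, is $a_{k+1}\nceq c_{k+1}$. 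This advances the induction and establishes both claims for every $k\ge3$.

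The only thing requiring care is bookkeeping: verifying the four word identities and tracking which pattern at each link is the connected one and which is of the form $1[\rho]$ --- the ``peak'' patterns $a_k$ and $d_k$ always playing the former role and $b_k,c_k$ the latter. I anticipate no genuine obstacle, the one mild nuisance being the degenerate tail segments at $k=3$ (where, e.g., the factor $(k-2)\dotsb2$ in $c_k$ is empty), which are to be read as empty sequences.
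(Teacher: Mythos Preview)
Your proposal is correct and follows exactly the approach the paper intends: the paper simply states that one iterates Theorem~\ref{thm-add1} starting from Theorem~\ref{thm-3nest}, and you have carefully written out the interleaving induction (with the patterns $a_k,b_k,c_k,d_k$ and the four word identities) that makes this precise. Your bookkeeping of which pattern is connected and which is of the form $1[\rho]$ at each step is accurate, and your treatment of the degenerate tail at $k=3$ is the right reading.
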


As an application of the above theorems, one may completely identify the Wilf-equivalence classes corresponding to $\{1212,\tau\}$, where $\tau$ is of
size at most six. For example, we have the following equivalences in the cases
when $\tau$ is of size four or five, while the corresponding table for size
six can be found on the second author's webpage~\cite{web}.

\begin{itemize}
\item \mbox{\cite{MS1}} $1232\nceq1213\nceq1221\nceq1122$ by
Theorem~\ref{thm-permute}, Theorem~\ref{thm-1t1s} and Corollary~\ref{cor-add1},

\item $1123\nceq1223\nceq1233$ by Theorem~\ref{thm-permute},

\item \mbox{\cite{MS2}} $1211\nceq1121$ by Theorem~\ref{thm-conswap},

\item \mbox{\cite{MS3}} $1112\nceq1222$ by Theorem~\ref{thm-permute},

\item $1234$,

\item $1231$,

\item $1111$,
\end{itemize}

\begin{itemize}
\item
$12332\nceq12333\nceq12133\nceq11123\nceq12213\nceq12321\nceq12223\nceq11232$
by Theorem~\ref{thm-permute}, Theorem~\ref{thm-1t1s}, Corollary~\ref{cor-add1}
and Corollary~\ref{cor-sequence2},

\item $12234\nceq12334\nceq12344\nceq11234$ by Theorem~\ref{thm-permute},

\item $12113\nceq12322\nceq12232\nceq11213$ by Theorem~\ref{thm-permute} and
Theorem~\ref{thm-conswap},

\item $12233\nceq11233\nceq11223$ by Theorem~\ref{thm-permute},

\item $12343\nceq12134\nceq12324$ by Theorem~\ref{thm-permute},

\item $11122\nceq12221\nceq11222$ by Theorem~\ref{thm-permute} and
Theorem~\ref{thm-1t1s},

\item $11121\nceq11211\nceq12111$ by Theorem~\ref{thm-conswap},

\item $12331\nceq12231$ by Theorem~\ref{thm-subst2} using
Theorem~\ref{thm-permute},

\item $12342\nceq12314$ by Theorem~\ref{thm-permute},

\item $11231\nceq12311$ by Theorem~\ref{thm-conswap},

\item $12211\nceq11221$ by Theorem~\ref{thm-conswap},

\item $12222\nceq11112$ by Theorem~\ref{thm-permute},

\item $12131$,

\item $12341$,

\item $12345$,

\item $11111$.
\end{itemize}

\section{Avoiding two patterns of size four}\label{sec-44}

Let us say that a pair of patterns $(\sigma,\tau)$ is a \emph{$(4,4)$-pair}, if
$\sigma$ and $\tau$ are two distinct partitions of size four. In this section,
we will provide the full classification of equivalences among all $(4,4)$-pairs.

\subsection{Previously known equivalences}
Equivalences among sets of patterns have been previously studied in a series of
papers by Mansour and Shattuck~\cite{MS3,MS2,MS4,MS5,MS1}, and some of
the equivalence classes of $(4,4)$-pairs have been identified. Specifically, the
following results are known.

 \begin{theorem}[Theorem 1.1 in \cite{MS2}]
 If $n \geq 0$, then $p_n(u,v)=w_{n}$ for the following pairs $(u,v)$:
\[
(1211,1212), (1121,1212), (1121,1221),
(1112,1123), (1122,1123).
\]
 The generating function for the sequence $w_n$ is given by
 \begin{align*}
 \sum_{n \geq 0}w_nx^n=\frac{(1-x)^2-\sqrt{1-4x+2x^2+x^4}}{2x^2}.
 \end{align*}
 \end{theorem}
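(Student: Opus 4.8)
The plan is to establish all five equalities at once by exhibiting, for each pair $(u,v)$ in the list, a common structural decomposition of the $(u,v)$-avoiding partitions that is governed by the same functional equation. First I would split off the easy reductions: several of these pairs become accessible through the machinery of Section~\ref{sec-3k} or Section~\ref{sec-nc} once one observes which of the two patterns controls the block structure. For instance, $(1211,1212)$ and $(1121,1212)$ both contain $1212$, so a $(u,v)$-avoider is a non-crossing partition avoiding a single extra pattern of size four, and one can attack it with the generating-function identities $NC(x;\pi)=1/(1-C(x;\pi))$ and $C(x;1[\tau]1)=x/(1-xNC(x;\tau))$ from Theorems~\ref{thm-1t1s} and~\ref{thm-add1}. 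The remaining pairs — $(1121,1221)$, $(1112,1123)$, $(1122,1123)$ — do not both lie inside the non-crossing class, so a separate argument is needed for at least one representative of each equivalence class; the natural route is a direct recursive decomposition of the canonical sequential form by peeling off the first block (as in the proof of Theorem~\ref{t1}) or by isolating the leftmost occurrence of the smaller pattern.

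The core of the argument is to show that every one of these avoidance classes satisfies the single algebraic equation
\[
x^2 W(x)^2 - (1-x)^2 W(x) + 1 = 0,
\]
whose solution with $W(0)=1$ is exactly $\bigl((1-x)^2-\sqrt{1-4x+2x^2+x^4}\bigr)/(2x^2)$. For each pair I would derive this by classifying a $(u,v)$-avoiding partition $\pi$ according to the size and position of its first block, or according to whether it is connected (in the sense of Section~\ref{sec-nc}) and, if connected, according to the structure $\pi=1[\pi^1]1[\pi^2]1\cdots$; in each case the subwords that appear are themselves $(u,v)$-avoiders or members of a slightly simpler sub-class whose generating function is elementary (e.g.\ $1/(1-x)$ for blocks of bounded count, or $NC(x;111)$, the Motzkin generating function, where a ``no block of size $\ge 3$ after the first one'' condition appears). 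Bookkeeping the contributions of these pieces yields a quadratic in $W(x)$, and checking that the quadratic coincides with the one above (equivalently, that $\sum w_n x^n$ satisfies $W = 1 + \text{(rational in }x, W)$ with the stated coefficients) finishes that pair.

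The main obstacle is the third group, $(1121,1221)$ and its partners, where neither forbidden pattern forces the non-crossing property, so the clean substitution-decomposition framework of Section~\ref{sec-nc} does not directly apply; here I expect to need an explicit hands-on decomposition of the canonical word, most delicately handling the interaction between an occurrence of $112$ (shared sub-pattern of $1121$ and $1122$) and the position of the fourth symbol, and then verifying by hand that the resulting system collapses to the same quadratic. A secondary, purely administrative difficulty is confirming that the square-root expression is the correct branch and that the small-$n$ values match ($w_0=w_1=1$, $w_2=2$, $w_3=5$, $w_4=14$, etc.), which I would check by direct enumeration of $P_n(u,v)$ for $n\le 5$ for one representative pair. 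Once the quadratic is established for one pair in each equivalence class and the two or three cross-class identifications are made via the Section~\ref{sec-nc} theorems, the common generating function follows for all five pairs simultaneously.
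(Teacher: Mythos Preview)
The paper does not prove this theorem at all: it is quoted verbatim from~\cite{MS2} in the ``Previously known equivalences'' subsection, with no argument supplied. So there is no in-paper proof to compare against; what you have written is an independent attempt.

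As an independent attempt, the outline is reasonable but there are two concrete errors and one structural gap. First, the quadratic you state is wrong. From
\[
W(x)=\frac{(1-x)^2-\sqrt{1-4x+2x^2+x^4}}{2x^2}
\]
one gets, after squaring $2x^2W-(1-x)^2=-\sqrt{\,\cdot\,}$ and simplifying,
\[
x^2 W(x)^2 - (1-x)^2 W(x) + (1-x) = 0,
\]
with constant term $1-x$, not $1$. Your version fails already at the $x^1$ coefficient. Second, your sanity check $w_4=14$ is wrong: the sequence begins $1,1,2,5,13,\dotsc$ (direct count: of the $15$ partitions of $[4]$, only $1211$ and $1212$ themselves are excluded by the pair $(1211,1212)$), so $w_4=13$. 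These are not the Catalan numbers, and assuming they are would derail any numerical verification.

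The structural gap is that the proposal never carries out a single decomposition to the point of producing an equation, let alone the correct quadratic. Phrases like ``I expect to need an explicit hands-on decomposition'' and ``verifying by hand that the resulting system collapses'' are precisely where all the content lives for the pairs not involving $1212$; for $(1112,1123)$ and $(1122,1123)$ in particular, neither the non-crossing framework nor the $(3,k)$-pair machinery of Section~\ref{sec-3k} applies directly, and you have not indicated what the actual recursive structure is. Until at least one of those decompositions is written down and shown to yield $x^2W^2-(1-x)^2W+(1-x)=0$, this remains a plan rather than a proof.
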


 \begin{theorem}[Theorem 1.1 in \cite{MS3}]
 If $n \geq 0$, then $p_n(u,v)=L_n$ for the following pairs $(u,v)$:
\[
(1222,1212), (1112,1212), (1211,1221), (1222,1221).
\]
 The generating function for the sequence $L_n$ is given by
 \begin{align*}
 \sum_{n\geq0}L_nx^n=\frac{1-3x+\sqrt{1-2x-3x^2}}{2(1-3x)}.
 \end{align*}
 \end{theorem}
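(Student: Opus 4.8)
The statement has two parts: that the four listed pattern pairs are Wilf-equivalent, and that the common sequence $L_n$ has the displayed algebraic generating function. My plan is to separate the two pairs containing $1212$ (whose avoiders are the non-crossing partitions) from the two containing $1221$ (whose avoiders are the non-nesting partitions), cut down the number of genuinely independent cases, and then compute one representative generating function explicitly.

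\textbf{Step 1: reduce the cases.} I would first note that $(1222,1212)\sim(1112,1212)$ requires no new argument: decomposing into connected components gives $1112=111[1]$ and $1222=1[111]$, so Theorem~\ref{thm-permute}, applied to the transposition of these two components, yields $111[1]\cceq1[111]$, hence $1112\nceq1222$, i.e.\ $p_n(1212,1112)=p_n(1212,1222)$ for all $n$. After this it only remains to link the class $(1212,1222)$ with the two non-nesting classes $(1221,1222)$ and $(1221,1211)$, and to evaluate their common generating function.

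\textbf{Step 2: the representative class.} Next I would observe that a partition avoids $1222$ exactly when every block other than the first has size at most two, since a block of size $\ge3$ whose minimum is $\ge2$ produces an occurrence $1vvv$ together with the initial symbol $1$. Thus a $(1212,1222)$-avoider is a non-crossing partition of this shape, and I would decompose it by the block $B\ni1$: if $B$ has size $r$, its elements split the remaining vertices into $r$ intervals, and two elements in distinct intervals lying in one block would cross $B$, so each interval independently carries a non-crossing partition all of whose blocks have size at most two --- a non-crossing partial matching, counted by the Motzkin generating function $M(x)=1+xM(x)+x^2M(x)^2$. This gives
\[
\sum_{n\ge0}p_n(1212,1222)x^n=\frac{1}{1-xM(x)},
\]
and substituting the closed form of $M(x)$ and rationalizing the radical should turn the right-hand side into $\bigl(1-3x+\sqrt{1-2x-3x^2}\bigr)/\bigl(2(1-3x)\bigr)$, the asserted expression.

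\textbf{Step 3: the non-nesting pairs, and the main obstacle.} Finally, for $(1221,1222)$ and $(1221,1211)$ I would run the analogous program: describe the class structurally (for $1222$, the same ``one possibly large first block, all other blocks of size $\le2$'' shape, now subject to non-nesting instead of non-crossing; for $1211$, a condition on values occurring at least three times that has no block-size shortcut), set up the corresponding decomposition --- now organized around the block of $1$ or the innermost arc, and requiring more care because non-nesting partitions do not break into independent pieces as cleanly as non-crossing ones --- solve it, and verify that the generating function is again the displayed one. Alternatively, one might deduce $(1221,1222)\sim(1212,1222)$ from a non-crossing/non-nesting bijection that preserves which block contains $1$ (so that the ``large block'' constraint is transported faithfully), and $(1221,1211)\sim(1221,1222)$ from a non-nesting analogue of the commutation lemmas of Section~\ref{sec-nc}. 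The Motzkin computation in Step~2 is routine; the hard part will be twofold: first, handling the $1211$-constraint, which lacks a clean combinatorial normal form and whose recurrence has to be found from scratch, and second, making the non-crossing versus non-nesting bookkeeping agree closely enough to conclude that the $1212$-pairs and the $1221$-pairs genuinely form one equivalence class rather than two classes that merely share a similar-looking formula.
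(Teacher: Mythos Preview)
The paper does not prove this theorem; it is quoted verbatim as Theorem~1.1 of~\cite{MS3} in the ``previously known equivalences'' subsection, so there is no in-paper argument to compare against. I can, however, assess your proposal on its own merits and against the tools the paper does provide.

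Your Steps~1 and~2 are correct. The decomposition $1112=111[1]$, $1222=1[111]$ together with Theorem~\ref{thm-permute} gives $(1212,1112)\sim(1212,1222)$, and the Motzkin computation in Step~2 is right: with $M(x)$ the Motzkin generating function one gets $\sum p_n(1212,1222)x^n=1/(1-xM(x))$, and rationalizing indeed produces $\bigl(1-3x+\sqrt{1-2x-3x^2}\bigr)/\bigl(2(1-3x)\bigr)$.

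In Step~3 you miss a shortcut already available in the paper. By Fact~\ref{fac11} with $k=2$ and $\tau$ empty, the patterns $1212$ and $1221$ are strongly partition equivalent; since $1222=12^3$ has the shape $12\cdots(k-1)k^a$ required by Theorem~\ref{thm-strong}, that theorem immediately yields $(1222,1212)\sim(1222,1221)$. This links three of the four pairs with no further work, and in particular bypasses the non-crossing/non-nesting transfer you were worried about for $(1221,1222)$.

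That leaves $(1211,1221)$, which you correctly identify as the crux. Here the paper's machinery runs out: although $1211$ and $1112$ are strongly partition equivalent (Fact~\ref{fac33}), the partner pattern $1221$ is \emph{not} of the form $12\cdots(k-1)k^a$, so Theorem~\ref{thm-strong} does not apply, and none of the non-crossing commutation lemmas of Section~\ref{sec-nc} help for a $1221$-pair. Your two proposed routes (a tailored non-nesting bijection, or a direct recurrence for $(1221,1211)$-avoiders) are plausible, but you do not carry either out, and this is a genuine gap: the equivalence $(1211,1221)\sim(1222,1221)$ really does require the separate argument of~\cite{MS3} or something of comparable strength. So your proposal establishes three of the four equalities and the generating-function formula, but is incomplete at exactly the point you flag.
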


Furthermore, results from \cite{MS1} imply the following fact.

\begin{fact}
 These pairs are all equivalent: $(1112,1213)$,
$(1122,1212)$, $(1123,1213)$, $(1123,1223)$, $(1211,
 1231)$, $(1212,1213)$,
 $(1221,1231)$, $(1222,1223)$, $(1222,1232)$, $(1212,1232)$, and $(1212,1221)$.
Moreover, for any such pair $(u,v)$,  $p_n(u,v)$ is equal to $F_{2n-2}$, where
$F_i$ is the $i$-th Fibonacci number.
\end{fact}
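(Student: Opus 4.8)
The plan is to split the eleven pairs according to whether one of the two patterns equals $1212$, and to treat the two groups by different tools.

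\emph{The pairs containing $1212$.} Four of the listed pairs --- $(1122,1212)$, $(1212,1213)$, $(1212,1221)$ and $(1212,1232)$ --- have the form $\{1212,\tau\}$, and these are exactly the objects studied in Section~\ref{sec-nc}. As recorded in the size-four list in that section, $1122\nceq1221\nceq1213\nceq1232$ (these nc-equivalences follow from Theorem~\ref{thm-permute}, Theorem~\ref{thm-1t1s} and Corollary~\ref{cor-add1}), so all four pairs are Wilf-equivalent to one another. It therefore suffices to show that the seven remaining pairs are each Wilf-equivalent to $(1212,1221)$, and to evaluate $p_n(1212,1221)$.

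\emph{The seven remaining pairs.} None of $(1112,1213)$, $(1123,1213)$, $(1123,1223)$, $(1211,1231)$, $(1221,1231)$, $(1222,1223)$, $(1222,1232)$ can be reached by the substitution-and-permutation machinery of Section~\ref{sec-nc}: the avoidance class of each of them contains the partition $1212$ itself (which avoids every size-four pattern other than $1212$), so these classes are not subclasses of the non-crossing partitions and the nc-equivalence theorems do not apply. I would handle these pairs one at a time. The reversal involution on set partitions of $[n]$ (sending a pattern to the standard form of its reverse) is size-preserving and already identifies $(1112,1213)$ with $(1222,1232)$, so that is one pair for free. For the rest I would, for each pair $(u,v)$, describe the canonical sequential forms of the $\{u,v\}$-avoiders explicitly enough to obtain a functional equation --- building an avoider from left to right as a restricted growth function while tracking whichever statistics (typically the largest letter used so far and/or the value of the last letter) are needed to detect an occurrence of $u$ or of $v$ --- and resolve it by the kernel method. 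Whenever a direct bijection suggests itself (a local rewriting of the first one or two blocks relating, say, $P_n(1123,1213)$ to $P_n(1112,1213)$, or relating one of these classes to $P_n(1212,1221)$) I would use it instead so as not to repeat the computation. I expect every case to produce the same rational generating function
\[
 \sum_{n\ge 0}p_n(u,v)x^n=\frac{1-2x}{1-3x+x^2}.
\]

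\emph{The count, and the main obstacle.} As an anchor I would evaluate $p_n(1212,1221)$ directly: a $\{1212,1221\}$-avoider is non-crossing and, in addition, no later-opening block has two elements strictly between two elements of an earlier block, which forces a rigid arc structure; a first-return decomposition over the first block then gives a generating-function identity solving to $\frac{1-2x}{1-3x+x^2}$, whose coefficient sequence is $1,1,2,5,13,34,\dots$, i.e.\ $F_{2n-2}$ in the Fibonacci indexing (with $F_0=F_1=1$) implicit in the statement. The difficulty is concentrated entirely in the second step: unlike the $1212$-pairs, the seven remaining pairs are not governed by any single structural theorem --- their avoidance classes genuinely lie outside the non-crossing world --- so one must either find several separate bijections (the $1231$ cases in particular are not obvious) or carry out several kernel-method computations, each with two forbidden patterns of size four. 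Arranging matters so that the common value $F_{2n-2}$ falls out cleanly, rather than re-deriving a bulky functional equation seven times, is the real work, and is in essence what is carried out in~\cite{MS1}.
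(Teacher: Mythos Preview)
The paper does not actually prove this statement: it is recorded in Section~4.1 as a known fact, with the sentence ``results from \cite{MS1} imply the following fact'' serving as the entire justification. So there is no in-paper proof to compare against.

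Your write-up goes further than the paper does. Splitting off the four pairs of the form $\{1212,\tau\}$ and disposing of them with the nc-equivalence results of Section~\ref{sec-nc} is correct and matches exactly what the paper records at the end of that section. Your ancillary observations --- the reversal bijection giving $(1112,1213)\sim(1222,1232)$, and the target generating function $(1-2x)/(1-3x+x^2)$ with coefficients $F_{2n-2}$ --- are also correct. But the remainder of your proposal is explicitly a plan rather than a proof: ``I would handle these pairs one at a time'', ``I expect every case to produce'', and the promised first-return decomposition for $p_n(1212,1221)$ are all left as intentions. The genuine gap, which you name yourself, is that the seven non-$1212$ pairs require individual arguments (bijections or kernel-method computations) that you do not supply; you defer these to~\cite{MS1}, which is precisely what the paper does as well. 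In short, your outline is at the same level of rigor as the paper's own treatment of this fact, only more informative about what the citation to~\cite{MS1} actually entails.
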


\subsection{Known results on pattern equivalences}

In the paper on partial patterns in matchings~\cite{pp}, the authors introduce the notion of strong partition equivalence.
We say that two patterns $\sigma$ and $\tau$ are \emph{strongly partition
equivalent}, if there exists a bijection $f$ between the sets of
$\sigma$-avoiding and $\tau$-avoiding partitions with the property that for any
$\sigma$-avoiding partition $\rho$, the number of blocks of $\rho$ is equal to
the number of blocks of $f(\rho)$, and moreover for any $i$, the $i$-th block of
$\rho$ has the same size as the $i$-th block of $f(\rho)$. Intuitively, strong
partition equivalence means that we can bijectively map $\sigma$-avoiders to
$\tau$-avoiders by just permuting the letters of their standard representation.

The concept of strong partition equivalence is first explicitly used
in~\cite{pp}, although most pairs of strongly partition equivalent patterns
follow from the bijections constructed in an earlier paper~\cite{parts}.
Let us list the known facts about strong partition equivalence (references
point to the corresponding statement in~\cite{parts}).

\begin{fact}[Lemma 9, Theorem 18, Corollary 18] \label{fac11}
For every $k$ and every partition $\tau$, the two partitions
$$12\dotsb k(\tau+k)12\dotsb k\mbox{ and  }12\dotsb k(\tau+k)k(k-1)\dotsb 1$$
are strongly partition equivalent.
\end{fact}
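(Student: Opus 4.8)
The plan is to construct an explicit bijection between the two avoidance classes that permutes only the letters of the canonical sequential form, thereby establishing strong partition equivalence directly. Write $\alpha = 12\dotsb k(\tau+k)12\dotsb k$ and $\beta = 12\dotsb k(\tau+k)k(k-1)\dotsb 1$. A partition $\rho$ contains $\alpha$ exactly when it has an increasing run of $k$ block-openers, followed by an occurrence of $\tau$ on fresh symbols, followed by a repetition of the first $k$ symbols in increasing order; similarly for $\beta$ but with the trailing $k$-tuple appearing in decreasing order. The key observation is that, after fixing a canonical (say leftmost, then topmost) occurrence of the shared prefix $12\dotsb k(\tau+k)$, what remains to be forbidden is a subsequence of symbols from a fixed $k$-element set appearing either in increasing order (for $\alpha$) or in decreasing order (for $\beta$) to the right of that prefix.

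First I would pin down, for a given $\rho$ avoiding one of the two patterns, a canonical witness of the prefix $\pi_0 := 12\dotsb k(\tau+k)$: among all occurrences of $\pi_0$ in $\rho$, take the one whose final index is as small as possible, breaking ties so that the $k$ ``low'' symbols are the first $k$ distinct symbols of $\rho$. Everything to the left of and including this witness is then the same for both classes. The content of the statement is therefore reduced to the following local claim: the set of words $w$ over the alphabet $[k]$ (together with whatever larger symbols may interleave, but those are irrelevant to the forbidden patterns since $\tau+k$ already uses them) that contain no increasing subsequence using all of $1,2,\dotsc,k$ is in size-and-composition-preserving bijection with those that contain no decreasing subsequence using all of $1,2,\dotsc,k$. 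But ``no increasing subsequence hitting every one of $1,\dotsc,k$ in order'' is by symmetry (reverse the word) equinumerous, letter-multiset by letter-multiset, with ``no decreasing subsequence hitting every one of $k,\dotsc,1$ in order'' — and the map realizing this is literally the reversal of the suffix of $\rho$ lying strictly to the right of the witness, which permutes letters and preserves every block's membership structure appropriately.

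The main obstacle, and where I would spend the most care, is verifying that reversing the suffix is well-defined at the level of set partitions rather than just words: reversing a suffix of the canonical form need not yield a canonical form, and it may alter which block a given element belongs to. To handle this I would not reverse positions naively but rather reverse the order in which the ``excess'' symbols (those contributing to a potential forbidden subsequence) are attached, i.e. describe $\rho$ as the fixed left part plus a sequence of insertions of symbols from $[k]$ into the tail, and reverse that sequence of insertions. One must check three things: (i) the resulting object is a legitimate partition whose canonical form we can write down; (ii) the $i$-th block has the same size before and after, for every $i$ — this is immediate because we only reorder occurrences of symbols already present, never create or destroy blocks; and (iii) the new partition avoids $\beta$ iff the old one avoided $\alpha$ — this follows because an increasing length-$k$ pattern in the tail corresponds, under insertion-reversal, exactly to a decreasing length-$k$ pattern, while no occurrence can straddle the fixed prefix in a way that mixes the two cases, precisely because the witness was chosen leftmost-then-topmost. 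Once (i)--(iii) are in place, the map and its analogously-defined inverse give the desired strong partition equivalence, and since this argument is symmetric under reversal, it is genuinely an involution-type bijection, which makes the verification of bijectivity routine rather than delicate.
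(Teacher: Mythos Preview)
This statement is not proved in the paper; it is quoted as a known fact from \cite{parts} (their Lemma~9, Theorem~18, Corollary~18), so there is no in-paper proof to compare against.

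Your argument has a genuine gap at its central reduction. You claim that once a canonical occurrence of the prefix $\pi_0=12\dotsb k(\tau+k)$ is fixed using the literal symbols $1,\dots,k$, containment of $\alpha$ in $\rho$ reduces to the presence of the specific subsequence $1,2,\dots,k$ to the right of that prefix, and likewise for $\beta$ with the reversed order. This is false: an occurrence of $\alpha$ may use an entirely different $k$-element set of low symbols, and nothing forces the canonical prefix to extend. Concretely, take $k=2$ and $\tau$ empty, so $\alpha=1212$ and $\beta=1221$, and consider $\rho=123423$. This partition avoids $1221$; your canonical prefix sits at positions $1,2$ with values $1,2$, and to its right the only symbol from $\{1,2\}$ is a single $2$ at position $5$, so your ``reversal of insertions'' is the identity. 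Yet $\rho$ contains $1212$ via the subsequence $2,3,2,3$ at positions $2,3,5,6$. Hence your bijection sends the $\beta$-avoider $123423$ to itself, which is not an $\alpha$-avoider. The leftmost/topmost normalisation you invoke in (iii) cannot repair this, because the offending occurrence lives on block-labels $\{2,3\}$ rather than on the fixed set $\{1,2\}$; any correct proof must control all possible choices of the low $k$-tuple simultaneously, which is why the construction in \cite{parts} is substantially more delicate than a single suffix reversal.
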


\begin{fact}[Theorem 31] Let $p_n(\rho;a_1,a_2,\dotsc,a_m)$ be the number of
partitions that avoid $\rho$ such that the $i$-th block has $a_i$ elements. Then
for any partition $\tau$,
\[
p_n(1(\tau+1);a_1,a_2,\dotsc,a_m)=\binom{n-1}{a_1-1}
p_{n-a_1}(\tau;a_2,a_3,\dotsc,a_m).
\]
Consequently, if $\tau$ and $\sigma$ are strongly partition-equivalent, then
so are $1(\tau+1)$ and $1(\sigma+1)$.
\end{fact}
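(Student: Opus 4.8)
The plan is to establish the enumeration formula by decomposing a $1(\tau+1)$-avoider along its first block, and then to extract the statement about strong partition equivalence from the same decomposition.

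The key step is a structural characterization of avoidance. I claim that a partition $\pi$ of $[n]$ with blocks $B_1,B_2,\dots,B_m$ in standard order contains $1(\tau+1)$ if and only if the subword of the canonical sequence of $\pi$ formed by the letters greater than $1$, rewritten in canonical form as a partition $\pi'$ of $[n-|B_1|]$ with $m-1$ blocks, contains $\tau$. The ``if'' direction is immediate: if $\pi'$ contains $\tau$, then the corresponding occurrence among the blocks $B_2,\dots,B_m$ of $\pi$, prefixed by the element $1$ (which carries the smallest value and occupies the leftmost position), is an occurrence of $1(\tau+1)$ in $\pi$. For the ``only if'' direction, observe that in any occurrence of $1(\tau+1)$ the letter matching the leading $1$ is strictly smaller than, and strictly to the left of, every letter matching $\tau+1$; since the element $1$ of $[n]$ sits in position $1$ with value $1$, one may replace the matching letter by the element $1$ and still have an occurrence of $1(\tau+1)$. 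The remaining letters match $\tau+1$ and hence all have value at least $2$, so they lie in $B_2\cup\dots\cup B_m$; read inside $\pi'$ they give an occurrence of $\tau$. (The only delicate point is that $\tau+1$ has no letter equal to $1$, which is exactly what forces those letters outside $B_1$.)

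Granting the characterization, the count follows quickly. A partition counted by $p_n(1(\tau+1);a_1,\dots,a_m)$ is determined by the pair consisting of its first block $B_1$ --- an $a_1$-subset of $[n]$ containing $1$, so $\binom{n-1}{a_1-1}$ choices --- together with the induced partition $\pi'$ on $[n]\setminus B_1\cong[n-a_1]$, which by the characterization ranges over exactly the $\tau$-avoiding partitions with block-size sequence $a_2,\dots,a_m$. Conversely, any such pair $(B_1,\pi')$ reassembles into a unique partition of $[n]$: list $[n]\setminus B_1$ in increasing order and put its $j$-th element into block $\pi'_j+1$. A routine check shows that the resulting canonical sequence is in standard form, has block-size sequence $a_1,\dots,a_m$, and, by the characterization, avoids $1(\tau+1)$ precisely because $\pi'$ avoids $\tau$. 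Since the two choices are independent, $p_n(1(\tau+1);a_1,\dots,a_m)=\binom{n-1}{a_1-1}\,p_{n-a_1}(\tau;a_2,\dots,a_m)$.

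For the concluding assertion, let $f$ witness the strong partition equivalence of $\tau$ and $\sigma$. Define $F$ on $1(\tau+1)$-avoiders by decomposing $\pi$ into the pair $(B_1,\pi')$ as above, replacing $\pi'$ by $f(\pi')$ --- a $\sigma$-avoider with the same block-size sequence as $\pi'$ --- and reassembling $(B_1,f(\pi'))$. By the characterization applied to $\sigma$, the partition $F(\pi)$ avoids $1(\sigma+1)$; and since $F$ leaves $B_1$ fixed and transports the block-by-block size preservation of $f$ to the blocks of index $\ge 2$, the map $F$ preserves the number of blocks and every block size. Being invertible through $f^{-1}$, the map $F$ witnesses the strong partition equivalence of $1(\tau+1)$ and $1(\sigma+1)$. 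The part of the argument demanding genuine care is the structural characterization --- in particular the ``slide the matching $1$ down to the element $1$'' move and the verification that $\pi'$ really is a canonical word with block sizes $a_2,\dots,a_m$; after that, everything is bookkeeping.
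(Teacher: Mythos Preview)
Your argument is correct. The paper does not supply its own proof of this statement---it is quoted as a known fact from~\cite{parts}---so there is nothing to compare line by line. That said, the structural observation you isolate (a partition $\pi$ avoids $1(\tau+1)$ if and only if the subpartition obtained by deleting the first block avoids $\tau$) is exactly the one the paper itself invokes in the proof of Proposition~\ref{P41}, so your approach is the intended one and your refinement to block-size sequences and to the bijective statement about strong partition equivalence is carried out cleanly.
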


\begin{fact}[Theorem 34]\label{fac33}
For every $j\geq1$ and $k\geq 0$, the partition $1^j21^k$ is strongly partition-equivalent
to
$1^{j+k}2$.
\end{fact}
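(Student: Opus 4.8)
The plan is to prove the equivalent and more convenient statement that for every composition $\vec a=(a_1,\dots,a_m)$ of $n$, the number $N(\vec a;\tau)$ of set partitions of $[n]$ whose $i$-th block (blocks listed by increasing minima) has size $a_i$ and which avoid $\tau$ is the same for $\tau=1^j21^k$ and for $\tau=1^{j+k}2$. This is equivalent to strong partition equivalence: choosing for each composition an arbitrary bijection between the $1^j21^k$-avoiders and the $1^{j+k}2$-avoiders of that composition and taking the union produces a bijection $f$ under which, for every $i$, the $i$-th blocks of $\pi$ and $f(\pi)$ have equal size. We may assume $k\ge 1$, since for $k=0$ the two patterns coincide. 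The argument is by induction on the number of blocks $m$, at each step peeling off block $1$.

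The base case $m=1$ is immediate: a one-block partition avoids every pattern with two distinct symbols, so $N=1$ in both cases. For the inductive step, the key remark is that block $1$, having the smallest index, can only ever serve as (part of) the run of $1$'s in an occurrence of either pattern, never as the isolated symbol $2$. Consequently, if $a_1<j+k$, then block $1$ is too short to participate in any occurrence, so $\pi$ avoids $\tau$ if and only if the sub-partition induced by blocks $2,\dots,m$ does; since block $1$ may be any $a_1$-subset of $[n]$ containing $1$, we get $N(\vec a;\tau)=\binom{n-1}{a_1-1}\,N((a_2,\dots,a_m);\tau)$, with the same prefactor for both patterns, and induction applies.

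The heart of the matter is the case $a_1\ge j+k$, when block $1$ is \emph{dangerous}. Write $b_1<\dots<b_{a_1}$ for its elements. For $\tau=1^{j+k}2$, every element of blocks $2,\dots,m$ must precede $b_{j+k}$ (otherwise it completes an occurrence), so these blocks are confined to the first $j+k-1$ gaps $(b_i,b_{i+1})$ of block $1$ while $b_{j+k},\dots,b_{a_1}$ form a terminal run; interleaving the $n-a_1$ foreign letters with $b_2,\dots,b_{j+k-1}$ gives the prefactor $\binom{n-a_1+j+k-2}{j+k-2}$ applied to $N((a_2,\dots,a_m);1^{j+k}2)$. For $\tau=1^j21^k$, the foreign letters that fall inside the span of block $1$ must avoid the middle gaps $j,\dots,a_1-k$; this forces $b_j,\dots,b_{a_1-k+1}$ to be a run of consecutive integers and confines the foreign letters in the span to the first $j-1$ gaps or the last $k-1$ gaps, the remaining foreign letters lying entirely to the right of $b_{a_1}$. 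Reading the sub-partition in order and cutting it into a prefix part of size $p$, a suffix part of size $p'-p$ and a tail part, then interleaving the first two parts with $b_2,\dots,b_{j-1}$ and with $b_{a_1-k+2},\dots,b_{a_1-1}$ respectively, gives the prefactor $\sum_{0\le p\le p'\le n-a_1}\binom{p+j-2}{j-2}\binom{p'-p+k-2}{k-2}$ applied to $N((a_2,\dots,a_m);1^j21^k)$. It remains to check that the two prefactors coincide: putting $q=p'-p$ and summing over $s=p+q$, the inner convolution equals $\binom{s+j+k-3}{j+k-3}$ by Vandermonde, and the outer sum over $s$ equals $\binom{n-a_1+j+k-2}{j+k-2}$ by the hockey-stick identity. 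Induction then completes the proof.

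The delicate part is the structural analysis in the dangerous case for $1^j21^k$: one must verify that no occurrence of the pattern can straddle block $1$ and the sub-partition beyond what the gap confinement already precludes, that the middle stretch of block $1$ really is a run of consecutive block-$1$ letters (here one uses that block $1$ has no lower-indexed block whose letters might intrude into that stretch), and that the prefix/suffix/tail cutting does set up an honest bijection with the avoiders of the given composition. The small cases $j=1$ and $k=1$, where one of the two interleaving regions is empty, should be treated separately (the relevant binomial coefficient being read as $1$ in that case), after which the closing binomial identity is routine.
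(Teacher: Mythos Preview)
The paper does not prove this fact; it merely cites it as Theorem~34 of~\cite{parts}. Your argument is a correct self-contained proof: the reduction to counting avoiders with a prescribed block-size composition, the induction on the number of blocks by peeling off block~$1$, the dichotomy according to whether $a_1<j+k$ or $a_1\ge j+k$, and the verification that in the dangerous case the placement prefactors for the two patterns coincide---all of this goes through as you describe.

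Two remarks. First, the structural analysis you carry out in the dangerous case is exactly the mechanism behind Lemma~\ref{lem-1a21b} in the present paper (there phrased for simultaneous avoidance of $1^a21^b$ and an auxiliary set of patterns), so your proof is very much in the spirit of the surrounding material even though the paper itself defers the fact to~\cite{parts}. Second, your computation of the $1^j21^k$ prefactor via Vandermonde plus hockey-stick is correct but slightly indirect, and it is what forces you to treat $j=1$ and $k=1$ separately. A cleaner route is to observe that once condition~(a) holds, placing block~$1$ amounts to distributing the $n-a_1$ foreign letters, in order, among $j+k-1$ ordered slots (the $j-1$ initial gaps, the $k-1$ final gaps, and the tail beyond $b_{a_1}$); stars-and-bars then gives $\binom{n-a_1+j+k-2}{j+k-2}$ directly, uniformly in $j\ge 1$ and $k\ge 1$, and this matches the $1^{j+k}2$ prefactor on the nose.
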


\begin{fact}[Theorem 42]\label{fac66}
For every sequence $s$ over the alphabet $[m]$, for every $p\ge
1$ and $q\ge 0$, the partitions $$12\dotsb m (m+1)^{p}(m+2)(m+1)^q s\mbox{ and }
12\dotsb m (m+1)^{p+q}(m+2) s$$
are strongly partition-equivalent.
\end{fact}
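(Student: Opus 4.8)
Write $w=12\cdots m$, so that $\alpha=w\,(m{+}1)^{p}(m{+}2)(m{+}1)^{q}s$ and $\beta=w\,(m{+}1)^{p+q}(m{+}2)\,s$; since $q=0$ makes $\alpha=\beta$, we may assume $q\ge 1$. The plan is to produce, for every $n$ and every composition $(b_1,\ldots,b_k)$, a bijection between the partitions of $[n]$ whose ordered sequence of block sizes is $(b_1,\ldots,b_k)$ that avoid $\alpha$ and those that avoid $\beta$; any such bijection is by definition a witness of strong partition equivalence. When $s$ is empty the statement is already a consequence of the results recalled above: there $\alpha=w\,(\tau{+}m)$ and $\beta=w\,(\sigma{+}m)$ with $\tau=1^{p}21^{q}$ and $\sigma=1^{p+q}2$, Fact~\ref{fac33} shows that $\tau$ and $\sigma$ are strongly partition equivalent, and applying \cite[Theorem~31]{parts} $m$ times upgrades this to strong partition equivalence of $w\,(\tau{+}m)$ and $w\,(\sigma{+}m)$. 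Thus the genuinely new feature is the trailing factor $s$, which is written over the \emph{small} alphabet $[m]$ and hence re-uses the symbols of the prefix~$w$.

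The first step would be to reduce $\alpha$- and $\beta$-containment to a purely local condition. Fix a partition $\pi$. Call an $m$-tuple $(C_1,\ldots,C_m)$ of blocks of $\pi$, listed by increasing block index, \emph{feasible} if $\pi$ has an occurrence of $12\cdots m$ sending symbol $i$ into $C_i$ together with an occurrence of $s$ sending symbol $i$ into $C_i$ and lying entirely to the right of the former. For a feasible tuple, select the $12\cdots m$-occurrence as far to the left as possible and the $s$-occurrence as far to the right as possible, let $x$ be the last position of the former and $u$ the first position of the latter, and call the elements of $\pi$ strictly between positions $x$ and $u$ the \emph{window} of the tuple. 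By the usual leftmost/rightmost-occurrence exchange argument (compare Lemma~\ref{lem-left-top}), $\pi$ contains $\alpha$ if and only if for some feasible tuple there are blocks $D$ and $E$, both of index larger than that of $C_m$ and with that of $D$ smaller than that of $E$, such that the subsequence of the window formed by the elements of $D$ and of $E$ contains $1^{p}21^{q}$ (under $1\mapsto D$, $2\mapsto E$); the identical statement holds for $\beta$ with $1^{p}21^{q}$ replaced by $1^{p+q}2$. Choosing the extremal occurrences is exactly what makes the window maximal, so that containment is decided inside it, and the only difference between the two avoidance conditions is the local opposition of $1^{p}21^{q}$ to $1^{p+q}2$.

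Granting this reduction, the bijection $\Phi$ would act locally inside the windows: for each block $D$ of $\pi$ whose index exceeds that of the $C_m$ of some feasible tuple, one rearranges the positions occupied by the elements of $D$ relative to the elements of strictly higher-indexed blocks interleaved with it, exactly as prescribed by a windowed version of the block-size-preserving bijection witnessing Fact~\ref{fac33}, and moves nothing else; every block keeps its size, so the whole ordered sequence of block sizes is preserved, and by the characterization the map carries $\alpha$-avoidance to $\beta$-avoidance and conversely. The hard part will be the coupling created by $s$ living over $[m]$: a single $\pi$ may admit several feasible tuples with overlapping windows, and the surgery performed to destroy $\alpha$-occurrences relative to one tuple must neither create a $\beta$-occurrence relative to another nor spoil the feasibility of any $12\cdots m$- or $s$-occurrence. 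I would organise this by induction — on $n$, or on the number of blocks, or on $m$ — peeling off the top value (or the top block) of $\pi$ and invoking the block-refined recursion of \cite[Theorem~31]{parts}, so that the nontrivial surgery is ever carried out only at the top of $\pi$, where at most one feasible configuration is active; the degenerate cases ($q=0$, $m=0$, $s$ empty) and the verification that $\Phi$ is a bijection are then routine bookkeeping.
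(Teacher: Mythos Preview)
The paper does not give a proof of this statement: it is recorded as a Fact and attributed to Theorem~42 of \cite{parts}, so there is no ``paper's own proof'' to compare against here.

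As for your proposal, it is not a proof but a plan, and the plan has a genuine gap at exactly the point you yourself flag. Two concrete issues. First, your appeal to Lemma~\ref{lem-left-top} is misplaced: that lemma is about non-crossing partitions, whereas strong partition equivalence concerns arbitrary partitions, so the leftmost/rightmost exchange you invoke to justify the ``window'' reduction is not available as stated. You would have to prove directly that, for a fixed choice of blocks $C_1<\cdots<C_m$, one may always push the $12\cdots m$-occurrence fully left and the $s$-occurrence fully right without destroying the intervening $(m{+}1)^p(m{+}2)(m{+}1)^q$ pattern; for general $s$ and general $\pi$ this is not automatic. Second, and more seriously, the heart of the argument is the paragraph beginning ``Granting this reduction\ldots'', and there nothing is actually proved: you say you would rearrange elements of each block $D$ ``as prescribed by a windowed version'' of the Fact~\ref{fac33} bijection, but you do not define this windowed version, and you explicitly leave open the coupling problem of overlapping windows from different feasible tuples. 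Resolving that coupling is the whole content of the result once $s$ is nonempty; saying you would handle it by induction, peeling off the top block via \cite[Theorem~31]{parts}, does not work as written because the patterns $\alpha$ and $\beta$ are not of the form $1(\tau{+}1)$ when $s$ is nonempty (the suffix $s$ uses the symbol~$1$), so Theorem~31 does not apply to them.

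In short: the reduction to the $s=\varnothing$ case is correct and clean, but the extension to general $s$ is asserted rather than carried out, and the tools you cite (Lemma~\ref{lem-left-top}, Theorem~31) do not apply in the generality you need.
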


\begin{fact}[Theorem 48]\label{fac77}
For every $k$, all the partitions of size $k$ that start with $12$ and that
contain two
occurrences of the symbol 1, one occurrence of the symbol 3, and all their
remaining symbols are equal to 2, are mutually strongly partition-equivalent.
\end{fact}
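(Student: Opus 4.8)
The plan is to describe explicitly the structure of a partition of size $k$ that starts with $12$ and whose multiset of symbols consists of two $1$'s, one $3$, and $k-3$ copies of $2$. Since such a partition is a restricted growth function beginning with $1,2$, the symbol $3$ can only occur after a $2$ has already appeared, which is automatic; the constraint is simply that there are exactly two positions carrying the symbol $1$ (one of which is position $1$), exactly one position carrying $3$, and every other position carries $2$. So such a partition is determined by choosing, among the positions $3,4,\dotsc,k$, one position for the second $1$ and one (different) position for the single $3$; equivalently it has the form
\[
12\,2^{a}\,x\,2^{b}\,y\,2^{c},
\]
where $\{x,y\}=\{1,3\}$ in some order and $a,b,c\ge 0$ with $a+b+c=k-3$. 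I will enumerate $\sigma$-avoiders for a representative $\sigma$ of this family and then show the count is independent of which of these patterns we pick, using a sequence of strong partition equivalences already available in the excerpt.

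First I would reduce, using Fact~\ref{fac66} (Theorem 42 of \cite{parts}), the pattern $12\,2^{a}\,3\,2^{b}\,1\,2^{c}$ to $12\,2^{a+b}\,3\,1\,2^{c}$: taking $m=2$, $p=a$, $q=b$, and letting $s$ be the suffix $1\,2^{c}$ over the alphabet $[2]$, the two patterns $12\,(3)^{p}(4)(3)^{q}s$ — wait, here the roles must be matched with $m=2$ so that $m+1=3$ and $m+2=4$, which does not directly fit since our large symbol is $3$, not $4$. Instead I would apply Fact~\ref{fac66} with the block structure read off from the $2$'s: the runs of $2$'s flanking the $3$ can be merged, and similarly Fact~\ref{fac11} (with $k=2$) lets me interchange the tail $12$ with $21$ after an initial $12(\tau+2)$, which moves the lone $1$ relative to the $3$. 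The combination of these two facts, applied to whichever of the three runs of $2$'s is adjacent to the symbol being moved, should let me transport both the second $1$ and the $3$ freely among the available positions while preserving the block sizes, hence preserving strong partition equivalence. Concretely, I would argue in two stages: (i) all patterns in the family with the $3$ occurring \emph{before} the second $1$ are mutually strongly equivalent, by repeatedly merging/splitting runs of $2$'s via Fact~\ref{fac66}; (ii) a single pattern from stage (i) is strongly equivalent to a pattern with the $3$ occurring \emph{after} the second $1$, via Fact~\ref{fac11}; and then stage (i) applied again in this new regime finishes the argument.

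The main obstacle I anticipate is matching the hypotheses of Fact~\ref{fac66} exactly: that statement is phrased for the symbols $m+1$ and $m+2$ with a prefix $12\dotsb m$, and in our patterns the "large" symbol $3$ sits right after the prefix $12$, so I must be careful that after each merge the resulting word is still a legitimate canonical sequential form (a restricted growth function) and that the "remaining symbols $s$" really do lie over the correct alphabet. A related subtlety is stage (ii): Fact~\ref{fac11} swaps an increasing tail $12\dotsb k$ with a decreasing tail $k\dotsb 21$, and I need to check that, for the pattern at hand, the segment being reversed is exactly such a tail (here $k=2$, so the tail is just two symbols) and that the reversal indeed corresponds to moving the single $1$ past the single $3$ in the way I want. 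Once those bookkeeping points are handled, transitivity of strong partition equivalence (and the fact, noted in the excerpt, that it refines ordinary Wilf equivalence) gives that all patterns of the stated form are mutually strongly partition-equivalent, which is the assertion of Fact~\ref{fac77}.
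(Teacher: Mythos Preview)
The paper does not prove this statement; it is quoted without proof as Theorem~48 of~\cite{parts}, so there is no in-paper argument to compare against.

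Your proposed reduction has a genuine gap. You want to connect all patterns in the family using Facts~\ref{fac11} and~\ref{fac66}, but the hypotheses of those facts do not match except in isolated cases. Fact~\ref{fac66} with $m=1$ reads $1\,2^{p}\,3\,2^{q}\,s \sim 1\,2^{p+q}\,3\,s$ with $s$ a word over $[1]=\{1\}$; a pattern $12\,2^{a}\,3\,2^{b}\,1\,2^{c}$ with $c\ge 1$ has $2$'s in the would-be suffix, so the fact does not apply, and with $m=2$ the fact concerns the symbols $3$ and $4$, which is irrelevant here. Fact~\ref{fac11} with $k=2$ gives $12(\tau+2)12\sim 12(\tau+2)21$ where $\tau+2$ uses only symbols $\ge 3$; since the patterns at hand contain a single $3$ and nothing larger, this yields only the size-five instance $12312\sim 12321$ and does not extend to $k>5$. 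Neither tool lets you slide the second $1$ past a run of $2$'s, nor does either connect a pattern with the $3$ before the second $1$ to one with the $3$ after it for general~$k$. You flag the alphabet-matching obstacle yourself but do not resolve it; as written the argument does not go through, and one really needs the construction carried out in~\cite{parts} (or an independent direct bijection tailored to this family) rather than a reduction to the other quoted facts.
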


In the study of multi-avoidance, the concept of strong partition equivalence
becomes relevant through the following simple result.

\begin{theorem}\label{thm-strong}
 Let $\rho$ be a pattern of the form $12\dotsb (k-1)k^a$ for some $a\ge 1$ and
$k\ge 1$. That is, $\rho$ is formed by a strictly increasing sequence of length
$k$, followed by another $a-1$ occurrences of the symbol~$k$. Suppose that the
two patterns $\sigma$ and $\tau$ are strongly partition equivalent. Then the
two pattern pairs $(\rho,\sigma)$ and $(\rho,\tau)$ are equivalent.
\end{theorem}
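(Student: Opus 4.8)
The plan is to show that strong partition equivalence of $\sigma$ and $\tau$ lifts to ordinary equivalence of the pairs $(\rho,\sigma)$ and $(\rho,\tau)$ when $\rho=12\dotsb(k-1)k^a$. The key observation is that whether a partition $\pi$ (in canonical form) contains $\rho$ depends only on the sizes of its blocks, not on the actual arrangement of the letters. Indeed, $\pi$ contains $12\dotsb(k-1)k^a$ if and only if $\pi$ has at least $k$ blocks, say with sizes $s_1,s_2,\dotsc$ in order of first appearance, such that some block (other than the first $k-1$ chosen) has size at least $a$; more precisely, $\pi$ contains $\rho$ iff there are indices $j_1<j_2<\dotsb<j_k$ of blocks with $s_{j_k}\ge a$. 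Since a $\rho$-occurrence uses each of its first $k-1$ symbols once and the symbol $k$ exactly $a$ times, and since canonical form guarantees the first-appearance order of blocks is increasing, a $\rho$-occurrence exists precisely when one can pick $k$ blocks in left-to-right order the last of which has at least $a$ elements. This is a condition purely on the multiset-with-order of block sizes.

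First I would make this characterization precise: $\pi$ avoids $\rho$ if and only if, writing $\pi$'s blocks in standard form with sizes $(s_1,\dotsc,s_m)$, there do \emph{not} exist $1\le j_1<\dotsb<j_k\le m$ with $s_{j_k}\ge a$ --- equivalently, fewer than $k-1$ blocks precede every block of size $\ge a$, or there is no block of size $\ge a$ among blocks $j\ge k$. The point is simply that this predicate is invariant under any rearrangement of letters that preserves the sequence of block sizes. Now let $f$ be the bijection witnessing the strong partition equivalence of $\sigma$ and $\tau$: for every $\sigma$-avoider $\pi$, $f(\pi)$ is a $\tau$-avoider with the same number of blocks and with the $i$-th block of $f(\pi)$ having the same size as the $i$-th block of $\pi$, for every $i$. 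Consequently $\pi$ and $f(\pi)$ have identical sequences of block sizes, so by the characterization above, $\pi$ avoids $\rho$ if and only if $f(\pi)$ avoids $\rho$.

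Therefore $f$ restricts to a bijection between the set of partitions avoiding both $\rho$ and $\sigma$ and the set of partitions avoiding both $\rho$ and $\tau$. Since $f$ also preserves the size $n$ of a partition (the total number of elements equals the sum of block sizes, which is preserved), this restricted bijection is size-preserving, and we conclude $p_n(\rho,\sigma)=p_n(\rho,\tau)$ for all $n\ge 0$, i.e.\ $(\rho,\sigma)\sim(\rho,\tau)$.

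The only delicate point --- and the step I would write out most carefully --- is verifying the claim that containment of $\rho=12\dotsb(k-1)k^a$ depends only on the block-size sequence. One has to check both directions: that any $\rho$-occurrence in $\pi$ yields $k$ blocks in increasing first-appearance order with the last having $\ge a$ elements (immediate, since the $a$ copies of the top symbol all lie in one block, and the remaining $k-1$ symbols lie in earlier blocks because in canonical form a new block's first element is a new maximal value); and conversely, that from such a configuration of blocks one can always extract a $\rho$-occurrence (pick the first element of each of the first $k-1$ blocks and $a$ elements of the last block --- these last $a$ elements all equal the same value, which is the largest among the $k$ chosen since that block appears last and hence received the largest label). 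Once this lemma is in hand, the rest is the routine transfer argument above.
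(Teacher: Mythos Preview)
Your proposal is correct and follows essentially the same approach as the paper: both arguments rest on the observation that containment (equivalently, avoidance) of $\rho=12\dotsb(k-1)k^a$ is determined purely by the sequence of block sizes, and then transfer $\rho$-avoidance through the strong-equivalence bijection~$f$. The paper phrases the characterization slightly more compactly---$\pi$ avoids $\rho$ iff it has fewer than $k$ blocks or every block of index $\ge k$ has size less than~$a$---but this is equivalent to your formulation, and your more explicit verification of both directions is perfectly sound.
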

\begin{proof}
 Note that a partition avoids $\rho$ if and only if it either has fewer than $k$
blocks, or for every $i\ge k$, its $i$-th block has size less than~$a$. In
other words, avoidance of $\rho$ can be characterized as a property of block
sizes.

Now assume that $\sigma$ and $\tau$ are strongly partition equivalent via a
bijection $f$. Since $f$ preserves the sizes of each block, we know that a
partition $\pi$ avoids $\rho$ if and only if $f(\pi)$ avoids~$\rho$. In
particular, $f$ maps the set of $(\rho,\sigma)$-avoiding
partitions bijectively to the set of $(\rho,\tau)$-avoiding partitions.
\end{proof}

\subsection{General arguments}

Before we deal with individual $(4,4)$-pairs, we first provide several general
results applicable to infinite families of pattern-avoiding classes.
Our first argument involves patterns containing one symbol equal to 2 and
the remaining symbols equal to 1. Fix such a pattern $\sigma=1^a21^b$ with
$a\ge 1$  and~$b\ge 0$. Let $k=a+b+1$ be the size of $\sigma$.

For a set of patterns $T$, let $T'$ denote the set $\{1(\tau+1),\tau\in T\}$.

For a set of patterns $R$, let $P_n(\sigma,R)$ denote the set of partitions of
size
$n$ that avoid the pattern $\sigma$ as well as all the patterns in $R$, and let
$P_n(\sigma,R;i)$ denote the set of partitions in $P_n(\sigma,R)$ whose first
block has
size~$i$. Let $f_n(\sigma,R)$ and $f_n(\sigma,R;i)$ denote the cardinality
of~$P_n(\sigma,R)$ and $P_n(\sigma,R;i)$, respectively.

\begin{lemma}\label{lem-1a21b} For any set of patterns $T$, and for $\sigma$
and $T'$ as above, we have
\[
f_n(\sigma,T')= \sum_{i=1}^{k-2}
f_{n-i}(\sigma,T)\binom{n-1}{i-1}+\sum_{i=k-1}^{n}
f_{n-i}(\sigma,T)\binom{n-i+k-3}{k-3}.
\]
\end{lemma}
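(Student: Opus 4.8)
The plan is to decompose an arbitrary $\pi\in P_n(\sigma,T')$ according to its first block $B_1$, the block containing the element~$1$. Write $i=|B_1|$ and let $\pi'$ be the partition of $[n-i]$ obtained from $\pi$ by deleting $B_1$ and relabelling; in terms of canonical sequences, $\pi'$ is obtained by erasing every $1$ from $\pi$ and subtracting $1$ from each remaining symbol. Two elementary observations drive the argument. First, for any pattern $\tau$ the partition $\pi$ contains $1(\tau+1)$ if and only if $\pi'$ contains $\tau$: an occurrence of $1(\tau+1)$ in $\pi$ uses only positions of value at least~$2$ for its $(\tau+1)$-part, so it descends to an occurrence of $\tau$ in $\pi'$, while conversely an occurrence of $\tau$ in $\pi'$ lifts to positions of value $\ge 2$ in $\pi$ which, preceded by the element~$1$ at position~$1$, form an occurrence of $1(\tau+1)$; hence $\pi$ avoids $T'$ exactly when $\pi'$ avoids~$T$. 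Second, in any occurrence of $\sigma=1^a21^b$ all the ``$1$''-symbols lie in one block and the ``$2$''-symbol lies in a block with a strictly larger label, so each occurrence of $\sigma$ in $\pi$ either lies entirely among the blocks $B_2,B_3,\dots$ --- and then descends to an occurrence of $\sigma$ in $\pi'$ --- or else has all of its ``$1$''-symbols inside $B_1$. Call $B_1$ \emph{admissible} if no occurrence of $\sigma$ in $\pi$ has all its ``$1$''-symbols in $B_1$; this is a condition on $B_1$ as a subset of $[n]$ alone. Then $\pi$ avoids $\sigma$ iff $\pi'$ avoids $\sigma$ and $B_1$ is admissible, and therefore $\pi\mapsto(B_1,\pi')$ is a bijection from $P_n(\sigma,T')$ onto the set of pairs $(B_1,\pi')$ with $B_1$ an admissible block of size $i$ and $\pi'\in P_{n-i}(\sigma,T)$.

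It then remains to count the admissible $B_1$ for each $i$. Write $B_1=\{c_1<c_2<\dots<c_i\}$, so $c_1=1$; since the value~$1$ occurs in $\pi$ precisely at $c_1,\dots,c_i$, the block $B_1$ supports an occurrence of $1^a21^b$ exactly when $i\ge a+b$ and some position outside $B_1$ lies strictly between $c_a$ and $c_{i-b+1}$ (for $b=0$ this means: some position outside $B_1$ lies after~$c_a$). Thus if $i\le k-2=a+b-1$ no such occurrence is possible, so every $i$-subset of $[n]$ containing~$1$ is admissible, there are $\binom{n-1}{i-1}$ of them, and these contribute exactly $\sum_{i=1}^{k-2}\binom{n-1}{i-1}f_{n-i}(\sigma,T)$. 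If $i\ge k-1$, then $B_1$ is admissible precisely when $c_a,c_{a+1},\dots,c_{i-b+1}$ are consecutive integers (when $b\ge1$), respectively when $c_a,\dots,c_i$ are consecutive integers ending at~$n$ (when $b=0$).

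For $i\ge k-1$ and $b\ge1$, such a $B_1$ is determined by the starting value $s$ of its ``core'' $\{c_a,\dots,c_{i-b+1}\}$, an interval of length $i-a-b+2$ that ends at $s+i-a-b+1$, together with the $a-2$ elements of $\{2,\dots,s-1\}$ constituting $\{c_2,\dots,c_{a-1}\}$ and the $b-1$ elements of $\{s+i-a-b+2,\dots,n\}$ constituting $\{c_{i-b+2},\dots,c_i\}$. Writing $N_i$ for the number of admissible $B_1$ of size $i$, this yields
\[
N_i=\sum_{s}\binom{s-2}{a-2}\binom{n-s-i+a+b-1}{b-1}=\binom{n-i+a+b-2}{a+b-2}=\binom{n-i+k-3}{k-3},
\]
the middle step being the Vandermonde-type identity $\sum_{r}\binom{r}{m}\binom{M-r}{l}=\binom{M+1}{m+l+1}$. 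The boundary case $b=0$ is handled directly and gives the same value, since there the core is forced to end at~$n$, there is no tail, and one only chooses the $a-2$ elements of $B_1$ lying below $c_a$. Summing $N_i\,f_{n-i}(\sigma,T)$ over $i=k-1,\dots,n$ and adding the contribution of the terms $i\le k-2$ produces the asserted identity.

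I expect the crux to be the case $i\ge k-1$: pinning down the precise condition under which the first block fails to support an occurrence of $1^a21^b$ --- the ``consecutive core'' condition above --- then checking that this condition depends on $B_1$ alone, so that $\pi\mapsto(B_1,\pi')$ is genuinely a bijection, and finally evaluating the binomial sum. The degenerate cases $b=0$ (and, more harmlessly, $a=1$) require a little extra bookkeeping but raise no real difficulty.
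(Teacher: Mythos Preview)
Your proof is correct and follows essentially the same approach as the paper: both decompose a partition by its first block $B_1$, observe that $\pi$ avoids $T'$ iff $\pi'$ avoids $T$ and that $\pi$ avoids $\sigma$ iff $\pi'$ avoids $\sigma$ together with an ``admissibility'' condition on $B_1$, and then count admissible first blocks. The one difference is in the final count for $i\ge k-1$: the paper observes directly that the $n-i$ non-$1$ symbols must be distributed among the $a-1$ gaps preceding the $a$-th $1$ and the $b$ gaps following the $(i-b+1)$-th $1$, so stars-and-bars immediately gives $\binom{n-i+k-3}{k-3}$, whereas you parameterize admissible subsets by the position of the ``core'' and sum via Vandermonde --- both yield the same binomial coefficient, but the gaps formulation is shorter and avoids the separate treatment of $b=0$.
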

\begin{proof}
We will compute the size of $P_n(\sigma,T';i)$ for $i\in[n]$. Suppose first that
$i<k-1$. Then a partition $\pi$ of size $n$ belongs to $P_n(\sigma,T';i)$ if and
only
if its first block has size $i$ and the remaining blocks induce a partition that
belongs to $P_{n-i}(\sigma,T)$. Therefore, we have
$f_n(\sigma,T';i)=f_{n-i}(\sigma,T)\binom{n-1}{i-1}$.

Now suppose that $i\ge k-1$. Then a partition $\pi$ of size $n$ belongs to
$P_n(\sigma,T';i)$ if and only if its first block has size $i$, the remaining
blocks induce a partition that belongs to $P_{n-i}(\sigma,T)$, and moreover, no
symbol
greater than 1 may have $a$ occurrences of $1$ before it and $b$ occurrences of
$1$ after it. That means that every symbol greater than 1 appears either before
the $a$-th occurrence of 1 or after the $(i-b+1)$-th occurrence of 1. That
gives $f_n(\sigma,T';i)=f_{n-i}(\sigma,T)\binom{n-i+k-3}{k-3}$.

Summing over all $i\in[n]$ gives the result.
\end{proof}

Note that the formula in the previous lemma does not depend on $a$ and~$b$. The
next statement is a direct consequence of Lemma~\ref{lem-1a21b}.
\begin{corollary}\label{cor-1a21b}
Let $\sigma$ and $\rho$ be two patterns of size $k$, with $\sigma=1^a21^b$
and $\rho=1^c21^d$. Let $T$ and $U$ be two sets of patterns. Let
$T'=\{1(\tau+1), \tau\in T\}$ and similarly for $U'$ and~$U$. If
$\{\sigma\}\cup T\sim \{\rho\}\cup U$, then $\{\sigma\}\cup T'\sim \{\rho\}\cup
U'$.
\end{corollary}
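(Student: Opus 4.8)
The plan is to derive Corollary~\ref{cor-1a21b} directly from Lemma~\ref{lem-1a21b} by comparing the two recurrences term by term. First I would observe that the formula of Lemma~\ref{lem-1a21b} expresses $f_n(\sigma,T')$ purely in terms of the numbers $f_{m}(\sigma,T)$ for $m<n$ (together with the binomial coefficients, which depend only on $n$, $i$, and $k$, and in particular \emph{not} on the values $a,b$). The same lemma applied to $\rho=1^c21^d$ and $U$ gives an identical-looking formula for $f_n(\rho,U')$ in terms of $f_m(\rho,U)$, with exactly the same binomial coefficients, because $\rho$ also has size $k$ and the coefficients in Lemma~\ref{lem-1a21b} depend only on $k$ (this is precisely the remark made right after the lemma). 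Thus the two recurrences for $f_n(\sigma,T')$ and $f_n(\rho,U')$ have identical shape.

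Next I would run a straightforward induction on $n$. The hypothesis $\{\sigma\}\cup T\sim\{\rho\}\cup U$ is exactly the statement $f_m(\sigma,T)=f_m(\rho,U)$ for all $m\ge 0$; note this is the \emph{full} hypothesis, not something to be proved inductively, so in fact no induction is even needed. Substituting $f_{n-i}(\sigma,T)=f_{n-i}(\rho,U)$ into the right-hand side of the Lemma~\ref{lem-1a21b} formula for $f_n(\sigma,T')$ turns it into the right-hand side of the corresponding formula for $f_n(\rho,U')$, and hence $f_n(\sigma,T')=f_n(\rho,U')$ for every $n$. By definition this says $\{\sigma\}\cup T'\sim\{\rho\}\cup U'$, which is the claim.

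The only point requiring a moment's care — and the one I would state explicitly — is the base case / well-definedness: the sum in Lemma~\ref{lem-1a21b} for $f_n(\sigma,T')$ involves $f_{n-i}(\sigma,T)$ for $i\ge 1$, i.e.\ only indices strictly smaller than $n$, so everything is genuinely determined by the sequence $(f_m(\sigma,T))_{m\ge 0}$; and for $n=0$ both $f_0(\sigma,T')$ and $f_0(\rho,U')$ equal $1$ (the empty partition avoids everything), so the equality holds trivially there as well. There is essentially no obstacle here: the corollary is a formal consequence of the fact, already built into Lemma~\ref{lem-1a21b}, that the transfer matrix sending $(f_m(\sigma,T))_m$ to $(f_n(\sigma,T'))_n$ is one and the same operator for every pattern of the form $1^a21^b$ of a fixed size $k$. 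I would keep the write-up to two or three sentences, simply citing Lemma~\ref{lem-1a21b} twice and invoking the $a,b$-independence of its coefficients.
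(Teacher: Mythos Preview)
Your proposal is correct and is exactly the argument the paper has in mind: the paper simply remarks that the formula in Lemma~\ref{lem-1a21b} does not depend on $a$ and $b$, and then states that the corollary is a direct consequence. Your write-up spells out this direct consequence in the obvious way, so there is nothing to add.
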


\begin{proposition}\label{P41}
If $T$ is a set of patterns and $T'$ is the set of patterns $1(\tau+1)$ where
$\tau \in T$, then for every $n\ge 1$, we have $p_n(T')=\sum_{k=0}^{n-1}
\binom{n-1}{k}p_k(T)$. Consequently, if $T$ and $R$ are sets of patterns
such that $T\sim R$, then $T' \sim R'$.
\end{proposition}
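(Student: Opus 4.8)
The plan is to establish the counting identity $p_n(T')=\sum_{k=0}^{n-1}\binom{n-1}{k}p_k(T)$ directly, by analyzing the structure of a $T'$-avoiding partition in terms of its first block; the Wilf-equivalence consequence then follows immediately by induction on $n$. The key observation is that every pattern in $T'$ has the form $1(\tau+1)$, i.e., it begins with the symbol $1$ which occurs exactly once and then is followed by a pattern on the symbols $\{2,3,\dotsc\}$. This means that the leading $1$ of such a pattern can only be matched by an element of the \emph{first} block of a partition, and the remainder $\tau+1$ must be matched using symbols strictly greater than $1$, i.e., within the partition obtained by deleting the first block.

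First I would fix $n\ge 1$ and a $T'$-avoiding partition $\pi$ of $[n]$, and let $i$ be the size of its first block $B_1$. I claim that $\pi$ avoids every pattern $1(\tau+1)\in T'$ if and only if the partition $\pi'$ obtained from $\pi$ by deleting $B_1$ (and relabeling) avoids every $\tau\in T$. Indeed, an occurrence of $1(\tau+1)$ in $\pi$ picks one element $x$ playing the role of the initial $1$, and then an occurrence of $\tau+1$ among elements strictly larger than $x$ and positioned after $x$; since $\pi$ is a restricted growth function, any element larger than $x$ in value cannot lie in $B_1$ unless $x\notin B_1$, and in any case an occurrence of $\tau$ entirely outside $B_1$ can always be completed to an occurrence of $1(\tau+1)$ by prepending the element $1=\pi_1\in B_1$. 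Conversely, deleting $B_1$ cannot create new occurrences. Thus the $T'$-avoiders of $[n]$ with first block of size $i$ are in bijection with pairs (a choice of the $i$ elements of $B_1$, one of which must be $1$; a $T$-avoiding partition on the remaining $n-i$ elements), giving $\binom{n-1}{i-1}p_{n-i}(T)$ such partitions. Summing over $i$ from $1$ to $n$ and reindexing with $k=n-i$ yields $p_n(T')=\sum_{k=0}^{n-1}\binom{n-1}{k}p_k(T)$.

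For the second assertion, suppose $T\sim R$, so $p_k(T)=p_k(R)$ for all $k\ge 0$. Then the identity just proved gives, for every $n\ge 1$,
\[
p_n(T')=\sum_{k=0}^{n-1}\binom{n-1}{k}p_k(T)=\sum_{k=0}^{n-1}\binom{n-1}{k}p_k(R)=p_n(R'),
\]
and for $n=0$ both sides equal $1$ (the empty partition avoids everything vacuously). Hence $T'\sim R'$.

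The main obstacle, and the only point requiring genuine care, is the equivalence claim \enquote{$\pi$ avoids $T'$ iff the deletion $\pi'$ avoids $T$}: one must verify cleanly that an occurrence of $\tau+1$ within $\pi$ that uses \emph{no} element of $B_1$ is automatically order-isomorphic to an occurrence in $\pi'$, and that an occurrence of $1(\tau+1)$ in $\pi$ necessarily has its \enquote{$\tau+1$} part disjoint from $B_1$ --- here one uses that the symbol chosen for the initial $1$ is value-minimal among the elements of the occurrence, combined with the fact that in the canonical sequential form the first block consists of the positions carrying the smallest available symbols, so any element strictly exceeding it in value lies outside $B_1$. This is routine but should be spelled out; everything else is a bookkeeping computation with binomial coefficients. (Note this proposition overlaps with Lemma~\ref{lem-1a21b} / Proposition~\ref{P41}-style arguments, so one could alternatively cite the relevant special case, but the self-contained first-block decomposition above is cleaner.)
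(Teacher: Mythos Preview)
Your proposal is correct and follows essentially the same approach as the paper: both argue that a partition $\pi$ avoids $T'$ if and only if the subpartition obtained by removing the first block avoids $T$, and then count by the size of the first block to obtain $\binom{n-1}{k}p_k(T)$ partitions whose first block has size $n-k$. Your write-up spells out the two directions of the key equivalence in more detail than the paper (which states it as a single observation), though your phrasing around ``unless $x\notin B_1$'' is unnecessarily hedged---since $B_1$ consists exactly of the positions with value $1$, any element with value strictly greater than $\pi_{i_1}\ge 1$ automatically lies outside $B_1$, regardless of where $\pi_{i_1}$ itself sits.
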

\begin{proof}
 It is enough to observe that a partition $\pi$ avoids $T'$ if and only if the
subpartition of $\pi$ obtained by removing the first block of $\pi$
avoids~$T$. Therefore, there are exactly $\binom{n-1}{k}p_k(T)$ partitions in
$P_n(T')$ whose first block has size $n-k$.
\end{proof}

\begin{corollary}\label{cor-1222}
 The $(4,4)$-pairs $(1222,1234)$ and $(1211,1234)$ are equivalent.
\end{corollary}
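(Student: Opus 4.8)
The plan is to reduce the claimed equivalence $(1222,1234)\sim(1211,1234)$ to an equivalence already established earlier in the paper, using Proposition~\ref{P41} as the transfer principle. The key observation is that both $1222$ and $1234$ have the shape $1(\tau+1)$: indeed, writing $1222 = 1(\rho+1)$ with $\rho = 111$, and $1234 = 1(\rho'+1)$ with $\rho' = 123$, we see that the pair $(1222,1234)$ is exactly $T'$ for $T = \{111,123\}$. Similarly, $1234 = 1(123+1)$, but $1211$ is \emph{not} of the form $1(\tau+1)$, so a direct application of Proposition~\ref{P41} to the right-hand side is not available. Instead I would look for a set $U$ with $U' = \{1211,1234\}$; since $1211 = 1(211+1)$ wait---$211+1 = 322$, not matching. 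Let me instead use the \emph{other} transfer lemma: $1211 = 1^1 2 1^2$ is of the form $1^a21^b$, and $1234$ is again $1(123+1)$. Here Corollary~\ref{cor-1a21b} applies: taking $\sigma = 1222$? No---$1222$ is not of the form $1^a21^b$ either.

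The correct route is as follows. Both pairs are of the form $\{1(\rho+1)\}\cup S'$ only if I treat them componentwise, so I would instead apply Proposition~\ref{P41} "in reverse" on \emph{both} sides. Observe that removing the initial symbol decrement, $(1222,1234) = T'$ where $T = \{111, 123\}$, and $(1211,1234) = U'$ where $U = \{211, 123\}$---but $211$ decremented is not a valid pattern since it would start with $1$... Actually $1211$ with first block removed: the subpartition on letters $\{2,1,1\}$ read as a pattern (standardizing) gives $211 \mapsto 211$, which is not a restricted-growth word. So $1211$ is genuinely not in the image of $\tau\mapsto 1(\tau+1)$. The resolution: I would first invoke a known equivalence $1211 \sim 1^3 2 = 1112$ from Fact~\ref{fac33} (with $j=1$, $k=2$), which is even a \emph{strong partition equivalence}, hence by Theorem~\ref{thm-strong} applied with $\rho = 1234$ (of the form $12\cdots(k-1)k^a$ with $k=4$, $a=1$) we get $(1234,1211)\sim(1234,1112)$. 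Now $1112 = 1(111+1)$ and $1234 = 1(123+1)$, so $(1234,1112) = T'$ with $T = \{111,123\}$; likewise $(1222,1234)$: here $1222 = 1(111+1)$ as well, so $(1222,1234) = T'$ for the \emph{same} $T$. Therefore the two pairs $(1222,1234)$ and $(1112,1234)$ are literally the same set $T'$, giving the chain $(1222,1234) = (1112,1234) \sim (1211,1234)$.

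Wait---that can't be right since $1222 \ne 1112$; but $1(111+1)$ means prepend $1$ to $111+1 = 222$, yielding $1222$, while $1112$ is \emph{not} of this form. So one of $1222, 1112$ equals $1(111+1)$ and the other does not. Let me recompute: $1(\tau+1)$ with $\tau = 111$ gives the word $1,2,2,2 = 1222$. With $\tau = 112$: $1,2,2,3 = 1223$. So $1112$ is not $1(\tau+1)$ for any $\tau$. Hence the clean statement is: $(1222,1234) = \{1(111+1), 1(123+1)\} = T'$ for $T=\{111,123\}$, and by Proposition~\ref{P41}, $(1222,1234)\sim R'$ whenever $T\sim R$. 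It thus suffices to find $R$ with $T = \{111,123\}\sim R$ and $R' = \{1211,1234\}$---impossible as noted. So the actual argument the authors intend must bypass $1222$ entirely: I would prove $(1222,1234)\sim(1211,1234)$ by showing both reduce, via Theorem~\ref{thm-strong}, to a common pair. Since $1222$ and $1112$ are strongly partition equivalent (both have block-size multiset structure: $1112$ has blocks of sizes $3,1$; $1222$ has sizes $1,3$---\emph{not} matching position-by-position!).

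\textbf{The honest plan, main obstacle identified.} The cleanest path I can justify rigorously: apply Theorem~\ref{thm-strong} with $\rho=1234$, which reduces the problem to finding patterns strongly partition equivalent to $1222$ and to $1211$ respectively that coincide, or directly to a known $(4,4)$-equivalence; and apply Proposition~\ref{P41} with $T=\{111,123\}$, $R=\{211\text{-type},123\}$. The genuine content---and the step I expect to be the main obstacle---is verifying that $(1211,1234)$ arises as $R'$ for some $R$ Wilf-equivalent to $\{111,123\}$; the likely resolution is that $1211 \sim 1112$ by Fact~\ref{fac33} is a strong equivalence, so $1(1211+1)$ need not be invoked---rather, one observes $1234 = 12\cdots k^a$ permits Theorem~\ref{thm-strong}, giving $(1234,1222)\sim(1234,\pi)$ for any $\pi$ strongly equivalent to $1222$, and separately the $(4,4)$-classification (Fact after Theorem 1.1 of \cite{MS3}, or the Fibonacci fact) already lists $(1222,1234)$ and $(1211,1234)$ in the same class. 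I would therefore present the proof as: by Proposition~\ref{P41} the pair $(1222,1234)$ equals $T'$ with $T = \{111,123\}$; one checks (e.g.\ via the $(3,k)$-analysis of Section~\ref{sec-3k}, since $111$-avoidance is partial matchings and adding $123$ caps the block count at two) that $p_n(111,123) = p_n(\text{something})$ matching $p_n(R)$ for $R$ with $R' = \{1211,1234\}$, and conclude. The true obstacle is pinning down this $R$ explicitly and its equivalence to $T$; I would resolve it by direct generating-function computation of both $p_n(111,123)$ and $p_n$ of the candidate $R$, confirming equality, then invoking Proposition~\ref{P41}.
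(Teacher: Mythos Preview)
Your proposal never reaches a complete argument. You correctly identify that $(1222,1234)=T'$ for $T=\{111,123\}$ and that Proposition~\ref{P41} then expresses $p_n(1222,1234)$ in terms of $p_k(111,123)$. But for the other pair you cycle through several dead ends---Corollary~\ref{cor-1a21b} (which needs \emph{both} patterns to be of the form $1^a21^b$), Theorem~\ref{thm-strong} with a hoped-for strong equivalence between $1222$ and $1112$ (which you correctly rule out: block-size sequences $(1,3)$ versus $(3,1)$), and the search for $R$ with $R'=\{1211,1234\}$ (impossible, as you note, since $1211$ is not of the form $1(\tau+1)$). Your ``honest plan'' ends in a nonspecific promise to compute generating functions, without saying which ones or how.

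The step you are missing is the one you actually wrote down and then abandoned: $1211=1^121^2$ is of the form $1^a21^b$ with $k=4$, so Lemma~\ref{lem-1a21b} applies \emph{directly} (not via its Corollary) with $\sigma=1211$ and $T=\{123\}$, giving
\[
p_n(1211,1234)=\sum_{i=1}^{2}\binom{n-1}{i-1}p_{n-i}(1211,123)+\sum_{i\ge 3}(n-i+1)\,p_{n-i}(1211,123).
\]
The values $p_m(1211,123)=p_m(123,1211)$ are available from Theorem~\ref{thm-123} (with $m=2$): one gets $p_m(123,1211)=m+\binom{m-1}{2}$. On the other side, $p_k(111,123)$ counts partitions with at most two blocks each of size at most two, so it is the finite sequence $1,1,2,3,3,0,0,\dotsc$, and Proposition~\ref{P41} turns this into a closed form for $p_n(1222,1234)$. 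Substituting and simplifying, both sides equal the polynomial $(n^4-6n^3+19n^2-22n+16)/8$ for $n\ge 1$. That is exactly the paper's proof: two \emph{different} transfer lemmas, one per side, followed by an explicit computation---not a single structural equivalence linking the two pairs.
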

\begin{proof}
 We express $p_n(1222,1234)$ using Proposition~\ref{P41} and $p_n(1211,1234)$
using Lemma~\ref{lem-1a21b}. We have, for $n\ge 1$,
\begin{align*}
 p_n(1222,1234)&=\sum_{k=0}^{n-1}\binom{n-1}{k}p_k(111,123)\\
&=1+(n-1)+2\binom{n-1 } {2}+3\binom{n-1}{3}+3\binom{n-1}{4}
\end{align*}
and
\begin{align*}
 p_n(1211,1234)&=p_{n-1}(123,1211)+(n-1)p_{n-2}(123,1211)\\&+1+\sum_{k=1}^{n-3}
p_k(123,1211)(k+1).
\end{align*}
From Theorem~\ref{thm-123}, we deduce that $p_n(123,1211)=n+\binom{n-1}{2}$. Substituting
into the above expression and simplifying shows that both $p_n(1211,1234)$ and
$p_n(1222,1234)$ are equal to $(n^4-6n^3+19n^2-22n+16)/{8}$ for $n\ge 1$.
\end{proof}

Reasoning similar to that used in the proof of Lemma \ref{lem-1a21b} above
yields the following result, whose proof we omit.

\begin{proposition}\label{P43}
Let $T$ be a set of patterns and $T'$ be the set of patterns $1(\tau+1)$, where
$\tau \in T$. Let $a_n(R)$ (respectively, $b_n(R)$) denote the number
of partitions of $[n]$ that avoid all the patterns in $R$ as well as both $1112$
and $1121$ (respectively, $1121$ and $1211$). Then, for all $n\ge 4$, we have
$$a_n(T')=a_{n-1}(T)+(n-1)a_{n-2}(T)+a_{n-3}(T)+a_{n-4}(T)+\cdots+a_0(T) \text{ and} $$
$$b_n(T')=b_{n-1}(T)+(n-1)b_{n-2}(T)+b_{n-3}(T)+b_{n-4}(T)+\cdots+b_0(T).$$
\end{proposition}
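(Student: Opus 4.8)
The plan is to mimic closely the proof of Lemma~\ref{lem-1a21b}, now applied to the more restrictive avoidance classes that also forbid $\{1112,1121\}$ (for $a_n$) or $\{1121,1211\}$ (for $b_n$). Fix a set of patterns $T$ and set $T'=\{1(\tau+1):\tau\in T\}$. Given a partition $\pi\in P_n$ counted by $a_n(T')$ or $b_n(T')$, I will condition on the size $i$ of its first block and on the structure forced by the extra patterns, then show that in each case the number of ways to build $\pi$ from its first block together with the remaining subpartition matches the claimed recurrence coefficient.

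The key observation is that both $\{1112,1121\}$ and $\{1121,1211\}$ are sets of patterns all of whose members have exactly two distinct symbols with the larger symbol appearing once; consequently, as in Theorem~\ref{thm-strong}'s proof, avoiding them is a constraint purely on how letters $>1$ interleave with the first block's letters. First I would treat the case $i=1$: the first block is a singleton, and since $1(\tau+1)\in T'$ for each $\tau\in T$, after deleting the first block the remainder must avoid $T$ itself, plus the relevant extra pair (because $1112$, $1121$, $1121$, $1211$ all begin with $1$ and the deleted letter can serve as that leading $1$), so this contributes $a_{n-1}(T)$, resp.\ $b_{n-1}(T)$. Next, $i=2$: the first block is $\{1,j\}$ for some $j\in\{2,\dots,n\}$, giving the factor $n-1$; here one checks that avoiding $1112$ and $1121$ (resp.\ $1121$ and $1211$) together with the two occurrences of $1$ in the first block forces the remaining subpartition to avoid $T$ and the extra pair, yielding $(n-1)a_{n-2}(T)$, resp.\ $(n-1)b_{n-2}(T)$. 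Finally, for $i\ge 3$: now the first block already contains three $1$'s, so any symbol $>1$ in $\pi$ must avoid creating $1112$, $1121$ (resp.\ $1121$, $1211$); a short case analysis of where such symbols may lie relative to the occurrences of $1$ shows there is exactly one admissible placement pattern, so the coefficient is $1$, and we get $\sum_{i\ge 3} a_{n-i}(T)=a_{n-3}(T)+a_{n-4}(T)+\dots+a_0(T)$, resp.\ the analogous sum for $b$. Summing over all $i$ gives the stated recurrences for $n\ge 4$.

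The main obstacle I anticipate is the $i\ge 3$ case: verifying that the additional avoidance of the pair $\{1112,1121\}$ (resp.\ $\{1121,1211\}$) collapses the binomial coefficient $\binom{n-i+k-3}{k-3}$ of Lemma~\ref{lem-1a21b} down to exactly $1$, and confirming that no spurious occurrences of the forbidden pair are introduced across the boundary between the first block and the rest, nor between letters $>1$ sitting before the third $1$ and letters in the tail. This requires carefully pinning down, for each of the two pairs, the unique allowed interleaving (essentially: every letter $>1$ must appear either before the second $1$ or after all the $1$'s, or some similarly rigid description), and checking compatibility with the patterns in $T'$. Once that structural claim is established, the counting is immediate and the argument runs exactly parallel to Lemma~\ref{lem-1a21b}, which is why the authors felt able to omit the details.
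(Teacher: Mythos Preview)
Your plan is correct and is precisely the approach the paper has in mind: the authors explicitly say the result follows by ``reasoning similar to that used in the proof of Lemma~\ref{lem-1a21b}'' and omit the details, and your case split on the size $i$ of the first block is exactly that reasoning. For $i\ge 3$ the analysis you anticipate does go through cleanly: writing $j$ for the number of first-block $1$'s preceding a given letter $>1$, the pair $\{1112,1121\}$ forces $j\le 2$ and ($j\le 1$ or $j=i$), hence $j=1$, so the unique admissible shape is $1\,\pi'\,1^{i-1}$; the pair $\{1121,1211\}$ forces ($j=1$ or $j=i$) and $j\ge i-1$, hence $j=i$, giving the shape $1^{i}\,\pi'$. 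In each case the coefficient is~$1$, and no cross-boundary occurrence of the forbidden pair can arise because any occurrence of $1112$, $1121$, or $1211$ uses three letters from a single block, which is either the first block (handled by the placement constraint) or a later block (handled by $\pi'$ avoiding the pair).

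One small wording issue: in your $i=1$ case the phrase ``because $1112,1121,1121,1211$ all begin with $1$ and the deleted letter can serve as that leading $1$'' is not the operative reason. The point is rather that each of these patterns has \emph{three} copies of its small letter, so with a singleton first block any occurrence lies entirely in~$\pi'$. This is cosmetic; the conclusion stands.
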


\subsection{Specific patterns of size four}

In the next theorem, we consider the case of avoiding $1213$ and another
pattern of the form $1(\tau+1)$.

\begin{theorem}\label{P44}
Let $\rho=1(\tau+1)$ be any pattern of size at least two such that the
rightmost letter of $\tau$ is greater than~1. Then the
generating function $H_\rho(x)$ is given by
$$1+\frac{x}{1-x}H_\tau(x)+\frac{x^2}{(1-x)(1-2x)}(H_\tau(x)-1),$$
where $H_{\sigma}(x)$ denotes the generating function for the number of partitions of $[n]$
that avoid $1213$ and a pattern $\sigma$.
Moreover, if $(1213,\tau)\sim(1213,\tau')$, then
$(1213,1(\tau+1))\sim(1213,1(\tau'+1))$.
\end{theorem}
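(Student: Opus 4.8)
The plan is to classify the $\{1213,\rho\}$-avoiding partitions according to the shape of their first block, exploiting the strong constraints that $1213$-avoidance imposes. Write $\rho=1(\tau+1)$. As in the proof of Proposition~\ref{P41}, a partition $\pi$ avoids $\rho$ if and only if the partition obtained by deleting the first block of $\pi$ avoids $\tau$: in any occurrence of $1(\tau+1)$ the final $|\tau|$ symbols form a copy of $\tau+1$ lying among elements of value exceeding that of the first symbol, hence among elements outside the first block, and conversely one may prepend $\pi_1$. I will also use the following observation, which is where the hypothesis that $\tau$ ends in a symbol greater than $1$ enters: if $R$ is a $\tau$-avoiding partition, then appending copies of the symbol $1$ to $R$ creates no occurrence of $\tau$, since such an occurrence would have to end at an appended symbol, forcing the last symbol of $\tau$ to be mapped to the minimum value $1$, a contradiction.

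The empty partition contributes the term $1$. If $\pi$ is a nonempty $\{1213,\rho\}$-avoider whose first block is an initial segment, then $\pi=1^i\pi''$ with $i\ge 1$ and $\pi''$ a (possibly empty) partition on the symbols $\{2,3,\dots\}$; since no occurrence of $1213$ can involve a leading symbol $1$ (its first and third elements could not both carry value $1$ with a larger value strictly between them), such $\pi$ avoid $1213$ exactly when $\pi''$ does, and they avoid $\rho$ exactly when $\pi''$ avoids $\tau$. Allowing $\pi''$ to be empty, this family is in size-preserving bijection with pairs consisting of a run $1^i$ ($i\ge1$) and a $\{1213,\tau\}$-avoider, so it contributes $\frac{x}{1-x}H_\tau(x)$.

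The remaining case — $\pi$ nonempty with first block not an initial segment — is decisive. Let $1^{e_0}$ be the maximal initial run of $1$'s ($e_0\ge1$), let $C_1$ be the maximal run of symbols greater than $1$ immediately following it (which begins with the symbol $2$), and let $p$ be the position of the first symbol $1$ occurring after $C_1$; this exists precisely because the first block is not an initial segment, and $\pi_p=1$ since $C_1$ is maximal. Applying $1213$-avoidance to $\pi_1=1$, the symbol $2$ heading $C_1$, and $\pi_p=1$ shows that every symbol after position $p$ is at most $2$; consequently any occurrence of $1213$ has its fourth (largest) element at or before $p$, hence all four elements lie in $\pi_1\cdots\pi_p$, where the only symbols exceeding $1$ are those of $C_1$, and one checks in a few lines that $\pi$ avoids $1213$ if and only if the reduction $R_0$ of $C_1$ does. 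It follows that $\pi$ has the form
\[
\pi=1^{e_0}\,C_1\,1^{e_1}2^{g_2}1^{e_2}2^{g_3}\cdots 1^{e_t}\,2^{c},
\]
with $t\ge1$, all $e_j$ and $g_j$ at least $1$, $c\ge0$, and the factor $1^{e_1}2^{g_2}\cdots1^{e_t}$ an alternating concatenation of runs of $1$'s and runs of $2$'s that begins and ends with a run of $1$'s. Deleting the first block and relabelling turns $C_1$ into $R_0$ and turns the later runs of $2$'s together with the trailing $2^c$ into copies of $1$ appended to $R_0$, which by the first paragraph are harmless; hence $\pi$ avoids $\rho$ if and only if $R_0$ avoids $\tau$. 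Conversely every such datum yields a distinct partition of this type. The data $(1^{e_0},\,R_0,\,\text{alternating middle part},\,2^{c})$ have generating functions $\frac{x}{1-x}$, $H_\tau(x)-1$ (nonempty $\{1213,\tau\}$-avoiders), $\frac{x}{1-x}\bigl(1-\frac{x^2}{(1-x)^2}\bigr)^{-1}=\frac{x(1-x)}{1-2x}$, and $\frac1{1-x}$, respectively, whose product is $\frac{x^2}{(1-x)(1-2x)}\bigl(H_\tau(x)-1\bigr)$.

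Adding the three contributions gives $H_\rho(x)=1+\frac{x}{1-x}H_\tau(x)+\frac{x^2}{(1-x)(1-2x)}\bigl(H_\tau(x)-1\bigr)$, the asserted formula. For the last statement, observe that this expression involves $\tau$ only through $H_\tau(x)$; hence if $(1213,\tau)\sim(1213,\tau')$, with $\tau$ and $\tau'$ both ending in a symbol greater than $1$ so that the formula applies to each, then $H_\tau=H_{\tau'}$ forces $H_{1(\tau+1)}=H_{1(\tau'+1)}$, i.e.\ $(1213,1(\tau+1))\sim(1213,1(\tau'+1))$. I expect the main obstacle to be the structural step in the third case: proving rigorously that a non-initial-segment first block together with $1213$-avoidance pins $\pi$ down to the displayed alternating shape, which comes down to an elementary but slightly fiddly enumeration of the possible positions of the four elements of an occurrence of $1213$.
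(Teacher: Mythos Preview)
Your proof is correct, and it takes a genuinely different route from the paper's. The paper classifies a nonempty $\{1213,\rho\}$-avoider $\pi$ according to whether the first block equals $\{1\}$, contains both $1$ and $2$, or contains $1$ and some later element but not $2$. The middle case contributes $x(H_\rho(x)-1)$, so one obtains a linear functional equation
\[
H_\rho(x)=1-x+xH_\tau(x)+xH_\rho(x)+\frac{x^2}{1-2x}\bigl(H_\tau(x)-1\bigr),
\]
which is then solved for $H_\rho(x)$. Your decomposition instead distinguishes whether the first block is an initial segment of $[n]$ or not; this effectively unrolls the paper's self-referential case and yields the closed form directly, at the cost of a more detailed structural analysis in the non-initial-segment case (your alternating $1$/$2$ tail). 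Both arguments use the same two ingredients: that removing the first block converts $\rho$-avoidance to $\tau$-avoidance, and that a $1$ occurring after a $2$ forces the tail to lie in $\{1,2\}^*$ by $1213$-avoidance. Your added remark that the hypothesis ``$\tau$ ends in a symbol greater than $1$'' is needed precisely so that appending $1$'s to a $\tau$-avoider stays $\tau$-avoiding is a point the paper leaves implicit. A minor simplification: in your third case the factor after $C_1$ is just a word over $\{1,2\}$ beginning with $1$, so its generating function is $\frac{x}{1-2x}$ without the detour through alternating runs and the trailing $2^c$.
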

\begin{proof}
Let us write an equation for the generating function $H_\rho(x)$. For each nonempty
partition $\pi$ of $[n]$ that avoids $1213$ and $\rho$, either the first block of $\pi$
contains only $1$ or it contains $1$ and $2$ or it contains $1$ and at least one
other element, not $2$. The contributions from the first two cases are
$xH_\tau(x)$ and $x(H_\rho(x)-1)$, respectively.  Each partition $\pi$ in the last
case must have the form $12\pi'1\alpha$, where $\pi'$ does not contain $1$ and $\alpha$ is a (possibly empty) word on $\{1,2\}$, which
implies a contribution of $\frac{x^2}{1-2x}(H_\tau(x)-1)$. Hence,
$$H_\rho(x)=1-x+xH_\tau(x)+xH_\rho(x)+\frac{x^2}{1-2x}(H_\tau(x)-1),$$
which yields the required result.
\end{proof}

\begin{example}\label{P45}
We consider some specific examples.  Theorem \ref{P44}, together with the fact that $H_{112}(x)=\frac{1-x}{1-2x}$,
implies $$H_{1223}(x)=1+\frac{x(1-3x+3x^2)}{(1-2x)^2(1-x)}.$$
Since $112\sim123$, we then have
$(1213,1223)\sim(1213,1234)$, by Theorem \ref{P44}.

Using the same reasoning as in the proof of Theorem \ref{P44} gives
\begin{itemize}
\item $H_{1231}(x)=1+xH_{1231}(x)+x(H_{1231}(x)-1)+\frac{x^3}{(1-x)(1-2x)}$, which implies $H_{1231}(x)=H_{1223}(x),$ whence $(1213,1223)\sim(1213,1231);$
\item $H_{1221}(x)=1+xH_{1221}(x)+x(H_{1221}(x)-1)+\frac{x^3}{(1-x)^3}$, which implies $$H_{1221}(x)=\frac{1-4x+6x^2-3x^3+x^4}{(1-x)^3(1-2x)};$$
\item $H_{1232}(x)=1+xH_{121}(x)+x(H_{1232}(x)-1)+\frac{x^3}{(1-x)^2(1-2x)}$, whence $H_{1232}(x)=H_{1221}(x).$
\end{itemize}
Similarly, we obtain $$H_{1233}(x)=H_{1221}(x)\mbox{ and }H_{1121}(x)=\frac{(1-x-x^2)(1-x)^2}{1-4x+4x^2-2x^4}.$$
\end{example}

\begin{example}\label{P46}
Let $L_\sigma(x)$ be the generating function for the number of partitions of $[n]$ that avoid $1231$ and $\sigma$.
In this example, we study the generating function $L_\sigma(x)$ in a couple of cases.

First, we consider the case $\sigma=1121$. Note that each nonempty partition $\pi$ that avoids $1231$ and $1121$ can be expressed as $\pi=11\cdots1\pi'$, $\pi=122\cdots211\cdots1\pi''$, or $\pi=122\cdots211\cdots122\cdots2\pi'''$, where $\pi'$ does not contain the letter $1$, $\pi''$ starts with $3$ if nonempty (and is such that $22\cdots2\pi''$ is a nonempty partition on the letters $\{2,3,\ldots\}$), and $\pi'''$ does not contain the letters $1$ or $2$. Thus the generating function $L_{1121}(x)$ satisfies
$$L_{1121}(x)=1+\frac{x}{1-x}L_{1121}(x)+\frac{x^2}{1-x}(L_{1121}(x)-1)+\frac{x^4}{(1-x)^3}L_{1121}(x),$$
whence $L_{1121}(x)=\frac{(1-x-x^2)(1-x)^2}{1-4x+4x^2-2x^4}$.

Next, we consider the case $\sigma=1232$. From the structure, each nonempty partition $\pi$ that avoids $1231$ and $1232$ can be written as $\pi=1\pi'$, where $\pi'$ does not contain the letter $1$, or as $\pi=1\pi''$, where $\pi''$ is nonempty, or as $\pi=122\cdots21\alpha\pi'''$, where $\alpha$ is a (possibly empty) word in $\{1,2\}$ and $\pi'''$ does not contain the letters $1$ or $2$. Thus the generating function $L_{1232}(x)$ satisfies
$$L_{1232}(x)=1+xL_{121}(x)+x(L_{1232}(x)-1)+\frac{x^3}{(1-x)(1-2x)}L_{121}(x),$$
whence $L_{1232}(x)=1+\frac{x(1-3x+3x^2)}{(1-2x)^2(1-x)}$.
\end{example}

The remaining results in this section are of a more specific nature and cover
most of the equivalences in the table below left to be shown concerning the
avoidance of two patterns of size four.  If $m$ and $n$ are positive integers,
then $[m,n]=\{m,m+1,\ldots,n\}$ if $m \leq n$ and $[m,n]=\varnothing$ if $m>n$.

\begin{proposition}\label{PrA1}
If $n\geq1$, then 
\[p_n(1221,1232)=p_n(1221,1223)=1+(n-1)2^{n-2}.\]
\end{proposition}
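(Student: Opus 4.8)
The plan is to describe the structure of the partitions in each of the two avoidance classes and to enumerate them directly; in both cases this yields the generating function $\frac{1}{1-x}+\frac{x^{2}}{(1-2x)^{2}}$, whose coefficient of $x^{n}$ equals $1+(n-1)2^{n-2}$ for $n\ge 1$. The common first step is to rephrase the forbidden patterns in terms of blocks. An occurrence of $1221$ in a partition amounts to a non-first block $b$ with at least two elements, together with some block of smaller index having an element to the right of the second element of $b$; an occurrence of $1223$ is the same statement with ``smaller index'' replaced by ``larger index''; and an occurrence of $1232$ amounts to a non-first block $b$ two of whose consecutive elements bracket an element belonging to a block of larger index.

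For $(1221,1223)$, combining the two block conditions shows that for every non-first block $b$ with $|b|\ge 2$, the only elements lying to the right of the second element of $b$ are elements of $b$ itself. Hence at most one non-first block is non-singleton, and if such a block exists it has the form $\{p_{1}\}\cup\{p_{2},p_{2}+1,\dots,n\}$ with $2\le p_{1}<p_{2}$, while the first block $B_{1}$ (arbitrary, containing $1$) and singletons occupy the remaining positions. There are $2^{n-1}$ avoiders with no such block; and choosing $p_{2}$, then $p_{1}\in\{2,\dots,p_{2}-1\}$, then a subset $B_{1}$ of $[1,p_{2}-1]\setminus\{p_{1}\}$ containing $1$, produces $\sum_{p_{2}=3}^{n}(p_{2}-2)2^{p_{2}-3}=(n-3)2^{n-2}+1$ avoiders with such a block. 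Adding gives $1+(n-1)2^{n-2}$.

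For $(1221,1232)$ the structure is more delicate. One shows that a non-first block $b$ with $|b|\ge 2$ consists of a ``head'' $p_{1}$ followed by a contiguous ``tail'' $\{p_{2},p_{2}+1,\dots,q\}$; that distinct such blocks occur from left to right without one nesting inside another; that only blocks of larger index appear to the right of the tail of the leftmost one; and---the key point, which has no counterpart in the previous case---that every position strictly between $p_{1}$ and $p_{2}$ must belong to $B_{1}$. It follows that a $(1221,1232)$-avoider is either $B_{1}$ together with singletons ($2^{n-1}$ of these), or it splits as an admissible ``prefix'' terminating at some position $q$ followed by a partition order-isomorphic to an \emph{arbitrary} $121$-avoider on $\{q+1,\dots,n\}$; here one uses the description of $121$-avoiders recalled in Section~\ref{sec-3k} (so they number $2^{m-1}$ on an $m$-set with $m\ge 1$) together with the observation that a $121$ inside the tail, preceded by the symbol $1$ at the very front of the partition, would create a $1232$. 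Counting admissible prefixes terminating at $q$---choose $p_{1},p_{2}$ with $2\le p_{1}<p_{2}\le q$, force $p_{1}+1,\dots,p_{2}-1$ into $B_{1}$, and choose freely which of $2,\dots,p_{1}-1$ also lie in $B_{1}$---gives $g(q)=\sum_{p_{1}=2}^{q-1}(q-p_{1})2^{p_{1}-2}=2^{q-1}-q$. Thus $p_{n}(1221,1232)=2^{n-1}+\sum_{q=3}^{n}g(q)h(n-q)$, where $h(0)=1$ and $h(m)=2^{m-1}$ for $m\ge 1$, and a short computation (via generating functions, or directly using the standard value of $\sum_{q\ge 1}q2^{-q}$) collapses this to $1+(n-1)2^{n-2}$.

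The step I expect to be hardest is establishing these structural descriptions, above all for $(1221,1232)$: one must rule out two non-first non-singleton blocks nesting one inside the other, and one must pin down the constraint that the positions between the head and the start of the tail of such a block all lie in $B_{1}$---neglecting it leads to an over-count. Once the structure is settled, the enumeration and the algebra are routine, and checking that the two generating functions coincide is immediate.
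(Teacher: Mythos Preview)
Your argument is correct, and at the structural level it coincides with the paper's: in both cases you and the paper identify exactly the same families of partitions. For $(1221,1223)$ your ``head $p_1$ plus terminal run $\{p_2,\dots,n\}$'' is precisely the paper's canonical form $11^{a_1}21^{a_2}\cdots k1^{a_k}j^a$, and for $(1221,1232)$ your ``prefix ending at the tail of the leftmost non-first non-singleton block, followed by a $121$-avoider'' is exactly the paper's form $11^{a_1}21^{a_2}\cdots j1^{a_j}j^a(j+1)^{a_{j+1}}\cdots k^{a_k}$.

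Where you differ is in the bookkeeping. The paper parametrises by the number of blocks $k$ and (for $(1221,1232)$) the index $j$ of the distinguished block, obtaining in one stroke the sum $\sum_{k\ge 2}(k-1)\binom{n-1}{k}+\sum_{k\ge 2}\binom{n-1}{k-1}$, which evaluates immediately. You instead parametrise by positions and, for $(1221,1232)$, split into a prefix count $g(q)=2^{q-1}-q$ convolved with the $121$-avoider count $h$. This is correct but noticeably longer: the convolution $\sum_q g(q)h(n-q)$ must still be collapsed, whereas the paper's binomial sum needs only the identity $\sum_k(k-1)\binom{n-1}{k}=(n-1)2^{n-2}-2^{n-1}+1$. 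Your remarks about non-nesting of several non-singleton blocks are true but not actually needed once you decide to peel off only the leftmost one and absorb the rest into the $121$-avoiding suffix; the paper sidesteps this by writing the entire canonical word at once.
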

\begin{proof}
Suppose $\pi \in P_{n,k}(1221,1232)$, where $k \geq 2$.  We consider whether or
not $\pi$ contains a repeated letter greater than one.  Thus, we see that $\pi$
must be of one of the following two forms:
\begin{enumerate}
\item $\pi=11^{a_1}21^{a_2}\cdots j1^{a_j}j^a(j+1)^{a_{j+1}}\cdots k^{a_k}$, where $2\leq j \leq k$, $a \geq 1$, $a_i\geq0$ if $i \in [j]$, and $a_i\geq 1$ if $i\in [j+1,k]$, 
\item $\pi=11^{b_1}21^{b_2}\cdots k1^{b_k}$, where $b_i\geq0$ if $i\in[k]$.
\end{enumerate}
This implies
\begin{align*}
p_n(1221,1232)&=1+\sum_{k=2}^n\sum_{j=2}^k\binom{n-k-1+k}{k}+\sum_{k=2}^n\binom{n-k+k-1}{k-1}\\
&=1+\sum_{k=2}^n(k-1)\binom{n-1}{k}+(2^{n-1}-1)=1+(n-1)2^{n-2}.
\end{align*}
For the second case, note that if $\pi \in P_{n,k}(1221,1223)$, where $k\geq 2$, then it must be of one of the following two forms:
\begin{enumerate}
\item $\pi=11^{a_1}21^{a_2}\cdots k1^{a_k}j^a$, where $2\leq j \leq k$, $a \geq 1$, and $a_i\geq0$ if $i \in [k]$,

\item $\pi=11^{b_1}21^{b_2}\cdots k1^{b_k}$, where $b_i\geq0$ if $i\in[k]$.
\end{enumerate}
This implies
\begin{align*}
p_n(1221,1223)&=1+\sum_{k=2}^{n}\sum_{a=1}^{n-k}(k-1)\binom{n-a-1}{k-1}+\sum_{k=2}^n\binom{n-1}{k-1}\\
&=1+\sum_{k=2}^n(k-1)\binom{n-1}{k}+(2^{n-1}-1)=1+(n-1)2^{n-2},
\end{align*}
which completes the proof.
\end{proof}

\begin{proposition}\label{PrAA1}
For $n \geq 1$, \[
p_n(1212,1123)=p_n(1212,1233)=p_n(1212,1223)=1+(n-1)2^{n-2}.
\]
\end{proposition}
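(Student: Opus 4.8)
The plan is to collapse the three asserted equalities into one enumeration and then carry that enumeration out by a direct structural analysis. For the first step, I would write $1123=11[1][1]$, $1223=1[11][1]$ and $1233=1[1][11]$; these are exactly the three substitution products of the connected non-crossing partitions $11,1,1$ taken in all possible orders, so by Theorem~\ref{thm-permute} they are mutually cc-equivalent, hence nc-equivalent (this equivalence is also recorded in the list preceding Section~\ref{sec-44}). Consequently $(1212,1123)\sim(1212,1223)\sim(1212,1233)$, and it remains only to prove $p_n(1212,1233)=1+(n-1)2^{n-2}$ for $n\ge 1$.

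Next I would record a simple characterization: a partition avoids $1233$ if and only if every block whose index is at least $3$ is a singleton. Indeed, if a block $B_v$ with $v\ge 3$ has two elements at positions $q_1<q_2$, then in the canonical form the value $1$ (at position $1$), some occurrence of the value $2$ (which precedes $q_1=\min B_v$ because $v\ge 3$), and the two elements $q_1,q_2$ together form an occurrence of $1233$; conversely, the repeated value in any occurrence of $1233$ exceeds two distinct smaller values, hence is at least $3$, and its block has size at least two. Therefore a $(1212,1233)$-avoider is precisely a non-crossing partition in which every block except possibly $B_1$ and $B_2$ is a singleton. Since any occurrence of $1212$ involves two blocks of size at least two, a singleton block can never appear in a $1212$-pattern, so the only non-crossing constraint that survives is that $B_1$ and $B_2$ must form a non-crossing pair.

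It then remains to count these partitions, and I expect this bookkeeping to be the only real work. I would split into two families. If $B_2$ is also a singleton (equivalently, every block other than $B_1$ is a singleton), the partition is determined by the choice of $B_1\ni 1$, giving $2^{n-1}$ partitions. If $|B_2|\ge 2$, then $B_2$ must be the second block, so $\min B_2=\min([n]\setminus B_1)$; writing $B_1\supseteq\{1,\dots,p\}$ with $p+1\notin B_1$ and $C:=B_1\setminus[p]\subseteq\{p+2,\dots,n\}$, the non-crossing condition on $(B_1,B_2)$ forces $B_2$ to lie inside the block of $[n]\setminus B_1$ delimited by $p$ and $\min C$, i.e. every element of $B_2$ precedes every element of $C$. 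Distributing the $M:=n-p-1$ elements of $\{p+2,\dots,n\}$ among $C$, $B_2\setminus\{p+1\}$ and the singleton blocks, subject to this ordering constraint and to $B_2\neq\{p+1\}$, yields (after a short computation) $M\,2^{M-1}$ partitions for each $p\in[n-1]$. Summing over $p$ gives $\sum_{M=0}^{n-2}M\,2^{M-1}=1+(n-3)2^{n-2}$, and adding the two families gives $2^{n-1}+1+(n-3)2^{n-2}=1+(n-1)2^{n-2}$, which agrees with the value already obtained for the equivalent pairs in Proposition~\ref{PrA1}.

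One could alternatively try to compute the generating function $NC(x;1233)$ directly from the substitution-composition identities of Section~\ref{sec-nc}, but since $1233=1[1][11]$ is disconnected those identities are less directly applicable, so the elementary count above seems preferable. The main obstacle is the distribution argument in the third paragraph (getting the ordering constraint and the $B_2\neq\{p+1\}$ condition right and summing correctly); the reduction in the first paragraph and the avoidance characterization in the second are routine.
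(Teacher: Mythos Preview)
Your argument is correct. The reduction via Theorem~\ref{thm-permute} matches the paper exactly; the enumeration step, however, proceeds along a different route.

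The paper chooses to count $(1212,1223)$-avoiders and does so by a word-level case analysis of the canonical sequence: a member of $P_{n,k}(1212,1223)$ is written either as $12\cdots(k-1)k^a\alpha$ with $\alpha$ non-increasing over $[k-1]$, or as $11^{b_1}21^{b_2}\cdots\ell 1^{b_\ell}(\ell+1)\cdots(k-1)k^a\beta$ with $\beta$ non-increasing over $[\ell+1,k-1]\cup\{1\}$, and each case is summed via binomial coefficients. You instead count $(1212,1233)$-avoiders, starting from the clean block-level characterisation that $1233$-avoidance is equivalent to all blocks of index at least~$3$ being singletons. This immediately reduces the non-crossing constraint to the single pair $(B_1,B_2)$, and the rest is the elementary distribution argument you outline (which indeed gives $M\,2^{M-1}$ per value of $p$ and sums to $1+(n-3)2^{n-2}$). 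Your approach is more structural and arguably more transparent, since the non-crossing condition becomes a single inequality $\max B_2<\min C$; the paper's approach has the advantage of giving the refinement by number of blocks $k$ along the way, which yours does not directly record.
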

\begin{proof}
From Theorem~\ref{thm-permute}, we know that the three pairs of patterns are equivalent. It is therefore enough to show that $p_n(1212,1223)=1+(n-1)2^{n-2}$. Suppose that $\pi \in P_{n,k}(1212,1223)$, where $k \geq 2$.  Then we either have $\pi=12\cdots (k-1)k^a\alpha$, where $a\geq1$ and $\alpha$ is a non-increasing word (i.e., it never strictly increases) on $[k-1]$, or
$$\pi=11^{b_1}21^{b_2}\cdots\ell 1^{b_\ell}(\ell+1)(\ell+2)\cdots (k-1)k^a\beta,$$
where $a\geq1$, $1 \leq \ell \leq k-1$, $b_\ell\geq1$, $b_i \geq 0$ if $1 \leq i < \ell$, and $\beta$ is a non-increasing word on $[\ell+1,k-1]\cup\{1\}$.  Summing over $a$ and $\ell$ implies that the members of $P_{n,k}(1212,1223)$ number
$$\sum_{a=1}^{n-k+1}\binom{n-a-1}{k-2}+\sum_{a=1}^{n-k}\sum_{\ell=1}^{k-1}\binom{n-a-1}{k-1}=\binom{n-1}{k-1}+(k-1)\binom{n-1}{k}.$$
Summing the last expression over $k \geq 2$ then gives $(n-1)2^{n-2}$ and completes the proof.
\end{proof}

\begin{proposition}\label{PrA2}
We have
$$\sum_{n\geq0}p_n(1122,1221)x^n=\frac{1-4x+5x^2-x^3}{(1-x)(1-2x)^2}.$$
\end{proposition}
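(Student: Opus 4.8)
The plan is to derive a functional equation for the generating function $P(x)=\sum_{n\ge 0}p_n(1122,1221)x^n$ by analyzing the structure of a partition avoiding both $1122$ and $1221$, classifying such partitions according to the behavior of their first block and the first occurrence of a repeated non-initial letter. A partition $\pi$ avoiding $1221$ has the property that whenever two equal letters $a>1$ sit inside a ``valley'' of a larger letter, that is forbidden, so repeats of non-first letters are quite constrained; combined with avoidance of $1122$ (no two disjoint adjacent pairs of equal letters in increasing order), the partitions split into a small number of shapes. Concretely I would write $\pi=1\pi'$ and ask whether the letter $1$ ever recurs, and if so whether any letter larger than $1$ recurs; each alternative contributes a term built from $P(x)$, from the generating function $1/(1-x)$ for all-distinct suffixes, or from geometric series like $x/(1-x)$ and $x/(1-2x)$ counting runs of $1$'s or words on two symbols.

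First I would set up the case split precisely: (i) $\pi$ has one block, contributing $x/(1-x)$ (together with the empty partition accounting for the constant term); (ii) $\pi$ has at least two blocks and no letter greater than $1$ is repeated — here $\pi$ looks like $1\,1^{a}\,2\,1^{b}\,3\cdots$ interleaving $1$'s with a strictly increasing sequence, but one must be careful that inserting $1$'s after two increasing steps risks creating $1221$ (pattern $1\,k\,k\,1$? no — rather $1\,2\,2\,1$ via $1,i,i,1$... actually the relevant forbidden configuration is more subtle), so I expect this subcase itself to break into ``$1$'s only before the second distinct letter'' versus a tail with no more $1$'s; (iii) $\pi$ has a repeated letter $j>1$, in which case avoidance of $1221$ forces that repeat to be ``exposed'' (nothing larger straddles it) and avoidance of $1122$ forces it to be essentially the last repeated pair, yielding a shape like $12\cdots j\,j\,\alpha$ with $\alpha$ a bounded-alphabet word, contributing a rational factor times $x^{\text{something}}/(1-x)$ or $/(1-2x)$. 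Summing the contributions gives a linear equation $P(x)=(\text{polynomial in }x)+x^{c}P(x)$ or a small system, which one solves to get the claimed $\frac{1-4x+5x^2-x^3}{(1-x)(1-2x)^2}$.

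The main obstacle I anticipate is getting the case analysis for the ``both patterns avoided'' interaction exactly right — in particular pinning down, for a partition that already contains a repeated non-first letter, precisely which further letters and repeats are still allowed (the $1122$ restriction interacts with where the ``$jj$'' block sits and forces the remaining letters to live in a two-element alphabet $\{1,\text{something}\}$ or to be non-increasing), and making sure the geometric-series denominators $(1-x)$ versus $(1-2x)$ are assigned to the correct sub-populations. A useful sanity check along the way is to expand the claimed rational function, $\frac{1-4x+5x^2-x^3}{(1-x)(1-2x)^2}=1+x+3x^2+8x^3+\cdots$, and verify the small coefficients against direct enumeration of $P_n(1122,1221)$ for $n\le 4$; matching these fixes any sign or off-by-one errors in the functional equation before the final algebraic simplification. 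An alternative, perhaps cleaner, route worth trying if the direct decomposition gets messy is to use Proposition~\ref{PrA1} or the relation between $1122$ and other size-four patterns (e.g. via Theorem~\ref{thm-permute} or the known $w_n$-class containing $(1122,1123)$) to transfer to an already-understood class, but I would expect the self-contained structural decomposition above to be the most transparent.
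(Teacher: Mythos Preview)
Your overall strategy---a structural case analysis producing rational building blocks that sum to the claimed generating function---is the same spirit as the paper's proof. However, the specific decomposition you propose breaks down in case~(iii). The assertion that a $\{1122,1221\}$-avoider with a repeated letter $j>1$ must have the shape $12\cdots j\,j\,\alpha$ is false: the partitions $1212$ and $12132$ both avoid $\{1122,1221\}$ and both have the letter $2$ repeated, yet neither begins $122\cdots$. What avoidance of $1221$ actually forces is only that no letter smaller than $j$ may appear to the right of the last occurrence of~$j$; it does not force the two copies of $j$ to be adjacent or to sit immediately after the initial increasing run. So the functional equation you would write down from this shape would miss terms, and the anticipated obstacle you flag (``getting the case analysis exactly right'') is a genuine one that your current split does not overcome.

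The paper sidesteps this by decomposing instead on the length $m$ of the maximal initial strictly increasing prefix $12\cdots m$, and then on the value of the first letter following it. The key step is a structural lemma for partitions beginning $12\cdots m\,1$: such a $\pi$ necessarily factors as
\[
\pi = 1\,\alpha_0\,1\,\alpha_1\,2\,\alpha_2\,3\,\alpha_3\cdots m\,\alpha_m,
\]
where $\alpha_0 = 23\cdots m$, the word $\alpha_1$ is an arbitrary shuffle of $1$'s with a strictly increasing run $m{+}1,m{+}2,\ldots$, and each subsequent block $i\alpha_i$ is either empty or $i$ followed by a run of consecutive new letters. This gives $f_{12\cdots m1}(x)=\dfrac{x^{m+1}}{(1-x)^{m-1}(1-2x)}$; noting $f_{12\cdots m j}=x^{j-1}f_{12\cdots(m-j+1)1}$ and summing over $j$ and then over $m$ yields the stated rational function directly. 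Your case~(ii) observation (that when no letter $>1$ repeats both patterns are vacuously avoided, contributing $2^{n-1}$) is correct and could anchor an alternative argument, but you would then need a substantially more refined description of case~(iii) than the one you sketch.
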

\begin{proof}
If $\tau$ is a word, then let $f_\tau(x)$ denote the generating function for
the number of partitions (of size at least $|\tau|$) avoiding $\{1122,1221\}$
of the form $\tau\pi'$.  We first compute $f_\tau(x)$ in the case when
$\tau=12\cdots m1$, where $m\geq 2$.  Suppose $\pi$ is a member of
$P_n(1122,1221)$ whose first $m+1$ letters are $12\cdots m1$.  Then $\pi$ must
be of the form
$$\pi=1\alpha_01\alpha_12\alpha_23\alpha_3\cdots m\alpha_m,$$
where $\alpha_0=23\cdots m$, $\alpha_1$ is a (possibly empty) word in which all
the symbols different from $1$ form a strictly increasing sequence
$m+1,m+2,\ldots$, and $i\alpha_i$, $2 \leq i \leq m$, is a sequence which is
either empty or is nonempty and starts with $i$, with the letters of $\alpha_i$
comprising a (possibly empty) sequence of consecutive integers, the smallest of
which is one more than the largest letter occurring to the left of $\alpha_i$.
For example, if $m=4$ and $\pi=1234115116712894$, then $\alpha_0=234$,
$\alpha_1=1511671$, $2\alpha_2=289$, $3\alpha_3=\varnothing$ and $4\alpha_4=4$.
From the above decomposition, we see that
\begin{equation}\label{gena}
f_{12\cdots m1}(x)=\frac{x^{m+1}}{(1-x)^{m-1}(1-2x)}, \qquad m \geq 1.
\end{equation}
(Note a simpler argument applies to the $m=1$ case of \eqref{gena}.)

If $m\geq 1$, then let $\widetilde{f}_{12\cdots m}(x)$ denote the generating
function for the number of members $\pi=\pi_1\pi_2\cdots$ of $P_n(1122,1221)$ of
size at least $m$ such that $\pi_1\pi_2\cdots \pi_m=12\cdots m$, with
$\pi_{m+1}\leq m$ (if it occurs).  From the definitions and \eqref{gena}, we
have
\begin{align*}
\widetilde{f}_{12\cdots m}(x)&=x^m+\sum_{j=1}^m f_{12\cdots mj}(x)=x^m+\sum_{j=1}^m x^{j-1}f_{12\cdots (m-j+1)1}(x)\\
&=x^m+\sum_{j=1}^m \frac{x^{m+1}}{(1-x)^{m-j}(1-2x)}=\frac{x^m(1-x(1-x)^{m-1})}{(1-x)^{m-1}(1-2x)}.
\end{align*}
Thus, we have
\begin{align*}
\sum_{n\geq0}p_n(1122,1221)x^n&=1+\sum_{m\geq1}\widetilde{f}_{12\cdots m}(x)
=1+\sum_{m\geq1}\frac{x^m(1-x(1-x)^{m-1})}{(1-x)^{m-1}(1-2x)}\\&=\frac{1-4x+5x^2-x^3}{(1-x)(1-2x)^2},
\end{align*}
which completes the proof.
\end{proof}

From Proposition \ref{PrA2}, we see that $p_n(1122,1221)=1+(n-1)2^{n-2}$ for all $n \geq 1$.

We now consider the case of avoiding $\{1122,1223\}$.  For this, let us introduce the following notation.  Let $f_{a_1a_2\cdots a_m}=f_{a_1a_2\cdots a_m}(x)$ denote the generating function for the number of members $\pi=\pi_1\pi_2\cdots \pi_n \in P_n(1122,1223)$ where $n \geq m$ such that $\pi_1\pi_2\cdots \pi_m=a_1a_2\cdots a_m$, and let $f_{a_1a_2\cdots a_m}^*=f_{a_1a_2\cdots a_m}^*(x)$ denote the generating function counting the same partitions with the further restriction that $\max_{1\leq i\leq n}(\pi_i)=\max_{1\leq i\leq m}(a_i)$.  To establish the case $\{1122,1223\}$, we will need the following lemma.

\begin{lemma}\label{slem}
We have
\begin{equation}\label{sleme1}
F^{*}(x):=1+\sum_{k\geq1}f_{12\cdots k}^*(x)=\frac{1-4x+5x^2-2x^3+x^4}{(1-x)(1-2x)^2}.
\end{equation}
\end{lemma}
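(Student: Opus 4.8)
The plan is to obtain an explicit closed form for each $f_{12\cdots k}^*(x)$ and then sum over $k$. Write a partition counted by $f_{12\cdots k}^*$ as $\pi=12\cdots k\,\pi'$, where $\pi'$ is a word over $[k]$ (the starred condition forces the largest letter of $\pi$ to be $k$, so the whole block structure is pinned down by the prefix). First I would record the structural consequence of $1223$-avoidance: any letter $w\ge 2$ appearing in $\pi'$ is the \emph{second} occurrence of $w$ in $\pi$, and it is preceded in the prefix by the smaller letter $1$, so $1223$-avoidance forbids any letter exceeding $w$ after it. Iterating, the subsequence of $\pi'$ consisting of the letters $\ge 2$ is weakly decreasing, so $\pi'=1^{e_0}v_1 1^{e_1}v_2 1^{e_2}\cdots v_t 1^{e_t}$ with $k\ge v_1\ge v_2\ge\cdots\ge v_t\ge 2$ and each $e_i\ge 0$.

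Next I would impose $1122$-avoidance. The key claim is that a copy of $1122$ can arise only in one way: the letter $1$ must occur in $\pi'$, and some letter $v\ge 2$ must occur at least twice in $\pi'$ strictly to the right of the first $1$ of $\pi'$; all other potential copies are excluded because among the $v_i$ the larger letters lie entirely to the left of the smaller ones, and the prefix occurrence of any $v\ge 2$ lies to the left of all of $\pi'$. This yields a trichotomy according to the position of the leftmost $1$ in $\pi'$: (a) $\pi'$ contains no $1$, and then $v_1\ge\cdots\ge v_t$ is an arbitrary weakly decreasing word over $\{2,\dots,k\}$; (b) $e_0\ge 1$, and then the $v_i$ must be \emph{strictly} decreasing; (c) $e_0=0$ but $\pi'$ contains a $1$, and then, writing $p$ for the index just before the first block of $1$'s, the initial run $v_1\ge\cdots\ge v_p$ is weakly decreasing while the tail $v_{p+1}>v_{p+2}>\cdots>v_t$ is strictly decreasing with $v_{p+1}\le v_p$. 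One must then check each configuration is also sufficient, i.e.\ genuinely avoids both patterns.

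With the structure in hand, the generating functions are routine to assemble. The only sub-computation worth isolating is that a ``strictly decreasing run of letters from $\{2,\dots,c\}$, each followed by a possibly empty block of $1$'s'' has generating function $1/(1-x)^{c-1}$ (an immediate induction on $c$), whereas a weakly decreasing word over $\{2,\dots,k\}$ has generating function $1/(1-x)^{k-1}$, and such a word with a prescribed minimum letter $c$ has generating function $x/(1-x)^{k-c+1}$. Substituting the bound on the tail in case (c) and summing over the value $c=v_p$ produces an exponent of $(1-x)$ independent of $c$, and the three cases combine to
\[
f_{12\cdots k}^*(x)=\frac{x^{k}}{(1-x)^{k-1}}+\frac{x^{k+1}}{(1-x)^{k}}+\frac{(k-1)x^{k+2}}{(1-x)^{k+1}},
\]
a formula one checks is also valid at $k=1$ (it collapses to $x/(1-x)$). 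Finally, summing over $k\ge 1$ with the substitution $y=x/(1-x)$, so that $\sum_{k\ge 1}xy^{k-1}=\frac{x(1-x)}{1-2x}$, $\sum_{k\ge 1}xy^{k}=\frac{x^2}{1-2x}$ and $\sum_{k\ge 1}(k-1)xy^{k+1}=\frac{x^4}{(1-x)(1-2x)^2}$, and adding the $1$ from the empty partition, gives $F^*(x)=1+\frac{x(1-x)}{1-2x}+\frac{x^2}{1-2x}+\frac{x^4}{(1-x)(1-2x)^2}$, which over the common denominator $(1-x)(1-2x)^2$ simplifies to the claimed rational function. The main obstacle will be the $1122$-avoidance case analysis of the second paragraph, especially verifying sufficiency in case (c) and getting the coupling $v_{p+1}\le v_p$ exactly right; once that is settled, the rest is bookkeeping.
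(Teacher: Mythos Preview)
Your argument is correct and gives a genuinely different derivation from the paper's. The paper proceeds by conditioning on the letter immediately following the prefix $12\cdots k$: it first computes $f_{12\cdots k1}^*=x^{k+1}/(1-x)^k$, then observes $f_{12\cdots kj}^*=x^{k-j+1}f_{12\cdots j}^*$ for $2\le j\le k$, obtaining the recurrence
\[
f_{12\cdots k}^*=x^k+\frac{x^{k+1}}{(1-x)^k}+\sum_{j=2}^k x^{k-j+1}f_{12\cdots j}^*,
\]
and sums over $k$ to get a linear equation for $F^*(x)$. You instead carry out a complete structural decomposition of the entire suffix $\pi'$, reducing the $1122$-avoidance constraint to the single combinatorial condition ``no repeated $v_j$ after the first $1$ of $\pi'$,'' and thereby obtain the explicit closed form
\[
f_{12\cdots k}^*(x)=\frac{x^{k}}{(1-x)^{k-1}}+\frac{x^{k+1}}{(1-x)^{k}}+\frac{(k-1)x^{k+2}}{(1-x)^{k+1}}
\]
for every $k$, which the paper never isolates. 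Your route demands a sharper case analysis (the coupling $v_{p+1}\le v_p$ in case~(c) and the verification that cases (a)--(c) are both exhaustive and sufficient), but it pays off with more information: one sees immediately, for instance, that $f_{12\cdots k}^*$ has a pole of order $k+1$ at $x=1$, and the summation over $k$ becomes a routine geometric/arithmetic series rather than the solution of a functional equation. The paper's approach, by contrast, is quicker to set up and generalises more mechanically to related prefixes, at the cost of leaving the individual $f_{12\cdots k}^*$ implicit.
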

\begin{proof}
First note that if $k \geq 2$, then partitions $\pi$ enumerated by $f_{12\cdots k1}^*$ are of the form $$\pi=12\cdots k1\alpha_1\alpha_k\alpha_{k-1}\cdots\alpha_2,$$ where $\alpha_i$, $1 \leq i \leq k$, is either a nonempty word of the form $i1^a$ for some $a \geq0$ or is empty, whence $f_{12\cdots k1}^*=\frac{x^{k+1}}{(1-x)^k}$.
If $2 \leq j \leq k$, then within partitions enumerated by $f_{12\cdots kj}^*$, the letters $j+1,j+2,\ldots,k$ can appear only once (so as to avoid $1223$), whence
$$f_{12\cdots kj}^*=x^{k-j}f_{12\cdots jj}^*=x^{k-j+1}f_{12\cdots j}^*.$$
From the definitions, we then have
\begin{align}
f_{12\cdots k}^*&=x^k+f_{12\cdots k1}^*+\sum_{j=2}^kf_{12\cdots kj}^*\notag\\
&=x^k+\frac{x^{k+1}}{(1-x)^k}+\sum_{j=2}^kx^{k-j+1}f_{12\cdots j}^*, \qquad k \geq 1.\label{sleme2}
\end{align}
Summing \eqref{sleme2} over $k \geq 1$, and noting $f_1^*=\frac{x}{1-x}$, then yields
\begin{align*}
F^*(x)-1&=\frac{x}{1-x}+\frac{x^2}{1-2x}+\sum_{j\geq 2}f_{12\cdots j}^*(x)\sum_{k\geq j}x^{k-j+1}\\
&=\frac{x}{1-x}+\frac{x^2}{1-2x}+\frac{x}{1-x}\left(F^*(x)-\frac{x}{1-x}-1\right),
\end{align*}
which gives \eqref{sleme1}.
\end{proof}

We now prove the case $\{1122,1223\}$.

\begin{proposition}\label{PrA3}
We have
$$\sum_{n\geq0}p_n(1122,1223)x^n=\frac{1-4x+5x^2-x^3}{(1-x)(1-2x)^2}.$$
\end{proposition}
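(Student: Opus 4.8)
The plan is to follow the strategy of Proposition~\ref{PrA2}: decompose a $\{1122,1223\}$-avoiding partition according to the length of its maximal initial increasing run, recognise the bulk of the resulting sum as the series $F^*(x)$ of Lemma~\ref{slem}, and evaluate the remaining correction term explicitly.

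Every nonempty partition is of the form $12\dotsb m\,\pi'$, where $m\ge 1$ is the length of its maximal initial run $1,2,\dotsc,m$ (so that the first letter of $\pi'$, if any, lies in $[m]$). Writing $\widetilde f_{12\dotsb m}(x)$ for the generating function of $\{1122,1223\}$-avoiders whose initial run is exactly $12\dotsb m$, this gives
\[
\sum_{n\ge 0}p_n(1122,1223)x^n=1+\sum_{m\ge 1}\widetilde f_{12\dotsb m}(x),\qquad
\widetilde f_{12\dotsb m}=x^m+\sum_{v=1}^{m}f_{12\dotsb mv}.
\]
The crucial structural remark is that for $2\le v\le m$ every $\{1122,1223\}$-avoider beginning with $12\dotsb mv$ is forced to have largest letter $m$: the two occurrences of $v$ (one in the initial run, one in position $m+1$) are preceded by the letter $v-1$, and any strictly larger letter occurring afterwards would create $1223$, so in particular no letter exceeding $m$ can appear. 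Hence $f_{12\dotsb mv}=f^*_{12\dotsb mv}$ for $v\ge 2$; likewise, an avoider beginning $12\dotsb m1$ either has largest letter $m$ (and is counted by $f^*_{12\dotsb m1}$) or introduces a letter larger than $m$, so $f_{12\dotsb m1}=f^*_{12\dotsb m1}+c_m(x)$, where $c_m(x)$ is the generating function of avoiders of the form $12\dotsb m\,1\,\pi'$ in which $\pi'$ introduces a letter larger than~$m$. Substituting, and using the identity $f^*_{12\dotsb m}=x^m+\sum_{v=1}^{m}f^*_{12\dotsb mv}$ established inside the proof of Lemma~\ref{slem}, we obtain $\widetilde f_{12\dotsb m}=f^*_{12\dotsb m}+c_m(x)$, and therefore by Lemma~\ref{slem}
\[
\sum_{n\ge 0}p_n(1122,1223)x^n=F^*(x)+\sum_{m\ge 1}c_m(x)=\frac{1-4x+5x^2-2x^3+x^4}{(1-x)(1-2x)^2}+\sum_{m\ge 1}c_m(x).
\]

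It remains to show $\sum_{m\ge 1}c_m(x)=x^3/(1-2x)^2$; adding this to $F^*(x)$ and simplifying then gives the asserted formula (and incidentally recovers the generating function of Proposition~\ref{PrA2}). To compute $c_m(x)$ I would analyse an avoider $\pi=12\dotsb m\,1\,\pi'$ introducing the letter $m+1$. Avoidance of $1223$ forces every letter between the second $1$ and the first $m+1$ to equal $1$ (else such a letter $v$, together with $v-1$ before it and $m+1$ after it, gives $1223$), so $\pi=12\dotsb m\,1^{a+1}(m+1)\beta$ for some $a\ge 0$. Avoidance of $1122$ then forces $m+1$, every new letter, and every letter of $\{2,\dotsc,m\}$ to occur at most once in $\beta$ (and $m+1$ in fact not at all), while avoidance of $1223$ forces that once a letter $v\in\{2,\dotsc,m\}$ occurs in $\beta$ every later letter is at most $v$. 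This yields a decomposition of $\beta$ as an initial word on $\{1\}\cup\{m+2,m+3,\dotsc\}$ with the new letters introduced in increasing order (generating function $\frac{1}{1-2x}$), optionally followed by a segment $v\gamma$ with $v\in\{2,\dotsc,m\}$ and $\gamma$ a word on $[v-1]$ of the same nested type; a short induction identifies the generating function of the latter words as $1/(1-x)^{v-1}$. Assembling the factors gives $c_m(x)=\frac{x^{m+2}}{(1-2x)(1-x)^m}$, and summing the geometric series over $m\ge 1$ gives $\frac{x^3}{(1-2x)^2}$.

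The genuinely delicate step is the structural analysis of $\beta$: one must verify that the short list of conditions above is \emph{exactly} equivalent to avoidance of $1122$ and $1223$ in the presence of the long prefix $12\dotsb m\,1^{a+1}(m+1)$, with no further constraints and no unexpected interactions, and then set up the nested decomposition of $\beta$ cleanly. The bookkeeping with $F^*$ and $f^*$, the geometric summations, and the final simplification $\frac{1-4x+5x^2-2x^3+x^4}{(1-x)(1-2x)^2}+\frac{x^3}{(1-2x)^2}=\frac{1-4x+5x^2-x^3}{(1-x)(1-2x)^2}$ are all routine.
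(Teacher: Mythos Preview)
Your proof is correct and follows essentially the same route as the paper's. Both arguments decompose a $\{1122,1223\}$-avoider by its maximal initial run $12\cdots m$ and by the next letter, observe that $f_{12\cdots mv}=f^*_{12\cdots mv}$ for $v\ge 2$, and reduce the problem to analysing the prefix $12\cdots m1$ together with Lemma~\ref{slem}. The only organisational difference is that the paper computes $f_{12\cdots m1}=\frac{x^{m+1}}{(1-x)^{m-1}(1-2x)}$ in one piece (via the decomposition $1\alpha_0 1\alpha_1\alpha_m\alpha_{m-1}\cdots\alpha_2$) and then assembles the full sum, whereas you split $f_{12\cdots m1}=f^*_{12\cdots m1}+c_m$ so that $\widetilde f_{12\cdots m}=f^*_{12\cdots m}+c_m$ and the total becomes $F^*(x)+\sum_{m\ge 1}c_m(x)$ directly; your $c_m(x)=\frac{x^{m+2}}{(1-x)^m(1-2x)}$ is exactly $f_{12\cdots m1}-f^*_{12\cdots m1}$, and the structural analysis you give for $\beta$ matches the paper's analysis of $\alpha_1\alpha_m\cdots\alpha_2$.
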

\begin{proof}
We first consider the generating function $\widetilde{f}_{12\cdots
k}=\widetilde{f}_{12\cdots k}(x)$, $k \geq 1$, for the number of partitions
$\pi=\pi_1\pi_2\cdots \pi_n \in P_n(1122,1223)$ having length at least $k$ such
that $\pi_1\pi_2\cdots\pi_k=12\cdots k$ and $\pi_{k+1}\leq k$ (if it exists).
From the definitions, we have
\begin{align}
\widetilde{f}_{12\cdots k}&=x^k+f_{12\cdots k1}+\sum_{j=2}^k f_{12\cdots kj} \notag\\
&=x^k+f_{12\cdots k1}+\sum_{j=2}^k x^{k-j+1}f_{12\cdots j}^*, \qquad k \geq 1, \label{PrA3e1}
\end{align}
for if $2 \leq j \leq k$, then $f_{12\cdots kj}=x^{k-j+1}f_{12\cdots j}^*$, since the letters $j+1, j+2, \ldots, k$ can only appear once in a partition enumerated by $f_{12\cdots kj}$, with no letters greater than $k$ occurring.  Furthermore, we have
\begin{equation}\label{PrA3e2}
f_{12\cdots k1}=\frac{x^{k+1}}{(1-x)^{k-1}(1-2x)}, \qquad k \geq 1,
\end{equation}
the enumerated partition $\pi$ having the form
$$\pi=1\alpha_01\alpha_1\alpha_k\alpha_{k-1}\cdots \alpha_2,$$
where $\alpha_0=23\cdots k$, $\alpha_1$ is a (possibly empty) word obtained by replacing the $2$'s occurring in a word in $\{1,2\}$, successively, with the letters $k+1,k+2,\ldots$, and $\alpha_i$, $2 \leq i \leq k$, is a sequence which is either empty or is nonempty and of the form $i1^a$ for some $a \geq 0$.

Summing \eqref{PrA3e1} over $k\geq 1$, and using \eqref{PrA3e2}, then gives
\begin{align*}
\sum_{n\geq0}p_n(1122,1223)x^n&=1+\sum_{k\geq 1}\widetilde{f}_{12\cdots k}(x)\\
&=\frac{1}{1-x}+\sum_{k\geq 1}\frac{x^{k+1}}{(1-x)^{k-1}(1-2x)}+\sum_{j\geq 2}f_{12\cdots j}^*(x)\sum_{k\geq j}x^{k-j+1}\\
&=\frac{1}{1-x}+\frac{x^2(1-x)}{(1-2x)^2}+\frac{x}{1-x}\left(F^*(x)-\frac{x}{1-x}-1\right),
\end{align*}
which yields the requested result, by \eqref{sleme1}.
\end{proof}

\begin{proposition}\label{PrB}
For $n \geq 0$, $p_n(1123,1233)=p_n(1231,1233)=p_n(1123,1232)$.  In
addition, we have
$$\sum_{n\geq0}p_n(1122,1232)x^n=\frac{1-4x+6x^2-3x^3+x^4}{(1-x)^3(1-2x)}.$$
\end{proposition}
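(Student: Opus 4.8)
The plan is to treat the four avoidance classes occurring in the statement one at a time, by a direct structural decomposition of their canonical sequences, exactly in the style of Propositions~\ref{PrA1}--\ref{PrA3}: for each pair $(\sigma,\tau)$ I would describe the precise shape of a generic $(\sigma,\tau)$-avoider, read off a rational generating function by summing over the natural parameters (size of the first block, length of the longest increasing prefix, the place where a repeated letter first occurs, and so on), and finally compare the four generating functions. Both the chain of equalities $p_n(1123,1233)=p_n(1231,1233)=p_n(1123,1232)$ and the displayed formula for $\{1122,1232\}$ will then drop out; in fact I expect all four of these pairs to share the generating function $\dfrac{1-4x+6x^2-3x^3+x^4}{(1-x)^3(1-2x)}$, which is the same rational function that appears as $H_{1221}(x)=H_{1233}(x)$ in Example~\ref{P45}.

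For the two pairs that contain $1233$ I would use that $1233$ has the form $12\dotsb(k-1)k^a$ (with $k=3$, $a=2$), so by the observation opening the proof of Theorem~\ref{thm-strong} a partition avoids $1233$ precisely when it has at most two blocks, or else every block from the third one on is a singleton. Intersecting this block-size restriction with the avoidance of $1123$ (respectively $1231$) forces the avoiders into a very rigid shape --- an increasing or near-increasing opening segment followed by a controlled tail of singleton blocks --- from which a rational generating function is extracted by a summation just like the one in Proposition~\ref{PrA1}. For the pair $\{1123,1232\}$ the same approach works: avoiding $1232$ is no longer a pure block-size condition, but the combined restriction still admits a transparent description of its avoiders, again giving a rational generating function. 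I note that the single equality $p_n(1123,1233)=p_n(1231,1233)$ could alternatively be obtained from Theorem~\ref{thm-strong} if one knew that $1123$ and $1231$ are strongly partition equivalent; the small cases strongly suggest this, but since it is not among the Facts quoted in Section~\ref{sec-44}, making that route rigorous would require exhibiting an explicit size- and block-structure-preserving bijection, so the uniform decomposition argument seems preferable.

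The pair $\{1122,1232\}$ is the step I expect to be the real obstacle, because neither pattern collapses to a block-size or a first-block condition. Here I would follow the template of Propositions~\ref{PrA2} and~\ref{PrA3}: classify the $\{1122,1232\}$-avoiders according to the length $m$ of their longest strictly increasing prefix $12\dotsb m$, and according to whether a repeated letter greater than $1$ ever appears, and I would almost certainly need an auxiliary generating function --- analogous to $F^{*}(x)$ in Lemma~\ref{slem} --- for avoiders with a prescribed increasing prefix and a restriction on the largest letter used. The resulting system of recurrences should collapse, after summation of geometric-type series, to $\dfrac{1-4x+6x^2-3x^3+x^4}{(1-x)^3(1-2x)}$. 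The remaining work is then purely algebraic: verifying that this rational function is identical to the three generating functions obtained in the preceding paragraph, which completes the proposition. The principal dangers are making the case analyses exhaustive and mutually exclusive --- in particular controlling the interaction between the forbidden pattern $1232$ and long runs of the letter $1$ --- and keeping the defining conditions of the auxiliary generating function consistent throughout the recurrence.
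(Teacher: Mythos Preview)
Your plan is sound and your expectation that all four pairs share the generating function $\dfrac{1-4x+6x^2-3x^3+x^4}{(1-x)^3(1-2x)}$ is correct, but the paper proceeds quite differently on both halves.

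For the chain $p_n(1123,1233)=p_n(1231,1233)=p_n(1123,1232)$ the paper does not compute any generating functions. Instead it observes that a partition $\pi\in P_{n,k}(1231,1233)$ with $k\ge 3$ is forced to have the shape $\pi=1^a\,2\,\alpha\,3\,2^b\,4\,5\cdots k$ with $a\ge 1$, $b\ge 0$, and $\alpha$ a word in $\{1,2\}$, and then writes down two explicit block-count-preserving bijections: $\pi\mapsto 12\cdots(k-1)\,2^{a-1}1^b\,k\,\alpha\in P_{n,k}(1123,1233)$ and $\pi\mapsto 12\cdots(k-1)^{\,b+1}1^{a-1}\,k\,\alpha'\in P_{n,k}(1123,1232)$, where $\alpha'$ is $\alpha$ with every $2$ replaced by $k$. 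This is considerably shorter than deriving and comparing three rational functions, though your route would of course also work and has the bonus of producing the common generating function explicitly.

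For $\{1122,1232\}$ the paper's decomposition is different from, and simpler than, the $F^{*}$-style argument you anticipate. The key shortcut is that if $\pi=1\pi'$ with $\pi'$ containing no $1$, then $\pi$ avoids $\{1122,1232\}$ if and only if $\pi'$ avoids $\{1122,121\}$: the leading $1$ converts the $1232$-restriction on $\pi$ into a $121$-restriction on $\pi'$. Since $g(x)=\sum_{n\ge 0}p_n(1122,121)x^n=\sum_{i=0}^{3}\bigl(\tfrac{x}{1-x}\bigr)^i$ is already available from the $(3,4)$-pair analysis in Section~\ref{sec-3k}, this auxiliary quantity comes for free. The remaining $\{1122,1232\}$-avoiders (those whose first block has size at least two) are then split into three explicit families built from runs of $1$'s, a possible repeated letter $\ell$, and a strictly increasing tail, and the four contributions sum directly to the stated rational function. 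Your longest-increasing-prefix scheme should also succeed, but would not exploit this reduction to $\{1122,121\}$ and would likely require the extra auxiliary layer you describe.
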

\begin{proof}
To show the first statement, we define bijections between the classes which preserve the number of blocks $k$ as follows.  We may assume $k \geq 3$, since all of the patterns contain at least one $3$.  If $\pi \in P_{n,k}(1231,1233)$, then $\pi$ is necessarily of the form $\pi=1^a2\alpha32^b45\cdots k$, where $a \geq 1$, $b \geq 0$, and $\alpha$ is a word in $\{1,2\}$.  Let $\pi'=12\cdots (k-1)2^{a-1}1^bk\alpha$ and $\pi''=12\cdots (k-1)^{b+1}1^{a-1}k\alpha'$, where $\alpha'$ is obtained from $\alpha$ by replacing each occurrence of the letter $2$ with $k$.  Then it may be verified that the mappings $\pi \mapsto \pi'$ and $\pi \mapsto \pi''$ are bijections from $P_{n,k}(1231,1233)$ to $P_{n,k}(1123,1233)$ and $P_{n,k}(1123,1232)$, respectively.

To prove the second statement, suppose $\pi \in P_{n}(1122,1232)$ has at least two blocks.  Then $\pi$ necessarily has one of the following four forms, where $k$ denotes the number of blocks in the last three cases:
\begin{enumerate}
\item $\pi=1\pi'$, where $\pi'$ contains no $1$'s, avoids $\{1122,121\}$ and is nonempty.

\item $\pi=1^{r_1}21^{r_2}\cdots k1^{r_k}$, where $r_1\geq2$ and $r_i\geq0$ if $i \geq 2$.

\item $\pi=1\pi'1^{s_0}(\ell+1)1^{s_1}\cdots k1^{s_{k-\ell}}$, where $2 \leq \ell \leq k$, $s_0 \geq 1$, $s_i \geq0$ if $i\geq1$, and $\pi'$ is a nonempty partition on the letters $\{2,3,\ldots,\ell\}$ avoiding $\{1122,121\}$.

\item $\pi=12\cdots(\ell-1)\ell^a1^{t_0}\ell1^{t_1}\cdots k1^{t_{k-\ell+1}}$, where $2 \leq \ell \leq k$, $a \geq 1$, $t_0\geq 1$, $t_i\geq0$ if $i\geq 1$, and an $\ell$ appears after the second run of the letter $1$.
\end{enumerate}

Thus, we get
\begin{align*}
\sum_{n\geq0}p_n(1122,1232)x^n&=\frac{1}{1-x}+x(g(x)-1)+\frac{x^3}{(1-x)(1-2x)}
\\ &+\frac{x^2}{1-x}(g(x)-1)\left(1+\frac{x}{1-2x}\right)
+\frac{x^4}{(1-x)^3(1-2x)},
\end{align*}
where $g(x)=\sum_{n\geq0}p_n(1122,121)x^n$.  By the table in Section 2.4 above, we have $g(x)=\sum_{i=0}^3\left(\frac{x}{1-x}\right)^i$, and substituting this into the last equation gives the second statement.
\end{proof}

\begin{proposition}\label{PrC}
If $n \geq 2$, then $p_n(1123,1234)=p_n(1122,1233)=2^{n-5}(n^2-n+14)$.
\end{proposition}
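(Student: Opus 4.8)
The plan is to determine the two generating functions $\sum_{n\ge 0}p_n(1123,1234)x^n$ and $\sum_{n\ge 0}p_n(1122,1233)x^n$ separately, show that each equals
\[
\frac{1-5x+8x^2-3x^3-x^4}{(1-2x)^3},
\]
and then extract the coefficient of $x^n$. The identity $p_n(1123,1234)=p_n(1122,1233)$ will then follow from the agreement of the generating functions rather than from a bijection; in fact the two classes do not share the same block-count statistic, so a block-preserving bijection cannot exist.

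For the pair $(1123,1234)$, the key point is that avoiding $1234$ is the same as having at most three blocks. Partitions with at most two blocks use only two distinct symbols and hence automatically avoid $1123$; they contribute $\frac{1-x}{1-2x}$. For partitions with exactly three blocks I would use the fact that such a partition $\pi$ contains $1123$ if and only if $\pi$ has at least two $1$'s and, in the portion of $\pi$ after the second $1$, some $2$ precedes some $3$ (equivalently, the restriction of that portion to $\{2,3\}$ is not of the form $3^a2^b$). Splitting off the sub-case of a single $1$ (where $\pi=1w$ with $w$ a two-block partition on $\{2,3,\dots\}$) and, in the remaining sub-case, writing $\pi=1\alpha1\beta$ with the first two $1$'s displayed, $\alpha$ a word over $\{2,3\}$ beginning with $2$, and $\beta$ a word over $\{1,2,3\}$ whose $\{2,3\}$-subword has the shape $3^a2^b$ — all subject to the symbols $2$ and $3$ actually occurring in $\pi$ — one translates these conditions into generating functions and finds that the three-block partitions contribute $\frac{x^3(1-x)}{(1-2x)^3}$. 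Together with the at-most-two-block piece this gives the displayed rational function.

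For the pair $(1122,1233)$, I would first establish that a partition avoids $1233$ precisely when every block after the first two is a singleton: a non-singleton block $B_c$ with $c\ge 3$, together with a $1$ and the first occurrence of the symbol $2$, forms an occurrence of $1233$, while conversely an occurrence of $1233$ requires a symbol repeated after a smaller one, which is impossible once all later blocks are singletons. Given that structure, an occurrence of $1122$ can only read $1,1,2,2$, so avoiding $1122$ amounts to forbidding the subsequence $1,1,2,2$. I would then decompose by the length $p$ of the initial run of $1$'s (the symbols strictly before the first $2$): when $p\ge 2$ there is only one $2$ in the word and the remaining symbols form an arbitrary interleaving of $1$'s with singleton blocks, contributing $\frac{x^3}{(1-x)(1-2x)}$; when $p=1$ the condition becomes that at most one $2$ lies to the right of the first re-occurring $1$, which I would handle by splitting the part following the leading $1\,2$ at its first $1$, giving $\frac{x^2(1-3x+3x^2)}{(1-2x)^3}$. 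Adding these to the contribution $\frac{1}{1-x}$ of the empty and one-block partitions and simplifying again reproduces $\frac{1-5x+8x^2-3x^3-x^4}{(1-2x)^3}$.

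Finally, polynomial division rewrites this function as $\frac{9}{16}+\frac{x}{8}+\frac{32x^2-28x+7}{16(1-2x)^3}$, so for $n\ge 2$ only the last summand contributes; using $\frac{1}{(1-2x)^3}=\sum_{n\ge 0}\binom{n+2}{2}2^nx^n$ and simplifying the resulting combination $\frac{2^n}{16}\bigl(7\binom{n+2}{2}-14\binom{n+1}{2}+8\binom{n}{2}\bigr)$ yields $2^{n-5}(n^2-n+14)$, as claimed. The step I expect to be most delicate is the structural decomposition behind the $(1122,1233)$ count, and in particular the $p=1$ case: one must track carefully where the surplus $2$'s sit relative to the re-occurring $1$'s, and must make sure at every stage that reinserting the singleton blocks respects the restricted-growth (canonical) form, i.e.\ that the first singleton appears only after the first occurrence of $2$. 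That bookkeeping is where a miscount would be easiest to slip in.
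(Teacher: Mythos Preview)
Your argument is correct. The structural observations you use are the same as the paper's: for $(1123,1234)$ one reduces to partitions with at most three blocks, and for $(1122,1233)$ one notes that every block beyond the second must be a singleton, so that the only possible $1122$-occurrence is the literal subsequence $1,1,2,2$. Where the two proofs diverge is in the bookkeeping. The paper does a direct binomial count in each case: for $(1123,1234)$ it conditions on the position of the last~$3$ and the number of $3$'s; for $(1122,1233)$ it extracts the $\{1,2\}$-subword after the initial $12$ and recognises it as a $122$-avoiding binary word, using $a_m=\binom{m+1}{2}$. You instead package each decomposition as a rational generating function and check that both equal $\dfrac{1-5x+8x^2-3x^3-x^4}{(1-2x)^3}$ before extracting coefficients. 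Your route is a bit more mechanical once the structure is pinned down, while the paper's is shorter and fully elementary; they are equivalent in difficulty. Your side remark that the two classes have different block-count distributions, ruling out a block-preserving bijection, is a useful addition that the paper does not make.

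One small point to tighten when you write it out in full: in the three-block case of $(1123,1234)$ you describe $\alpha$ as ``a word over $\{2,3\}$ beginning with~$2$'', but $\alpha$ may be empty (when the first two letters of $\pi$ are both~$1$); just say that $\alpha$ is a possibly empty word over $\{2,3\}$ which, if nonempty, begins with~$2$, so that the restricted-growth condition is visibly satisfied.
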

\begin{proof}
We enumerate both classes directly.  If $n \geq 2$, then there are $2^{n-1}$ members of $P_n(1123,1234)$ having no $3$'s as well as $2^{n-1}$ members of $P_n(1122,1233)$ having at most one occurrence of the letter $2$.  So it remains to show that the members of $P_n(1123,1234)$ in which $3$ occurs as well as the members of $P_n(1122,1233)$ in which $2$ occurs at least twice both number $2^{n-5}(n-2)(n+1)$.

To show the first part, let $j$ denote the \emph{largest} index such that the $(j+2)$-nd letter of $\pi \in P_n(1123,1234)$ is $3$.  Then the letters to the right of position $j+2$ may constitute any binary word, while those to the left (excepting the first two, which are necessarily $1,2$) must form a non-decreasing binary word.  Upon conditioning further on the number, $\ell+1$, of $3$'s, we see that the remaining members of $P_n(1123,1234)$ number
\begin{align*}
\sum_{j=1}^{n-2}2^{n-2-j}\sum_{\ell=0}^{j-1}(j-\ell)\binom{j-1}{\ell}&=\sum_{j=1}^{n-2}2^{n-2-j}(j+1)2^{j-2}\\
&=2^{n-4}\sum_{j=1}^{n-2}(j+1)=2^{n-5}(n-2)(n+1),
\end{align*}
as required.

Members of $P_n(1122,1233)$ in which $2$ occurs at least twice must start
$1,2$, with any letters greater than $2$ occurring once.  The remaining
positions are to be filled by $1$'s and $2$'s in such a way that the word
comprising the letters in these positions avoids $122$ and contains at least one
$2$.  Let $a_m$ denote the number of words over $\{1,2\}$ of length $m$ avoiding
$122$
such that $2$ occurs at least once.  It may be verified that
$a_m=\binom{m+1}{2}$.  Upon conditioning on the number, $\ell$, of elements
greater than two occurring, we see that the remaining members of
$P_n(1122,1233)$ number
\begin{align*}
\sum_{\ell=0}^{n-3}\binom{n-2}{\ell}a_{n-2-\ell}&=\sum_{\ell=1}^{n-2}\binom{n-2}
{\ell}a_\ell\\
&=\frac{1}{2}\sum_{\ell=1}^{n-2}\binom{n-2}{\ell}(\ell^2+\ell)
=2^{n-5}(n-2)(n+1),
\end{align*}
which completes the proof.
\end{proof}

\subsection{The cases $\{1123,1211\}$ and $\{1123,1222\}$}

In order to establish these cases, we first consider a statistic on the members
of $P_n$ having at least two blocks as follows.  We will say that an
\emph{ascent} occurs at position $i$ in $\pi=\pi_1\pi_2\cdots \pi_n \in P_n$ if
$\pi_i<\pi_{i+1}$, where $1 \leq i \leq n-1$.  For example, the partition
$\pi=1223413142 \in P_{10}$ has ascents at positions $1$, $3$, $4$, $6$, and
$8$.  By the \emph{final ascent}, we will mean the \emph{largest index} $i$ such
that $\pi_i<\pi_{i+1}$.

\begin{definition}\label{df}
Suppose that the final ascent in $\pi=\pi_1\pi_2\cdots \pi_n \in P_n$ occurs at position $m$. Then let $fasc$ denote the statistic on $P_n$ defined by $fasc(\pi)=n-m+1$.
\end{definition}

For example, if $\pi=123241355311 \in P_{12}$, then the final ascent occurs at position $7$ and $fasc(\pi)=12-7+1=6$.  Note that $2\leq fasc(\pi) \leq n-k+2$ for all $\pi \in P_{n,k}$, where $n \geq k \geq 2$, the minimum being achieved by any partition whose last two entries form an ascent and the maximum achieved by partitions of the form $12\cdots k \tau$, where $\tau$ is any non-increasing $k$-ary word of length $n-k$.  If $n \geq k \geq 2$ and $2 \leq t \leq n-k+2$, then let
$$a_{n,k,t}=|\{\pi \in P_{n,k}(1123,1211):~fasc(\pi)=t\}|,$$
and define $a_{n,k,t}$ to be zero otherwise.  For example, we have $a_{4,3,2}=2$ since there are two members of $P_{4,3}(1123,1211)$ having $fasc$ value $2$, namely $1223$ and $1213$, and $a_{4,3,3}=3$, the partitions in this case being $1231$, $1232$, and $1233$. Furthermore, let $A_{n,k,t}$ denote the subset of partitions enumerated by $a_{n,k,t}$.  In the following lemma, we provide an explicit recurrence satisfied by the numbers $a_{n,k,t}$.

\begin{lemma}\label{lma}
If $3 \leq k \leq n$ and $2 \leq t \leq n-k+2$, then
\begin{equation}\label{lmae1}
a_{n,k,t}=a_{n-1,k-1,t}+\sum_{j=t-1}^{n-k+1}a_{n-2,k-1,j},
\end{equation}
with $a_{2,2,2}=1$ and for $n \geq 3$,
\begin{equation}\label{lmae2}
a_{n,2,t}= \left\{ \begin{array}{cl}
n-t+1, & \textrm{if \quad $3\leq t \leq n-1$;}\\
n-2, & \textrm{if \quad $t=2$;}\\
2, & \textrm{if \quad $t=n$.}
\end{array} \right.
\end{equation}
\end{lemma}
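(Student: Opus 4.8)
The proof I have in mind is a direct combinatorial decomposition of the set $A_{n,k,t}$, organised by the size of the first block. The first step is a structural reduction: for $k\ge 3$, no partition in $P_{n,k}(1123,1211)$ can have a first block of size more than $2$. Indeed, if the letter $1$ occurred at least three times, then avoidance of $1211$ would force all letters up to and including the second-to-last occurrence of $1$ to equal $1$, so in particular $\pi_1=\pi_2=1$; using these two $1$'s as the ``$11$'' of a $1123$, avoidance of $1123$ then forces the letters greater than $1$ to occur in weakly decreasing order after position $2$, and since in a restricted growth function such a sequence can only be a run of $2$'s, this gives $k\le 2$, a contradiction. So for $k\ge 3$ the first block has size $1$ or $2$, and I treat these cases separately.

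In the size-$1$ case, I would delete $\pi_1$ and subtract $1$ from the remaining letters. This is a bijection onto $P_{n-1,k-1}(1123,1211)$: prepending a $1$ to a partition creates no new copy of $1123$ or of $1211$, since the new letter is the unique minimum and occurs once, and a comparison of ascent positions shows $fasc$ is unchanged; one also checks that $t=n$ cannot occur here when $k\ge 3$, keeping the index ranges consistent. This case contributes exactly $a_{n-1,k-1,t}$.

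In the size-$2$ case I would write $\pi=1\alpha 1\beta$, where $\alpha$ and $\beta$ are the (possibly empty) runs of letters greater than $1$ lying before and after the second $1$. The crucial observations are that $\pi$ avoids $1211$ iff the word $\alpha\beta$ does (the letter $1$ occurs only twice, hence never participates in a $1211$), and that $\pi$ avoids $1123$ iff $\beta$ is weakly decreasing and $\alpha\beta$ avoids $1123$. Thus the partition $\pi'$ obtained from $\alpha\beta$ by subtracting $1$ lies in $P_{n-2,k-1}(1123,1211)$, and $\pi$ is recovered from $\pi'$ together with the position where $\alpha$ ends; this position is pinned down by $fasc$. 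If $\beta\neq\varnothing$, the final ascent of $\pi$ is exactly at the second $1$, so $fasc(\pi)=|\beta|+1$; hence if $fasc(\pi)=t$ the cut is forced with $|\beta|=t-1$, and ``$\beta$ weakly decreasing'' translates into ``the last $t-1$ letters of $\pi'$ are weakly decreasing'', which is equivalent to $fasc(\pi')\ge t$. If $\beta=\varnothing$, then $\pi=1\alpha 1$ ends with a descent, $\pi'$ is simply $\alpha-1$, and comparing ascents gives $fasc(\pi')=t-1$. So the size-$2$ case contributes
\[
a_{n-2,k-1,t-1}+\sum_{j=t}^{n-k+1}a_{n-2,k-1,j}=\sum_{j=t-1}^{n-k+1}a_{n-2,k-1,j},
\]
and summing the two cases yields~\eqref{lmae1}.

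For the base case \eqref{lmae2} I would note that a two-block partition automatically avoids $1123$ (which has three distinct symbols), so only $1211$ is a constraint, and a word over $\{1,2\}$ beginning with $1$ avoids $1211$ exactly when at most one $1$ occurs after its first $2$. Consequently every $\pi\in P_{n,2}(1123,1211)$ has one of the forms $1^{a}2^{n-a}$, $1^{a}2^{p+1}12^{q}$ with $q\ge 1$, or $1^{a}2^{p+1}1$; the final ascent is at the unique ``$1\to 2$'' step in the first two forms and at position $a$ in the third, and summing over the free parameters for each fixed value of $fasc$ gives the three cases of \eqref{lmae2}, with $a_{2,2,2}=1$ immediate. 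The part I expect to require the most care is the $fasc$-bookkeeping in the size-$2$ case: it is precisely the dichotomy $\beta=\varnothing$ versus $\beta\neq\varnothing$ that splits the right side of \eqref{lmae1} into the isolated term $a_{n-2,k-1,t-1}$ plus the tail sum, and one has to verify both that the cut position is determined by $t$ (so no partition is counted for two values of $t$) and the localisation statement that $\pi$ avoids $1123$ iff $\beta$ is weakly decreasing and $\alpha\beta$ avoids $1123$, paying attention to an ascent straddling the $\alpha$-$\beta$ boundary. A numerical check for small $n$ and $k$ would be used to catch any off-by-one errors.
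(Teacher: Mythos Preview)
Your proof is correct and follows essentially the same approach as the paper: both split $A_{n,k,t}$ (for $k\ge 3$) according to whether the first block has size one or two, identify the size-one case with $A_{n-1,k-1,t}$ by stripping the leading $1$, handle the size-two case by removing both $1$'s and tracking how the position of the second $1$ relates to $fasc$, and treat $k=2$ by direct enumeration. Your justification of why at most two $1$'s can occur, and of the equivalence ``$\pi$ avoids $1123$ iff $\beta$ is weakly decreasing and $\alpha\beta$ avoids $1123$'', is spelled out in more detail than in the paper, but the underlying decomposition and the $fasc$ bookkeeping (the $\beta=\varnothing$ vs.\ $\beta\neq\varnothing$ dichotomy producing the $j=t-1$ term versus the tail sum) are the same.
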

\begin{proof}
First note that members of $A_{n,k,t}$, where $3 \leq k \leq n$ and $2 \leq t \leq n-k+2$, have either one or two $1$'s, since such partitions must start with a single $1$ and can have at most a single additional $1$ coming after a letter greater than one.  To show \eqref{lmae1}, first observe that there are $a_{n-1,k-1,t}$ members of $A_{n,k,t}$ containing a single $1$, for writing a $1$ in front of $\alpha \in A_{n-1,k-1,t}$ (on the letters $\{2,3,\ldots,k\}$) does not affect the $fasc$ value.

If two $1$'s occur in $\pi \in A_{n,k,t}$, then $\pi$ may be formed from $\beta=\beta_1\beta_2\cdots \beta_{n-2} \in A_{n-2,k-1,j}$, where $t-1\leq j \leq n-k+1$, by first writing a $1$ in front of $\beta$ and then considering cases on $j$.  If $j \geq t$, then write a second $1$ just before the $(t-1)$-st letter of $\beta$ from the right to obtain $\pi=1\beta_1\beta_2\cdots\beta_{n-t+1}1\beta_{n-t}\cdots \beta_{n-2}$, and if $j=t-1$, then write a second $1$ at the end of $\beta$ to obtain $\pi=1\beta_1\beta_2\cdots \beta_{n-2}1$.  Note that $j \geq t$ in the first case implies no occurrence of $1123$ is introduced by the insertion of the second $1$ since the final $j-1 \geq t-1$ letters of $\beta$ form a non-increasing word.  Also, in the case when $j=t-1$, the addition of $1$ at the end increases the $fasc$ value by one.  Thus, the sum in \eqref{lmae1} conditions on the $fasc$ value of the partition resulting from the removal of the $1$'s from $\pi \in A_{n,k,t}$ containing
  two $1$'s.

We now turn to the case when $k=2$.  Note first that members of $A_{n,2,2}$, $n\geq 3$, are of the form $1\rho12$, where $\rho=1^a2^b$ with $a,b \geq 0$ and $a+b=n-3$, whence they number $n-2$.  There are two members of $A_{n,2,n}$, namely, $12\cdots2$ and $12\cdots 21$.  Finally, if $3 \leq t \leq n-1$, then $A_{n,2,t}$ is the set of size $n-t+1$ comprising the partitions $1^{n-t+1}2^{t-2}1$ and $1^{n-t+1}2^{t-1}$, together with the partitions of the form $1\rho12^{t-1}$, where $\rho=1^a2^b$ for some $a\geq 0$ and $b \geq 1$ satisfying $a+b=n-t-1$.  This establishes \eqref{lmae2} and completes the proof.
\end{proof}

We can now enumerate the partitions avoiding $\{1123,1211\}$.

\begin{theorem}\label{tm1}
We have

{\footnotesize 
\[\sum_{n\geq 0}
p_n(1123,1211)t^n=\frac{(1-t^2)\sqrt{(1-t)(1-t-4t^2)}}{2t^2(1-3t+t^2)}-\frac{
1-3t-2t^2+14t^3-15t^4+3t^5}{2t^2(1-t)^2(1-3t+t^2)}.
\]
}
\end{theorem}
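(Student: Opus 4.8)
The plan is to encode the refined counts $a_{n,k,t}$ from Lemma~\ref{lma} in a generating function and extract $\sum_n p_n(1123,1211)x^n$ by the kernel method. First note that a partition avoiding $\{1123,1211\}$ either has at most one block (exactly one such partition of each size, contributing $\frac{1}{1-x}$), or has $k\ge 2$ blocks, in which case it has a well-defined final-ascent value and is counted by $a_{n,k,t}$ for a unique $t\in[2,n-k+2]$. Hence, putting $\bar a_{n,t}=\sum_{k\ge 2}a_{n,k,t}$ and $A(x,z)=\sum_{n\ge 2}\sum_{t\ge 2}\bar a_{n,t}x^nz^t$, we get $\sum_{n\ge 0}p_n(1123,1211)x^n=\frac{1}{1-x}+A(x,1)$. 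Moreover the upper limit $n-k+1$ in the sum of~\eqref{lmae1} is vacuous, because $a_{n-2,k-1,j}=0$ for $j>n-k+1$; summing~\eqref{lmae1} over $k\ge 3$ and adding the $k=2$ values from~\eqref{lmae2} therefore yields the self-contained recursion
\[
\bar a_{n,t}=a_{n,2,t}+\bar a_{n-1,t}+\sum_{j\ge t-1}\bar a_{n-2,j},\qquad n\ge 2,
\]
with $\bar a_{m,\cdot}=0$ for $m<2$.

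Next I would convert this into a functional equation. The term $\bar a_{n-1,t}$ contributes $xA(x,z)$, and the tail sum is handled by interchanging the order of summation: $\sum_{t\ge 2}z^t\sum_{j\ge t-1}c_j=\sum_{j\ge 2}c_j\frac{z^2-z^{j+2}}{1-z}=\frac{z^2}{1-z}\bigl(\sum_jc_j-\sum_jc_jz^j\bigr)$, which turns $\sum_{j\ge t-1}\bar a_{n-2,j}$ into $\frac{x^2z^2}{1-z}\bigl(A(x,1)-A(x,z)\bigr)$. Writing $A_2(x,z)=\sum_{n\ge 2}\sum_{t\ge 2}a_{n,2,t}x^nz^t$, a routine summation of the initial value $a_{2,2,2}=1$ and the three-part formula~\eqref{lmae2} gives the closed form
\[
A_2(x,z)=\frac{x^2}{(1-x)^2}\Bigl((1-x+x^2)z^2+\frac{(2-2x+x^2)\,xz^3}{1-xz}\Bigr).
\]
Collecting everything, the recursion becomes
\[
A(x,z)\Bigl(1-x+\frac{x^2z^2}{1-z}\Bigr)=A_2(x,z)+\frac{x^2z^2}{1-z}\,A(x,1).
\]

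The kernel $1-x+\frac{x^2z^2}{1-z}$ vanishes exactly when $x^2z^2-(1-x)z+(1-x)=0$, whose root
\[
z_0=z_0(x)=\frac{(1-x)-\sqrt{(1-x)(1-x-4x^2)}}{2x^2}
\]
is a formal power series in $x$ with $z_0(0)=1$. Since for each $n$ the coefficient of $x^n$ in $A(x,z)$ and in $A_2(x,z)$ is a polynomial in $z$, substituting $z=z_0$ is a legitimate operation on formal power series; it annihilates the left-hand side, and using $\frac{x^2z_0^2}{1-z_0}=x-1$ we obtain $0=A_2(x,z_0)+(x-1)A(x,1)$, so that
\[
\sum_{n\ge 0}p_n(1123,1211)x^n=\frac{1}{1-x}+A(x,1)=\frac{1+A_2(x,z_0)}{1-x}.
\]

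Finally I would simplify $A_2(x,z_0)$. The kernel identity in the form $x^2z_0^2=(1-x)(z_0-1)$ eliminates $z_0^2$ and $z_0^3$, and rationalizing gives $\frac{1}{1-xz_0}=\frac{1-3x+\sqrt{(1-x)(1-x-4x^2)}}{2(1-3x+x^2)}$ (using $(3x-1)^2-(1-x)(1-x-4x^2)=-4x(1-3x+x^2)$), so $A_2(x,z_0)$ reduces to a rational function of $x$ plus a rational multiple of $\sqrt{(1-x)(1-x-4x^2)}$. Separating the two parts — the square-root coefficient collapses via the factorization $1-2x+2x^3-x^4=(1-x)^3(1+x)$ — and dividing by $1-x$ yields exactly the claimed formula (with $x$ renamed to $t$). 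The bulk of the work, and the one place where a sign or bookkeeping slip is easy, is this final rational manipulation together with the preliminary computation of the base-case generating function $A_2(x,z)$ from~\eqref{lmae2}; everything else is a textbook application of the kernel method.
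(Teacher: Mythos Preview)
Your proof is correct and follows essentially the same route as the paper: encode the recursion of Lemma~\ref{lma} in a generating function, apply the kernel method with the same root $z_0=w_0|_{v=1}$, and simplify. The only difference is cosmetic---you sum out the block-count variable $k$ at the outset and work with the bivariate $A(x,z)$, whereas the paper carries a third variable $v$ marking $k$ throughout and only sets $v=1$ at the end; your $A_2(x,z)$ is exactly the paper's right-hand side $\frac{w^2v^2t^2(1-t(1-w)(1-t))}{(1-t)^2(1-tw)}$ at $v=1$.
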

\begin{proof}
First define the distribution polynomial $A_{n,k}(w)=\sum_{t=2}^{n-k+2}a_{n,k,t}w^t$ if $n \geq k \geq 2$, with $A_{n,k}(w)=0$ otherwise.  Multiplying \eqref{lmae1} above by $w^t$, summing over $2 \leq t \leq n-k+2$, and interchanging summation yields
\begin{align}
A_{n,k}(w)&=A_{n-1,k-1}(w)+\sum_{j=1}^{n-k+1}a_{n-2,k-1,j}\sum_{t=2}^{j+1}w^t \notag \\
&=A_{n-1,k-1}(w)+\frac{w^2}{1-w}\sum_{j=2}^{n-k+1}a_{n-2,k-1,j}(1-w^j)\notag\\
&=A_{n-1,k-1}(w)+\frac{w^2}{1-w}\left(A_{n-2,k-1}(1)-A_{n-2,k-1}(w)\right), \qquad n\geq k \geq 3, \label{tm1e1}
\end{align}
with $A_{2,2}(w)=w^2$ and
\begin{equation}\label{tme2}
A_{n,2}(w)=(n-2)w^2+\sum_{t=3}^{n-1}(n-t+1)w^t+2w^n, \qquad n \geq 3.
\end{equation}

Next define $A_n(v,w)=\sum_{k=2}^n A_{n,k}(w)v^k$ if $n \geq 2$, with $A_1(v,w)=A_0(v,w)=0$.  Multiplying \eqref{tm1e1} by $v^k$ and summing over $3 \leq k \leq n$ yields the recurrence
\begin{equation}\label{tme3}
A_n(v,w)-A_{n,2}(w)v^2=vA_{n-1}(v,w)+\frac{w^2v}{1-w}(A_{n-2}(v,1)-A_{n-2}(v,w)), \qquad n \geq 3.
\end{equation}
Define the generating function $A(t,v,w)=\sum_{n\geq 2}A_n(v,w)t^n$. 
Multiplying \eqref{tme3} by $t^n$ and summing over all $n \geq 3$ gives
\begin{multline*}
A(t,v,w)-A_2(v,w)t^2-v^2\sum_{n \geq
3}A_{n,2}(w)t^n\\=vtA(t,v,w)+\frac{w^2vt^2}{1-w}(A(t,v,1)-A(t,v,w)),
\end{multline*}
which implies by \eqref{tme2} the relation
\begin{equation}\label{tme4}
\left(1-vt+\frac{w^2vt^2}{1-w}\right)A(t,v,w)=\frac{w^2v^2t^2(1-t(1-w)(1-t))}{(1-t)^2(1-tw)}+\frac{w^2vt^2}{1-w}A(t,v,1).
\end{equation}
This type of functional equation may be solved systematically using the \emph{kernel method} (see \cite{BBD}).  Setting the coefficient of $A(t,v,w)$ equal to zero and solving for $w=w_0$ in terms of $v$ and $t$ yields
\begin{equation}\label{tme5}
w_0=\frac{1-vt-\sqrt{(1-vt)(1-vt-4vt^2)}}{2vt^2}.
\end{equation}
Note that of the two possible values of $w_0=w_0(t,v)$, only this one yields a power series in $t$ and $v$.  Setting $w=w_0$ in \eqref{tme4} then gives
\begin{equation}\label{tme6}
A(t,v,1)=\frac{v(w_0-1)(1-t(1-w_0)(1-t))}{(1-t)^2(1-tw_0)}.
\end{equation}
Note that $A(t,v,1)$ is the generating function for the cardinality of
$P_{n,k}(1123,1211)$ for $n \geq k \geq 2$.  Letting $v=1$, and including the
$k=0$ and $k=1$ cases, we see that

{\footnotesize
\begin{align*}
\sum_{n \geq 0}p_n(1123,1211)t^n&=\frac{1}{1-t}+A(t,1,1)\\
&=\frac{(1-t^2)\sqrt{(1-t)(1-t-4t^2)}}{2t^2(1-3t+t^2)}-\frac{1-3t-2t^2+14t^3-15t^4+3t^5}{2t^2(1-t)^2(1-3t+t^2)},
\end{align*}
}

\noindent by \eqref{tme5} and \eqref{tme6}, as desired.
\end{proof}

\emph{Remark:}~~Note that $w_0=C\left(\frac{vt^2}{1-vt}\right)$, where
$C(x)=\frac{1-\sqrt{1-4x}}{2x}=\sum_{n\geq 0}c_nx^n$ is the generating function
for the Catalan number $c_n=\frac{1}{n+1}\binom{2n}{n}$.

We now turn to the case of avoiding $\{1123,1222\}$.  For this, we first consider the avoidance of $\{1123,111\}$.  If $n \geq k \ge2$ and $2 \leq t \leq n-k+2$, then let $$b_{n,k,t}=|\{\pi \in P_{n,k}(1123,111):~fasc(\pi)=t\}|,$$
and define $b_{n,k,t}$ to be zero otherwise.  For example, we have $b_{4,3,2}=2$ (the enumerated partitions being $1213$ and $1223$) and $b_{5,3,3}=4$ (the partitions being $12132$, $12133$, $12231$ and $12233$).  Note that $b_{n,k,t}=0$ for all $t$ if $n>2k$.

Next, let $$c_{n,k,t}=|\{\pi=\pi_1\pi_2\cdots \pi_n \in P_{n,k}(1123,1222):~fasc(\pi)=t \text{~and~} \pi_n\neq1\}|,$$
if $n \geq k \ge2$ and $2 \leq t \leq n-k+2$ and put $c_{n,k,t}=0$ otherwise.  For example, we have $c_{4,3,2}=2$ (for $1213$ and $1223$) and $c_{5,3,3}=3$ (for $12132$, $12133$, and $12233$).  In the following lemma, we provide a recurrence for $b_{n,k,t}$ and a relation between $c_{n,k,t}$ and $b_{n,k,t}$.

\begin{lemma}\label{lmb}
If $3 \leq k \leq n$ and $2 \leq t \leq n-k+2$, then
\begin{equation}\label{lmbe1}
b_{n,k,t}=b_{n-1,k-1,t}+\sum_{j=t-1}^{n-k+1}b_{n-2,k-1,j}
\end{equation}
and
\begin{equation}\label{lmbe2}
c_{n,k,t}=b_{n-1,k-1,t}+\sum_{j=t}^{n-k+1}c_{n-1,k,j}.
\end{equation}
If $k=2$, we have $b_{n,2,t}=0$ if $n \geq 5$, $b_{4,2,4}=b_{4,2,3}=b_{4,2,2}=1$, $b_{3,2,3}=2$, and $b_{3,2,2}=b_{2,2,2}=1$.  Furthermore,
\begin{equation}\label{lmbe3}
c_{n,2,t}= \left\{ \begin{array}{cl}
n-2, & \textrm{if \quad $t=2$;}\\
1, & \textrm{if \quad $t=3$;}\\
0, & \textrm{if \quad $t\geq 4$,}
\end{array} \right.
\end{equation}
if $n\geq 3$, with $c_{2,2,2}=1$.
\end{lemma}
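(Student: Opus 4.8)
The plan is to prove the two recurrences bijectively, running the arguments in parallel with the proof of Lemma~\ref{lma}, and to settle the $k=2$ cases by direct inspection. The identity \eqref{lmbe1} is proved exactly as \eqref{lmae1} was. First observe that a $\{1123,111\}$-avoider has every letter occurring at most twice, so a partition counted by $b_{n,k,t}$ has either one or two $1$'s; moreover, when $k\ge 3$ two $1$'s cannot be adjacent, since $11$ followed by the forced $2$ and any later $3$ would be an occurrence of $1123$. Partitions with a single $1$ are obtained by prepending $1$ to a $\{1123,111\}$-avoider on $\{2,\dots,k\}$ of size $n-1$ with $k-1$ blocks and the same $fasc$ value, contributing $b_{n-1,k-1,t}$. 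Partitions with two $1$'s are reconstructed from a $\{1123,111\}$-avoider $\beta$ on $\{2,\dots,k\}$ of size $n-2$ with $k-1$ blocks and $fasc(\beta)=j$ by prepending a $1$ and then inserting a second $1$ just before the $(t-1)$-st letter of $\beta$ counted from the right (if $j\ge t$) or at the very end (if $j=t-1$); since the terminal non-increasing run of $\beta$ has length $j-1\ge t-1$, the inserted $1$ becomes the new final ascent and introduces no $1123$, just as in Lemma~\ref{lma}. This contributes $\sum_{j=t-1}^{n-k+1} b_{n-2,k-1,j}$, and adding the two parts gives \eqref{lmbe1}.

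For \eqref{lmbe2} I would again split a partition $\pi$ counted by $c_{n,k,t}$ by its number of $1$'s. If $\pi$ has exactly one $1$ (necessarily $\pi_1$), deleting it and relabeling yields a partition on $\{1,\dots,k-1\}$ of size $n-1$ with $k-1$ blocks and $fasc$ value $t$; since $\pi$ avoids $1222$, every letter of this partition (each of which exceeded $1$ in $\pi$) occurs at most twice, so it avoids $111$ as well, and conversely $1(\gamma+1)$ always avoids both $1222$ and $1123$ and has last letter $\neq 1$, for any $\gamma$ counted by $b_{n-1,k-1,t}$. This bijection accounts for the term $b_{n-1,k-1,t}$. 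If $\pi$ has at least two $1$'s, the key structural fact is that $\pi$ has no ascent strictly to the right of its last $1$: two $1$'s followed by such an ascent $\pi_q<\pi_{q+1}$ would form an occurrence of $1123$. Because the terminal non-increasing run $\pi_{m+1}\cdots\pi_n$ (where $m=n-t+1$ is the final-ascent position) consists of letters $\ge\pi_n\ge 2$ and so contains no $1$, the last $1$ of $\pi$ lies at position $\le m$; combining the two observations forces the last $1$ to be exactly $\pi_m$.

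Deleting that $1$ produces a $\{1123,1222\}$-avoider $\gamma$ of size $n-1$ with $k$ blocks and last letter $\neq 1$ whose $fasc$ value $j$ satisfies $t\le j\le n-k+1$ (removing $\pi_m$ merges the non-increasing tail with what precedes it, possibly lengthening it). Conversely, given such a $\gamma$ with $fasc(\gamma)=j\ge t$, inserting a $1$ so that exactly $t-1$ letters of the result follow it — these are forced to be the last $t-1$ letters of $\gamma$, which are $\ge 2$ and non-increasing — recreates a valid $\pi$: the inserted $1$ sits to the right of all $1$'s of $\gamma$, so it is the last $1$ of $\pi$ and the source of its final ascent, whence $fasc(\pi)=t$ and the last letter is still $\neq 1$; and since the inserted letter is the smallest symbol and lies after all other $1$'s, no new $1123$ (nor $1222$) is created. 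These deletion and insertion maps are mutually inverse, so this case contributes $\sum_{j=t}^{n-k+1} c_{n-1,k,j}$, and \eqref{lmbe2} follows.

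The $k=2$ assertions are routine enumerations. A partition on $\{1,2\}$ avoiding $\{1123,111\}$ uses each of $1,2$ at most twice, hence has size at most $4$; listing the few such partitions with their $fasc$ values gives the stated values of $b_{n,2,t}$, with $b_{n,2,t}=0$ for $n\ge5$. A partition on $\{1,2\}$ avoiding $\{1123,1222\}$ with last letter $\neq 1$ uses the letter $2$ at most twice and ends in $2$, so it is $1^{n-1}2$ (with $fasc=2$), or $1^{a}21^{b}2$ with $b\ge1$ (with $fasc=2$), or $1^{n-2}22$ (with $fasc=3$), which yields \eqref{lmbe3}. I expect the genuine obstacle to be the two-or-more-$1$'s part of \eqref{lmbe2}: correctly locating the last $1$ (the observation that having two $1$'s forbids later ascents in a $1123$-avoider is the crux) and then checking carefully that the deletion/insertion pair preserves the block count, the $fasc$ statistic, the condition that the last letter is not $1$, and both avoidances; the transfer of $111$-avoidance in the one-$1$ case, via the characterization of $1222$-avoidance as ``every letter $\ge 2$ occurs at most twice'', is a second point that merely needs to be stated explicitly.
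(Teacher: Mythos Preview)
Your proof is correct and follows essentially the same approach as the paper's: for \eqref{lmbe1} you mirror the argument of Lemma~\ref{lma} (the paper simply says the proof is similar), and for \eqref{lmbe2} you use the same ``remove/insert the rightmost $1$'' bijection on $C_{n,k,t}$, together with the identification of the single-$1$ case with $\{1123,111\}$-avoiders via the $1222\leftrightarrow111$ correspondence. Your write-up is in fact more explicit than the paper's on the crux of the two-or-more-$1$'s case---pinning the last $1$ at the final-ascent position via the observation that two $1$'s forbid any later ascent---which the paper leaves implicit.
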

\begin{proof}
The proof of \eqref{lmbe1} is similar to the one given above for \eqref{lmae1}; note that a partition is restricted to containing one or two $1$'s since we are now avoiding $111$, which also implies $b_{n,2,t}=0$ if $n \geq 5$.  One may verify directly the other conditions for $b_{n,2,t}$.

Let $C_{n,k,t}$ denote the subset of partitions counted by $c_{n,k,t}$.  To show \eqref{lmbe2}, first note that if $n \geq k \geq 3$, then members of $C_{n,k,t}$ containing a single $1$ are of the form $1\pi'$, where $\pi'$ is a partition on the letters $\{2,3,\ldots,k\}$ avoiding $\{1123,111\}$.  On the other hand, one may add a $1$ just before the $(t-1)$-st letter from the right of $\alpha=\alpha_1\alpha_2\cdots \alpha_{n-1} \in C_{n-1,k,j}$, $t \leq j \leq n-k+1$, to obtain $\pi=\alpha_1\alpha_2\cdots \alpha_{n-t}1\alpha_{n-t+1}\cdots \alpha_{n-1} \in C_{n,k,t}$ having two or more $1$'s.  Note that $j \geq t$ implies $\alpha_{n-t+1}\cdots \alpha_{n-1}$ is a non-decreasing word and thus contains no $1$'s since $\alpha_{n-1}>1$. Therefore, the sum on the right side of \eqref{lmbe2} is seen to count the members $\pi \in C_{n,k,t}$ containing two or more $1$'s by conditioning on the $fasc$ value of the partition resulting when we remove the right-most $1$ from $\pi$.

We now consider the case $k=2$.  First, note that there are $n-2$ members of $C_{n,2,2}$, $n \geq 3$, since they are of the form $1\rho12$, where $\rho$ is all $1$'s except for possibly a single occurrence of $2$.  There is a single member of $C_{n,2,3}$ given by $1^{n-2}22$.  Finally, there are no members of $C_{n,2,t}$ if $t \geq 4$ since the last letter not $1$ implies only $2$'s may follow the last ascent and these would then number $t-1 \geq 3$, which is not permitted.  This establishes \eqref{lmbe3} and completes the proof.
\end{proof}

Define the distribution polynomial $B_{n,k}(w)=\sum_{t=2}^{n-k+2}b_{n,k,t}w^t$
if $n \geq k \geq 2$.  When $k=2$, note that $B_{2,2}(w)=w^2$,
$B_{3,2}(w)=w^2+2w^3$, $B_{4,2}(w)=w^2+w^3+w^4$, and $B_{n,2}(w)=0$ if $n \geq
5$.  If $n \geq 2$, then let $B_n(v,w)=\sum_{k=2}^n B_{n,k}(w)v^k$, with
$B_0(v,w)=B_1(v,w)=0$.  Finally, define the generating function $B(t,v,w)$ by
$$B(t,v,w)=\sum_{n\geq 2}B_n(v,w)t^n.$$
Note that $B(t,v,w)$ counts all of the members of $P_n(1123,111)$ according to
the number of blocks and the \emph{fasc} value when $n\geq 3$, counts only the
partition $\{1\},\{2\}$ when $n=2$, and counts neither $\{1\}$ nor the empty
partition.  There is the following explicit formula for $B(t,v,w)$.

\begin{lemma}\label{lm1}
We have
\begin{align*}
B(t,v,w)&=\frac{w^2(vt^2-1)\sqrt{(1-vt)(1-vt-4vt^2)}}{2vt^2((1-w)(1-vt)+w^2vt^2)}\\
&~~+\frac{w^2(1-vt-3vt^2+v^2t^3+2v^3(1-w)t^4+2w^3(1-2w)t^5-2v^3w^3t^6)}{2vt^2((1-w)(1-vt)+w^2vt^2)}.
\end{align*}
\end{lemma}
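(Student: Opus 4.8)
The plan is to follow the same three-step generating-function scheme as in the proof of Theorem~\ref{tm1}, taking advantage of the fact that the recurrence \eqref{lmbe1} for the numbers $b_{n,k,t}$ is formally identical to the recurrence \eqref{lmae1} for $a_{n,k,t}$; only the $k=2$ initial data are different. First I would multiply \eqref{lmbe1} by $w^t$, sum over $2\le t\le n-k+2$, interchange the order of summation in the resulting double sum, and use the identity $\sum_{t=2}^{j+1}w^t=w^2(1-w^j)/(1-w)$, exactly as in the derivation of \eqref{tm1e1}. This gives, for $n\ge k\ge 3$,
\[
B_{n,k}(w)=B_{n-1,k-1}(w)+\frac{w^2}{1-w}\bigl(B_{n-2,k-1}(1)-B_{n-2,k-1}(w)\bigr),
\]
with the initial polynomials $B_{2,2}(w)=w^2$, $B_{3,2}(w)=w^2+2w^3$, $B_{4,2}(w)=w^2+w^3+w^4$, and $B_{n,2}(w)=0$ for $n\ge 5$ supplied by Lemma~\ref{lmb}.

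Next, writing $B_n(v,w)=\sum_{k\ge 2}B_{n,k}(w)v^k$, I would multiply this recurrence by $v^k$ and sum over $3\le k\le n$ to obtain the analogue of \eqref{tme3},
\[
B_n(v,w)-B_{n,2}(w)v^2=vB_{n-1}(v,w)+\frac{w^2v}{1-w}\bigl(B_{n-2}(v,1)-B_{n-2}(v,w)\bigr),\qquad n\ge 3,
\]
and then multiply by $t^n$ and sum over $n\ge 3$. Since $B_0=B_1=0$ and the only nonzero base polynomials are the four listed above, this yields the functional equation
\[
\Bigl(1-vt+\frac{w^2vt^2}{1-w}\Bigr)B(t,v,w)=v^2w^2t^2\bigl(1+(1+2w)t+(1+w+w^2)t^2\bigr)+\frac{w^2vt^2}{1-w}B(t,v,1),
\]
where the polynomial on the right is just $v^2\sum_{n\ge 2}B_{n,2}(w)t^n$. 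The key observation is that the kernel $1-vt+w^2vt^2/(1-w)$ coincides with the kernel in \eqref{tme4}, so its unique power-series root in $t$ is again $w_0$ from \eqref{tme5}; no new kernel computation is needed.

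Setting $w=w_0$ annihilates the left-hand side, and since the factor $w_0^2t^2$ is common to the two terms on the right it cancels, leaving the compact identity
\[
B(t,v,1)=-v(1-w_0)\bigl(1+(1+2w_0)t+(1+w_0+w_0^2)t^2\bigr).
\]
Substituting this back into the functional equation (after clearing the factor $1-w$) expresses $B(t,v,w)$ as a rational function of $t,v,w$ and $w_0$. Using the kernel relation $vt^2w_0^2=(1-vt)(w_0-1)$ to eliminate $w_0^2$, and then inserting the explicit radical form of $w_0$, reduces the $w_0$-dependence to a single occurrence of $\sqrt{(1-vt)(1-vt-4vt^2)}$; after collecting terms over the common denominator $2vt^2\bigl((1-w)(1-vt)+w^2vt^2\bigr)$ one should arrive at the stated formula. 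I expect the only genuine work to lie in this final algebraic reduction — in particular, carrying the slightly irregular base polynomials $B_{3,2}(w)$ and $B_{4,2}(w)$ through the computation and checking that the rational part really does collapse to $w^2\bigl(1-vt-3vt^2+v^2t^3+2v^3(1-w)t^4+2w^3(1-2w)t^5-2v^3w^3t^6\bigr)$ over that denominator. Everything else is a direct transcription of the argument already carried out for Theorem~\ref{tm1}.
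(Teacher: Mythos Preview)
Your proposal is correct and follows essentially the same route as the paper: derive the polynomial recurrence \eqref{lm1e1} from \eqref{lmbe1}, sum over $k$ to get \eqref{lm1e2}, sum over $n$ to get the functional equation \eqref{lm1e3}, cancel the kernel at $w=w_0$ to obtain $B(t,v,1)$, and substitute back. The only cosmetic difference is that you write $B(t,v,1)=-v(1-w_0)\bigl(1+(1+2w_0)t+(1+w_0+w_0^2)t^2\bigr)$, whereas the paper records it as $B(t,v,1)=\dfrac{w_0^2v^2t^2\bigl(1+(1+2w_0)t+(1+w_0+w_0^2)t^2\bigr)}{1-vt}$; these are equivalent via the kernel identity $vt^2w_0^2=(1-vt)(w_0-1)$ you already noted.
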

\begin{proof}
Multiplying \eqref{lmbe1} by $w^t$ and summing over $2 \leq t \leq n-k+2$ implies
\begin{equation}\label{lm1e1}
B_{n,k}(w)=B_{n-1,k-1}(w)+\frac{w^2}{1-w}(B_{n-2,k-1}(1)-B_{n-2,k-1}(w)), \qquad n \geq k \geq 3,
\end{equation}
and multiplying \eqref{lm1e1} by $v^k$ and summing over $3 \leq k \leq n$ yields
\begin{equation}\label{lm1e2}
B_n(v,w)-B_{n,2}(w)v^2=vB_{n-1}(v,w)+\frac{w^2v}{1-w}(B_{n-2}(v,1)-B_{n-2}(v,w)), \qquad n \geq 3,
\end{equation}
which is seen to hold for $n=2$ as well, since $B_0(v,w)=B_1(v,w)=0$. 
Multiplying \eqref{lm1e2} by $t^n$ and summing over $n \geq 2$ then gives
\begin{multline}\label{lm1e3}
B(t,v,w)-w^2v^2t^2(1+(1+2w)t+(1+w+w^2)t^2)\\=vtB(t,v,w)+\frac{w^2vt^2}{1-w}(B(t,
v ,1)-B(t,v,w)).
\end{multline}
Letting $w=w_0$ in \eqref{lm1e3}, where $w_0$ is given by \eqref{tme5} above, implies
\begin{equation}\label{lm1e4}
B(t,v,1)=\frac{w_0^2v^2t^2(1+(1+2w_0)t+(1+w_0+w_0^2)t^2)}{1-vt}.
\end{equation}
Substituting \eqref{lm1e4} into \eqref{lm1e3}, solving for $B(t,v,w)$, and simplifying then yields the required expression.
\end{proof}

We can now enumerate the partitions avoiding $\{1123,1222\}$.

\begin{theorem}\label{tm2}
We have

{\footnotesize
\[
\sum_{n\geq 0}
p_n(1123,1222)t^n=\frac{(1-t^2)\sqrt{(1-t)(1-t-4t^2)}}{2t^2(1-3t+t^2)}-\frac{
1-3t-2t^2+14t^3-15t^4+3t^5}{2t^2(1-t)^2(1-3t+t^2)}.\]
}
\end{theorem}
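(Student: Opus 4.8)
The plan is to mirror the proof of Theorem~\ref{tm1}, now using the numbers $c_{n,k,t}$ introduced before Lemma~\ref{lmb} together with the closed form for $B(t,v,w)$ established in Lemma~\ref{lm1}. First I would reduce to counting $(1123,1222)$-avoiders that do not end in~$1$. Let $\pi\in P_{n,k}(1123,1222)$ with $k\ge 2$, and let $m$ be the position of the rightmost letter of $\pi$ exceeding~$1$; then $\pi=\sigma 1^{s}$ with $s=n-m\ge 0$ and $\sigma=\pi_1\cdots\pi_m$ ending in a letter greater than~$1$. Appending (or deleting) a run of trailing $1$'s affects neither the multiset of block sizes nor the avoidance of $1123$ and $1222$: in each of these two patterns the smallest symbol must be followed by a strictly larger one, so a trailing $1$ cannot take part in an occurrence. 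Since the partitions with at most one block contribute $1/(1-t)$ in total, summing over $s\ge 0$ yields
\[
\sum_{n\ge 0}p_n(1123,1222)\,t^n=\frac{1+C(t,1,1)}{1-t},
\]
where $C(t,v,w)=\sum_{n\ge 2}C_n(v,w)t^n$, $C_n(v,w)=\sum_{k\ge 2}v^{k}C_{n,k}(w)$, and $C_{n,k}(w)=\sum_{j}c_{n,k,j}w^{j}$; thus $C(t,v,w)$ plays for the $c$'s the role $B(t,v,w)$ plays for the $b$'s, and everything reduces to computing $C(t,1,1)$.

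Next I would convert the recurrence~\eqref{lmbe2} into a functional equation. Multiplying \eqref{lmbe2} by $w^{t}$ and summing over $t$ gives, for $n\ge k\ge 3$,
\[
C_{n,k}(w)=B_{n-1,k-1}(w)+\frac{w^{2}}{1-w}\,C_{n-1,k}(1)-\frac{w}{1-w}\,C_{n-1,k}(w);
\]
then, multiplying by $v^{k}t^{n}$ and summing, while handling the $k=2$ column separately via~\eqref{lmbe3} — whose generating function $\gamma(t,w):=\sum_{n\ge 2}C_{n,2}(w)t^{n}$ equals the explicit rational expression $\gamma(t,w)=\frac{w^{2}t^{2}(1-t+t^{2})}{(1-t)^{2}}+\frac{w^{3}t^{3}}{1-t}$, with $\gamma(t,1)=\frac{t^{2}}{(1-t)^{2}}$ — one arrives at
\[
\Bigl(1+\frac{wt}{1-w}\Bigr)C(t,v,w)=v^{2}\gamma(t,w)+vt\,B(t,v,w)+\frac{w^{2}t}{1-w}C(t,v,1)-\frac{w^{2}v^{2}t}{1-w}\gamma(t,1)+\frac{wv^{2}t}{1-w}\gamma(t,w),
\]
with $B(t,v,w)$ given by Lemma~\ref{lm1}. (One may just as well set $v=1$ throughout, since the number of blocks is not needed.)

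Now I would apply the kernel method. The coefficient of $C(t,v,w)$ vanishes precisely at $w=w_{1}:=\tfrac{1}{1-t}$, which is a legitimate power series in~$t$. Substituting $w=w_{1}$ annihilates the left-hand side; moreover $\frac{w_{1}^{2}t}{1-w_{1}}=-\frac{1}{1-t}$ and $\frac{w_{1}t}{1-w_{1}}=-1$, so the two terms $v^{2}\gamma(t,w_{1})$ cancel and the equation collapses to
\[
C(t,v,1)=(1-t)\,vt\,B\bigl(t,v,\tfrac{1}{1-t}\bigr)+\frac{v^{2}t^{2}}{(1-t)^{2}}.
\]
Setting $v=1$ and inserting this into the formula of the first step gives the compact identity
\[
\sum_{n\ge 0}p_n(1123,1222)\,t^n=\frac{1}{1-t}+\frac{t^{2}}{(1-t)^{3}}+t\,B\bigl(t,1,\tfrac{1}{1-t}\bigr).
\]

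Finally I would substitute $w=\tfrac{1}{1-t}$, $v=1$ into the formula for $B(t,v,w)$ from Lemma~\ref{lm1} and simplify. The radical $\sqrt{(1-vt)(1-vt-4vt^{2})}$ survives with coefficient $w_{1}^{2}(1-t)^{2}=1$, and a short computation shows its contribution to be $\frac{(1-t^{2})\sqrt{(1-t)(1-t-4t^{2})}}{2t^{2}(1-3t+t^{2})}$ — exactly the radical part of the claimed formula; the rational part emerges after clearing the nested powers of $1-t$ in the (long) rational factor of $B$. This last step is the only real obstacle: it is routine but bulky, and one must take care that \eqref{lmbe2} holds only for $n\ge k\ge 3$, so the $k=2$ boundary has to be supplied separately through~\eqref{lmbe3} (those boundary terms, as noted, drop out after the kernel substitution). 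Since the resulting generating function coincides with the one in Theorem~\ref{tm1}, a byproduct of the argument is that the pairs $\{1123,1211\}$ and $\{1123,1222\}$ are Wilf-equivalent.
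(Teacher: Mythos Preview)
Your proposal is correct and follows essentially the same route as the paper: derive a functional equation for $C(t,v,w)$ from recurrence~\eqref{lmbe2}, kill the kernel by substituting $w=\tfrac{1}{1-t}$ to obtain $C(t,v,1)=\widetilde{C}(t,v,1)+vt(1-t)\,B\bigl(t,v,\tfrac{1}{1-t}\bigr)$, and then plug in Lemma~\ref{lm1}. Your $\gamma(t,w)$ is exactly the paper's $\widetilde{C}(t,1,w)$, and your observation that the two $v^{2}\gamma(t,w_1)$ terms cancel is the same simplification the paper makes implicitly; the only slip is the remark that trailing $1$'s do not affect block sizes (they enlarge the first block), but this is irrelevant since what matters is that they cannot participate in an occurrence of $1123$ or $1222$.
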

\begin{proof}
Let $C_{n,k}(w)=\sum_{t=2}^{n-k+2}c_{n,k,t}w^t$.  Multiplying \eqref{lmbe2} by
$w^t$ and summing over $2 \leq t \leq n-k+2$ yields
\begin{multline}
C_{n,k}(w)=B_{n-1,k-1}(w)+\frac{w^2}{1-w}C_{n-1,k}(1)\\-\frac{w}{1-w}C_{n-1,k}
(w) , \qquad n \geq k \geq 3,\label{tm2e1}
\end{multline}
with $C_{2,2}(w)=w^2$ and $C_{n,2}(w)=(n-2)w^2+w^3$ if $n\geq 3$.  Let $C_n(v,w)=\sum_{k=2}^n C_{n,k}(w)v^k$ if $n \geq 2$.  Multiplying $\eqref{tm2e1}$ by $v^k$ and summing over $3 \leq k \leq n$ yields
\begin{align}
C_n(v,w)-C_{n,2}(w)v^2&=vB_{n-1}(v,w)+\frac{w^2}{1-w}(C_{n-1}(v,1)-C_{n-1,2}(1)v^2)\label{tm2e2}\\
&~~~-\frac{w}{1-w}(C_{n-1}(v,w)-C_{n-1,2}(w)v^2), \qquad n \geq 3. \notag
\end{align}

Let $C(t,v,w)=\sum_{n\geq 2}C_n(v,w)t^n$ and $\widetilde{C}(t,v,w)=v^2\sum_{n \geq 2}C_{n,2}(w)t^n$.
Note that
\begin{align}
\widetilde{C}(t,v,w)&=w^2v^2t^2+\frac{w^3v^2t^3}{1-t}+w^2v^2\sum_{n \geq 3}(n-2)t^n \notag\\
&~~~=\frac{w^2v^2t^2(1-t(1-w)(1-t))}{(1-t)^2}. \label{tm2e3}
\end{align}
Multiplying \eqref{tm2e2} by $t^n$ and summing over $n \geq 3$ gives
\begin{align}
C(t,v,w)-\widetilde{C}(t,v,w)&=vtB(t,v,w)+\frac{w^2t}{1-w}(C(t,v,1)-\widetilde{C}(t,v,1))\label{tm2e4}\\
&~~~-\frac{wt}{1-w}(C(t,v,w)-\widetilde{C}(t,v,w)). \notag
\end{align}
Substituting $w=\frac{1}{1-t}$ into \eqref{tm2e4}, and rearranging, implies
\begin{equation}\label{tm2e5}
C(t,v,1)=\widetilde{C}(t,v,1)+vt(1-t)B\left(t,v,\frac{1}{1-t}\right).
\end{equation}
By Lemma \ref{lm1},

{\footnotesize
\begin{multline}
B\left(t,v,\frac{1}{1-t}\right)=\frac{(1-vt^2)\sqrt{(1-vt)(1-vt-4vt^2)}}{
2vt^3(1-t-2vt+vt^2)}\label{tm2e6}\\
\qquad-\frac{1-(3+v)t+3t^2-(1-6v-v^2)t^3-v(8+3v)t^4+(3+3v-4v^2)t^5-v^2(1-2v)t^6}
{2vt^3(1-t)^3(1-t-2vt+vt^2)}.
\end{multline}
}

Note that $C(t,1,1)$ is the generating function for the number of elements of
$P_n(1123,1222)$, $n \geq 2$, ending in a letter greater than one, which implies
that $\frac{1}{1-t}C(t,1,1)$ counts all of the members of $P_n(1123,1222)$
having at least two blocks, for adding a string of $1$'s of arbitrary length to
the end does not otherwise affect the enumeration.  Thus, we have

{\footnotesize
\begin{align*}
\sum_{n\geq0}p_n(1123,1222)t^n&=\frac{1}{1-t}+\frac{1}{1-t}C(t,1,1)\\
&=\frac{1}{1-t}+\frac{1}{1-t}\widetilde{C}(t,1,1)+tB\left(t,1,\frac{1}{1-t}\right)\\
&=\frac{(1-t^2)\sqrt{(1-t)(1-t-4t^2)}}{2t^2(1-3t+t^2)}-\frac{1-3t-2t^2+14t^3-15t^4+3t^5}{2t^2(1-t)^2(1-3t+t^2)},
\end{align*}
}

\noindent as required, by \eqref{tm2e3}, \eqref{tm2e5}, and \eqref{tm2e6}.
\end{proof}

Comparison of Theorems \ref{tm1} and \ref{tm2} reveals that the pairs $\{1123,1211\}$ and $\{1123,1222\}$ are equivalent.  We have searched for a direct
bijection demonstrating this fact and leave it to the reader as an open problem.

\subsection{Table of equivalence classes of $(4,4)$-pairs}

Combining the results of previous sections yields a
complete solution to the problem of identifying all of the equivalence classes
of $(4,4)$-pairs. It should be observed that any
pattern pair not represented in the table below belongs to a Wilf class of size
one, such classes being determined by numerical evidence (note that there are
$\binom{15}{2}-84=21$ singleton classes in all).

{\footnotesize
\begin{itemize}
\itemsep3pt
\item
$(1121,1232)\simref{\ref{lem-1a21b}}(1112,1223)\simref{\ref{lem-1a21b}}(1121,
1223)\simref{\ref{lem-1a21b}}(1211,1232)\simref{\ref{P45},\ref{lem-1a21b}}\break(1213,
1223)\simref{\ref{P45}}(1213,1234)\simref{\ref{P45}}(1213,1231)\sims(1231,
1234)\simref{\ref{thm-123}}(1232,1234)\simref{\ref{thm-123}}\break(1223,1234)\simref{\ref{thm-123}}
(1233,1234)\simref{\ref{P41}}(1222,1233)\simref{\ref{P41}}(1223,1232)\simref{\ref{P41}}(1223,
1233)\simref{\ref{P41}}\break(1232,1233)\simref{\ref{P46},\ref{P41}}(1231,1232)\simref
{\ref{PrAA1}}(1123,1212)\simref{\ref{PrA2}}(1122,1221)\simref{\ref{PrAA1}}(1212,
1223)\simref{\ref{PrA3}}\break(1122,1223)\simref{\ref{PrA1}}(1221,1223)
\simref{\ref{PrA1}}(1221,1232)\simref{\ref{PrAA1}}(1212,1233)\sims(1221,1233)$

\item \mbox{\cite{MS1}}
$(1212,1232)\sim(1112,1213)\sim(1123,1223)\sim(1221,1231)\sim(1123,
1213)\sim\break(1212,1213)\sim(1212,1221)\sim(1222,1223)\sim(1122,1212)\sim(1222
,1232)\sim(1211,1231)$

\item
$(1112,1233)\sims(1211,1233)\sims(1121,1233)\simref{\ref{lem-1a21b}}(1112,
1234)\sims(1121,1234)\sims\break(1211,1234)\simref{\ref{cor-1222}}(1222,1234)$

\item $(1213,1221)\simref{\ref{P45}}(1213,1232)\simref{\ref{P45}}(1213,1233)\sims(1231,1233)\simref{\ref{PrB}}(1123,1232)\simref{\ref{PrB}}\break(1123, 1233)\simref{\ref{PrB}}(1122,1232)$

\item \mbox{\cite{MS2}} $(1112,1123)\sim(1211,1212)\sim(1122,1123)\sim(1121,1221)\sim(1121,1212)$

\item \mbox{\cite{MS3}} $(1211,1221)\sim(1112,1212)\sim(1212,1222)\sim(1221,1222)$

\item $(1211,1222)\sims(1121,1222)\sims(1112,1222)$

\item $(1111,1121)\sims(1111,1211)\sims(1111,1112)$

\item $(1212,1234)\sims(1221,1234)$

\item $(1112,1232)\simref{\ref{lem-1a21b}}(1211,1223)$

\item $(1222,1231)\sims(1213,1222)$

\item $(1111,1223)\sims(1111,1232)$

\item $(1123,1211)\simref{\ref{tm1},\ref{tm2}}(1123,1222)$

\item $(1123,1234)\simref{\ref{PrC}}(1122, 1233)$

\item $(1121,1231)\simref{\ref{P45},\ref{P46}}(1121,1213)$

\item $(1111,1213)\sims(1111,1231)$

\item $(1111,1212)\sims(1111,1221)$

\item $(1121,1211)\simref{\ref{P43}}(1112,1121)$.
\end{itemize}
}

\section*{Acknowledgement}
We are thankful to Richard Mathar for pointing out an error in an earlier version of this paper.

\end{document}